\documentclass[12pt,fleqn]{article}

 \setlength {\marginparwidth }{2cm} 

\usepackage{todonotes}
{}

\usepackage[french, english]{babel}

\usepackage[utf8]{inputenc}
\usepackage[T1]{fontenc}

\usepackage{mathtools}
\usepackage{amsfonts,amsmath, amsthm,amssymb,esint}
\usepackage{bbm}

\allowdisplaybreaks

\usepackage{Alegreya}
\usepackage{float}
\usepackage{authblk}
\usepackage{tikz}
\usepackage{color}
\definecolor{DB}{rgb}{0.0,0.0,0.8} 
\definecolor{DG}{rgb}{0.0,0.55,0.14}
\definecolor{DR}{rgb}{0.5,0,0.07}

\usepackage[hyperindex=true,colorlinks=true, urlcolor={DB}, linkcolor={DB}, menucolor={DB}, citecolor={DG}, anchorcolor={DG},linktoc=all]{hyperref}

\usepackage{url}

\usepackage{amsrefs}

\usepackage{bbding}
\newcommand{\verti}[1]{{\left\vert #1 \right\vert}}
\newcommand{\vertii}[1]{{\left\vert\kern-0.25ex\left\vert #1 \right\vert\kern-0.25ex\right\vert}}  
\newcommand{\vertiii}[1]{{\left\vert\kern-0.25ex\left\vert\kern-0.25ex\left\vert #1 \right\vert\kern-0.25ex\right\vert\kern-0.25ex\right\vert}}
\newcommand{\pari}[1]{{\left( #1 \right)}}

\def\bt{\begin{theo}}
\def\et{\end{theo}}
\def\bpr{\begin{prop}}
\def\epr{\end{prop}}
\def\bl{\begin{lemma}}
\def\el{\end{lemma}}
\def\bc{\begin{coro}}
\def\ec{\end{coro}}
\def\br{\begin{rema}}
\def\er{\end{rema}}
\def\bp{\begin{proof}}
\def\ep{\end{proof}}

\numberwithin{equation}{section}

\usepackage{mathrsfs}
\usepackage{stmaryrd}

\usepackage{graphicx}

\usepackage[margin=25mm]{geometry}

\geometry{a4paper}

\def\R{{\mathbb R}}
\def\N{{\mathbb N}}

\def\fo{\forall\,}

\def\bes{\begin{equation*}}
\def\ees{\end{equation*}}
\def\be{\begin{equation}}
\def\ee{\end{equation}}
\def\ba{\begin{aligned}}
\def\ea{\end{aligned}}
\def\d{\displaystyle}
\def\na{\nabla}

\usepackage{enumerate}
\def\ben{\begin{enumerate}}
\def\een{\end{enumerate}}

\theoremstyle{definition}
\newtheorem{prop}{Proposition}[section]
\newtheorem{theo}[prop]{Theorem}
\newtheorem{coro}[prop]{Corollary}
\newtheorem{lemma}[prop]{Lemma}
\theoremstyle{definition}

\theoremstyle{definition}
\newtheorem{rema}[prop]{Remark}
\theoremstyle{definition}

\def\ep{\end{proof}}
\def\bp{\begin{proof}}

\newcommand\blfootnote[1]{%
  \begingroup
  \renewcommand\thefootnote{}\footnote{#1}%
  \addtocounter{footnote}{-1}%
  \endgroup
}

\usepackage[autostyle=true]{csquotes}
\bibliographystyle{plain}

\title{Higher-order affine Sobolev inequalities}
\author[Tristan]{Tristan Bullion-Gauthier*}
\date{}

\begin{document}
\maketitle

\begin{abstract}
Zhang refined the classical Sobolev inequality $\|f\|_{L^{Np/(N-p)}} \lesssim \| \nabla f \|_{L^p}$, where $1\leq p <N$, by replacing $\|\nabla f\|_{L^p}$ with a smaller quantity invariant by unimodular affine transformations. The analogue result in homogeneous fractional Sobolev spaces $\mathring{W}^{s,p}$, with $0 <s <1$ and $sp < N$, was obtained by Haddad and Ludwig. We generalize their results to the case where $s > 1$. Our approach, based on the existence of optimal unimodular transformations, allows us to obtain various affine inequalities, such as affine Sobolev inequalities, reverse affine inequalities, and affine Gagliardo--Nirenberg type inequalities. In a different but related direction, we also answer a question concerning reverse affine inequalities, raised by Haddad, Jim\'enez, and Montenegro.     
\end{abstract}
\blfootnote{*Universite Claude Bernard Lyon 1, CNRS, Centrale Lyon, INSA Lyon, Université Jean Monnet, ICJ UMR5208, 69622 Villeurbanne, France.  
\url{bullion@math.univ-lyon1.fr}}
\blfootnote{Keywords: Affine Sobolev semi-norms; Sobolev inequalities; Gagliardo-Nirenberg inequalities; reverse inequalities; slicing}
\blfootnote{MSC 2020 classification: 46E35}
\blfootnote{Acknowledgments. I thank \'Oscar Dom\'inguez for introducing me to affine inequalities and for his wonderful questions, which are at the heart of this article. It is largely the result of writing advice from Petru Mironescu, to whom I am indebted for taking the time to teach me how to do and write mathematics. I am grateful to Antoine Detaille for his careful reading and for his ideas to improve the presentation.}

\section{Introduction}
The classical Sobolev inequality asserts that,  for each $1 \leq p<N$, there exists $\widetilde{C}_{p,N}<\infty$ such that
\begin{flalign}\label{classic SOb}
    \vertii{f}_{L^{Np/(N-p)}(\R^N)} \leq \widetilde C_{p,N} \left( \int_{\R^N} \verti{\nabla f(x)}^p \ dx \right)^{1/p} \text{,} \ \fo f \in \mathring{W}^{1,p}(\R^N)\text{.}
\end{flalign}
(Here, $\mathring{W}^{1,p}(\R^N)\coloneq\{ f\in L^{Np/(N-p)}(\R^N);\, \na f\in L^p(\R^N)\}$\text{,} which coincides with the completion of the space of compactly supported smooth functions with respect to the semi-norm $\vertii{\nabla \cdot }_{L^p}$.)
The sharp value of the constant $\widetilde C_{p,N}$ was found by Aubin \cite{aubin1976problemes} and Talenti \cite{talenti1976best}.
  In his seminal article \cite{zhang1999affine}, Zhang improved the sharp Sobolev inequality  in the case where $p=1$ and proved the  \enquote{affine Sobolev inequality}
  \be\label{zhang}
  \begin{split}
      \vertii{f}_{L^{N/(N-1)}(\R^N)} \leq  C_{1,N} \left(\int_{\mathbb{S}^{N-1}}\left(\int_{\R^N}\verti{\nabla f(x) \cdot \xi} \ dx\right)^{-N} \, d\mathscr{H}^{N-1}(\xi) \right)^{-1/N}, 
      \\
      \fo f \in \mathring{W}^{1,1}(\R^N).  
      \end{split}
  \ee
  
  Here, the constant $ C_{1,N}$ is such that, when $f$ is radial, the right-hand sides of \eqref{classic SOb} and \eqref{zhang} coincide. By a straightforward application of Jensen's inequality, one finds that the right-hand side of \eqref{zhang} is less than or equal to the one of  \eqref{classic SOb}, and thus \eqref{zhang} is a refinement of \eqref{classic SOb}. An important feature of \eqref{zhang}  is its invariance under unimodular linear transformations (i.e. $T \in \text{GL}_N$ such that $\verti{\text{det} \ T}=1$). This underlying property is characteristic of  the affine inequalities in the spirit of  \eqref{zhang}.

\smallskip
  The work of Zhang inspired many subsequent developments. In particular, Lutwak, Yang, and Zhang proved sharp affine Sobolev inequalities in the whole range $1 \leq p<N$ \cite{lutwak2002sharp}, while Wang proved an affine Sobolev inequality for $\text{BV}(\R^N)$ functions \cite{wang2012affine}. In a slightly different, but related direction, Cianchi, Lutwak, Yang, and Zhang \cite{cianchi2009affine} proposed a unified approach to such inequalities going beyond the critical value $p=N$.  

  \smallskip
  More recently,  Haddad and Ludwig  established a fractional counterpart of \eqref{zhang} \cites{haddad2024affine,haddad2022affine}. More precisely, these authors proved that, for $0<s<1$ and $1\leq p<\infty$ verifying $sp<N$, and for each $f \in \mathring{W}^{s,p}(\R^N)$,  we have
  \be\label{lud had}
  \begin{split}
      &\vertii{f}_{L^{Np/(N-sp)}(\R^N)} \\
      &\le C_{s,p,N} \left(\int_{\mathbb{S}^{N-1}}\left(\int_{0}^{\infty}t^{-sp-1}\vertii{\Delta_{t\xi} f}_{L^p(\R^N)}^p \ dt \right)^{-N/sp} \, d\mathscr{H}^{N-1} (\xi)\right)^{-s/N}, 
      \end{split}
  \ee
  where the best constant $C_{s,p,N}$ is given by an explicit formula involving a best Sobolev constant $\widetilde C_{s,p,N}$ (similarly to above). 
   (Here, $\Delta_hf(x)\coloneq f(x+h)-f(x)$.) Their sharp result implies, by extrapolation ($s \rightarrow 1^{-}$), \eqref{zhang} and its extension to $\mathring W^{1,p}$. In a related direction, a new approach to affine Moser-Trudinger inequalities was proposed in \cite{dominguez2024new}.

 \smallskip
 We now present our contributions. 
\textit{The main goal of this article is to obtain affine Sobolev inequalities of general smoothness order $s$ (not necessarily $\leq 1$).} Since, when $N=1$, affine Sobolev inequalities coincide with standard Sobolev inequalities, \textit{in what follows we always assume that $N\ge 2$, unless otherwise stated}. 

\smallskip
Given $f\in \dot{W}^{s,p}(\R^N)$ (for the definition of $\dot{W}^{s,p}(\R^N)$, see \eqref{def 1} and \eqref{def 2}), we denote
\bes
 \mathscr{E}_{s,p}(f)\coloneq \sigma_N^{(N+sp)/Np}\left(\int_{\mathbb{S}^{N-1}} \left(\int_{0}^{\infty}t^{-sp-1} \vertii{\Delta^{\lfloor s \rfloor +1}_{t\xi}f}_{L^p(\R^N)}^p \, dt\right)^{-N/sp}\,  d\mathscr{H}^{N-1}(\xi) \right)^{-s/N},
\ees
 if $s>0$ is not an integer, respectively
\bes
    \mathscr{E}_{s,p}(f)\coloneq \sigma_N^{(N+sp)/Np} \left(\int_{\mathbb{S}^{N-1}} \left(\int_{\R^N}\verti{\partial^s_{\xi} f(x)}^p \ dx\right)^{-N/sp} d \mathscr{H}^{N-1}(\xi)\right)^{-s/N}, 
    \ees
if $s\ge 1$ is an integer. Here, $\sigma_N$ is the surface area of the unit sphere $\mathbb{S}^{N-1}$.

 \smallskip

Our first main results are the following.
\begin{theo}\label{subcrit aff emb}
    Let $s>0$ and $1\leq p<\infty$ satisfy $sp<N$. Then there exists $K=K_{s,p,N}<\infty$ such that
    \begin{flalign}\label{h order aff sob}
        \vertii{f}_{L^{Np/(N-sp)}(\R^N)} \leq K\mathscr{E}_{s,p}(f), \ \forall f \in \mathring{W}^{s,p}(\R^N),
    \end{flalign}
    possibly except when $s\geq 2$ is an integer and $p=1$.
\end{theo}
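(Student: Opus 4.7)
My plan is to reduce \eqref{h order aff sob} to the classical Sobolev embedding in $\mathring{W}^{s,p}$ via an optimal unimodular change of variables, extending the Haddad--Ludwig scheme from $s\in(0,1)$ to arbitrary $s>0$. Three structural facts will drive the proof. First, $\vertii{f}_{L^{Np/(N-sp)}}$ is invariant under precomposition $f\mapsto f\circ T$ for any $T\in SL(N)$, since $|\det T|=1$. Second, $\mathscr{E}_{s,p}(f)$ is, by design, also $SL(N)$-invariant. Third, the standard Sobolev semi-norm admits a slicing representation
$$\vertii{f}_{\mathring{W}^{s,p}}^{p}\;\asymp\;\int_{\mathbb{S}^{N-1}}u_f(\xi)\,d\mathscr{H}^{N-1}(\xi),$$
where
$$u_f(\xi):=\int_{0}^{\infty} t^{-sp-1}\vertii{\Delta_{t\xi}^{\lfloor s\rfloor+1}f}_{L^p}^{p}\,dt\quad(s\notin\N),\qquad u_f(\xi):=\vertii{\partial_{\xi}^{s}f}_{L^p}^{p}\quad(s\in\N).$$
This identity follows from polar coordinates applied to the Gagliardo (resp.\ Strichartz/Calder\'on) representation of the semi-norm, and it is precisely what motivates the form of $\mathscr{E}_{s,p}$.

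Next I will establish the transformation rule. For $T\in SL(N)$, the change of variable $y=Tx$ together with the rescaling $\tau=t|T\xi|$ in the inner integral gives
$$u_{f\circ T}(\xi)=|T\xi|^{sp}\,u_f\!\left(\frac{T\xi}{|T\xi|}\right),\qquad T\in SL(N).$$
A subsequent change of variable on $\mathbb{S}^{N-1}$ (pushing $\xi$ through $T^{-1}$ and renormalizing) then shows that the $(-N/sp)$-mean appearing in $\mathscr{E}_{s,p}$ is left unchanged, confirming the second structural fact above.

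The crux is then to produce $T_f\in SL(N)$ such that
$$\vertii{f\circ T_f}_{\mathring{W}^{s,p}}\leq C_{N,s,p}\,\mathscr{E}_{s,p}(f).$$
By the transformation rule this is a purely geometric statement: for every positive $u$ on $\mathbb{S}^{N-1}$ there exists $T\in SL(N)$ such that
$$\int_{\mathbb{S}^{N-1}}|T\xi|^{sp}\,u\!\left(\frac{T\xi}{|T\xi|}\right)d\mathscr{H}^{N-1}(\xi)\leq C\,\sigma_N^{(N+sp)/N}\left(\int_{\mathbb{S}^{N-1}}u(\eta)^{-N/sp}\,d\mathscr{H}^{N-1}(\eta)\right)^{-sp/N}.$$
I will obtain this by minimizing the left-hand side over $SL(N)/O(N)$: the scaling $T\mapsto \lambda T$ fixes a normalization, and an AM/GM--type comparison (Jensen applied with the negative exponent $-N/sp$) forces minimizing sequences to remain in a compact region of the homogeneous space, producing an optimizer. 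Geometrically, $T_f$ plays the role of an $L^p$-John/isotropic position for the body implicitly associated with $u_f$.

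Granting $T_f$, the classical sharp Sobolev embedding $\vertii{g}_{L^{Np/(N-sp)}}\leq \widetilde C_{s,p,N}\vertii{g}_{\mathring{W}^{s,p}}$ applied to $g=f\circ T_f$, combined with the unimodular invariance of both sides, concludes the proof:
$$\vertii{f}_{L^{Np/(N-sp)}}=\vertii{f\circ T_f}_{L^{Np/(N-sp)}}\leq \widetilde C_{s,p,N}\vertii{f\circ T_f}_{\mathring{W}^{s,p}}\leq \widetilde C_{s,p,N}\,C_{N,s,p}\,\mathscr{E}_{s,p}(f).$$
The main obstacle is clearly the existence and quantitative control of $T_f$: the minimization lives on the non-compact group $SL(N)$, and one must rule out degeneration of minimizing sequences and simultaneously keep track of explicit constants. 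The exclusion of $s\geq 2$ integer with $p=1$ reflects the regime where the slicing identity above --- the very first link of the chain --- is not available in a form sharp enough to feed into this argument.
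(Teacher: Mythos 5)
Your final assembly (apply the classical Sobolev inequality to $f\circ T_f$ and invoke the $\text{SL}_N$-invariance of $\vertii{\cdot}_{L^{Np/(N-sp)}}$ and of $\mathscr{E}_{s,p}$) coincides with the paper's, and your transformation rule $u_{f\circ T}(\xi)=\verti{T\xi}^{sp}\,u_f(T\xi/\verti{T\xi})$ is correct. The gap is in the central step: you reduce the existence of $T_f$ with $\verti{f\circ T_f}_{W^{s,p}}\leq C\,\mathscr{E}_{s,p}(f)$ to a ``purely geometric'' claim valid for \emph{every} positive $u$ on $\mathbb{S}^{N-1}$, namely that some $T\in\text{SL}_N$ satisfies $\int_{\mathbb{S}^{N-1}}\verti{T\xi}^{sp}u(T\xi/\verti{T\xi})\,d\mathscr{H}^{N-1}(\xi)\leq C\sigma_N^{(N+sp)/N}\bigl(\int_{\mathbb{S}^{N-1}}u^{-N/sp}\bigr)^{-sp/N}$. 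That claim is false, so no minimization over $\text{SL}_N/\text{O}_N$ can deliver it. Take $u_\varepsilon=\varepsilon$ on two antipodal caps of fixed radius $r_0$ about $\pm e_1$ and $u_\varepsilon=1$ elsewhere; the right-hand side is $\asymp\varepsilon$. For $T\in\text{SL}_N$ with largest singular value $\sigma_1\geq 1$ and top right singular direction $q_1$: if the set where $T\xi/\verti{T\xi}$ misses the caps has measure $\geq\delta_0$ (a dimensional constant), Jensen against the identity $\int_{\mathbb{S}^{N-1}}\verti{T\xi}^{-N}d\xi=\sigma_N$ gives a contribution $\geq c_0>0$; otherwise the ``good'' set has nearly full measure and, since $\verti{T\xi}\geq\sigma_1\verti{\langle\xi,q_1\rangle}$, it contributes $\geq c\,\varepsilon\,\sigma_1^{sp}$, while the ``bad'' set contains $\{\verti{\langle\xi,w\rangle}<\cos(r_0)\sigma_1^{-N}\}$ (with $w=T^{\mathrm{T}}e_1/\verti{T^{\mathrm{T}}e_1}$), has measure $\gtrsim\sigma_1^{-N}$, carries $u_\varepsilon=1$ and $\verti{T\xi}\geq\sigma_1^{-(N-1)}$, hence contributes $\gtrsim\sigma_1^{-(N-1)sp-N}$. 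Optimizing in $\sigma_1$ yields $\min_T\int\verti{T\xi}^{sp}u_\varepsilon(T\xi/\verti{T\xi})\,d\xi\gtrsim\varepsilon^{\alpha}$ with $\alpha=\frac{N(sp+1)-sp}{N(sp+1)}<1$, which is $\gg\varepsilon$ as $\varepsilon\to0$.

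What rescues the theorem is structure of $u_f$ along the whole orbit $\{f\circ T;\,T\in\text{SL}_N\}$, not a property of a single abstract $u$. The paper (i) shows the minimum of $\verti{f\circ T}_{W^{s,p}}$ over $\text{SL}_N$ is attained (Theorem \ref{there is min intro}; compactness of minimizing sequences uses that $\inf_\xi u_f(\xi)>0$ unless $\verti{f}_{W^{s,p}}=0$, Lemmas \ref{no constant direction} and \ref{no cons direc toy}), and (ii) proves that at a minimizer every directional energy is comparable to $\verti{f}_{W^{s,p}}$ uniformly in $\xi$ (Theorem \ref{constraint on min intro}), by a variational argument: if one direction were too small, an anisotropic dilation $\text{diag}(\lambda,\mu,\dots,\mu)\in\text{SL}_N$ would strictly decrease the semi-norm. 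That argument requires the two-sided slicing comparison of $\verti{g}_{W^{s,p}}$ with basis sums of directional energies, applied to every $g=f\circ T$ with constants uniform over all orthonormal bases (Theorems \ref{theo besov} and \ref{integer slicing}); this orbit-uniform function-space input is exactly what your abstract-$u$ formulation discards. It is also where the excluded case really enters: your attribution of the exclusion ``$s\geq2$ integer, $p=1$'' to the sphere representation of $\verti{\cdot}_{W^{s,p}}$ is inaccurate, since that representation (Lemma \ref{int norm}) holds for all $p\geq1$; what fails at $p=1$ is the upper bound in Theorem \ref{integer slicing} (Ornstein's non-inequality), which the variational step needs. As written, your proposal does not supply the content of Theorems \ref{there is min intro} and \ref{constraint on min intro}, and the statement you propose to prove instead is untrue.
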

\label{thm1.2}

\begin{theo}\label{gen embedding}
    Let $ 0 < s_1< s_2$ and $1 \leq p_1,p_2 <\infty$ satisfy
    \begin{flalign}\label{Sob n}
        s_2 -\frac{N}{p_2}=s_1-\frac{N}{p_1}.
    \end{flalign}
    Then there exists $K=K_{s_1,p_1,s_2,p_2,N}<\infty$ such that
    \begin{flalign}\label{multi cases}
        \mathscr{E}_{s_1,p_1}(f) \leq K\mathscr{E}_{s_2,p_2}(f), \ \fo f \in \dot{W}^{s_1,p_1}(\R^N) \cap \dot{W}^{s_2,p_2}(\R^N),
  \end{flalign}
   possibly except when $s_2\geq 2$ is an integer and $p_2=1$.
\end{theo}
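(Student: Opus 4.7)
The plan is to combine three ingredients---affine invariance of $\mathscr{E}_{s,p}$, a Jensen-type upper bound $\mathscr{E}_{s,p}(f)\le [f]_{\dot W^{s,p}(\R^N)}$, and the existence of an optimal unimodular transformation at level $(s_2,p_2)$---with the classical non-affine fractional Sobolev embedding. The first ingredient is a direct change-of-variables computation: for $T\in SL(N,\R)$ and $\eta=T\xi/|T\xi|$, the substitution $u=t|T\xi|$ in the radial integral yields $\int_0^\infty t^{-sp-1}\vertii{\Delta^{\lfloor s\rfloor+1}_{t\xi}(f\circ T)}_{L^p}^p\,dt = |T\xi|^{sp}\int_0^\infty u^{-sp-1}\vertii{\Delta^{\lfloor s\rfloor+1}_{u\eta}f}_{L^p}^p\,du$; raising to the exponent $-N/(sp)$ produces a factor $|T\xi|^{-N}$ that is exactly compensated by the Gauss-type Jacobian of $\xi\mapsto T\xi/|T\xi|$ on $\mathbb{S}^{N-1}$. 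For the second ingredient I would use the polar-coordinate identity
\[
\int_{\mathbb{S}^{N-1}}\!\!\int_0^\infty t^{-sp-1}\vertii{\Delta^{\lfloor s\rfloor+1}_{t\xi}f}_{L^p(\R^N)}^p\,dt\,d\mathscr{H}^{N-1}(\xi)=[f]_{\dot W^{s,p}(\R^N)}^p
\]
(and its analogue for integer $s$), together with the power-mean inequality $(\tfrac{1}{\sigma_N}\int \Phi^{-N/s})^{-s/N}\le(\tfrac{1}{\sigma_N}\int \Phi^{p})^{1/p}$ on $\mathbb{S}^{N-1}$; the prefactor $\sigma_N^{(N+sp)/(Np)}$ in the definition of $\mathscr{E}_{s,p}$ is calibrated so that the $\sigma_N$-powers cancel, yielding $\mathscr{E}_{s,p}(f)\le [f]_{\dot W^{s,p}(\R^N)}$ with constant $1$.

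\textbf{Assembly.} The third ingredient---for each $f$ the existence of some $T_f\in SL(N,\R)$ with $[f\circ T_f^{-1}]_{\dot W^{s_2,p_2}(\R^N)}\le C\,\mathscr{E}_{s_2,p_2}(f)$---is the central existence result of the paper, which I take for granted here. With such a $T=T_f$ selected at level $(s_2,p_2)$, the argument is simply
\[
\mathscr{E}_{s_1,p_1}(f)=\mathscr{E}_{s_1,p_1}(f\circ T^{-1})\le [f\circ T^{-1}]_{\dot W^{s_1,p_1}(\R^N)}\le K_1[f\circ T^{-1}]_{\dot W^{s_2,p_2}(\R^N)}\le K_2\,\mathscr{E}_{s_2,p_2}(f),
\]
the four steps being, in order: affine invariance at $(s_1,p_1)$; the Jensen/power-mean upper bound at $(s_1,p_1)$; the classical non-affine Sobolev embedding $\dot W^{s_2,p_2}\hookrightarrow\dot W^{s_1,p_1}$ under the scaling \eqref{Sob n} (note that $s_1<s_2$ combined with \eqref{Sob n} forces $p_2<p_1$, placing us in the standard Sobolev direction); and the defining property of the optimal $T$ at $(s_2,p_2)$.

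\textbf{Main obstacle.} The only non-routine step is Ingredient 3, the existence of an optimal unimodular $T$ for $\mathscr{E}_{s_2,p_2}$: this is a variational problem $\inf_{T\in SL(N,\R)}[f\circ T^{-1}]_{\dot W^{s_2,p_2}(\R^N)}$ on the non-compact group $SL(N,\R)$, whose solvability requires coercivity as the singular values of $T$ degenerate (to $0$ or $\infty$) together with weak lower semicontinuity of the seminorm along a minimizing sequence. It is in the $W^{s,1}$ case with integer $s\ge 2$ that this scheme is known to break down (as for BV-type substitutes at $p=1$), which is exactly why the exclusion ``$s_2\ge 2$ integer, $p_2=1$'' appears in the statement. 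Once Ingredient 3 is in hand, the assembly above contains no further difficulty; it is a short interpolation between the paper's variational apparatus and a classical Sobolev embedding.
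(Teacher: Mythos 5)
Your proposal is correct and follows essentially the same route as the paper: the chain affine invariance at $(s_1,p_1)$, Jensen/power-mean bound $\mathscr{E}_{s_1,p_1}\leq$ Sobolev semi-norm (Lemma \ref{Jensen}), homogeneous Sobolev embedding $\dot W^{s_2,p_2}\hookrightarrow \dot W^{s_1,p_1}$ (Theorem \ref{Opt Sob}), and the optimal unimodular transformation at level $(s_2,p_2)$ (Corollary \ref{affine = classic}, i.e.\ Theorems \ref{there is min intro} and \ref{constraint on min intro}) is exactly the paper's argument. One small correction to your closing remark: the exclusion ``$s_2\geq 2$ integer, $p_2=1$'' does not come from non-solvability of the variational problem (the minimizer of Theorem \ref{there is min intro} exists for all $s,p$), but from the failure, via Ornstein's counterexamples, of the slicing inequality of Theorem \ref{integer slicing} underlying the comparison at the minimizer (Theorem \ref{constraint on min intro}).
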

We emphasize the fact that our approach is new, even in the known case where $0<s\leq 1$.
One of its features is that, while it encompasses the case $0<s\leq  1$, it does not provide the sharp constants in \eqref{lud had}. The trade-off is that we gain in generality, but lose in precision. This pertains to the fact that
the sharp constants in \eqref{zhang} and \eqref{lud had} are obtained using rearrangements and convex geometry techniques which do not seem to have counterparts for higher-order inequalities.

\smallskip
The starting point of our proofs of Theorems \ref{subcrit aff emb} and \ref{gen embedding} is inspired by the results of  Huang and Li \cite{huang2016optimal}, who proved the following. 
\begin{enumerate}
    \item For each $f \in W^{1,p}(\R^N)$, there exists $T_f \in \text{SL}_N$ such that 
    \begin{flalign}\label{huang 1}
        \vertii{\nabla(f \circ T_f) }_{L^p(\R^N)}= \min\{\vertii{\nabla(f\circ T)}_{L^p(\R^N)};\, T \in \text{SL}_N \}. 
    \end{flalign}
    \item There exists $C<\infty$ such that if $f \in W^{1,p}(\R^N)$ satisfies 
    \begin{flalign*}
        \vertii{\nabla f}_{L^p(\R^N)}= \min \{\vertii{\nabla(f\circ T)}_{L^p(\R^N)};\, T \in \text{SL}_N \},
        \end{flalign*}
        then
    \begin{flalign}\label{huang 2}
        \vertii{\nabla f}_{L^p(\R^N)} \leq C \vertii{\nabla f \cdot \xi}_{L^p(\R^N)}, \ \fo \xi \in \mathbb{S}^{N-1}.
    \end{flalign}
\end{enumerate}  

In other words, for each $f \in W^{1,p}(\R^N)$, one can choose a representative of $f$ in the class $[f]_{1,p}\coloneq \{ f\circ T;\, T \in \text{SL}_N \}$ which has large directional derivatives
in all directions. For this representative, the $W^{1,p}$-analogue of \eqref{zhang} (possibly not with sharp constants) is equivalent to the Sobolev embedding \eqref{classic SOb}.

\smallskip
A striking conclusion of our analysis is  that the general affine Sobolev inequalities \eqref{h order aff sob} and \eqref{multi cases} are  equivalent to their classical counterparts, if we disregard the matter of finding the best constants. This follows from Theorems \ref{there is min intro} and \ref{constraint on min intro} below.

\begin{theo}\label{there is min intro}
    Let $s>0$ and $1 \leq p < \infty$.
    For each $f \in \dot{W}^{s,p}(\R^N)$, there exists $T_f \in \text{SL}_N$ such that
    \begin{flalign*}
        \verti{f \circ T_f}_{W^{s,p}}= \min\{\verti{f \circ T}_{W^{s,p}};\,  \ T \in \text{SL}_N \}.
    \end{flalign*}
\end{theo}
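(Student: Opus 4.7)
The plan is to apply the direct method of the calculus of variations. Set $J(T) := \verti{f\circ T}_{W^{s,p}}^p$ and $J_\ast := \inf_{\text{SL}_N} J$; since $J(\text{Id}) < \infty$, we have $J_\ast < \infty$. Picking a minimizing sequence $(T_n) \subset \text{SL}_N$, the goal is to extract a subsequence converging to some $T_\ast \in \text{SL}_N$ with $J(T_\ast) = J_\ast$. The argument splits into proving continuity of $J$ on $\text{GL}_N$ and coercivity of $J$ on $\text{SL}_N$ modulo a compact group action.

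Two changes of variables do most of the work. First, changing $u = Tx$, $v = Ty$ in the Gagliardo form gives, for $s \in (0,1)$,
\[
    J(T) = \int_{\mathbb{S}^{N-1}} \verti{T^{-1}\xi}^{-(N+sp)} \, \psi_f(\xi) \, d\mathscr{H}^{N-1}(\xi), \qquad \psi_f(\xi) := \int_0^\infty t^{-sp-1} \vertii{\Delta_{t\xi} f}_{L^p}^p \, dt,
\]
with an analogous formula (using iterated differences $\Delta^{\lfloor s \rfloor +1}_{t\xi} f$ for non-integer $s$, or directional derivatives $\partial^s_\xi f$ for integer $s$) in the remaining cases. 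Continuity of $T \mapsto J(T)$ on $\text{GL}_N$ follows by dominated convergence. Second, the change of variables $x \mapsto V^{-1}x$ for $V \in O(N)$ yields the right invariance $J(TV) = J(T)$. Using the singular value decomposition $T_n = U_n D_n V_n$ with $U_n, V_n \in O(N)$ and $D_n$ positive diagonal of determinant one, this invariance lets me replace $T_n$ by $U_n D_n$, and compactness of $O(N)$ extracts $U_n \to U_\ast$.

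The main obstacle, and the heart of the proof, is the \emph{coercivity} claim: if the diagonal entries $d_i^{(n)}$ of $D_n$ are not uniformly bounded away from $0$ and $\infty$, then $J(U_n D_n) \to \infty$, contradicting minimality. Concretely, up to further extraction, the unit direction $\xi_n$ of smallest $\verti{(U_n D_n)^{-1}\xi}$ converges to some $\xi_\ast \in \mathbb{S}^{N-1}$, and a scaling computation modeled on the planar case $\text{diag}(\sigma, \sigma^{-1})$ shows that a fixed neighborhood of $\xi_\ast$ contributes at least $c \, \sigma_n^{-(N+sp-2)} \psi_f(\xi_\ast)$ to $J(U_n D_n)$, where $\sigma_n$ is the smallest singular value of $(U_n D_n)^{-1}$. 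Since $N \geq 2$ and $sp > 0$, this diverges as $\sigma_n \to 0$, provided $\psi_f(\xi_\ast) > 0$. Positivity of $\psi_f(\xi_\ast)$ holds for nontrivial $f \in \dot{W}^{s,p}(\R^N)$: if $\psi_f(\xi_\ast) = 0$ then $\vertii{\Delta_{t\xi_\ast} f}_{L^p} = 0$ for every $t$, so $f$ is invariant under all translations in the direction $\xi_\ast$ and hence depends on only $N-1$ variables, which combined with the $L^q$ control built into $\dot{W}^{s,p}$ forces $f \equiv 0$. Cleanly justifying the neighborhood estimate requires a lower semicontinuity or Fatou-type argument for $\psi_f$, which is the main technical care point. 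With coercivity in hand, $D_n$ is precompact and, along a further subsequence, $D_n \to D_\ast$ with $\det D_\ast = 1$; continuity yields $T_\ast := U_\ast D_\ast \in \text{SL}_N$ with $J(T_\ast) = J_\ast$, as desired.
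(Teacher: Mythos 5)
Your overall skeleton (minimizing sequence, continuity of $T\mapsto\verti{f\circ T}_{W^{s,p}}$ on $\text{GL}_N$ via change of variables and dominated convergence, and exclusion of degenerating matrices) coincides with the paper's proof, which uses Lemma \ref{Conv compo matrix} plus a boundedness claim for the minimizing sequence. But your coercivity step has a genuine gap. In the representation $J(T)=\int_{\mathbb{S}^{N-1}}\verti{T^{-1}\xi}^{-(N+sp)}\psi_f(\xi)\,d\mathscr{H}^{N-1}(\xi)$, the weight blows up only on spherical caps whose radius shrinks (like a negative power of the largest singular value of $T_n$) and whose location moves with $n$; over a \emph{fixed} neighborhood of $\xi_\ast$ most directions give a \emph{small} weight, so a lower bound of the form $c\,\sigma_n^{-\beta}\psi_f(\xi_\ast)$ on a fixed neighborhood is not available, and pointwise positivity of $\psi_f$ at the single limiting direction $\xi_\ast$ does not control the integral over moving, shrinking caps. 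What is really needed is a lower bound on $\psi_f$ that is uniform in the direction (or at least a lower semicontinuity statement), and this — which you explicitly defer as ``the main technical care point'' — is precisely the nontrivial content of the proof. The paper obtains it in Lemma \ref{no constant direction} (mollification plus Fatou give $\vertii{\Delta^{\lfloor s\rfloor+1}_{t\xi}f}_{L^p}\geq Ct^{\lfloor s\rfloor+1}$ for all small $t$, uniformly in $\xi$), whence $\inf_{\xi}\psi_f(\xi)>0$ in Lemma \ref{no cons direc toy}; combined with the directional slicing bound of Theorem \ref{theo besov} (or the trivial pointwise bound $\verti{\partial^s_\xi f}\leq\vertii{D^s_xf}$ in the integer case), this bounds $\verti{T_n(\xi)}$ directly and avoids any sphere-integral or cap estimate.

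Moreover, the positivity argument you sketch is itself incorrect as stated. From $\psi_f(\xi_\ast)=0$ one only gets $\Delta^{\lfloor s\rfloor+1}_{t\xi_\ast}f=0$ for a.e.\ $t$, i.e.\ $f$ restricted to lines in the direction $\xi_\ast$ behaves like a polynomial of degree $\leq\lfloor s\rfloor$ (not translation invariance once $\lfloor s\rfloor\geq1$); and $\dot{W}^{s,p}$ carries no global integrability (that is $\mathring{W}^{s,p}$), so you cannot conclude $f\equiv0$. The correct conclusion — which suffices, since the theorem is trivial when the seminorm vanishes — is $\verti{f}_{W^{s,p}}=0$, and reaching it requires the mollification/Fatou step and a polynomial lemma of the type of Lemma \ref{poly L=0}, since $f$ is a priori only in $L^1_{loc}$. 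Your exponent bookkeeping for the cap contribution is also off, but that is secondary to the missing uniform lower bound on the directional energies.
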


A companion of this theorem is the following counterpart of \eqref{huang 2}.
\begin{theo}\label{constraint on min intro} \hfill
\begin{enumerate}[(1)]
  \item For every non-integer $s$ and $1 \leq p < \infty$,  there exist $0<C^1_{s,p,N}\leq C^2_{s,p,N}<\infty$
  such that if $f \in \dot{W}^{s,p}(\R^N)$  satisfies
  \begin{flalign*}
    \verti{f}_{W^{s,p}}= \min \{\verti{f \circ T}_{W^{s,p}};\, T \in \textrm{SL}_N \},
  \end{flalign*}  
  then
  \begin{flalign}\begin{split}
    C^1_{s,p,N}\verti{f}_{W^{s,p}} \leq \left(\int_{0}^{\infty}t^{-sp-1} \vertii{\Delta^{\lfloor s \rfloor +1}_{t \xi} f}_{L^p(\R^N)}^p \ dt \right)^{1/p}
    \leq  C^2_{s,p,N}& \verti{f}_{W^{s,p}}, \\ & \fo \xi \in \mathbb{S}^{N-1}.
    \end{split}
   \end{flalign}
   \item For every integer $s$ and $1<p<\infty$, 
 there exist $0<C^1_{s,p,N}\leq C^2_{s,p,N}<\infty$
  such that if $f \in \dot{W}^{s,p}(\R^N)$ satisfies 
  \begin{flalign*}
    \verti{f}_{W^{s,p}}= \min \{\verti{f \circ T}_{W^{s,p}};\, \ T \in \text{SL}_N \},
  \end{flalign*}  
  then
  \begin{flalign}\label{all dir est}
    C^1_{s,p,N}\verti{f}_{W^{s,p}} \leq \left(\int_{\R^N} \verti{\partial^s_{\xi} f(x)}^p \ dx \right)^{1/p}
    \leq C^2_{s,p,N}\verti{f}_{W^{s,p}}, \ \fo \xi \in \mathbb{S}^{N-1}.
   \end{flalign}
   Same when $s=1$ and $p=1$.
   \end{enumerate}
\end{theo}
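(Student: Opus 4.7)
The plan is to derive both inequalities by testing the minimality assumption against a carefully chosen one-parameter subgroup of $\text{SL}_N$ adapted to each direction $\xi$. The starting identity is the change-of-variable formula
\begin{equation*}
\verti{f\circ T}_{W^{s,p}}^p = \int_{\mathbb{S}^{N-1}}\verti{T^{-1}\omega}^{-(sp+N)}\phi(\omega)\,d\mathscr{H}^{N-1}(\omega) =: F(T),\quad T\in\text{SL}_N,
\end{equation*}
where $\phi(\omega):=\int_0^\infty t^{-sp-1}\vertii{\Delta^{\lfloor s\rfloor+1}_{t\omega}f}_{L^p}^p\,dt$ when $s$ is non-integer, respectively $\vertii{\partial^s_\omega f}_{L^p}^p$ when $s$ is an integer. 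It is obtained by writing $\verti{f}_{W^{s,p}}^p$ in polar coordinates, using $\Delta^k_h(f\circ T)=\Delta^k_{Th}f\circ T$ and $|\det T|=1$, and finally the change of variable $\zeta\mapsto T\zeta/\verti{T\zeta}$ on $\mathbb{S}^{N-1}$ with Jacobian $\verti{T\zeta}^{-N}$. In particular $\verti{f}_{W^{s,p}}^p=F(I)=\int_{\mathbb{S}^{N-1}}\phi\,d\mathscr{H}^{N-1}$, and the minimality hypothesis rewrites as $F(T)\geq F(I)$ for every $T\in\text{SL}_N$.

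For the lower bound on $\phi(\xi)$, I would fix $\xi\in\mathbb{S}^{N-1}$ and test $F\geq F(I)$ against the diagonal one-parameter family $T^\xi_\lambda\in\text{SL}_N$ acting as multiplication by $\lambda$ on $\xi$ and by $\lambda^{-1/(N-1)}$ on $\xi^\perp$; then $\verti{(T^\xi_\lambda)^{-1}\omega}^2 = \lambda^{-2}(\omega\cdot\xi)^2+\lambda^{2/(N-1)}(1-(\omega\cdot\xi)^2)$. Performing the blow-up $\omega=\cos\theta\,\xi+\sin\theta\,\eta$ with $\theta=\lambda^{-N/(N-1)}v$ concentrates the integrand on a spherical cap of radius $\lambda^{-N/(N-1)}$ about $\pm\xi$, yielding, as $\lambda\to\infty$, the asymptotic $F(T^\xi_\lambda)\sim C_{s,p,N}\lambda^{sp}\phi(\xi)$, with the contribution of $\omega$ bounded away from $\pm\xi$ being $O(\lambda^{-(sp+N)/(N-1)}\verti{f}^p_{W^{s,p}})$. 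Inserting these bounds into the minimality inequality at a finite $\lambda$ chosen to balance the two contributions produces $\phi(\xi)\geq C^1_{s,p,N}\verti{f}_{W^{s,p}}^p$.

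The upper bound in the integer case (statement~(2)) requires no minimality: writing $\partial^s_\xi f=\sum_{|\alpha|=s}\binom{s}{\alpha}\xi^\alpha\partial^\alpha f$ and applying Minkowski yields $\vertii{\partial^s_\xi f}^p_{L^p}\lesssim_{s,N}\sum_{|\alpha|=s}\vertii{\partial^\alpha f}^p_{L^p}\lesssim \verti{f}^p_{W^{s,p}}$, where the second estimate is the Calder\'on--Zygmund equivalence of mixed and pure $L^p$-Sobolev seminorms, valid for $p>1$ and trivially for $s=1$, $p=1$ (and failing at $p=1$ for $s\geq 2$, which is precisely why that case is excluded). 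For the upper bound in the non-integer case (statement~(1)), I would test the same family $T^\xi_\lambda$ in the complementary regime $\lambda\to 0$: the symmetric blow-up $\sin\theta=\lambda^{N/(N-1)}v$ now concentrates the integrand on the equator $\xi^\perp\cap\mathbb{S}^{N-1}$, and the resulting minimality constraint combined with the first-order Euler--Lagrange identity $\int\omega_i\omega_j\phi\,d\sigma=N^{-1}\verti{f}^p_{W^{s,p}}\delta_{ij}$ (obtained by linearizing $F$ at $I$ along a traceless perturbation) yields $\phi(\xi)\leq C^2_{s,p,N}\verti{f}^p_{W^{s,p}}$.

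The principal obstacle will be converting the leading-order asymptotic, which a priori only controls averages of $\phi$ over shrinking spherical caps about $\xi$, into a genuine pointwise estimate with constants depending solely on $s,p,N$. I expect to handle this via the continuity of $\phi$ on $\mathbb{S}^{N-1}$ together with a covering/iteration argument propagating the bound throughout the sphere.
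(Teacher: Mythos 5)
Your starting identity and the choice of the anisotropic family $T^\xi_\lambda=\mathrm{diag}(\lambda,\lambda^{-1/(N-1)},\dots)$ are sound (for non-integer $s$; in the integer case the spherical representation $F(T)=\int_{\mathbb{S}^{N-1}}\verti{T^{-1}\omega}^{-(sp+N)}\phi(\omega)\,d\mathscr{H}^{N-1}$ holds only for the \emph{equivalent} seminorm $\verti{\cdot}^{*}_{W^{s,p}}$ of \eqref{radial norm}, not for $\verti{\cdot}_{W^{s,p}}$ itself, so the minimality hypothesis does not transfer exactly). But the argument has a genuine gap at the point you yourself flag, and the proposed fix cannot close it. The concentration asymptotics of $F(T^\xi_\lambda)$ as $\lambda\to\infty$ only bound from below weighted averages (or sups) of $\phi$ over caps of fixed radius $r$ about $\pm\xi$, with constants that degenerate as $r\to 0$; passing from this to the pointwise bound $\phi(\xi)\geq C^1_{s,p,N}\verti{f}_{W^{s,p}}^p$ requires comparing $\phi$ at $\xi$ with $\phi$ on nearby directions \emph{uniformly in $f$}. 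Continuity of $\phi$ (even granting it for every $f\in\dot W^{s,p}$, which is itself not immediate) only provides a modulus depending on $f$, so a covering/iteration based on it yields constants depending on $f$, not on $(s,p,N)$. The upper bound in the non-integer case is also not reachable by your scheme: minimality gives \emph{lower} bounds $F(T^\xi_\lambda)\geq F(I)$, hence lower bounds on weighted averages of $\phi$ (equatorial concentration as $\lambda\to 0$), and the Euler--Lagrange identity constrains only the second moments $\int\omega_i\omega_j\phi$; no pointwise \emph{upper} bound on $\phi(\xi)$ can follow from these, since a thin spike of small mass at $\xi$ is compatible with all of them. In fact that upper bound holds for \emph{every} $f$, without minimality, but it is exactly the nontrivial direction of the directional slicing theorems.

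This missing uniform, direction-wise comparability is precisely what the paper imports: the two-sided slicing estimates (Theorem \ref{theo besov} for non-integer $s$, Theorem \ref{integer slicing} for integer $s$, the latter requiring $p>1$ by Ornstein) state that each single-direction energy is comparable, up to constants depending only on $(s,p,N)$, to the full seminorm summed over an orthonormal frame. With these in hand, the paper's proof is a short contraposition: if some direction had energy below $C^1_{s,p,N}\verti{f}_{W^{s,p}}$, rotating it to $e_1$ and applying your same dilation $T_\lambda$ with a suitable finite $\lambda$ strictly decreases the seminorm (via the slicing inequalities and the exact scaling of directional energies, Lemma \ref{frac chan}), contradicting minimality; the upper bound is the direct inequality in the slicing theorems and needs no minimality at all. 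So either you invoke (or re-prove) a result of this slicing type, in which case your asymptotic machinery becomes unnecessary, or you must supply a genuinely quantitative replacement for the cap-to-point step and for the upper bound; as written, neither is available. Your integer-case upper bound via Minkowski is fine (indeed with the paper's definition $\verti{\partial^s_\xi f(x)}\leq\vertii{D^s_xf}$ it is trivial, also at $p=1$), but the exclusion of $p=1$, $s\geq 2$ concerns the lower-bound direction, not the one you attribute it to.
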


We next explain why the homogeneous Sobolev spaces $\dot{W}^{s,p}(\R^N)$ are  the natural setting for affine Sobolev \enquote{energies}. This is the content of our next result.

\begin{theo}\label{good fspace}
Let $1 \leq p < \infty$.
    \begin{enumerate}[(1)]
    \item Let $s$ be non-integer. For each $f \in L^1_{\text{loc}}(\R^N)$ , we have
\begin{flalign*}
    \verti{f}_{W^{s,p}} < \infty \iff \mathscr{E}_{s,p}(f)<\infty. 
\end{flalign*}
\item Let $s$ be an integer. For each $f \in W^{s,1}_{\text{loc}}(\R^N)$, we have
\bes
 \verti{f}_{W^{s,p}} < \infty \iff \mathscr{E}_{s,p}(f)<\infty.
\ees
\end{enumerate}
\end{theo}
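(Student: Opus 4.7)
The direction $|f|_{W^{s,p}} < \infty \Rightarrow \mathscr{E}_{s,p}(f) < \infty$ is immediate. Denote by $g_f \colon \mathbb{S}^{N-1} \to [0,\infty]$ the $p$-th root of the inner integral in the definition of $\mathscr{E}_{s,p}(f)$ (respectively $g_f(\xi) = \|\partial^s_\xi f\|_{L^p}$ in the integer case). Writing the full $\dot W^{s,p}$-seminorm in polar coordinates one has $|f|_{W^{s,p}}^p \sim \int_{\mathbb{S}^{N-1}} g_f^p\, d\mathscr{H}^{N-1}$, whereas $\mathscr{E}_{s,p}(f)^{-N/s} \sim \int_{\mathbb{S}^{N-1}} g_f^{-N/s}\, d\mathscr{H}^{N-1}$; the $L^q$-mean comparison on the sphere (with $-N/s < p$) then yields $\mathscr{E}_{s,p}(f) \lesssim |f|_{W^{s,p}}$.

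For the converse, the plan is to study the positively $s$-homogeneous extension $\tilde g_f(\eta) := |\eta|^s g_f(\eta/|\eta|)$ on $\R^N \setminus \{0\}$ and to establish a quasi-triangle inequality $\tilde g_f(\eta_1 + \eta_2) \leq C(s,p,N)(\tilde g_f(\eta_1) + \tilde g_f(\eta_2))$. For non-integer $s$ I would use the binomial identity $\Delta^m_{h_1+h_2} = \sum_{j=0}^m \binom{m}{j} T_{jh_1}\,\Delta^j_{h_2}\Delta^{m-j}_{h_1}$ with an auxiliary order $m > 2s$, so that in each cross term at least one of $j, m-j$ exceeds $s$; one then trades the small-order difference for the trivial $2^{\min(j,m-j)}$ and invokes the Marchaud-type equivalence of the weighted seminorm $\int_0^\infty t^{-sp-1}\|\Delta^k_{t\eta}f\|_{L^p}^p\, dt$ across all orders $k > s$. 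For integer $s$, the analogue is the expansion $\partial^s_{\eta_1+\eta_2} = (\partial_{\eta_1}+\partial_{\eta_2})^s$, with the pointwise estimate on symbols $|(\omega \cdot \eta_1)^j (\omega \cdot \eta_2)^{s-j}| \leq \tfrac{j}{s}|\omega\cdot\eta_1|^s + \tfrac{s-j}{s}|\omega\cdot\eta_2|^s$ promoted to $L^p$-bounds on mixed partials by Riesz multiplier / Calder\'on--Zygmund theory for $1 < p < \infty$; the cases $s = 1$ (triangle inequality) and $p = 1$ with $s \geq 2$ (using the standing $W^{s,1}_{loc}$ hypothesis to reduce $\partial^s_\eta$ to iterated first-order differences) are handled separately.

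Combined with $\tilde g_f(-\eta) = \tilde g_f(\eta)$ and the $s$-homogeneity, this subadditivity forces $S := \{\eta \in \R^N : \tilde g_f(\eta) < \infty\}$ to be a linear subspace of $\R^N$, and I would argue by dichotomy. Either $S$ is a proper subspace, in which case $S \cap \mathbb{S}^{N-1}$ is $\mathscr{H}^{N-1}$-null, so $g_f \equiv +\infty$ a.e.\ and both $|f|_{W^{s,p}}$ and $\mathscr{E}_{s,p}(f)$ are infinite (the equivalence then holds trivially). Or $S = \R^N$, in which case the measurable monotone level sets $A_M := \{\tilde g_f \leq M\}$ exhaust $\R^N$, so some $A_{M_0}$ has positive Lebesgue measure; a Steinhaus-type argument combined with the quasi-triangle inequality shows that $\tilde g_f$ is bounded on $A_{M_0} - A_{M_0}$, which contains a neighbourhood of the origin, and positive $s$-homogeneity then bounds $g_f$ uniformly on $\mathbb{S}^{N-1}$. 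Hence $g_f \in L^\infty(\mathbb{S}^{N-1}) \subset L^p(\mathbb{S}^{N-1})$ and $|f|_{W^{s,p}} < \infty$. I expect the quasi-triangle inequality to be the main technical hurdle, especially at the endpoint $p = 1$ with $s \geq 2$, which also explains why the stronger local regularity hypothesis is required in part~(2).
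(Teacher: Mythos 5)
Your easy direction (power-mean/Jensen comparison on the sphere, plus the finite-dimensional norm equivalence in the integer case) is fine, and your overall architecture for the converse (evenness, $s$-homogeneity and subadditivity force the finiteness set $S$ to be a subspace; then either $S$ has null trace on $\mathbb{S}^{N-1}$ or a Steinhaus argument bounds $g_f$) would indeed prove the hard implication. It does work for $0<s\le 1$, where $\Delta_{h_1+h_2}f(x)=\Delta_{h_1}f(x)+\Delta_{h_2}f(x+h_1)$, resp. $\partial_{\eta_1+\eta_2}f=\partial_{\eta_1}f+\partial_{\eta_2}f$, gives the triangle inequality with $C=1$. But for $s>1$, which is the only new case of the theorem, the quasi-triangle inequality you rely on is false at the level of generality required, and both tools you invoke to prove it break down. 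Take $f(x)=x_1x_2$ (smooth, hence in $W^{s,1}_{loc}$) and $1<s\le 2$: then $\Delta^{2}_{te_1}f=\Delta^{2}_{te_2}f=0$ and $\partial^2_{e_1}f=\partial^2_{e_2}f=0$, so $\tilde g_f(e_1)=\tilde g_f(e_2)=0$, while $\Delta^2_{t(e_1+e_2)}f\equiv 2t^2$ and $\partial^2_{e_1+e_2}f\equiv 2$, so $\tilde g_f(e_1+e_2)=\infty$; here $S=\{\eta;\ \eta_1\eta_2=0\}$ is not a subspace (monomials like $x_1^{\lfloor s\rfloor}x_2$ do the same for every $s>1$). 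This pinpoints the two failures: (i) the ``Marchaud-type equivalence across all orders $k>s$'' is not unconditional for merely locally integrable $f$ -- in the example the order-$3$ directional energy along $e_1+e_2$ vanishes while the order-$2$ energy is infinite, because the restrictions of $f$ to lines are polynomials, which are not in $L^p(\R)$, and the homogeneous Marchaud inequality only holds modulo exactly such polynomials; (ii) in the integer case, a pointwise symbol bound does not yield an $L^p$ multiplier bound, and, worse, the target inequality $\vertii{\partial_1\partial_2 f}_{L^p}\lesssim \vertii{\partial_1^2 f}_{L^p}+\vertii{\partial_2^2 f}_{L^p}$, true for Schwartz functions by Calder\'on--Zygmund theory, is simply false in $W^{2,1}_{loc}$ (same example: the right-hand side is $0$), so no density argument can rescue it.

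The moral, and the way the paper proceeds, is that for $s>1$ one cannot argue two directions at a time: one must exploit from the start that the set of good directions has positive $\mathscr{H}^{N-1}$-measure, precisely in order to kill polynomial-type behaviour. For non-integer $s$ the paper sends $N$ good directions to the coordinate axes by a linear change of variables, uses slicing (Theorems \ref{theo besov} and \ref{higher slicing}) together with Dorronsoro's theorem (Lemma \ref{oscar res}) to obtain $g-P\in\dot W^{s,p}$ for some polynomial $P$, and then Lemma \ref{Zero Poly} (a positive measure of good directions forces $\deg P\le \lfloor s\rfloor$) to discard $P$; for integer $s$ it uses the pointwise norm equivalence for $s$-linear forms over a positive-measure set of directions (Lemmas \ref{form norm} and \ref{int norm}), which needs no multiplier theory and also covers $p=1$. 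To salvage your Steinhaus idea you would need a subadditivity statement valid only after this polynomial obstruction has been removed, i.e.\ essentially these same steps.
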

We also prove the following Gagliardo-Nirenberg affine inequalities.
\begin{theo}\label{aff gag}
Let $0\leq s_1<s_2<\infty$, $1 <p_1,p_2 < \infty $, and $\theta \in (0,1)$. Set $s\coloneq \theta s_2 +(1-\theta)s_1$ and $1/p \coloneq \theta/p_2 +(1-\theta)/p_1$.  There exists $K\coloneq K_{s_1,p_1,s_2,p_2,\theta,N}<\infty$ such that
     \bes
      \mathscr{E}_{s,p}(f) \leq K \mathscr{E}_{s_1,p_1}(f)^{1-\theta} \mathscr{E}_{s_2,p_2}(f)^{\theta}, \ \fo 
 f \in \dot{W}^{s_1,p_1}(\R^N) \cap \dot{W}^{s_2,p_2}(\R^N).
     \ees
    Same when $0<s_1<s_2\leq 1$ and $1 \leq p_1,p_2<\infty$, with $s_1p_1<1$ if $s_2=1$ and $p_2=1$.
\end{theo}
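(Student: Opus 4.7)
My plan is to adapt the optimizer strategy used for Theorems \ref{subcrit aff emb} and \ref{gen embedding}, with an additional pointwise directional Gagliardo-Nirenberg inequality obtained via slicing. First, I would check that $\mathscr{E}_{s,p}$ is $\textrm{SL}_N$-invariant: for $T\in\textrm{SL}_N$, a change of variables $t\mapsto t|T\xi|$ in the radial integral gives $A^{s,p}_\xi(f\circ T) = |T\xi|^{sp}\,A^{s,p}_{T\xi/|T\xi|}(f)$, and the factor $|T\xi|^{-N}$ appearing after raising to the $-N/(sp)$-th power is precisely the Jacobian of the map $\xi\mapsto T\xi/|T\xi|$ on $\mathbb{S}^{N-1}$. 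Consequently $\mathscr{E}_{s,p}(f\circ T)=\mathscr{E}_{s,p}(f)$, and similarly for $\mathscr{E}_{s_j,p_j}$.

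Next, I would establish the pointwise directional inequality
\begin{equation*}
A^{s,p}_\xi(f)^{1/p}\leq C_0\,A^{s_1,p_1}_\xi(f)^{(1-\theta)/p_1}A^{s_2,p_2}_\xi(f)^{\theta/p_2},\qquad \xi\in\mathbb{S}^{N-1}.
\end{equation*}
Writing $\R^N=\R\xi\oplus\xi^\perp$ and using Fubini, $A^{s,p}_\xi(f)=\int_{\xi^\perp}\tilde{A}^{s,p}(f_{\xi,y})\,d\mathscr{H}^{N-1}(y)$, where $\tilde A^{s,p}$ denotes the one-dimensional Besov (or integer Sobolev) seminorm to the $p$-th power and $f_{\xi,y}(\tau):=f(\tau\xi+y)$. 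The classical one-dimensional Gagliardo-Nirenberg inequality applies slice-by-slice, since the hypotheses force the correct 1D scaling $s-1/p=(1-\theta)(s_1-1/p_1)+\theta(s_2-1/p_2)$; then Hölder in $y\in\xi^\perp$ with the conjugate exponents $p_1/(p(1-\theta))$ and $p_2/(p\theta)$ (conjugate by $1/p=(1-\theta)/p_1+\theta/p_2$, both $\geq 1$) yields the bound.

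Finally, I would use Theorem \ref{there is min intro} to select $T_f\in\textrm{SL}_N$ minimizing $|f\circ T|_{W^{s,p}}$ and set $g:=f\circ T_f$. By Theorem \ref{constraint on min intro}, the values $A^{s,p}_\xi(g)$ are uniformly comparable (in $\xi$) to $|g|_{W^{s,p}}^p$, and together with the power-mean inequality $\mathscr{E}_{s,p}\leq|\cdot|_{W^{s,p}}$, this gives $\mathscr{E}_{s,p}(f)\sim |g|_{W^{s,p}}$ with constants depending only on $s,p,N$. Applying the directional inequality to $g$ and using the uniformity then produces, pointwise in $\xi$, $\mathscr{E}_{s,p}(f)\lesssim A^{s_1,p_1}_\xi(g)^{(1-\theta)/p_1}A^{s_2,p_2}_\xi(g)^{\theta/p_2}$. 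Raising to the $-N/s$ power (so that the exponents on $A^{s_j,p_j}_\xi(g)$ become $-\gamma_jN/(s_jp_j)$ with $\gamma_1:=(1-\theta)s_1/s$, $\gamma_2:=\theta s_2/s$ summing to $1$ by $s=(1-\theta)s_1+\theta s_2$), integrating on $\mathbb{S}^{N-1}$, applying Hölder's inequality on the sphere with conjugate exponents $1/\gamma_1$ and $1/\gamma_2$, and identifying the resulting sphere integrals with $\mathscr{E}_{s_j,p_j}(f)^{-N/s_j}$ via $\textrm{SL}_N$-invariance delivers $\mathscr{E}_{s,p}(f)\lesssim\mathscr{E}_{s_1,p_1}(f)^{1-\theta}\mathscr{E}_{s_2,p_2}(f)^{\theta}$.

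The main obstacle I anticipate is the correct direction of Hölder in the last step: a naive forward Hölder gives an upper bound on the sphere integral, whereas we need a lower bound to close the argument. The resolution must exploit the rigidity from Theorem \ref{constraint on min intro} at the $(s,p)$-minimizer, which makes the pointwise directional inequality essentially sharp (up to universal constants) and lets the negative-mean structure of $\mathscr{E}_{s,p}$ interact with Hölder in the required direction.
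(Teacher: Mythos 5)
Your first two steps are sound: the $\text{SL}_N$-invariance is Proposition \ref{affine invariance}, and the directional Gagliardo--Nirenberg inequality $A^{s,p}_\xi(f)^{1/p}\lesssim A^{s_1,p_1}_\xi(f)^{(1-\theta)/p_1}A^{s_2,p_2}_\xi(f)^{\theta/p_2}$ does follow from slicing along lines parallel to $\xi$, the one-dimensional inequality (Theorem \ref{gag}), and H\"older in the transverse variable; this is essentially the computation the paper performs (in coordinate directions). The gap is exactly the one you flag, and it is fatal as written: after raising to the power $-N/s$ and integrating in $\xi$, closing the argument requires $\int_{\mathbb{S}^{N-1}}B_1^{\gamma_1}B_2^{\gamma_2}\,d\mathscr{H}^{N-1}\gtrsim\big(\int B_1\big)^{\gamma_1}\big(\int B_2\big)^{\gamma_2}$, where $B_j(\xi)=A^{s_j,p_j}_\xi(g)^{-N/(s_jp_j)}$ and $\gamma_1=(1-\theta)s_1/s$, $\gamma_2=\theta s_2/s$; this is a reverse H\"older inequality, false in general. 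Moreover, the rigidity you invoke lives only at the intermediate index: choosing $T_f$ as a minimizer for $\verti{\cdot}_{W^{s,p}}$ makes $A^{s,p}_\xi(g)$ comparable to $\verti{g}_{W^{s,p}}^p$ uniformly in $\xi$, but it gives no uniformity at all for $A^{s_1,p_1}_\xi(g)$ or $A^{s_2,p_2}_\xi(g)$, so it cannot force $B_1$ or $B_2$ to be essentially constant and does not rescue that step.

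The paper closes the argument differently: it never integrates the directional inequality over the sphere, and it uses minimizers at the two endpoint indices rather than at $(s,p)$. By Corollary \ref{affine = classic}, $\mathscr{E}_{s_j,p_j}(f)\gtrsim\verti{f\circ T_j}_{W^{s_j,p_j}}$ for suitable $T_j\in\text{SL}_N$, while at the middle index only the cheap Jensen bound $\mathscr{E}_{s,p}\leq\verti{\cdot}_{W^{s,p}}$ (Lemma \ref{Jensen}) is needed; the theorem is thus reduced to $C\,\mathscr{E}_{s,p}(f)\leq\verti{f\circ T}_{W^{s_1,p_1}}^{1-\theta}\verti{f}_{W^{s_2,p_2}}^{\theta}$ for every $T\in\text{SL}_N$, i.e. \eqref{red gag}. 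After a polar/spectral decomposition one may take $T=\text{DO}$ with $\text{D}=\text{diag}(\lambda_1,\dots,\lambda_N)\in\text{SL}_N$, and the slicing/1D-GN/H\"older computation yields a lower bound by $\sum_i\verti{\lambda_i}^{s_1(1-\theta)p}\int_{\R^{N-1}}\verti{f(x_1,\dots,x_{i-1},\cdot,x_{i+1},\dots,x_N)}_{W^{s,p}(\R)}^p\,d\widehat{x_i}$; the weights are then absorbed by composing $f$ with the unimodular dilation $\widetilde{\text{D}}=\text{diag}(\verti{\lambda_1}^{s_1(1-\theta)/s},\dots,\verti{\lambda_N}^{s_1(1-\theta)/s})$, and Theorem \ref{theo besov} (or Theorem \ref{integer slicing}), Lemma \ref{Jensen}, and Proposition \ref{affine invariance} finish the proof. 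This extra $\text{SL}_N$ change of variables is precisely the mechanism your scheme is missing: it is what handles the fact that the optimal frames for $(s_1,p_1)$ and $(s_2,p_2)$ differ, and no sharpness of the directional inequality at the $(s,p)$-minimizer can substitute for it.
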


Finally, we present a partial generalization of the reverse affine inequality in \cite{haddad2021affine}*{Theorem 9}, Theorem \ref{gen rev affine ineq} below. The starting point is the following.
\begin{theo}\label{start reverse}
    Let $s>0$, $1 \leq p<\infty$, and $R>0$. There exists $K=K_{s,p,R,N}<\infty$ such that we have
    \begin{flalign*}
\vertii{f}_{L^p(\R^N)}^{1-1/N}\verti{f}_{W^{s,p}}^{1/N}\leq K \verti{f\circ T}_{W^{s,p}}, 
    \end{flalign*}
    for each $T\in \text{SL}_N$ and $f\in {W}^{s,p}(\R^N)$ supported in $B(0,R)$,
    possibly except when $s\geq 2$ is an integer and $p=1$.
\end{theo}
This result, combined with Theorems \ref{there is min intro} and \ref{constraint on min intro}, allows to obtain the following.
\begin{theo}\label{gen rev affine ineq}
    Let $s>0$, $1 \leq p<\infty,$ and $R>0$. There exists $K=K_{s,p,R,N}<\infty$ such that
    \bes
    \vertii{f}_{L^p(\R^N)}^{1-1/N}\verti{f}_{W^{s,p}}^{1/N}\leq K \mathscr{E}_{s,p}(f),
    \ees
    for each $f\in W^{s,p}(\R^N)$ supported in $B(0,R)$, possibly except when $s\geq 2$ is an integer and $p=1$.
\end{theo}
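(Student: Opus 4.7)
The strategy, as suggested just before the statement, is to apply Theorem \ref{start reverse} at a well-chosen unimodular representative of $f$, to bound the standard seminorm of that representative by $\mathscr{E}_{s,p}$ via Theorem \ref{constraint on min intro}, and finally to invoke the $\mathrm{SL}_N$-invariance of $\mathscr{E}_{s,p}$. In detail, for $f \in W^{s,p}(\R^N)$ supported in $B(0,R)$, I would first use Theorem \ref{there is min intro} to pick $T_f \in \mathrm{SL}_N$ with $|f\circ T_f|_{W^{s,p}} = \min\{|f\circ T|_{W^{s,p}};\, T\in \mathrm{SL}_N\}$. Applying Theorem \ref{start reverse} to $f$ with this specific $T = T_f$ then yields
\[
\|f\|_{L^p(\R^N)}^{1-1/N}|f|_{W^{s,p}}^{1/N} \leq C_{s,p,R,N}\,|f\circ T_f|_{W^{s,p}}.
\]

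Next, set $g := f\circ T_f$. Because $g$ minimizes the seminorm over its $\mathrm{SL}_N$-orbit, Theorem \ref{constraint on min intro} provides, for every $\xi \in \mathbb{S}^{N-1}$, a lower bound $C^1_{s,p,N}|g|_{W^{s,p}} \leq Q(\xi)$, where $Q(\xi) := \bigl(\int_0^{\infty}t^{-sp-1}\|\Delta^{\lfloor s\rfloor+1}_{t\xi}g\|_{L^p}^p\,dt\bigr)^{1/p}$ in the non-integer case and $Q(\xi) := \|\partial^s_\xi g\|_{L^p}$ in the integer case (the latter requiring $1<p<\infty$ or $s=p=1$). Raising to the power $-N/(sp)$ reverses the inequality, so integrating over $\mathbb{S}^{N-1}$, taking the $-s/N$-power, and using $(N+sp)/(Np)-s/N = 1/p$ to simplify the $\sigma_N$-factors hidden in the definition of $\mathscr{E}_{s,p}$, one gets
\[
|g|_{W^{s,p}} \leq \frac{1}{\sigma_N^{1/p}\,C^1_{s,p,N}}\,\mathscr{E}_{s,p}(g).
\]

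Finally, I would combine this with the standard $\mathrm{SL}_N$-invariance $\mathscr{E}_{s,p}(g) = \mathscr{E}_{s,p}(f)$. This invariance follows from three elementary changes of variable: the algebraic identity $\Delta_{t\xi}^{k}(f\circ T_f)(x) = (\Delta^k_{tT_f\xi}f)(T_f x)$ combined with $y = T_f x$ (preserving $L^p$ norms since $|\det T_f|=1$); then $u = t|T_f\xi|$ in the radial integral, producing a factor $|T_f\xi|^{sp}$; and finally the spherical substitution $\eta = T_f\xi/|T_f\xi|$, whose Jacobian $|T_f\xi|^{-N}$ exactly cancels the previous factor when combined with the exponent $-N/(sp)$. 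Chaining the three inequalities above then delivers the claim, with $K_{s,p,R,N} = C_{s,p,R,N}/(\sigma_N^{1/p}\,C^1_{s,p,N})$; the exclusion $s\geq 2$ integer and $p=1$ is inherited directly from the analogous exclusions in Theorems \ref{start reverse} and \ref{constraint on min intro}.

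Since the three ingredients are already established, the argument reduces to a chain of three inequalities plus constant-chasing; the only step meriting care is the affine invariance of $\mathscr{E}_{s,p}$, but the exponent $-N/(sp)$ in its definition is precisely the one calibrated for the spherical Jacobian to cancel, so this last point is a verification rather than a genuine obstacle.
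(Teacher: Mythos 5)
Your proposal is correct and follows exactly the route the paper intends: it combines Theorem \ref{there is min intro} (existence of the minimizing $T_f$), Theorem \ref{start reverse} applied at $T=T_f$, Theorem \ref{constraint on min intro} integrated over the sphere (as in Corollary \ref{affine = classic}, with the same exponent bookkeeping $(N+sp)/(Np)-s/N=1/p$), and the $\mathrm{SL}_N$-invariance of $\mathscr{E}_{s,p}$ from Proposition \ref{affine invariance}. The exclusions for $s\geq 2$ integer with $p=1$ are handled just as in the paper, so there is nothing to add.
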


In the case where $s=1$, Theorem \ref{gen rev affine ineq} reads as
   \be\label{reverse HJM}
    \vertii{f}_{L^p(\R^N)}^{1-1/N}\vertii{\nabla f}_{L^p(\R^N)}^{1/N}\leq K \mathscr{E}_{1,p}(f),
    \ee
    for each $f\in W^{1,p}(\R^N)$ supported in $B(0,R)$. This inequality is a weak version (i.e., with a non-explicit constant) of \cite{haddad2021affine}*{Theorem 9}. 
Our proof of Theorem \ref{gen rev affine ineq} is new even in the case where $s=1$. It relies on the basic AM-GM inequality, while the proof of \eqref{reverse HJM} given in  \cite{haddad2021affine}*{Theorem 9} makes strong use of the powerful Blaschke-Santal\'o inequality.

In connection with \eqref{reverse HJM},  Haddad, Jim\'enez, and Montenegro asked the following question \cite[Section 7, item (6), p. 33]{haddad2021affine}, motivated by some results on mixed variational problems in Schindler and Tintarev \cite{schindler2018compactness}: can the inequality \eqref{reverse HJM} be improved to
   \be\label{reverse HJM ext}
    \vertii{f}_{L^q(\R^N)}^{1-1/N}\vertii{\nabla f }_{L^p(\R^N)}^{1/N}\leq K \mathscr{E}_{1,p}(f)
    \ee
    for some  $q > p$ ?   We show that \eqref{reverse HJM ext} fails for any $q > p$. More generally, we present the full list of the analogues of \eqref{reverse HJM} that hold true.
\begin{theo}\label{cara rev}
Let $1 \leq p,q<\infty$, $R>0$ and $0 \leq \theta \leq 1$. 
\begin{enumerate}[(1)]
    \item  In the case where $q \leq p$, the inequality 
    \be\label{ineq 1}
    \vertii{f}_{L^q(\R^N)}^{1-\theta} \vertii{\nabla f}_{L^p(\R^N)}^{\theta} \leq K \mathscr{E}_{1,p}(f), \ \text{for each} \  f \in W^{1,p} \ \text{supported in} \ B(0,R),
    \ee
    holds for some finite $K=K_{p,q,\theta,R,N}$ if and only if $\displaystyle \theta \leq 1/N$. 
    \item In the case where $q\geq p$, the inequality
    \be \label{ineq 2}
      \vertii{f}_{L^q(\R^N)}^{1-\theta} \vertii{\nabla f}_{L^p(\R^N)}^{\theta} \leq K \mathscr{E}_{1,p}(f), \ \text{for each} \  f \in W^{1,p} \ \text{supported in} \ B(0,R),
    \ee
    holds for some finite $K=K_{p,q,\theta,R,N}$ if and only if $\displaystyle  0 \leq \theta \leq \frac{1/N+1/q-1/p}{1+1/q-1/p}$.
\end{enumerate}
\end{theo}

In particular, when $q>p$ and $\theta=1/N$, \eqref{ineq 2} does not hold. This answers negatively the question in \cite{haddad2021affine}.

\smallskip
Our text is organized as follows. In Section \ref{sect on SOb},  we  recall some standard properties of function spaces and prove Theorem \ref{there is min intro}. In Section \ref{affine en}, we study several properties of the functionals $\mathscr{E}_{s,p}$ and prove Theorem \ref{good fspace}. In Section \ref{appli}, we prove that Theorems \ref{there is min intro} and \ref{constraint on min intro} imply Theorems \ref{subcrit aff emb} and \ref{gen embedding}.
In Section \ref{pf s=1}, we illustrate our approach to Theorem \ref{constraint on min intro} in the special case where $s=1$. Section \ref{sect general fractional} is devoted to the proof of Theorem \ref{constraint on min intro} in the general case. 
Section \ref{cons} is a short discussion about the constants in Theorems \ref{subcrit aff emb}, \ref{gen embedding} and \ref{constraint on min intro} when $0<s<1$.
In Section \ref{gag sec}, we prove Theorem \ref{aff gag}.
Finally, in Section \ref{rev ineq}, we present our approach to reverse affine inequalities and prove Theorems \ref{start reverse}, \ref{gen rev affine ineq}, and \ref{cara rev}. 

\section{Sobolev semi-norms: slicing and compactness}\label{sect on SOb}
In what follows, we use the following notation.

\begin{enumerate}[(a)]
\item $N$ is the space dimension. We always assume that $N\geq 2$, unless otherwise stated. 
\item  $\verti{x}$ the Euclidean norm of $x \in \R^N$.
\item $\verti{A}$ is the Lebesgue measure of a Borel set $A \subset \R^N$.
\item $\sigma_N \coloneq \mathscr{H}^{N-1}(\mathbb{S}^{N-1})$ is the surface area of the unit sphere.
\item Given $x \in \R^N$ and $1 \leq i \leq N$, we denote $\widehat{x_i}\coloneq (x_1,\dots,x_{i-1},x_{i+1},\dots,x_N) \in \R^{N-1}$.
\item  The matrix norm is the one induced by $\verti{\ \cdot\ }$ on $\text{M}_N$.
\item Given a $k$-linear form $\eta \colon (\R^N)^k  \rightarrow \R$, we let
\begin{flalign*}
    \vertii{\eta}\coloneq \sup_{\verti{x_1}\leq 1, \dots, \verti{x_k} \leq 1}\verti{\eta(x_1,\dots,x_k)}
\end{flalign*}
This is the only norm we will consider on $k$-linear forms defined on $\R^N$.
\item Given a $k$-linear form $\eta$ and a matrix $T$, we denote by $T^{\ast}\eta$ the $k$-linear form
\be\label{pull b}
 (\R^N)^k\ni (\xi^1, \dots, \xi^k) \mapsto   T^{\ast}\eta (\xi^1, \dots, \xi^k)\coloneq  \eta(T(\xi_1), \dots, T(\xi_k)).
\ee
\item Given $f\colon \R^N \rightarrow \R$ a measurable function and $h \in \R^N$, 
we let  
\begin{flalign*}
  \R^N\ni   x \mapsto \Delta_h f(x)\coloneq f(x+h)-f(x).  
\end{flalign*}
Given $m\geq 1$ an integer, we define higher-order difference operators by $\Delta^{m+1}_h= \Delta_h \circ \Delta^{m}_h$, so that 
\begin{flalign}\label{useful identity}
    (\Delta^m_hf)(x) = \sum_{l=1}^m \binom{m}{l}(-1)^{m-l}f(x+lh),\ \fo x\in\R^N.
\end{flalign}      

\item Given $s$ an integer, a function $f$ in the Sobolev space $W^{s,1}_{\text{loc}}(\R^N)$, and $\xi \in \R^N$, 
we denote by $\partial_{\xi}^{s}f$ the function (defined for a.e.\ $x \in \R^N$)
\bes
\partial^s_{\xi} f(x) \coloneq D_x^s f(\xi, \dots, \xi) = \sum_{\verti{\alpha}=s} \xi^{\alpha} \partial^{\alpha}f(x).
\ees
\item  Let $s$ be non-integer. We denote by $\dot{W}^{s,p}=\dot{W}^{s,p}(\R^N)$ the space of functions 
$f \in L^1_{\text{loc}}(\R^N)$ such that 
$\verti{f}_{W^{s,p}}<\infty$, where
\be\label{def 1}
\ba 
\verti{f}_{W^{s,p}}^p &\coloneq \int_{\R^N} \frac{\vertii{\Delta_h^{\lfloor s \rfloor +1}f}_{L^p}^p}{\verti{h}^{sp+N}} \ dh \\ &= \int_{\mathbb{S}^{N-1}} \left(\int_{0}^{\infty}t^{-sp-1}\vertii{\Delta^{\lfloor s \rfloor +1}_{t\xi}f}_{L^p}^p  \ dt \right) \ \  d\mathscr{H}^{N-1}(\xi).
\ea
\ee

\item  Let $s$ be an integer. We denote by $\dot{W}^{s,p}=\dot{W}^{s,p}(\R^N)$ the space of functions $f \in W_{\text{loc}}^{s,1}(\R^N)$ such that $\verti{f}_{W^{s,p}}<\infty$, where
\be \label{def 2}
\verti{f}_{W^{s,p}}^p \coloneq \int_{\R^N}\vertii{D_x^s f}^p \ dx.
\ee
In particular,
\bes
\verti{f}_{W^{s,p}}=\vertii{\nabla f}_{L^p}.
\ees
\item We set, for convenience, $\mathscr{E}_{0,p}(f)\coloneq \vertii{f}_{L^p}$ and $\verti{f}_{W^{0,p}}\coloneq \vertii{f}_{L^p}$, for each measurable $f$.
\item The semi-norms $\verti{\cdot}_{W^{s,p}}$ are invariant under orthogonal transformations:  for each $s>0$, $1 \leq p<\infty$, $f \in W^{s,p}$, and $R \in \text{O}_N$, we have
 \begin{flalign*}
     \verti{f\circ R}_{W^{s,p}}=\verti{f}_{W^{s,p}}.
 \end{flalign*}
\item If $s>0$ and $1 \leq p<\infty$ are such that $sp<N$, we set $\displaystyle q\coloneq \frac{Np}{N-sp}$ and denote 
\begin{flalign*}
    \mathring{W}^{s,p}\coloneq \{f \in L^q;\, \verti{f}_{W^{s,p}}<\infty \}.
\end{flalign*}
\item In what follows, $\rho$ stands for a standard mollifier and we set $\displaystyle \rho_{\delta}(x)\coloneq1/\delta^N \rho(\cdot / \delta^N),$ for each $\delta>0$.
\end{enumerate}

We next recall or establish some basic estimates for Sobolev semi-norms.   The first one is obvious. See, e.g., Leoni \cite[Theorem 6.62]{leoni2023first} and \cite[Lemma 17.25]{leoni2024first} for the second and third ones.
\begin{lemma}\label{decre convol}
    For each $1 \leq p<\infty$, integer $m$ , $h \in \R^N$, and $f \in L^1_{\text{loc}}=L^1_{\text{loc}}(\R^N)$, we have 
\begin{flalign*}
&\Delta^m_h(f\ast \rho) = (\Delta^m_h f)\ast \rho,
 \\ &\vertii{\Delta^m_{h}(f\ast \rho)}_{L^p} \leq \vertii{\Delta^m_{h} f}_{L^p}.
\end{flalign*}
\end{lemma}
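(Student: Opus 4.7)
The plan is to prove the two assertions in sequence, the first being a purely algebraic identity and the second an immediate consequence of the first via Young's convolution inequality.

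For the identity $\Delta^m_h(f\ast \rho) = (\Delta^m_h f)\ast \rho$, I would first treat the case $m=1$. The point is that translation commutes with convolution: writing $\tau_h g(x)\coloneq g(x+h)$, a direct change of variable in the convolution integral gives $\tau_h(f\ast \rho)=(\tau_h f)\ast \rho$, hence $\Delta_h(f\ast \rho)=(\tau_h f-f)\ast \rho=(\Delta_hf)\ast \rho$. The only thing to check is that the integrals make sense, which follows from $f\in L^1_{\text{loc}}$ and $\rho$ being a standard (in particular compactly supported, smooth) mollifier, so that $f\ast\rho$ is well-defined and smooth. The general case $m\ge 2$ then follows by a one-line induction: assuming the identity at step $m$, apply $\Delta_h$ to both sides and use the case $m=1$ to get $\Delta^{m+1}_h(f\ast \rho)=\Delta_h((\Delta^m_hf)\ast \rho)=(\Delta_h\Delta^m_hf)\ast\rho=(\Delta^{m+1}_hf)\ast \rho$.

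For the inequality, I would combine the identity just proved with Young's convolution inequality. Since $\rho$ is a standard mollifier, $\rho\ge 0$ and $\vertii{\rho}_{L^1}=1$. Hence
\begin{equation*}
\vertii{\Delta^m_h(f\ast \rho)}_{L^p}=\vertii{(\Delta^m_hf)\ast \rho}_{L^p}\leq \vertii{\Delta^m_hf}_{L^p}\vertii{\rho}_{L^1}=\vertii{\Delta^m_hf}_{L^p},
\end{equation*}
which concludes the proof. In the degenerate case where $\vertii{\Delta^m_hf}_{L^p}=\infty$ the inequality is trivial, so Young's inequality can be applied without concern about finiteness.

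There is no real obstacle: both statements are routine manipulations that rely only on translation invariance of Lebesgue measure and the $L^1$-normalization of $\rho$. If anything requires care, it is just the verification that $f\ast \rho$ is well-defined as an element of $L^1_{\text{loc}}$ (or $L^p$, once differences are taken), but this is automatic from the compact support of $\rho$.
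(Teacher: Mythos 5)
Your proof is correct: the commutation identity via $\tau_h(f\ast\rho)=(\tau_h f)\ast\rho$ plus induction, followed by Young's inequality with $\|\rho\|_{L^1}=1$, is exactly the standard argument the paper has in mind (it declares the identity obvious and cites Leoni, Theorem 6.62, for the inequality rather than writing it out). Nothing to add.
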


\begin{lemma}\label{ast conv}
    Let $0<s<1$ and $1 \leq p<\infty$. For each $f \in \dot{W}^{s,p}$, we have 
    \begin{flalign*}
        \verti{f\ast \rho_{\delta}-f}_{W^{s,p}}{\rightarrow} \ 0 \  \text{as}\ \delta\to 0. 
    \end{flalign*}
\end{lemma}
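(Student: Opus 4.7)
The plan is to reduce the convergence of Gagliardo seminorms to the classical $L^p$ convergence of mollifiers, applied inside the integral, via dominated convergence. Since $0<s<1$, we have $\lfloor s\rfloor+1=1$, so
\[
\verti{f\ast\rho_\delta-f}_{W^{s,p}}^p=\int_{\R^N}\frac{\vertii{\Delta_h(f\ast\rho_\delta-f)}_{L^p}^p}{\verti{h}^{sp+N}}\,dh.
\]
First I would use Lemma \ref{decre convol} to rewrite $\Delta_h(f\ast\rho_\delta-f)=(\Delta_hf)\ast\rho_\delta-\Delta_hf$, which moves the mollifier convolution onto the already-differenced function. This is the only place where the commutation of $\Delta_h$ with convolution is needed.

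Next I would establish pointwise (a.e. in $h$) convergence to zero of the integrand. Since $\verti{f}_{W^{s,p}}<\infty$, Fubini implies $\vertii{\Delta_hf}_{L^p}<\infty$ for a.e.\ $h\in\R^N$. For each such $h$, $\Delta_hf\in L^p(\R^N)$, and the standard mollifier approximation theorem in $L^p$ (for $1\le p<\infty$) gives $(\Delta_hf)\ast\rho_\delta\to\Delta_hf$ in $L^p$ as $\delta\to 0$. Consequently
\[
\frac{\vertii{(\Delta_hf)\ast\rho_\delta-\Delta_hf}_{L^p}^p}{\verti{h}^{sp+N}}\longrightarrow 0\quad\text{for a.e. }h\in\R^N.
\]

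For the dominating function, I would use Young's convolution inequality together with the fact that $\rho_\delta$ is a standard mollifier (in particular $\vertii{\rho_\delta}_{L^1}=1$), giving
\[
\vertii{(\Delta_hf)\ast\rho_\delta-\Delta_hf}_{L^p}\le \vertii{(\Delta_hf)\ast\rho_\delta}_{L^p}+\vertii{\Delta_hf}_{L^p}\le 2\vertii{\Delta_hf}_{L^p},
\]
so the integrand is bounded by $2^p\vertii{\Delta_hf}_{L^p}^p/\verti{h}^{sp+N}$, whose integral over $\R^N$ equals $2^p\verti{f}_{W^{s,p}}^p<\infty$ by assumption. The Lebesgue dominated convergence theorem then yields $\verti{f\ast\rho_\delta-f}_{W^{s,p}}\to 0$ as $\delta\to 0$.

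There is no real obstacle here; the argument is structurally a textbook density argument, and the only point requiring any care is that the pointwise convergence holds only for a.e.\ $h$ (those for which $\Delta_hf\in L^p$), which is perfectly sufficient for dominated convergence. In particular, the proof does not require $f\in L^p$, only $f\in L^1_{loc}$ with finite Gagliardo seminorm, i.e.\ the conditions built into the definition of $\dot{W}^{s,p}$.
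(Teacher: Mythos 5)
Your proof is correct and complete. Note that the paper does not actually prove Lemma \ref{ast conv}: it is one of the \enquote{recalled} facts, with a pointer to Leoni \cite[Theorem 6.62]{leoni2023first}, and your argument is precisely the standard dominated-convergence proof one would find there --- commute $\Delta_h$ with the mollification (Lemma \ref{decre convol}), use that $\Delta_h f\in L^p$ for a.e.\ $h$ (finiteness of the seminorm already gives this; no Fubini is really needed) to get $\vertii{(\Delta_h f)\ast\rho_\delta-\Delta_h f}_{L^p}\to 0$ pointwise in $h$, and dominate the integrand by $2^p\vertii{\Delta_h f}_{L^p}^p/\verti{h}^{sp+N}$. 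The only cosmetic point is that dominated convergence is applied along arbitrary sequences $\delta_n\to 0$, which is the standard way to handle the continuous parameter; your conclusion and the hypotheses you use (only $f\in L^1_{loc}$ with finite Gagliardo seminorm) match the setting of the lemma exactly.
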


\begin{lemma}\label{delta fo}
    Let $\chi_1\coloneq \mathbbm{1}_{[0,1]}$ and for $m\geq 2$, set \bes \chi_m\coloneq\chi_1 \ast  \dots \ast \chi_1 \ \text{($m$ times)}.\ees
    For each $\varphi \in C^{\infty}(\R^N)$, integer $m$, and $h \in \R^N$, we have
    \begin{flalign}\label{explicit diff}
        (\Delta_{h}^m \varphi)(x)=\int_{0}^m \chi_m(t) D^m_ {x+t h}\varphi(h,\dots,h) \ dt
    \ , \ \  \fo x \in \R^N.
    \end{flalign}
\end{lemma}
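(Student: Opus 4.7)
The plan is to proceed by induction on $m\geq 1$, writing $\Delta_h^{m+1}=\Delta_h\circ\Delta_h^m$ at each step and reducing the resulting integration kernel to the convolution $\chi_1*\chi_m$.

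For the base case $m=1$, $\chi_1=\mathbbm{1}_{[0,1]}$, so the right-hand side of \eqref{explicit diff} reduces to $\int_0^1 D_{x+th}\varphi(h)\,dt$, which equals $\varphi(x+h)-\varphi(x)=\Delta_h\varphi(x)$ by the fundamental theorem of calculus applied to $t\mapsto\varphi(x+th)$.

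For the inductive step, suppose the identity holds at order $m$. Since differentiation commutes with $\Delta_h$ on $C^\infty(\R^N)$, the function $\Delta_h^m\varphi$ is smooth, and applying the base case to it gives
\[ \Delta_h^{m+1}\varphi(x)=\int_0^1 D_{x+sh}(\Delta_h^m\varphi)(h)\,ds = \int_0^1 \Delta_h^m F(x+sh)\,ds, \]
where $F(y):=D_y\varphi(h)$ is smooth. Applying the inductive hypothesis to $F$ and using the elementary identity $D^m F(y)(h,\dots,h)=D^{m+1}\varphi(y)(h,\dots,h)$ (derived by iterating $DF(\cdot)(v)=D^2\varphi(\cdot)(v,h)$), we obtain
\[ \Delta_h^{m+1}\varphi(x)=\int_0^1\!\!\int_0^m \chi_m(t)\,D^{m+1}_{x+(s+t)h}\varphi(h,\dots,h)\,dt\,ds. \]
Fubini and the change of variable $u=s+t$ at fixed $s$ then rewrite this as
\[ \int_0^{m+1}\!\left(\int_\R \chi_1(s)\chi_m(u-s)\,ds\right)D^{m+1}_{x+uh}\varphi(h,\dots,h)\,du, \]
whose kernel is precisely $(\chi_1*\chi_m)(u)=\chi_{m+1}(u)$, closing the induction.

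The main obstacle is purely bookkeeping: after the change of variable $u=s+t$, one must carefully intersect the domains $s\in[0,1]$ and $t\in[0,m]$ in order to recognize the remaining inner integral as the convolution $\chi_1*\chi_m=\chi_{m+1}$. Everything else reduces to the chain rule, the fundamental theorem of calculus, and the commutation of $D$ with $\Delta_h$, so no regularity issues arise in the smooth setting of this lemma.
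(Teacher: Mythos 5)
Your induction is correct: the base case is the fundamental theorem of calculus, the inductive step correctly uses the commutation of $\Delta_h^m$ with directional differentiation, the identity $D^m F(y)(h,\dots,h)=D^{m+1}\varphi(y)(h,\dots,h)$ for $F=D\varphi(\cdot)(h)$, and the substitution $u=s+t$ producing the kernel $\chi_1\ast\chi_m=\chi_{m+1}$, with Fubini harmless on the compact parameter domain. The paper itself gives no proof of this lemma, only the citation to \cite[Lemma 17.25]{leoni2024first}, and your argument is precisely the standard inductive (B-spline kernel) proof found there, so nothing further is needed.
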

We next recall a few slicing inequalities involving  semi-norms. For $s=1$, we have the obvious inequalities, for each measurable $f$ and each orthonormal basis $(u_1,\dots,u_N)$ of $\R^N$,
\begin{flalign}\label{slice s=1}
    \frac{1}{N} \sum_{i=1}^N \vertii{\partial_{u_i} f}_{L^p} \leq \vertii{\nabla f}_{L^p} \leq  \sum_{i=1}^N \vertii{\partial_{u_i} f}_{L^p},
\end{flalign}
the quantities above being infinite if $f \notin \dot{W}^{1,p}$.
For other values of $s$, we mention the following counterparts of \eqref{slice s=1}, for which we refer the reader to, e.g., Triebel \cite{triebel2010theory}.
\begin{theo}\label{theo besov} (\cite{triebel2010theory}*{Theorem, Section 2.5.13})
Let $s$ be non-integer and $1 \leq p < \infty$.
There exist $0<K^1_{s,p,N}\leq K^2_{s,p,N} <\infty$ such that, for each $f \in \dot{W}^{s,p}$ and each orthonormal basis $(u_1,\dots,u_N)$ of $\R^N$, we have
\be\label{Besov}
\ba
   K^1_{s,p,N}&\sum_{i=1}^N \left(\int_{0}^{\infty}t^{-sp-1} \vertii{\Delta^{\lfloor s \rfloor +1}_{t u_i} f}_{L^p}^p \ dt \right)^{1/p} \leq \verti{f}_{\dot{W}^{s,p}} \\ & \hspace{70 pt} \leq
    K^{2}_{s,p,N}\sum_{i=1}^N \left(\int_{0}^{\infty}t^{-sp-1} \vertii{\Delta^{\lfloor s \rfloor +1}_{t u_i}f}_{L^p}^p \ dt \right)^{1/p}.
\ea    
\ee
In particular, for each $\xi \in \mathbb{S}^{N-1}$, we have
\be
K^1_{s,p,N} \pari{\int_{0}^{\infty}t^{-sp-1} \vertii{\Delta_{t\xi}^{\lfloor s \rfloor +1} f}_{L^p}^p \ dt}^{1/p} \leq \verti{f}_{W^{s,p}}.
\ee
Moreover, 
when $0<s<1$, the above inequalities hold for each measurable $f$.
\end{theo}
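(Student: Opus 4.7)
The plan is to organize the proof around the ``single direction'' estimate stated at the end of the theorem, namely that for every $\xi\in\mathbb{S}^{N-1}$,
\bes
\int_{0}^{\infty} t^{-sp-1}\vertii{\Delta_{t\xi}^{\lfloor s \rfloor +1}f}_{L^p}^p\,dt \le C_{s,p,N}\verti{f}_{W^{s,p}}^p.
\ees
Summing this estimate over $u_1,\dots,u_N$ immediately yields the left inequality in \eqref{Besov}, so the two main tasks are this single-direction bound and the complementary right inequality.

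For the single-direction bound, I would exploit the identity $T_{t\xi}-I = T_{t\xi'}\Delta_{t(\xi-\xi')} + \Delta_{t\xi'}$ to compare different directions. Raising to the $m$-th power with $m=\lfloor s\rfloor+1$ and using the non-commutative binomial expansion (translations commute with differences and act isometrically on $L^p$) gives
\bes
\vertii{\Delta_{t\xi}^m f}_{L^p}^p \lesssim_{m,p} \sum_{k=0}^m \vertii{\Delta_{t(\xi-\xi')}^{m-k}\Delta_{t\xi'}^k f}_{L^p}^p.
\ees
After multiplying by $t^{-sp-1}$, integrating in $t$, rescaling each piece via $s=t|\xi-\xi'|$ or $s=t|\xi'|$ (homogeneity of the difference in the step), and averaging the resulting inequality over $\xi'\in\mathbb{S}^{N-1}$, the right-hand side is controlled by $\verti{f}_{W^{s,p}}^p$ uniformly in $\xi$.

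For the right inequality, I would decompose $\xi=\sum_i \xi_i u_i$ and use the telescoping identity
\bes
T_{t\xi}-I = \sum_{j=1}^N T_{t\sum_{i<j}\xi_i u_i}\,\Delta_{t\xi_j u_j}.
\ees
Raising to the $m$-th power and expanding, each resulting operator composition reduces, after discarding the $L^p$-isometric translations and using that coordinate differences commute, to a product $\Delta_{t\xi_1 u_1}^{\alpha_1}\cdots\Delta_{t\xi_N u_N}^{\alpha_N}$ with $\alpha_1+\cdots+\alpha_N=m$, whose $L^p$-norm is bounded by $2^{m-\alpha_k}\vertii{\Delta_{t\xi_k u_k}^{\alpha_k}f}_{L^p}$ for any coordinate $k$ with $\alpha_k>0$. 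The change of variables $s=t\xi_k$ then turns each piece into $|\xi_k|^{sp}\int_0^\infty s^{-sp-1}\vertii{\Delta_{su_k}^{\alpha_k}f}_{L^p}^p\,ds$, and integrating $\xi$ over $\mathbb{S}^{N-1}$ collects the desired $\xi_i$-moments.

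The main obstacle in both parts is that the mixed or lower-order integrals $\int_{0}^{\infty} t^{-sp-1}\vertii{\Delta_{tu_k}^{m'}f}_{L^p}^p\,dt$ with $m'\le s$ are \emph{a priori} infinite near $t=0$ for smooth $f$, so they cannot be compared directly with integrals of the target order $m=\lfloor s\rfloor+1$. A Marchaud-type inequality trading different orders of finite differences against each other is precisely what bridges this gap, and it is at this point that Triebel's proof leans on the Littlewood--Paley decomposition. When $0<s<1$ and $m=1$, however, no mixed or lower-order terms arise in either expansion above, and both steps simplify considerably; since this is the only regime used explicitly in later sections of the paper, the argument one actually needs to carry out in full detail is substantially lighter than the general case.
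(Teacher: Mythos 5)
The paper does not actually prove this statement: it is imported verbatim from Triebel (\cite{triebel2010theory}, Theorem, Section 2.5.13), with Remark \ref{co s 1} citing Leoni for the uniformity of the constants when $0<s<1$. So the only question is whether your blind argument would stand on its own, and for the range the paper really needs it does not. For $m=\lfloor s\rfloor+1\geq 2$ both of your expansions produce mixed terms --- $\Delta^{m-k}_{t(\xi-\xi')}\Delta^{k}_{t\xi'}f$ in the first part, $\Delta^{\alpha_1}_{t\xi_1u_1}\cdots\Delta^{\alpha_N}_{t\xi_Nu_N}f$ with all $\alpha_k<m$ in the second --- and your proposed reduction (drop all but one factor, paying a factor $2^{m-\alpha_k}$) leaves a difference of order at most $\lfloor s\rfloor<s$, whose integral against $t^{-sp-1}\,dt$ diverges at $t=0$ for every nonconstant smooth $f$. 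Keeping both factors instead requires comparing mixed differences with pure $m$-th order directional differences in the homogeneous setting (no $\|f\|_{L^p}$ available), which is a Marchaud/Besov-type estimate of essentially the same depth as the theorem itself. You acknowledge this and defer it to ``a Marchaud-type inequality'' and Triebel's Littlewood--Paley machinery, but that deferred step is the entire content of the statement for $s>1$; as written, the proposal proves nothing beyond $0<s<1$.

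Your closing justification --- that $0<s<1$ is the only regime used later --- is also incorrect: the theorem is invoked for arbitrary non-integer $s$ in the proofs of Theorems \ref{there is min intro} and \ref{constraint on min intro} (see \eqref{upper b} and \eqref{by def of C} in Section \ref{sect general fractional}), in the case $s>1$ of Theorem \ref{good fspace} (via \eqref{P control}), in Theorem \ref{aff gag} (via \eqref{frac sli}), and in Theorem \ref{start reverse} (via \eqref{1frac}); higher-order $s$ is precisely the point of the paper. On the positive side, the part of your argument you do carry out for $0<s<1$ is sound and is essentially the standard proof (cf.\ \cite[Theorem 6.35]{leoni2023first}): the coordinate telescoping $T_{t\xi}-I=\sum_j T_{t\sum_{i<j}\xi_iu_i}\Delta_{t\xi_ju_j}$ gives the upper bound after the substitution $u=t|\xi_j|$, and the chord-direction averaging over $\xi'$ gives the single-direction lower bound, since the pushforward of the surface measure under $\xi'\mapsto(\xi-\xi')/|\xi-\xi'|$ has bounded density on a hemisphere and $\|\Delta_{-h}f\|_{L^p}=\|\Delta_hf\|_{L^p}$.
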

\begin{rema}\label{co s 1}
When $0<s<1$ and $1 \leq p<\infty$, one may choose, in \eqref{Besov}, constants independent of $s$ and $p$: $K^1_{s,p,N}=K^1_N >0$ and $K^2_{s,p,N}=K^2_N<\infty$. Although this fact is not explicitly stated in Leoni \cite{leoni2023first},  it follows from the proof of \cite[Theorem 6.35]{leoni2023first}.
\end{rema}
\begin{theo}\label{integer slicing}
    Let $s$ be an integer and  $1<p<\infty$.  There exist $0<K^1_{s,p,N}\leq K^2_{s,p,N}<\infty$ such that, for each $f \in \dot{W}^{s,p}$ and each orthonormal basis $(u_1,\dots,u_N)$ of $\R^N$, we have
    \be\label{ttt1} \begin{split}
K^1_{s,p,N}\sum_{i=1}^{N} \vertii{\partial^s_{u_i} f}_{L^p} \leq \verti{f}_{W^{s,p}}
    \leq K^2_{s,p,N} \sum_{i=1}^{N}\vertii{\partial^s_{u_i}f}_{L^p}. 
    \end{split}
    \ee
\end{theo}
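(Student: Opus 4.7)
My plan is to exploit the orthogonal invariance of $|\cdot|_{W^{s,p}}$ recorded in item (l) of the notation to reduce to the canonical basis, after which the problem becomes a Fourier multiplier calculation. Concretely, if $R \in \text{O}_N$ sends $e_i$ to $u_i$ for each $i$, the chain rule gives $\partial_i^s(f \circ R)(x) = \partial_{u_i}^s f(Rx)$, so $\|\partial_i^s(f \circ R)\|_{L^p} = \|\partial_{u_i}^s f\|_{L^p}$, and (l) yields $|f \circ R|_{W^{s,p}} = |f|_{W^{s,p}}$. Once this reduction is in place, the upper estimate in \eqref{ttt1} is automatic: the pointwise bound $|\partial_i^s f(x)| = |D^s_x f(e_i, \dots, e_i)| \leq \|D^s_x f\|$ (since $|e_i|=1$), followed by integration and summation in $i$, gives $\sum_i \|\partial_i^s f\|_{L^p} \leq N\,|f|_{W^{s,p}}$.

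The substantial direction is the lower estimate. On the finite-dimensional space of symmetric $s$-linear forms on $\R^N$ all norms are comparable, so $\|D^s_x f\|$ is pointwise equivalent to $\sum_{|\alpha|=s}|\partial^\alpha f(x)|$ with constants depending only on $s$ and $N$; hence $|f|_{W^{s,p}} \lesssim \sum_{|\alpha|=s}\|\partial^\alpha f\|_{L^p}$, and it suffices to control each mixed derivative $\partial^\alpha f$, with $|\alpha|=s$, by $\sum_i \|\partial_i^s f\|_{L^p}$. On the Fourier side I would use the identity
\begin{equation*}
(i\xi)^\alpha = \sum_{i=1}^N m_{\alpha,i}(\xi)\,(i\xi_i)^s, \qquad m_{\alpha,i}(\xi):= \frac{(i\xi)^\alpha\,\overline{(i\xi_i)^s}}{\sum_{j=1}^N \xi_j^{2s}},
\end{equation*}
valid on $\R^N \setminus \{0\}$ because $\sum_j \xi_j^{2s}\geq c_N|\xi|^{2s}$, which follows from $\max_j|\xi_j|\geq |\xi|/\sqrt{N}$ together with the fact that $s$ is an integer (so $\xi_j^{2s}$ is a polynomial). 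Each $m_{\alpha,i}$ is smooth and $0$-homogeneous on $\R^N \setminus \{0\}$, hence satisfies the H\"ormander--Mikhlin bounds; the operator $m_{\alpha,i}(D)$ is therefore bounded on $L^p$ for every $1 < p < \infty$. Interpreting $\partial^\alpha f = \sum_i m_{\alpha,i}(D)\,\partial_i^s f$ in the sense of tempered distributions and applying this $L^p$-boundedness gives $\|\partial^\alpha f\|_{L^p} \leq C \sum_i \|\partial_i^s f\|_{L^p}$, and summing over $\alpha$ completes the lower bound.

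The main obstacle, and the reason for the hypothesis $1 < p < \infty$, is that for $s\geq 2$ no pointwise inequality bounds a mixed derivative $\partial^\alpha f$ by the pure directional derivatives $\partial_i^s f$ at the same point; one genuinely needs the $L^p$-boundedness of Calder\'on--Zygmund-type operators, which fails at $p=1$ and $p=\infty$. The verification of the Mikhlin estimates $|\xi|^{|\beta|}|\partial^\beta m_{\alpha,i}(\xi)|\leq C_{\alpha,\beta,s,N}$ is routine from $0$-homogeneity and the non-vanishing of the denominator on $\mathbb{S}^{N-1}$.
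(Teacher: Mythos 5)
Your argument is correct, but it follows a genuinely different route from the paper. The paper obtains the non-trivial (right-hand) inequality in \eqref{ttt1} by quoting Triebel's \emph{inhomogeneous} slicing theorem (Theorem \ref{trie}), first for $f\in C_c^\infty(B(0,1))$, absorbing the lower-order terms of the 1D inhomogeneous norms via the Poincar\'e inequality of Lemma \ref{1d Poi}, then removing the support restriction by scaling, and finally passing to all of $\dot{W}^{s,p}$ by the density Lemma \ref{dens}. You instead give a direct harmonic-analysis proof: after the (correct) reduction to the canonical basis by orthogonal invariance, you write $(i\xi)^\alpha=\sum_i m_{\alpha,i}(\xi)(i\xi_i)^s$ with $m_{\alpha,i}$ smooth and $0$-homogeneous on $\R^N\setminus\{0\}$ (the denominator $\sum_j\xi_j^{2s}$ is indeed bounded below by $N^{-s}|\xi|^{2s}$ for integer $s$), and invoke the Mikhlin--H\"ormander theorem to control every mixed derivative $\partial^\alpha f$, $\verti{\alpha}=s$, by $\sum_i\vertii{\partial_i^s f}_{L^p}$; together with the equivalence of norms on the finite-dimensional space of symmetric $s$-linear forms this gives the second inequality in \eqref{ttt1}, while the first is the same trivial pointwise bound as in the paper. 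Your proof is more self-contained (no appeal to Triebel), and it makes transparent why $p>1$ is needed --- the multiplier theorem fails at $p=1$, consistently with Ornstein's counterexamples cited in the paper --- whereas the paper's route avoids Calder\'on--Zygmund machinery by outsourcing it and is structurally uniform with its treatment of the fractional case and the Sobolev embeddings in the appendices. One point you should tighten: for a general $f\in\dot{W}^{s,p}$ (only assumed in $W^{s,1}_{loc}$ with $s$-th derivatives in $L^p$), $f$ need not be a tempered distribution, so the identity $\partial^\alpha f=\sum_i m_{\alpha,i}(D)\partial_i^s f$ cannot be read off directly from $\widehat{f}$; the standard fix is to verify it (and the resulting inequality) for $f\in C_c^\infty$ and then pass to the limit using the density Lemma \ref{dens}, since convergence in $\verti{\cdot}_{W^{s,p}}$ controls all $s$-th order derivatives in $L^p$. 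With that remark added, the proof is complete.
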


 The first inequality in \eqref{ttt1} is obvious, and was stated only in order to highlight the analogy between the two theorems. The non-trivial assertion in \eqref{ttt1} is the second inequality. For its validity, when $s\geq 2$, 
 the assumption $p\neq 1$ is necessary. Indeed, Ornstein's  family of counterexamples \cite{ornstein1962non} shows that, when $p=1$ and $s\ge 2$ is an integer, the second inequality in \eqref{ttt1} fails. Theorem \ref{integer slicing} may be obtained as a consequence of its inhomogeneous counterpart \cite[Theorem, Section 2.5.6]{triebel2010theory}, see Appendix \ref{A}. 
\begin{theo}\label{higher slicing}(\cite[Theorem 6.61]{leoni2024first})
    Let $s$ be non-integer and $1\leq p<\infty$. There exists $A_{s,p,N}<\infty$ such that, for each measurable $f$ and each orthonormal basis $(u_1,\dots,u_N)$ of $\R^N$, we have
    \bes
     \ba
     \left(\int_{\R^N}\frac{\vertii{\Delta^{N(\lfloor s \rfloor +1)}_{h}f}_{L^p}^p}{\verti{h}^{sp+N}} \ dh \right)^{1/p} \leq A_{s,p,N} \sum_{i=1}^N\pari{\int_{0}^{\infty}t^{-sp-1}\vertii{\Delta^{\lfloor s \rfloor +1}_{tu_i}f}_{L^p}^p \ dt }^{1/p}.
     \ea
    \ees
    \end{theo}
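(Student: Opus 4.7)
The plan is to bound $\vertii{\Delta^{Nm}_h f}_{L^p}$ pointwise in $h$ by a sum of directional differences $\vertii{\Delta^m_{t_i u_i} f}_{L^p}$ along the basis axes (with $t_i \coloneq h\cdot u_i$ and $m \coloneq \lfloor s \rfloor + 1$), then integrate this bound after separating the variables $(t_1, \ldots, t_N)$. The pointwise inequality rests on a commutative-algebra expansion. Writing $T_{t_i u_i} = I + A_i$ with $A_i \coloneq \Delta_{t_i u_i}$, all the translation operators mutually commute, hence so do the $A_i$; since $h = \sum_i t_i u_i$, we have $T_h = \prod_i (I + A_i)$, so that
\[
\Delta_h = T_h - I = \sum_{\emptyset \neq S \subseteq \{1, \ldots, N\}} \prod_{i \in S} A_i.
\]
Raising this identity to the $Nm$-th power and expanding multinomially (legitimate because the $A_i$ commute) produces a finite sum, with non-negative coefficients depending only on $N$ and $m$, of operators of the form $\prod_i A_i^{k_i}$, where the multi-index satisfies $Nm \leq \sum_i k_i \leq N^2 m$.

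Since $\sum_i k_i \geq Nm$, the pigeonhole principle forces, in every such term, some index $i^*$ with $k_{i^*} \geq m$. The plan is to isolate $\Delta^m_{t_{i^*} u_{i^*}}$ in that product and absorb all the remaining $\sum_i k_i - m$ difference factors via the elementary bound $\vertii{\Delta^k_v g}_{L^p} \leq 2^k \vertii{g}_{L^p}$. Summing over the finitely many surviving multi-indices then yields the pointwise estimate
\[
\vertii{\Delta^{Nm}_h f}_{L^p} \leq C_{N,m} \sum_{i=1}^N \vertii{\Delta^m_{t_i u_i} f}_{L^p}.
\]

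To conclude, the plan is to raise this to the $p$-th power (using $(\sum a_i)^p \leq N^{p-1} \sum a_i^p$), divide by $\verti{h}^{sp+N}$, and integrate; the change of variables $h \leftrightarrow (t_1, \ldots, t_N)$ (with unit Jacobian since $(u_i)$ is orthonormal) combined with Fubini reduces the problem, for each $i$, to evaluating the transverse integral
\[
\int_{\R^{N-1}} \frac{dy}{(\verti{y}^2 + t_i^2)^{(sp+N)/2}}.
\]
The scaling substitution $y = \verti{t_i} z$ converts this into $C_{s,p,N}/\verti{t_i}^{sp+1}$, where $C_{s,p,N} = \int_{\R^{N-1}} (1+\verti{z}^2)^{-(sp+N)/2}\, dz < \infty$, the convergence relying on $sp > 0$ (decay at infinity) and $N \geq 2$ (integrability at the origin). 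The symmetry $\vertii{\Delta^m_{-v}g}_{L^p} = \vertii{\Delta^m_v g}_{L^p}$, which turns the resulting integral over $t_i \in \R$ into twice the one over $t_i > 0$, then delivers the announced bound.

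The main obstacle is the combinatorial bookkeeping in the expansion of $\bigl(\sum_{S \neq \emptyset} \prod_{i \in S} A_i\bigr)^{Nm}$: one must verify that the constraint $\sum_i k_i \geq Nm$ survives in every term, which is precisely what activates the pigeonhole argument. All other steps are essentially routine, and no restriction on the parameters beyond the standing hypotheses of the theorem is incurred.
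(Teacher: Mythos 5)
Your proof is correct, and it takes a genuinely different route from the paper, which does not prove this statement at all but simply quotes it from \cite[Theorem 6.61]{leoni2024first}. Your argument is self-contained and elementary: with $m=\lfloor s\rfloor+1$, $t_i=h\cdot u_i$ and the commuting operators $A_i=\Delta_{t_iu_i}$, the identity $\Delta_h=\prod_i(I+A_i)-I=\sum_{\emptyset\neq S}\prod_{i\in S}A_i$, the multinomial expansion of its $Nm$-th power, the pigeonhole selection of an index with $k_{i^*}\geq m$ (sound, since every factor of the expansion contributes at least one $A_i$, so $\sum_i k_i\geq Nm$, and $k_i\leq m-1$ for all $i$ would force $\sum_i k_i\leq N(m-1)<Nm$), and the absorption of the remaining factors via $\vertii{\Delta_v g}_{L^p}\leq 2\vertii{g}_{L^p}$ do yield the pointwise bound $\vertii{\Delta^{Nm}_h f}_{L^p}\leq C_{N,m}\sum_i\vertii{\Delta^m_{t_iu_i}f}_{L^p}$ for every measurable $f$. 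The integration step is also fine: the transverse integral equals $C_{s,p,N}\verti{t_i}^{-sp-1}$ with $C_{s,p,N}=\int_{\R^{N-1}}(1+\verti{z}^2)^{-(sp+N)/2}\,dz<\infty$ because $sp+N>N-1$ (your parenthetical about integrability at the origin is unnecessary; the integrand is bounded there), and the symmetry $\vertii{\Delta^m_{-v}g}_{L^p}=\vertii{\Delta^m_{v}g}_{L^p}$ handles $t_i<0$. Compared with invoking the cited result, your proof buys transparency and an explicit constant; it gives nothing less, since the combinatorial constant depends only on $N$ and $m$.

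One point to flag: what you actually prove is the scale-consistent inequality, with $\sum_{i=1}^N\int_0^\infty t^{-sp-1}\vertii{\Delta^m_{tu_i}f}_{L^p}^p\,dt$ on the right-hand side, i.e.\ without the exponent $1/p$. The statement as printed, with the $1/p$, cannot hold literally: replacing $f$ by $\lambda f$ scales the left side by $\lambda^p$ and the right side by $\lambda$, so no finite constant works for $p>1$. The printed $1/p$ is evidently a typo, your version is the correct one, and it is also all that the paper uses (in the proof of Theorem \ref{good fspace}, only the implication ``right side finite $\Rightarrow$ left side finite'' is needed), so your proof serves the intended purpose; it would be worth stating explicitly that you are proving the corrected form.
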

\begin{rema}\label{1d norm}
    Let $s>0$ and $1  \leq p<\infty$. 
\begin{enumerate}[(1)]
    \item If $s$ is non-integer, we have
     \bes
      2 \int_{0}^{\infty}t^{-sp-1} \vertii{\Delta^{\lfloor s \rfloor +1}_{te_i} f}_{L^p}^p \ dt = \int_{\R^{N-1}} \verti{f(x_1,\dots,x_{i-1},\cdot,x_{i+1},\dots,x_N)}_{W^{s,p}(\R)}^p  d \widehat{x_i},   
     \ees
     for each $1 \leq i \leq N$ and $f \in L^1_{\text{loc}}$.
    \item If $s$ is an integer, we have 
    \bes
     \vertii{\partial^s_i f}_{L^p}^p = \int_{\R^{N-1}} \verti{f(x_1,\dots,x_{i-1},\cdot,x_{i+1},\dots,x_N)}_{W^{s,p}(\R)}^p  d \widehat{x_i}, 
    \ees
    for each $1\leq i \leq N$ and $f \in W^{s,1}_{\text{loc}}$.
\end{enumerate}
\end{rema}

\smallskip
We now state useful change of variable formulas.
\begin{lemma}\label{chan}
For each integer $m$, measurable $f$ and linear map $U$, we have 
\be\label{delta comp}
\Delta^m_h\left(f \circ U \right)=\left(\Delta^m_{U(h)} f \right)\circ U.
\ee
\end{lemma}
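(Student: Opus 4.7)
The plan is to prove \eqref{delta comp} by a short induction on $m$, using only the linearity of $U$ and the definitional recursion $\Delta_h^{m+1}=\Delta_h\circ\Delta_h^m$.

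For the base case $m=1$, I would compute directly: for a.e.\ $x\in\R^N$,
\begin{equation*}
\Delta_h(f\circ U)(x) = f(U(x+h))-f(U(x)) = f(U(x)+U(h))-f(U(x)) = (\Delta_{U(h)}f)(U(x)),
\end{equation*}
where the middle equality is the linearity of $U$. This is exactly $((\Delta_{U(h)}f)\circ U)(x)$, so the $m=1$ case is done.

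For the inductive step, assume the formula holds for some $m\ge 1$ and every measurable $f$. Applying the case $m=1$ to the function $g\coloneq \Delta_{U(h)}^m f$, and then the induction hypothesis, I would chain
\begin{equation*}
\Delta_h^{m+1}(f\circ U) = \Delta_h\bigl(\Delta_h^m(f\circ U)\bigr) = \Delta_h\bigl((\Delta_{U(h)}^m f)\circ U\bigr) = \bigl(\Delta_{U(h)}(\Delta_{U(h)}^m f)\bigr)\circ U = (\Delta_{U(h)}^{m+1}f)\circ U,
\end{equation*}
closing the induction.

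There is no serious obstacle here: the lemma is essentially a consequence of the fact that translation in the domain of $f\circ U$ by $h$ corresponds to translation in the domain of $f$ by $U(h)$, and the higher-order differences inherit this intertwining by iteration. The only minor point to note is that no regularity of $f$ is required—the identity is a pointwise identity of measurable functions, so \eqref{useful identity} could alternatively be used to give a one-line non-inductive proof via
\begin{equation*}
\Delta_h^m(f\circ U)(x)=\sum_{l=0}^{m}\binom{m}{l}(-1)^{m-l}f(U(x)+lU(h)) = (\Delta_{U(h)}^m f)(U(x)).
\end{equation*}
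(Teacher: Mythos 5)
Your proposal is correct. The paper states Lemma \ref{chan} without proof, treating it as an immediate change-of-variables identity, and your induction (or the one-line computation from the binomial expansion) is exactly the trivial verification being left implicit, so there is nothing substantive to compare. One small remark: your closing display uses the expansion $\sum_{l=0}^{m}\binom{m}{l}(-1)^{m-l}f(x+lh)$, which is the correct formula, whereas \eqref{useful identity} as printed in the paper starts the sum at $l=1$ (an apparent typo omitting the $l=0$ term); if you cite that equation, keep your $l=0$ version.
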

In particular, we have the following.
\begin{lemma}\label{frac chan}
Let $m$ be an integer, $s$ be non-integer and $1 \leq p<\infty$. Let $(u_1,\dots,u_N)$ be an orthonormal basis of $\R^N$, $\text{O} \in \text{O}_N$ be defined by $\text{O}u_i=e_i$, and $\text{D}=\text{diag}(\lambda_1,\dots,\lambda_N)$ be invertible. 
We have
\bes
\int_{0}^{\infty}t^{-sp-1}\vertii{\Delta^{m}_{tu_i} (f\circ (\text{DO}))}^p_{L^p} \ dt= \frac{\verti{\lambda_i}^{sp}}{\verti{\text{det} \ \text{D}}}  \int_{0}^{\infty}t^{-sp-1}\vertii{\Delta^{m}_{te_i} f}^p_{L^p} \ dt, 
\ees
for each $1 \leq i \leq N$ and $f$ measurable.
\end{lemma}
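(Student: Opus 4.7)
The plan is to reduce the identity to two standard changes of variables: one in the spatial variable on which $L^p$ norms are computed, and a second one in the scalar parameter $t$.

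First, I would apply Lemma \ref{chan} to the function $f\circ (\text{DO})$ with the linear map $U=\text{DO}$. This gives
\[
\Delta^m_{tu_i}(f\circ (\text{DO})) = \bigl(\Delta^m_{\text{DO}(tu_i)}f\bigr)\circ (\text{DO}).
\]
Because $\text{O}u_i = e_i$ and $\text{D}e_i=\lambda_i e_i$, one has $\text{DO}(tu_i) = t\lambda_i e_i$, so the right-hand side becomes $(\Delta^m_{t\lambda_i e_i}f)\circ (\text{DO})$. Next, I would compute the $L^p$ norm via the linear change of variable $y=\text{DO}(x)$ in $\R^N$, whose Jacobian is $|\det(\text{DO})|=|\det \text{D}|$; this yields
\[
\vertii{\Delta^m_{tu_i}(f\circ (\text{DO}))}_{L^p}^p = \frac{1}{\verti{\det \text{D}}}\vertii{\Delta^m_{t\lambda_i e_i}f}_{L^p}^p.
\]

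The second step is the scaling in $t$. I would substitute $u=\lambda_i t$ (assuming, as is natural from the statement, that $\lambda_i>0$; the case of general sign reduces to this one by symmetry of $\Delta^m_h f$ in $h$, replacing $\lambda_i^{sp}$ by $|\lambda_i|^{sp}$). Then $dt = du/\lambda_i$ and $t^{-sp-1} = \lambda_i^{sp+1}u^{-sp-1}$, so
\[
\int t^{-sp-1}\vertii{\Delta^m_{t\lambda_i e_i}f}_{L^p}^p\,dt = \lambda_i^{sp}\int u^{-sp-1}\vertii{\Delta^m_{ue_i}f}_{L^p}^p\,du.
\]
Combining this with the factor $1/|\det \text{D}|$ from the spatial change of variable yields exactly the claimed identity.

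The verification is essentially mechanical, and no step looks genuinely difficult; the only point requiring minor care is the sign of $\lambda_i$, which is handled either by restricting to positive entries or by the symmetry $\vertii{\Delta^m_{-h}f}_{L^p}=\vertii{\Delta^m_{h}f}_{L^p}$ (up to translation, which leaves $L^p$ norms invariant). So the main obstacle, if any, is bookkeeping the Jacobian factor and the $t$-scaling together rather than any conceptual difficulty.
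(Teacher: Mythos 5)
Your proof is correct and follows exactly the route the paper intends: Lemma \ref{frac chan} is stated as an immediate consequence of Lemma \ref{chan}, combined with the spatial change of variables $y=\text{DO}(x)$ (Jacobian $\verti{\det \text{D}}$) and the scaling $u=\lambda_i t$, which is the same computation carried out in \eqref{calcul change of var}. Your remark on the sign of $\lambda_i$ (reducing to $\verti{\lambda_i}^{sp}$ via $\vertii{\Delta^m_{-h}f}_{L^p}=\vertii{\Delta^m_{h}f}_{L^p}$) is a correct and welcome point of care that the paper leaves implicit.
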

We next state some auxiliary results  that we will use in the proof of Theorem \ref{there is min intro}.
\begin{lemma}\label{no constant direction}
    Let $m$ be an integer and $1 \leq p,q < \infty$. For each $f$  in $L^q\setminus\{0\}$, 
    there exist $\delta=\delta_f>0$ and $C=C_f>0$ such that
    \begin{flalign*}
        \vertii{\Delta^m_{t\xi} f}_{L^p} \geq C t^m, \ \fo 0<t<\delta, \  \fo \xi \in \mathbb{S}^{N-1}.
    \end{flalign*}
    Same when $0<s<\infty$ and $f \in \dot{W}^{s,q}$  satisfies $\verti{f}_{W^{s,q}}> 0$.
\end{lemma}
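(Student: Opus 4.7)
The plan is to dualize the $L^p$ norm, expand the transposed difference operator on a smooth test function to leading order in $t$, and exploit compactness of $\mathbb{S}^{N-1}$. For any $g\in C_c^\infty(\R^N)$, Hölder's inequality combined with the identity $\int_{\R^N} \Delta^m_{t\xi}f\cdot g\,dx=\int_{\R^N} f\cdot \Delta^m_{-t\xi}g\,dy$ yields
\begin{flalign*}
\vertii{\Delta^m_{t\xi}f}_{L^p}\,\vertii{g}_{L^{p'}}\geq \left|\int_{\R^N} f(y)\,(\Delta^m_{-t\xi}g)(y)\,dy\right|.
\end{flalign*}
Applying Lemma~\ref{delta fo} to $g$ and exploiting $\int_0^m \chi_m=1$, together with the continuity of translation in $L^1_{loc}$ (available since $f\in L^1_{loc}$ in both statements) and the compact support of $\partial^m_\xi g$, I would obtain, uniformly in $\xi\in\mathbb{S}^{N-1}$,
\begin{flalign*}
\int_{\R^N} f(y)\,(\Delta^m_{-t\xi}g)(y)\,dy=(-t)^m\left(\int_{\R^N} f(y)\,\partial^m_\xi g(y)\,dy+o_{t\to 0^+}(1)\right).
\end{flalign*}

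The crux is then to produce, for each $\xi_0\in\mathbb{S}^{N-1}$, some $g_{\xi_0}\in C_c^\infty(\R^N)$ with $\int_{\R^N} f\,\partial^m_{\xi_0} g_{\xi_0}\neq 0$, i.e.\ $\partial^m_{\xi_0} f\not\equiv 0$ in $\mathcal D'(\R^N)$. Otherwise, the distributional structure theorem gives a representation $f(y+t\xi_0)=\sum_{k=0}^{m-1} a_k(y)\,t^k$ with $y\in\xi_0^\perp$ and $a_k\in L^1_{loc}(\xi_0^\perp)$. In the $L^q$ case, Fubini then forces $f\equiv 0$, since a nonzero polynomial is not $L^q$-integrable on $\R$. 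In the Sobolev case, I would invoke the slicing identity of Remark~\ref{1d norm} in an orthonormal basis containing $\xi_0$: the $\xi_0$-directional slice of $\verti{f}_{W^{s,q}}$ vanishes, and then the finiteness of the full semi-norm forces each $a_k$ to be a polynomial on $\xi_0^\perp$ of degree $<\lfloor s\rfloor+1$ (or $<s$ when $s$ is an integer). Thus $f$ is a global polynomial, and $0<\verti{f}_{W^{s,q}}<\infty$ is impossible, because finite differences of order $\lfloor s\rfloor+1$ (or $s$-th derivatives) of polynomials are again polynomials, hence zero or non-integrable over $\R^N$.

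Once $g_{\xi_0}$ exists, the map $\xi\mapsto \int_{\R^N} f\,\partial^m_\xi g_{\xi_0}$ is continuous on $\mathbb{S}^{N-1}$ (it is a polynomial in the coordinates of $\xi$), so it is bounded below in absolute value by some $c_{\xi_0}/2>0$ on an open neighborhood $U_{\xi_0}$ of $\xi_0$. A finite subcover $U_1,\dots,U_J$ of $\mathbb{S}^{N-1}$ yields data $(g_j,c_j)$, and choosing $\delta>0$ small enough so that on each $U_j$ and for $t\in(0,\delta)$ the $o(1)$ correction is $\leq c_j/4$ delivers the uniform bound with $C=\min_j c_j/(4\vertii{g_j}_{L^{p'}})$. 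The main obstacle is the rigidity argument in the Sobolev case (ruling out $\partial^m_{\xi_0}f\equiv 0$ via slicing of the semi-norm and classification of admissible polynomials); everything else reduces to routine duality, Taylor-type expansion, and compactness.
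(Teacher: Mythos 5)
Your proposal is correct, but its quantitative part takes a genuinely different route from the paper. The paper argues by contradiction: assuming sequences $t_n\searrow 0$, $\xi_n\to e_1$ with $\vertii{\Delta^m_{t_n\xi_n}f}_{L^p}<t_n^m/(n+1)$, it first mollifies $f$ (Lemmas \ref{decre convol} and \ref{ast conv}) to reduce to smooth $f$, then uses Lemma \ref{delta fo} and Fatou to conclude $\partial_1^m f=0$, and finally derives $\vertii{f}_{L^q}=0$, resp.\ $\verti{f}_{W^{s,q}}=0$. You instead test $\Delta^m_{t\xi}f$ against $C_c^\infty$ functions, expand $\Delta^m_{-t\xi}g$ via the same Lemma \ref{delta fo}, and use compactness of $\mathbb{S}^{N-1}$; this gives the uniform lower bound directly (no contradiction argument, no mollification of $f$, and in particular no appeal to Lemma \ref{ast conv}, which the paper only states for $0<s<1$), at the price of having to rule out $\partial^m_{\xi_0}f=0$ in $\mathcal{D}'$ for every direction. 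The rigidity core is the same in both proofs: a degenerate direction forces $f$ to be a polynomial along a.e.\ line in that direction, which is incompatible with $f\in L^q\setminus\{0\}$, resp.\ with $\verti{f}_{W^{s,q}}>0$, via Lemma \ref{poly L=0} and the slicing results (Theorem \ref{theo besov}, Remark \ref{1d norm}). One remark: your detour through \enquote{each $a_k$ is a polynomial on $\xi_0^\perp$, hence $f$ is a global polynomial} — the step you rightly flag as the main obstacle — is heavier than needed. Once $f(\cdot,x')$ is a polynomial for a.e.\ $x'$, the finiteness of the seminorm gives $\Delta^{\lfloor s\rfloor+1}_{h}f\in L^q$ for a.e.\ $h$ (resp.\ $\partial^{\alpha}f\in L^q$ for $\verti{\alpha}=s$); these are again polynomials in the $\xi_0$-variable, so Fubini and the non-integrability of nonzero one-variable polynomials force them to vanish, and the slicing theorems then give $\verti{f}_{W^{s,q}}=0$. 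This is exactly how the paper concludes and it closes your gap without classifying the coefficients $a_k$, although your classification can also be carried out (by separating powers of the $\xi_0$-component of $h$), so the proposal as written is sound.
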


\begin{lemma}\label{no cons direc toy}
    Let $1 \leq p < \infty$.
    \begin{enumerate}[(1)]
    \item Let $s$ be non-integer.  If $f \in \dot{W}^{s,p}$  is such that
    \begin{flalign*}
        \inf_{\xi \in \mathbb{S}^{N-1}} \int_{0}^{\infty}t^{-sp-1} \vertii{\Delta_{t\xi}^{\lfloor s \rfloor +1} f}_{L^p}^p dt =0,
    \end{flalign*}
    then $\verti{f}_{W^{s,p}}=0$.
    \item Let $ s $ be an integer.
    If $f \in \dot{W}^{s,p}$ is such that 
    \begin{flalign*} 
    \inf_{\xi \in \mathbb{S}^{N-1}} \int_{\R^N} \verti{\partial_{\xi}^s f(x)}^p \ dx = 0,
    \end{flalign*}
    then $\verti{f}_{W^{s,p}}=0$.
\end{enumerate}
\end{lemma}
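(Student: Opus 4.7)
The plan is to prove both parts by contraposition, reducing the conclusion directly to the quantitative lower bound of Lemma~\ref{no constant direction}. Starting from $\verti{f}_{W^{s,p}} > 0$, I will produce a positive lower bound on the directional quantity that is \emph{uniform} in $\xi \in \mathbb{S}^{N-1}$, which contradicts the hypothesis.

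For part (1), the second assertion of Lemma~\ref{no constant direction} applied with $q = p$ and $m = \lfloor s \rfloor + 1$ provides $\delta_f, C_f > 0$ (independent of $\xi$) such that $\vertii{\Delta^{\lfloor s \rfloor + 1}_{t\xi} f}_{L^p} \ge C_f\, t^{\lfloor s \rfloor + 1}$ for every $0 < t < \delta_f$ and $\xi \in \mathbb{S}^{N-1}$. Since $s$ is non-integer, one has $(\lfloor s \rfloor + 1 - s)p > 0$, so integrating the $p$-th power of this inequality against the weight $t^{-sp-1}$ on $(0, \delta_f)$ yields a finite, strictly positive lower bound for $\int_0^\infty t^{-sp-1} \vertii{\Delta^{\lfloor s \rfloor + 1}_{t\xi} f}_{L^p}^p\, dt$ that is independent of $\xi$, contradicting the hypothesis.

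For part (2), Lemma~\ref{no constant direction} with $m = s$ supplies $\delta, C > 0$ with $\vertii{\Delta^s_{t\xi} f}_{L^p} \ge C t^s$ uniformly in $\xi$, for $0 < t < \delta$. The companion upper bound will come from extending Lemma~\ref{delta fo} to $\dot W^{s,p}$: applying the identity to $f_\epsilon \coloneq f \ast \rho_\epsilon$ and letting $\epsilon \to 0^{+}$, using $\partial^s_\xi f \in L^p$ (so that $\partial^s_\xi f_\epsilon \to \partial^s_\xi f$ in $L^p$) together with $\Delta^s_{t\xi} f_\epsilon = (\Delta^s_{t\xi} f) \ast \rho_\epsilon$ from Lemma~\ref{decre convol}, one obtains the $L^p$ identity
\[
\Delta^s_{t\xi} f(x) = t^s \int_0^s \chi_s(\tau)\, \partial^s_\xi f(x + \tau t \xi)\, d\tau.
\]
Taking $L^p$ norms and applying Minkowski's integral inequality together with $\int_0^s \chi_s(\tau)\,d\tau = 1$ then gives $\vertii{\Delta^s_{t\xi} f}_{L^p} \le t^s \vertii{\partial^s_\xi f}_{L^p}$, so that $\vertii{\partial^s_\xi f}_{L^p} \ge C > 0$ uniformly in $\xi$, again contradicting the hypothesis.

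The only delicate point is the extension of Lemma~\ref{delta fo} from $C^\infty(\R^N)$ to the homogeneous class $\dot W^{s,p}$ in part (2); this step is routine thanks to Lemma~\ref{decre convol} and the standard $L^p$-approximation properties of mollifiers, and once it is in place, both parts are essentially immediate consequences of the uniform-in-$\xi$ estimate of Lemma~\ref{no constant direction}.
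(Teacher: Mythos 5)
Your proposal is correct. For part (1) it coincides with the paper's argument: contraposition, the uniform bound $\vertii{\Delta^{\lfloor s \rfloor +1}_{t\xi}f}_{L^p}\geq C_f\,t^{\lfloor s \rfloor +1}$ from Lemma~\ref{no constant direction}, and integration of the weight over $(0,\delta_f)$, using $(\lfloor s \rfloor +1-s)p>0$. For part (2) you take a genuinely different route. The paper does not compare finite differences with $\partial^s_\xi f$ at all: it picks a sequence $(\xi_n)\subset\mathbb{S}^{N-1}$ with $\xi_n\to e_1$ and $\vertii{\partial^s_{\xi_n}f}_{L^p}\to 0$, uses the pointwise convergence $\partial^s_{\xi_n}f\to\partial^s_1 f$ and Fatou to get $\partial^s_1 f=0$ a.e., and then reruns the Schwarz-lemma/polynomial argument from the proof of Lemma~\ref{no constant direction} (each $\partial^\alpha f$ with $\verti{\alpha}=s$ is polynomial in $x_1$ and lies in $L^p$, hence vanishes by Lemma~\ref{poly L=0}). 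You instead sandwich: the lower bound $\vertii{\Delta^{s}_{t\xi}f}_{L^p}\geq C_f\,t^{s}$ from Lemma~\ref{no constant direction} applied with $m=s$ (the lemma is stated for an arbitrary integer $m$, so this is legitimate), and the upper bound $\vertii{\Delta^{s}_{t\xi}f}_{L^p}\leq t^{s}\vertii{\partial^s_\xi f}_{L^p}$ obtained by extending the representation formula of Lemma~\ref{delta fo} to $\dot{W}^{s,p}$ via mollification (Lemma~\ref{decre convol}, $\partial^s_\xi f\in L^p$, Minkowski, $\int_0^s\chi_s=1$); this gives the uniform bound $\inf_{\xi}\vertii{\partial^s_\xi f}_{L^p}\geq C_f>0$, contradicting the hypothesis. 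What each approach buys: yours is quantitative (an explicit uniform-in-$\xi$ lower bound rather than mere nonvanishing of the infimum) and treats both parts in a single contrapositive scheme, at the modest cost of the extra (routine, but genuinely needed) verification that the finite-difference identity survives the passage from $C^\infty$ to $\dot{W}^{s,p}$; the paper's version avoids that extension entirely, since mollification is only used inside the already-proved Lemma~\ref{no constant direction}, and simply recycles the polynomial argument written there. Both arguments are complete and fit the way the lemma is later used in the proof of Theorem~\ref{there is min intro}.
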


\begin{lemma}\label{Conv compo matrix}
    Let $s>0$ and $1 \leq p<\infty$. Let $f \in \dot{W}^{s,p}(\R^N)$ and let $(T_n) \subset \text{GL}_N$ converge to a matrix $T \in \text{GL}_N$. Then 
    \begin{flalign*}
         \verti{f\circ T_n}_{W^{s,p}}\to \verti{f\circ T}_{W^{s,p}} \ \text{as} \ n \rightarrow \infty.
    \end{flalign*}
\end{lemma}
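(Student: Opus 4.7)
The plan is to reduce to the case $T = I$ by setting $g := f \circ T \in \dot{W}^{s,p}(\R^N)$ and $S_n := T^{-1} T_n$, so that $f \circ T_n = g \circ S_n$ and $S_n \to I$ in $\text{GL}_N$. For $n$ large, the bounds $\verti{S_n} \leq 3/2$ and $\verti{S_n^{-1}} \leq 2$ are automatic, and the goal becomes to prove $\verti{g \circ S_n}_{W^{s,p}} \to \verti{g}_{W^{s,p}}$ via dominated convergence, handled separately for integer and non-integer $s$.

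When $s$ is an integer, I would use the chain rule $D^s_x(g \circ S_n) = S_n^* D^s_{S_n x} g$ and substitute $y = S_n x$ to write
\bes
\verti{g \circ S_n}_{W^{s,p}}^p = \verti{\det S_n}^{-1} \int_{\R^N} \vertii{S_n^* D^s_y g}^p \, dy.
\ees
The integrand converges pointwise to $\vertii{D^s_y g}^p$ (since $S_n^* \to I^*$ as operators on $s$-linear forms) and is dominated by $\verti{S_n}^{sp}\vertii{D^s_y g}^p \leq (3/2)^{sp}\vertii{D^s_y g}^p$, which is integrable. Combined with $\verti{\det S_n}^{-1} \to 1$, dominated convergence closes this case.

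When $s$ is non-integer, Lemma \ref{chan} applied with $m := \lfloor s \rfloor + 1$ gives the identity $\vertii{\Delta^m_h(g \circ S_n)}_{L^p}^p = \verti{\det S_n}^{-1}\vertii{\Delta^m_{S_n h} g}_{L^p}^p$. Substituting $k = S_n h$ in the integral defining the semi-norm yields
\bes
\verti{g \circ S_n}_{W^{s,p}}^p = \verti{\det S_n}^{-2} \int_{\R^N} \frac{\vertii{\Delta^m_k g}_{L^p}^p}{\verti{S_n^{-1} k}^{sp+N}} \, dk.
\ees
The integrand converges pointwise to $\vertii{\Delta^m_k g}_{L^p}^p / \verti{k}^{sp+N}$, and the bound $\verti{S_n^{-1} k} \geq \verti{S_n}^{-1} \verti{k} \geq 2\verti{k}/3$ furnishes the integrable dominant $(3/2)^{sp+N} \vertii{\Delta^m_k g}_{L^p}^p / \verti{k}^{sp+N}$. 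Dominated convergence concludes.

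The only delicate point is producing an integrable dominant uniform in $n$, which is precisely what the reduction $S_n \to I$ together with the eventual bounds on $\verti{S_n}$ and $\verti{S_n^{-1}}$ is designed to deliver; everything else is a direct change of variables plus the pointwise continuity $T \mapsto T^*$.
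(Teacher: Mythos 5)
Your argument is correct and follows essentially the same route as the paper: the same change-of-variables identities (chain rule for integer $s$, Lemma \ref{chan} plus the substitution in $h$ for non-integer $s$) followed by dominated convergence with a uniform lower bound on $\verti{S_n^{-1}k}$, respectively a uniform bound on $\vertiii{S_n}$. The only difference is your cosmetic normalization $S_n:=T^{-1}T_n\to I$ (using that $f\circ T\in\dot W^{s,p}$, i.e.\ Corollary \ref{T W}), which merely makes the uniform constants explicit where the paper asserts them directly for $T_n\to T$.
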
 
We  now turn to the proofs of Lemmas \ref{no constant direction}, \ref{no cons direc toy}, \ref{Conv compo matrix}, and Theorem \ref{there is min intro}.
The essential ingredient in the proof of Lemma \ref{no constant direction} is the following trivial fact.
\begin{lemma}\label{poly L=0}
 Let $1\leq q<\infty$. If $g \in L^q$ is such that 
\begin{flalign*}
    \R \ni x_1 \mapsto g(x_1,x_2,\dots,x_N)
\end{flalign*}
is a polynomial for a.e.\ $(x_2,\dots,x_N) \in \R^{N-1}$, then $g=0$  a.e. 
\end{lemma}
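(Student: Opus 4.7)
The plan is to combine Fubini's theorem with the elementary observation that a non-zero polynomial on $\R$ cannot belong to $L^q(\R)$ for any $1\le q<\infty$. First I would apply Tonelli's theorem to rewrite
\bes
\int_{\R^N}\verti{g(x)}^q\, dx=\int_{\R^{N-1}}\pari{\int_{\R}\verti{g(x_1,\widehat{x_1})}^q\, dx_1}d\widehat{x_1}<\infty,
\ees
which forces $x_1\mapsto g(x_1,\widehat{x_1})$ to lie in $L^q(\R)$ for a.e. $\widehat{x_1}\in\R^{N-1}$.

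Combining this with the hypothesis, I would conclude that for a.e. $\widehat{x_1}\in\R^{N-1}$, the function $x_1\mapsto g(x_1,\widehat{x_1})$ is simultaneously a polynomial and an element of $L^q(\R)$. The key auxiliary claim is then that the only polynomial in $L^q(\R)$ is the zero polynomial: indeed, if $P\not\equiv 0$ has leading term $c\, x_1^d$ with $c\neq 0$ and degree $d\ge 0$, there exists $R>0$ such that $\verti{P(x_1)}\ge \verti{c}\verti{x_1}^d/2$ for $\verti{x_1}\ge R$, so the integral $\int_{\verti{x_1}\ge R}\verti{P(x_1)}^q\, dx_1$ diverges (including the case $d=0$, where $\verti{P}$ is a non-zero constant).

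Hence for a.e. $\widehat{x_1}$ the section $g(\cdot,\widehat{x_1})$ vanishes identically on $\R$, so in particular $g(x_1,\widehat{x_1})=0$ for a.e. $x_1$. A final application of Fubini--Tonelli shows that $g=0$ a.e. on $\R^N$. There is no real obstacle here: the only point requiring a sentence of argument is the $L^q$-incompatibility of a non-trivial polynomial, and everything else is bookkeeping via Fubini.
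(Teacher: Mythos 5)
Your argument is correct: Tonelli gives that almost every section $x_1\mapsto g(x_1,\widehat{x_1})$ lies in $L^q(\R)$, a non-zero polynomial is never in $L^q(\R)$, and Fubini then yields $g=0$ a.e. The paper explicitly omits the proof of this lemma as trivial, and your reasoning is precisely the standard argument it has in mind, so nothing further is needed.
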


\begin{proof}[Proof of Lemma \ref{no constant direction}]
    We have to show  that if $f \in L^q$ , respectively $f \in \dot{W}^{s,q}$, is such that there exist sequences
$t_n \searrow 0$ and $(\xi_n) \subset \mathbb{S}^{N-1}$ satisfying
\begin{flalign}\label{contra direc}
    \vertii{\Delta^m_{t_n\xi_n}f}_{L^p} < \frac{t^m_n}{n+1}, \ \fo n,
\end{flalign}
then $\vertii{f}_{L^q}=0$, respectively $\verti{f}_{W^{s,q}}=0$.

\smallskip
In both cases, we may assume, without loss of generality, that $\xi_n \rightarrow e_1$.   
We claim that we may further assume that $f \in C^{\infty}(\R^N)$. Indeed, by Lemma \ref{decre convol}, $f\ast\rho_\delta$ satisfies
\begin{flalign*}
    \vertii{\Delta^m_{t_n\xi_n}(f\ast \rho_{\delta})}_{L^p} < \frac{t^m_n}{n+1}, \fo n.
\end{flalign*} 

On the other hand, as $\delta\to 0$, we have
\begin{flalign*}
    &\vertii{f \ast \rho_{\delta}}_{L^q} \to \vertii{f}_{L^q}, \ \text{in the case where} \ f \in L^q ,
    \\ & \verti{f \ast \rho_{\delta}}_{W^{s,q}} \to \verti{f}_{W^{s,q}}, \ \text{in the case where} \ f \in \dot{W}^{s,q}.
\end{flalign*}
(For the second assertion, we rely on Lemma \ref{ast conv}.)

\smallskip
Therefore, by replacing $f$ with $f\ast\rho_\delta$, then passing to the limits, it suffices to deal with the case where $f$ is smooth.

\smallskip

Consider now a smooth function satisfying \eqref{contra direc}. 
By Lemma \ref{delta fo}, we have 
\be \label{pt conv}
\d \frac{\Delta_{t_n \xi_n}^m f (x)}{t_n^m}\to \partial_1^m f(x), \ \text{pointwise as} \ n \rightarrow \infty.
\ee Fatou's lemma, combined with \eqref{contra direc} and \eqref{pt conv}, implies that  $\partial_1^m f=0$. If $f\in L^q$, then Lemma \ref{poly L=0} implies that $f=0$, and we are done.

\smallskip
When $f \in \dot{W}^{s,q}$, we argue as follows. If $s$ is an integer then, for each $\alpha$ such that $\verti{\alpha}=s$ and each $(x_2,\dots,x_N) \in \R^{N-1}$,  the function
    \begin{flalign*}
        x_1 \mapsto \partial^{\alpha}f(x_1,x_2,\dots,x_N)
    \end{flalign*}
    is a polynomial of degree $\leq m-1$. This follows from the Schwarz lemma, which yields 
    \begin{flalign*}
        \partial_1^m(\partial^{\alpha}f)=\partial^{\alpha}(\partial_1^m f)=0.
    \end{flalign*}
    
Therefore, Lemma \ref{poly L=0} implies that $\vertii{\partial^{\alpha} f}_{L^q}=0$ and thus $\verti{f}_{W^{s,q}}=0$.

\smallskip
If $s$ is non-integer, then, by Theorem \ref{theo besov}, for each $1\leq i \leq N $, we have 
\begin{flalign*}
    \int_{0}^{\infty}t^{-sq-1}\vertii{\Delta^{\lfloor s \rfloor +1}_{t
     e_i} f}_{L^q}^q \ dt < \infty,
\end{flalign*}
which implies that
\begin{flalign*}
     \Delta^{\lfloor s \rfloor +1}_{t
    e_i} f \in L^{q}, \ \text{for a.e.} \ t>0.
\end{flalign*}
On the other hand, for each $1 \leq i \leq N$, $(x_2,\dots,x_N) \in \R^{N-1}$, and $t>0$, the function
\begin{flalign*}
    x_1 \mapsto \Delta^{\lfloor s \rfloor +1}_{t e_i}f(x_1,x_2,\dots,x_N)
\end{flalign*} 
is a polynomial, since
\begin{flalign*}
    \partial^m_1(\Delta^{\lfloor s \rfloor +1}_{t e_i}f)
     = \Delta^{\lfloor s \rfloor +1}_{t e_i}(\partial^m_1 f )=0.
\end{flalign*}
Hence, an application of Lemma \ref{poly L=0} gives that, for each $1 \leq i \leq N$,
\begin{flalign*}
    \vertii{\Delta^{\lfloor s \rfloor +1}_{t e_i} f}_{L^q}=0, \ \text{for a.e.} \ t >0,
\end{flalign*}
and therefore
\begin{flalign*}
    \int_{0}^{\infty}t^{-sq-1}\vertii{\Delta^{\lfloor s \rfloor +1}_{t
     e_i} f}_{L^q}^q \ dt=0.
\end{flalign*}
Theorem \ref{theo besov} hence implies that $\verti{f}_{W^{s,q}}=0$.

\smallskip
This completes the proof of Lemma \ref{no constant direction}.
\end{proof}
\begin{proof}[Proof of Lemma \ref{no cons direc toy}]
    (1) Let $s$ be non-integer and let $f \in \dot{W}^{s,p}$ be such that $\verti{f}_{W^{s,p}}\neq 0$. By Lemma \ref{no constant direction}, there exist $\delta>0$ and $C>0$ such that
    \begin{flalign*}
        \vertii{\Delta^{\lfloor s \rfloor +1}_{t\xi} f}_{L^p} \geq C t^{\lfloor s \rfloor +1}, \ \fo 0<t<\delta, \  \fo \xi \in \mathbb{S}^{N-1}.
    \end{flalign*}
    Hence,  for each $\xi \in \mathbb{S}^{N-1}$, we have 
    \bes
    \ba
     \int_{0}^{\infty} t^{-sp-1}\vertii{\Delta^{\lfloor s \rfloor +1}_{t\xi} f}_{L^p}^p \ dt &\geq \int_{0}^{\delta} t^{-sp-1}\vertii{\Delta^{\lfloor s \rfloor +1}_{t\xi} f}_{L^p}^p \ dt \\ &\geq C^p \int_{0}^{\delta}t^{(\lfloor s \rfloor +1)p -sp-1} \ dt = \frac{C^p\delta^{(\lfloor s \rfloor +1 -s)p}}{(\lfloor s \rfloor +1 -s)p}.
    \ea
    \ees
Therefore, 
\bes
\inf_{\xi \in \mathbb{S}^{N-1}} \int_{0}^{\infty} t^{-sp-1}\vertii{\Delta^{\lfloor s \rfloor +1}_{t\xi} f}_{L^p}^p \ dt  \geq \frac{C^p\delta^{(\lfloor s \rfloor +1 -s)p}}{(\lfloor s \rfloor +1 -s)p}>0.
\ees
This completes the proof of $(1)$. 

\smallskip
\noindent
(2) Let $s$ be an integer and let $f \in \dot{W}^{s,p}$ be such that 
\bes
\inf_{\xi \in \mathbb{S}^{N-1}} \int_{\R^N} \verti{\partial_{\xi}^{s} f(x)}^p \ dx =0. 
\ees

Without loss of generality, we may assume that there exists a sequence $(\xi_n) \subset \mathbb{S}^{N-1}$ such that $\xi_n \to e_1$ and
\bes
\int_{\R^N} \verti{\partial_{\xi_n}^s f (x) }^p \ dx \to 0.
\ees
Since 
\bes
\partial^s_{\xi_n} f(x) \to \partial^s_1 f (x), \ \text{for a.e.} \ x \in \R^N,
\ees
Fatou's lemma yields
\bes
\int_{\R^N}\verti{\partial^s_1 f(x)}^p \ dx=0, \ \text{and therefore} \ \ \partial^s_{1}f(x)=0, \ \text{for a.e.} \ x \in \R^N.
\ees

Using this fact, we find, as in the proof of Lemma \ref{no constant direction}, that $\verti{f}_{W^{s,p}}=0$.
\end{proof}
\begin{proof}[Proof of Lemma \ref{Conv compo matrix}]
    Let $f \in \dot{W}^{s,p}$ and let $(T_n) \subset \text{GL}_N$ converge to
     a matrix $T \in \text{GL}_N$.

\smallskip
    If $s$ is non-integer, we argue as follows. Using
    \eqref{delta comp},
    we find that 
    \bes
    \ba
        \verti{f \circ T_n}_{W^{s,p}}^p&= \int_{\R^N} \frac{\vertii{\Delta^{\lfloor s \rfloor +1}_h (f\circ T_n)}_{L^p}^p}{\verti{h}^{sp+N}} \ dh=\int_{\R^N} \frac{\vertii{\pari{\Delta^{\lfloor s \rfloor +1}_{T_n(h)} f}\circ T_n}_{L^p}^p}{\verti{h}^{sp+N}} \ dh
   \\&=\int_{\R^N} \frac{\vertii{\pari{\Delta^{\lfloor s \rfloor +1}_{T_n(h)} f}}_{L^p}^p}{\verti{h}^{sp+N}} \ \frac{dh}{\verti{\text{det} \ T_n}} =  \int_{\R^N} \frac{\vertii{\Delta^{\lfloor s \rfloor +1}_{z} f}_{L^p}^p}{\verti{T_n^{-1}(z)}^{sp+N}} \ \frac{dz}{\verti{\text{det}\ T_n}^2}.
    \ea
    \ees

\smallskip
Since $T_n \rightarrow T$, we have
\bes 
\text{det} \ T_n \to \text{det} \ T , \ \ \ T_n^{-1}(z) \to T^{-1}(z), \ \fo z \in \ \R^N,
\ees
and there exists $C>0$ such that
\bes
\verti{T_n^{-1}(z)}^{sp+N} \verti{\text{det} \ T_n}^2 \geq C\verti{z}^{sp+N}, \ \fo z \in \R^N, \ \fo n \in \N.
\ees

The conclusion of the lemma then follows by dominated convergence.

\smallskip

If $s$ is an integer, we rely on the following facts. Fact 1: given $g \in \dot{W}^{s,p}$ and $L \in \text{GL}_N$, we have 
    \begin{flalign}\label{chainr}\begin{split}
        D_x^s(g \circ L)(u^1, \dots, u^s )&= D_{L(x)}g(L(u^1),\dots, L(u^s))= L^{*}(D_{L(x)}g)(u^1,\dots,u^s)
    \end{split}\end{flalign}
    for each $(u^1, \dots,u^s) \in (\R^N)^s$ and a.e.\ $ x \in \R^N.$ (Recall that the notation $T^{*}\eta$ was introduced in \eqref{pull b}.)  Fact 2: given  a $k$-linear form $\eta$ on $\R^N$ and a sequence $(L_n) \subset M_N$ that converges to $L \in M_N$ ,
    we have
    \begin{flalign*} 
        & \vertii{L_n^{\ast} \eta }\leq  C \vertiii{L_n}^k \vertii{\eta}, \\
        & \vertii{L_n^{\ast}\eta-L^{\ast} \eta } \rightarrow 0. 
    \end{flalign*}
Using these elementary facts, we find that    
\begin{flalign*}\label{alt form}\begin{split}
        \verti{f\circ T_n}_{W^{s,p}}^p &= \int_{\R^N} \vertii{D^s_x(f\circ T_n)}^p \ dx 
        \\ &= \int_{\R^N} \vertii{T_n^{\ast}(D^s_{T_n(x)} f) }^p\ dx=  \int_{\R^N} \vertii{T_n^{\ast}(D^s_{y} f) }^p\ \frac{dy}{\verti{\text{det} \ T_n}}.
\end{split}
\end{flalign*}
     (Here we rely on Fact 1 for the second equality, and we make the change of variables $y=T_n(x)$ in order to obtain the last one.)  Moreover, 
 since $T_n \to T$, we have, by Fact 2,
        \bes
             \frac{\vertii{T_n^{\ast}(D^s_{y}f)}^p} {\verti{\text{det} \ T_n}} \leq \frac{\vertiii{T_n}^{sp}}{\verti{\text{det} \ T_n}} \vertii{D^s_{y}f}^p \leq C\vertii{D^s_{y}f}^p,\ \fo y\in \R^N,\ \fo n, 
        \ees
        where $C>0$ is independent of $n$. On the other hand, Fact 2 yields
 \bes
 \frac{\vertii{T_n^{\ast}(D^s_{y} f)}^p}{\verti{\text{det} \ T_n}}\to \frac{\vertii{T^{\ast}(D^s_{y} f)}^p}{\verti{\text{det}\ T}}, \ \fo y \in \R^N.
 \ees

 By dominated convergence, we find that 
    \bes
        \verti{f\circ T_n}_{W^{s,p}}^p \to \int_{\R^N} \vertii{T^{\ast}(D^s_{y} f)}^p\ \frac{dy}{\verti{\text{det} \ T}}= \verti{f \circ T}_{W^{s,p}}^p. \qedhere
    \ees
\end{proof}
We now prove Theorem \ref{there is min intro}.
\begin{proof}[Proof of Theorem \ref{there is min intro}] Consider $f \in \dot{W}^{s,p}$ and a minimizing sequence $(T_n) \subset \text{SL}_N$ such that
    \begin{flalign*}
        \verti{f \circ T_n}_{W^{s,p}}\rightarrow \inf\{\verti{f\circ T}_{W^{s,p}};\,  T \in \text{SL}_N \}. 
    \end{flalign*}
    Without loss of generality, we may assume that $\verti{f}_{W^{s,p}} \neq 0$. 

    \smallskip
    \noindent \textit{Claim}. $(T_n)$ is bounded. 
    
    \smallskip
    Granted the claim, we complete the proof of  Theorem \ref{there is min intro} as follows. 
Consider a subsequence $(T_{n_k})$ and $T\in \text{SL}_N$ such that $T_{n_k} \to T$.
By Lemma \ref{Conv compo matrix}, the conclusion of the theorem holds with $T_f=T$.

\smallskip
We now prove the claim, which amounts to the existence of $M<\infty$ such that
\bes
    \verti{T_n(\xi)} \leq M, \ \fo \xi \in \mathbb{S}^{N-1}, \fo n \in \N.
\ees

 If $s$ is non-integer, we denote $m:=\lfloor s \rfloor +1$. On the one hand, an application of Theorem \ref{theo besov} yields the existence of $C<\infty$ such that
    \begin{flalign}\label{upper b}
        \int_{0}^{\infty}t^{-sp-1}\vertii{\Delta^{m}_{t\xi}(f\circ T_n)}_{L^p}^p \ dt \leq C \verti{f\circ T_n}_{W^{s,p}}^p, \fo \xi \in \mathbb{S}^{N-1}, \fo n \in \N.
    \end{flalign}
On the other hand, using \eqref{delta comp}, we find that, with $w_n\coloneq \d\frac{1}{\verti{T_n(\xi)}}T_n(\xi)$, we have
\be
\label{calcul change of var}
\ba
    \int_{0}^{\infty}t^{-sp-1}\vertii{\Delta^{m}_{t\xi}(f\circ T_n)}_{L^p}^p \ dt&=  \int_{0}^{\infty}t^{-sp-1}\vertii{\pari{\Delta^{m}_{T_n(t\xi)}f}\circ T_n }_{L^p}^p \ dt
    \\ &=\int_{0}^{\infty}t^{-sp-1}\vertii{\Delta^{m}_{T_n(t\xi)}f }_{L^p}^p \ dt
    \\ 
  &= \verti{T_n(\xi)}^{sp}\int_{0}^{\infty}u^{-sp-1}\vertii{\Delta^m_{uw_n}f }_{L^p}^p\ du.
\ea
\ee

Combining \eqref{upper b} and \eqref{calcul change of var}, we find that, for each $n \in \N$ and $\xi \in \mathbb{S}^{N-1}$,
\bes
\verti{T_n(\xi)}^{sp}\int_{0}^{\infty}u^{-sp-1}\vertii{\Delta^m_{uw_n}f}_{L^p}^p\ du  \leq C \verti{f\circ T_n}_{W^{s,p}}^p.
\ees
Since $\verti{f}_{W^{s,p}} \neq 0$, Lemma \ref{no cons direc toy} yields
\begin{flalign*}
\alpha\coloneq \inf_{\omega \in \mathbb{S}^{N-1}}\int_{0}^{\infty}u^{-sp-1}\vertii{\Delta^m_{t\omega}f}_{L^p}^p \ du>0.
\end{flalign*}
Therefore, we have
\begin{flalign*}
    \verti{T_n(\xi)}^{sp} \leq \frac{C \sup_n \verti{f\circ T_n}_{W^{s,p}}^p}{\alpha} < \infty , \ \fo \xi \in \mathbb{S}^{N-1},\,  \fo n \in \N,
\end{flalign*}
since $(\verti{f\circ T_n}_{W^{s,p}})$ is bounded.
This proves the claim in the case where $s$ is non-integer.

\smallskip
We next consider  the case where $s$ is an integer. The inequality 
\bes
\verti{\partial^s_{\xi}f(x)}\leq \vertii{D^s_x f}, \ \text{for a.e.} \ x \in \R^N, \ \text{for each} \ \xi \in \mathbb{S}^{N-1},
\ees
yields  
\begin{flalign*}
    \int_{\R^N} \verti{\partial^s_{\xi}(f\circ T_n)(x)}^p \ dx \leq \verti{f\circ T_n}_{W^{s,p}}^p, \ \fo \xi \in \mathbb{S}^{N-1}.
\end{flalign*}
But we also have 
\begin{flalign*}\begin{split}
    \int_{\R^N} \verti{\partial^s_{\xi}(f\circ T_n)(x)}^p \ dx &= \int_{\R^N} \verti{D^s_{T_n(x)}f(T_n(\xi), \dots, T_n(\xi))}^p \ dx
\\ & = \verti{T_n(\xi)}^{sp}\int_{\R^N}\verti{D^s_{T_n(x)}f\left(w_n, \dots,w_n \right)}^p \ dx
\\ & = \verti{T_n(\xi)}^{sp}\int_{\R^N}\verti{D^s_y f\left(w_n, \dots,w_n \right)}^p \ dy.
\end{split}
\end{flalign*}

As in the fractional case, combining these two facts, using the boundedness of the sequence $(\verti{f\circ T_n}_{W^{s,p}}),$ and applying Proposition \ref{no cons direc toy} yields
\bes
    \verti{T_n(\xi)}^{sp} \leq \frac{C\sup_n \verti{f \circ T_n}^p_{W^{s,p}}}{\d\inf_{\omega \in \mathbb{S}^{N-1}} \int_{\R^N} \verti{\partial^s_{\omega}f(x)}^p\ dx} < \infty.
\ees

This proves the claim in the case where $s$ is an integer and completes the proof of Theorem \ref{there is min intro}.
\end{proof}

\section{A closer look at affine 
\enquote{energies}} \label{affine en}
Given a bijective convex function $\Psi \colon [0,\infty] \to [0,\infty]$, we may try to define \enquote{refinements} of Sobolev semi-norms as follows. We consider
\bes
\left[\mathscr{E}_{s,p}^{\Psi}(f)\right]^p\coloneq  \sigma_N\Psi\left(\frac{1}{\sigma_N}\int_{\mathbb{S}^{N-1}} \Psi^{-1}\left(\int_{0}^{\infty}t^{-sp-1}  \vertii{\Delta^{\lfloor s \rfloor + 1}_{t\xi} f}_{L^p}^p \ dt\right) \ d\mathscr{H}^{N-1}(\xi)\right),
\ees
for each measurable $f$ and $s$ non-integer, and
\bes
\left[\mathscr{E}_{s,p}^{\Psi}(f)\right]^p\coloneq \sigma_N\Psi \left(\frac{1}{\sigma_N}\int_{\mathbb{S}^{N-1}} \Psi^{-1}\left(\int_{\R^N} \verti{\partial^s_{\xi} f(x)}^p \ dx \right) \ d\mathscr{H}^{N-1}(\xi) \right),
\ees
for each $f \in W^{s,1}_{\text{loc}}$ and $s$ integer. Given $s$ an integer, we also set
\be\label{radial norm}
\verti{f}^{*}_{W^{s,p}}\coloneq \left(\int_{\mathbb{S}^{N-1}}  \left(\int_{\R^N} \verti{\partial^s_{\xi} f(x)}^p \ dx \right) \ d\mathscr{H}^{N-1}(\xi)\right)^{1/p}, 
\ee
for each $f \in W^{s,1}_{\text{loc}}$. $\verti{\ \cdot \ }^{*}_{W^{s,p}}$ is a semi-norm which is equivalent to $\verti{\ \cdot \ }_{W^{s,p}}$ (see Lemma \ref{int norm}). 

\smallskip
By Jensen's inequality, we have, for each measurable $f$,
\be\label{frac J}
\mathscr{E}_{s,p}^{\Psi}(f)\leq \verti{f}_{W^{s,p}},
\ee
in the case where $s$ is non-integer, respectively, for each $f \in W^{s,1}_{\text{loc}}$,
\be\label{int J}
\mathscr{E}_{s,p}^{\Psi}(f) \leq \verti{f}^{*}_{W^{s,p}},
\ee
in the case where $s$ is an integer. On the other hand, 
\be\label{rad}
\eqref{frac J} \ \text{and} \ \eqref{int J} \ \text{are equalities for radial functions.} 
\ee
\smallskip
In the special case where $\Psi=\Psi_{s,p}$, with
\bes
\Psi_{s,p} \colon [0,\infty] \ni x \mapsto 
\begin{cases}
x^{-sp/N},& \ \text{if} \ x \in (0,\infty) \\
\infty, & \ \text{if} \ x=0 \\
0,& \ \text{if} \ x=\infty,
\end{cases}
\ees
we obtain
\be\label{sp}
\mathscr{E}_{s,p}=  \mathscr{E}_{s,p}^{\Psi}.
\ee

In particular, we have
\begin{lemma}\label{Jensen}
Let $1 \leq p <\infty $. 
\begin{enumerate}[(1)]
    \item Let $s$ be non-integer. We have  
\bes
\mathscr{E}_{s,p}(f) \leq  \verti{f}_{W^{s,p}},
\ees
for each $f \in L^1_{\text{loc}}.$
\item Let $s$ be an integer. We have
\bes
\mathscr{E}_{s,p}(f) \leq \verti{f}^{*}_{W^{s,p}},
\ees
for each $f \in W^{s,1}_{\text{loc}}$ ($\verti{\ \cdot \ }_{W^{s,p}}^{*}$ is defined in \eqref{radial norm}).
\end{enumerate}
\end{lemma}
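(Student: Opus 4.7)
The plan is to apply Jensen's inequality with the specific convex function $\Psi_{s,p}$, which by \eqref{sp} realizes $\mathscr{E}_{s,p}$ as the instance $\Psi=\Psi_{s,p}$ of the general functional $\mathscr{E}^{\Psi}_{s,p}$. Both assertions of the lemma are thus particular cases of \eqref{frac J} and \eqref{int J}, and it suffices to establish the latter two inequalities for any bijective convex $\Psi:[0,\infty]\to[0,\infty]$.

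First I would check convexity of $\Psi_{s,p}$ on $(0,\infty)$ by direct computation: $(x^{-sp/N})''=\frac{sp}{N}\bigl(\frac{sp}{N}+1\bigr)x^{-sp/N-2}>0$; the boundary conventions $\Psi_{s,p}(0)=\infty$ and $\Psi_{s,p}(\infty)=0$ preserve convexity in the extended-value sense.

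For part (1), I would introduce the probability measure $d\mu(\xi):=\sigma_N^{-1}\,d\mathscr{H}^{N-1}(\xi)$ on $\mathbb{S}^{N-1}$ and set $I(\xi):=\int_{0}^{\infty}t^{-sp-1}\vertii{\Delta^{\lfloor s \rfloor+1}_{t\xi}f}_{L^p}^p\,dt$. Applying Jensen's inequality to the convex function $\Psi$ and to $\xi\mapsto\Psi^{-1}(I(\xi))$ gives
\bes
\Psi\!\left(\int_{\mathbb{S}^{N-1}}\Psi^{-1}(I(\xi))\,d\mu(\xi)\right) \leq \int_{\mathbb{S}^{N-1}} I(\xi)\,d\mu(\xi),
\ees
and multiplying through by $\sigma_N$ reproduces $[\mathscr{E}^{\Psi}_{s,p}(f)]^p\leq \verti{f}_{W^{s,p}}^p$ once the right-hand side is identified, via the slicing form of the semi-norm in \eqref{def 1}, with $\verti{f}_{W^{s,p}}^p$. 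Specializing to $\Psi=\Psi_{s,p}$ and extracting $p$-th roots yields (1). Part (2) is handled identically, with $I(\xi):=\int_{\mathbb{R}^N}\verti{\partial^s_{\xi}f(x)}^p\,dx$ and the right-hand side identified with $[\verti{f}^{*}_{W^{s,p}}]^p$ via \eqref{radial norm}.

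The only point requiring a little care is the handling of degenerate values of $I$. If $\verti{f}_{W^{s,p}}=\infty$ the estimate is trivial; if $\verti{f}_{W^{s,p}}=0$ then Lemma \ref{no cons direc toy} forces $I(\xi)=0$ for every $\xi$, and the conventions $\Psi_{s,p}^{-1}(0)=\infty$, $\Psi_{s,p}(\infty)=0$ make both sides of the desired inequality vanish. In the remaining range $0<\verti{f}_{W^{s,p}}<\infty$, Lemma \ref{no cons direc toy} guarantees $I(\xi)>0$ on all of $\mathbb{S}^{N-1}$, so Jensen applies without further precaution. No substantive obstacle arises: once the identification \eqref{sp} is accepted, the lemma is a direct one-line application of Jensen's inequality.
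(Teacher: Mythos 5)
Your proposal is correct and follows essentially the same route as the paper, which likewise deduces the lemma from Jensen's inequality applied to the general functionals $\mathscr{E}^{\Psi}_{s,p}$ and the identification \eqref{sp} with $\Psi=\Psi_{s,p}$. The only slight inaccuracy is your citation of Lemma \ref{no cons direc toy} for the implication $\verti{f}_{W^{s,p}}=0\Rightarrow I(\xi)=0$ for all $\xi$ (that lemma gives the converse direction); the claim you need follows instead directly from \eqref{def 1} (or Theorem \ref{theo besov}), and in any case the degenerate situations are harmless since whenever some $I(\xi)$ vanishes on a set of positive measure one has $\mathscr{E}_{s,p}(f)=0$ and the inequality is trivial.
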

The adequate homogeneity of the maps $\Psi_{s,p}$ gives special properties to the corresponding $\mathscr{E}_{s,p}$. In particular, the functionals $\mathscr{E}_{s,p}$ are \textit{affine} , in the sense that the following holds.
\begin{prop}\label{affine invariance}
    Let $s>0$ and $1 \leq p<\infty$. For each $f \in \dot{W}^{s,p}$ and each $T \in \text{GL}_N$ such that $\verti{\text{det} \ T}=1$, we have
    \begin{flalign*}
        \mathscr{E}_{s,p}(f\circ T)= \mathscr{E}_{s,p}(f).
    \end{flalign*}
\end{prop}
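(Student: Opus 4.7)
The plan is to reduce $\mathscr{E}_{s,p}(f\circ T)=\mathscr{E}_{s,p}(f)$ to the classical spherical change-of-variable identity
\[
\int_{\mathbb{S}^{N-1}} \verti{T\xi}^{-N}\, G\!\left(T\xi/\verti{T\xi}\right)\, d\mathscr{H}^{N-1}(\xi) = \frac{1}{\verti{\det T}}\int_{\mathbb{S}^{N-1}} G(\omega)\, d\mathscr{H}^{N-1}(\omega),
\]
valid for every $T\in \text{GL}_N$ and every nonnegative Borel $G:\mathbb{S}^{N-1}\to[0,\infty]$. I would prove this in a short auxiliary step by computing $\int_{\R^N} G(x/\verti{x})\verti{x}^{-N}\phi(\verti{x})\, dx$, for a positive radial weight $\phi$ supported away from $0$, in two ways: directly in polar coordinates on the one hand, and via the substitution $x=Ty$ followed by polar coordinates in $y$ together with the radial rescaling $u=r\verti{T\omega}$ on the other. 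The two computations differ only by the Jacobian $\verti{\det T}$, which yields the identity; for $T\in\text{SL}_N$ the factor is $1$.

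Second, for each fixed $\xi\in\mathbb{S}^{N-1}$, I would show that replacing $f$ by $f\circ T$ multiplies the inner quantity defining $\mathscr{E}_{s,p}$ by $\verti{T\xi}^{sp}$ and evaluates it at $\omega(\xi):=T\xi/\verti{T\xi}$. In the non-integer case, setting $m:=\lfloor s\rfloor+1$, Lemma \ref{chan} together with $\verti{\det T}=1$ gives $\vertii{\Delta^m_{t\xi}(f\circ T)}_{L^p}=\vertii{\Delta^m_{tT\xi}f}_{L^p}$; the substitution $u=t\verti{T\xi}$ in the radial integral then produces the factor $\verti{T\xi}^{sp}$. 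In the integer case, the chain rule \eqref{chainr}, the multilinearity of $D^s$, and the substitution $y=Tx$ (again with $\verti{\det T}=1$) give the analogous transformation for $\int_{\R^N}\verti{\partial^s_\xi(f\circ T)}^p\,dx$. Raising the resulting inner quantity to the power $-N/(sp)$ extracts exactly the factor $\verti{T\xi}^{-N}$ that the spherical identity is designed to absorb.

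Finally, applying the identity above with $G(\omega)$ equal to the $-N/(sp)$-th power of the inner quantity for $f$ (an extended nonnegative Borel function, interpreted according to the conventions on $\Psi_{s,p}$ following \eqref{sp}) converts the outer integral in $\mathscr{E}_{s,p}(f\circ T)$ into the one in $\mathscr{E}_{s,p}(f)$; since the prefactor $\sigma_N^{(N+sp)/(Np)}$ is common to both sides, the conclusion $\mathscr{E}_{s,p}(f\circ T)=\mathscr{E}_{s,p}(f)$ drops out. The only step demanding real care is the auxiliary spherical identity itself — a classical computation that hinges on $\verti{x}^{-N}$ being the correct homogeneity to convert the Cartesian Jacobian $\verti{\det T}$ into the desired spherical Jacobian; once that is in hand, the rest is bookkeeping that exploits precisely the homogeneity built into the exponent $-N/(sp)$.
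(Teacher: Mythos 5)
Your proposal is correct and follows essentially the same route as the paper: you establish the spherical change-of-variables identity (the paper's Lemma \ref{change of var}, which it proves by the same two-ways computation, just with the Gaussian weight $\mathrm{e}^{-\verti{x}^2}$ in place of your weight $\verti{x}^{-N}\varphi(\verti{x})$), then use Lemma \ref{chan} (resp.\ the chain rule \eqref{chainr} and $s$-homogeneity in the integer case) and the rescaling $u=t\verti{T\xi}$ to extract the factor $\verti{T\xi}^{sp}$, and finally absorb $\verti{T\xi}^{-N}$ via the identity applied to $G=$ the $-N/(sp)$-th power of the inner quantity. No gaps; this matches the paper's argument.
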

This fact is a consequence of the following.
\begin{lemma}\label{change of var}
    Let $T \in \text{GL}_N$. Consider the map $F_T \colon \R^N \setminus \{0\} \to \R^N \setminus \{0\}$, where
    \begin{flalign}\label{ft}
        F_T(x) \coloneq \frac{T(x)}{\verti{T(x)}}, \ \fo x \in \R^N \setminus \{0\}.
    \end{flalign}
    For each measurable $g\colon \mathbb{S}^{N-1}\rightarrow \R^{+}$ , we have
    \begin{flalign*}
        \int_{\mathbb{S}^{N-1}}g(F_T(\omega))\ \frac{\verti{\text{det}\ T}}{\verti{T(\omega)}^{N}}\ d \mathscr{H}^{N-1}(\omega)= \int_{\mathbb{S}^{N-1}} g(\nu)\ d\mathscr{H}^{N-1}(\nu).
    \end{flalign*}
\end{lemma}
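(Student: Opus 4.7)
The plan is to derive this spherical identity from the standard Euclidean linear change of variables, using polar coordinates as a bridge. The key observation is the positive $1$-homogeneity of $T$: for $r>0$ and $\omega \in \mathbb{S}^{N-1}$, we have $\verti{T(r\omega)}=r\verti{T(\omega)}$ and $F_T(r\omega)=F_T(\omega)$.

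Concretely, I would fix an auxiliary nonnegative measurable $\phi\colon (0,\infty)\to \R^+$ with
\bes
I\coloneq \int_0^\infty \phi(r)\, r^{N-1}\, dr \in (0,\infty)
\ees
(for instance $\phi=\mathbbm{1}_{(1,2)}$), and evaluate
\bes
J\coloneq \int_{\R^N} \phi(\verti{T(x)})\, g(F_T(x))\, dx
\ees
in two different ways. On one hand, the linear substitution $y=T(x)$ (with $dy=\verti{\det T}\, dx$) followed by standard polar coordinates in $y$ gives
\bes
J=\frac{1}{\verti{\det T}}\int_{\R^N}\phi(\verti{y})\, g(y/\verti{y})\, dy = \frac{I}{\verti{\det T}}\int_{\mathbb{S}^{N-1}}g(\nu)\, d\mathscr{H}^{N-1}(\nu).
\ees
On the other hand, applying polar coordinates directly in $x$ and then performing, for each fixed $\omega$, the one-dimensional substitution $u=r\verti{T(\omega)}$ in the radial variable, gives
\bes
J=\int_{\mathbb{S}^{N-1}}g(F_T(\omega))\int_0^\infty \phi(r\verti{T(\omega)})\, r^{N-1}\, dr\, d\mathscr{H}^{N-1}(\omega) = I\int_{\mathbb{S}^{N-1}}\frac{g(F_T(\omega))}{\verti{T(\omega)}^N}\, d\mathscr{H}^{N-1}(\omega).
\ees

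Equating the two expressions for $J$, canceling the finite nonzero factor $I$, and multiplying by $\verti{\det T}$, yields the claimed identity. Tonelli's theorem legitimizes all the interchanges since $g\geq 0$; the continuity of $\omega\mapsto \verti{T(\omega)}$ on the compact sphere, combined with $T\in \text{GL}_N$, guarantees that $\verti{T(\omega)}$ is bounded above and below, so no integrability issue arises. I do not foresee any genuine obstacle: the content of the lemma is the elementary fact that $F_T$ pushes $d\mathscr{H}^{N-1}$ on $\mathbb{S}^{N-1}$ forward to the measure with density $\verti{\det T}/\verti{T(\omega)}^N$ relative to $d\mathscr{H}^{N-1}$, and the only care required is in bookkeeping the two polar substitutions cleanly — which the auxiliary $\phi$ handles.
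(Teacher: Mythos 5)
Your proof is correct and is essentially the paper's argument: both evaluate a radially-weighted integral of $g$ composed with $F_T$ over $\R^N$ in two ways, via the linear substitution $y=T(x)$ on one side and polar coordinates plus the radial substitution $u=r\verti{T(\omega)}$ on the other, the only difference being that the paper uses the Gaussian weight $e^{-\verti{x}^2}$ where you use a generic radial profile $\phi$.
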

\begin{coro}\label{T W}
    Let $s$ be non-integer and $1 \leq p < \infty$. If $f \in \dot{W}^{s,p}$ and $T \in \text{GL}_N$, then $f\circ T \in W^{s,p}$.
\end{coro}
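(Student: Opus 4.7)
The plan is to reduce this to the explicit change-of-variables formula already derived in the proof of Lemma \ref{Conv compo matrix}. Since $f \in \dot{W}^{s,p}$ with $s$ non-integer means, by definition, $f \in L^1_{\text{loc}}$ and $\verti{f}_{W^{s,p}} < \infty$; composition with an invertible linear map preserves $L^1_{\text{loc}}$, so the only issue is finiteness of the seminorm $\verti{f \circ T}_{W^{s,p}}$.

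First, I would apply the identity \eqref{delta comp} from Lemma \ref{chan}, namely $\Delta_h^{\lfloor s\rfloor +1}(f\circ T) = (\Delta_{T(h)}^{\lfloor s\rfloor +1} f)\circ T$, then make the change of variables $z = T(h)$ in the integral defining $\verti{f\circ T}_{W^{s,p}}^p$. Exactly as in the fractional case of the proof of Lemma \ref{Conv compo matrix}, this gives
\bes
\verti{f\circ T}_{W^{s,p}}^p = \frac{1}{\verti{\det T}^2}\int_{\R^N} \frac{\vertii{\Delta_{z}^{\lfloor s\rfloor +1} f}_{L^p}^p}{\verti{T^{-1}(z)}^{sp+N}}\,dz.
\ees

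Next, I would use the elementary bound $\verti{z} = \verti{T\,T^{-1}(z)} \leq \vertiii{T}\,\verti{T^{-1}(z)}$, equivalently $\verti{T^{-1}(z)}^{-(sp+N)} \leq \vertiii{T}^{sp+N}\verti{z}^{-(sp+N)}$, to conclude
\bes
\verti{f\circ T}_{W^{s,p}}^p \leq \frac{\vertiii{T}^{sp+N}}{\verti{\det T}^2}\int_{\R^N} \frac{\vertii{\Delta_{z}^{\lfloor s\rfloor +1} f}_{L^p}^p}{\verti{z}^{sp+N}}\,dz = \frac{\vertiii{T}^{sp+N}}{\verti{\det T}^2}\verti{f}_{W^{s,p}}^p < \infty.
\ees
This yields $f \circ T \in \dot{W}^{s,p}$, which gives the claim. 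There is no real obstacle here: the argument is just the ``finiteness half'' of the continuity statement in Lemma \ref{Conv compo matrix}, and only requires the invertibility of $T$ (not unimodularity), together with the fact that $f \in L^1_{\text{loc}}$ ensures $f\circ T$ is also locally integrable so the difference operators $\Delta_z^{\lfloor s\rfloor +1}(f\circ T)$ are well defined pointwise a.e.
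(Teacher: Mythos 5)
Your proof is correct and takes essentially the same route as the paper: it is the change of variables $z=T(h)$ after applying \eqref{delta comp}, exactly as in the fractional case of the proof of Lemma \ref{Conv compo matrix} (equivalently, the spherical change of variables of Lemma \ref{change of var}), combined with the elementary bound $\verti{T^{-1}(z)}\geq \verti{z}/\vertiii{T}$, which yields the finiteness of $\verti{f\circ T}_{W^{s,p}}$ with the constant $\vertiii{T}^{sp+N}/\verti{\det T}^{2}$.
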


\begin{proof}[Proof of Lemma \ref{change of var}] We present a short proof of this result well-known to experts.
 On the one hand, we have
\begin{flalign*}
    \int_{\R^N}g\left(\frac{x}{\verti{x}}\right) \text{e}^{-\verti{x}^2} \ dx = \left(\int_{\R^N} \text{e}^{-r^2}r^{N-1}\ dr\right) \int_{\mathbb{S}^{N-1}}g(\nu) d \mathscr{H}^{N-1}(\nu).
\end{flalign*}    
On the other hand, we have
\begin{flalign*}\begin{split}
   \int_{\R^N}&g\left(\frac{x}{\verti{x}}\right) \text{e}^{-\verti{x}^2} \ dx \\ &= \int_{\R^N} g\left(\frac{T(y)}{\verti{T(y)}}\right) \text{e}^{-\verti{T(y)}^2} \verti{\text{det}\ T} \ dy
              \\ &=\int_{\mathbb{S}^{N-1}}g\left(\frac{T(\omega)}{\verti{T(\omega)}}\right) \left(\int_{0}^{\infty} \text{e}^{-\verti{T(\omega)}^2 r^2} r^{N-1} \ dr\right) \ \verti{\text{det} \ T} \ d\mathscr{H}^{N-1}(\omega)
              \\ & = \int_{\mathbb{S}^{N-1}}g\left(\frac{T(\omega)}{\verti{T(\omega)}}\right) \left(\int_{0}^{\infty} \text{e}^{-s^2} s^{N-1} \frac{ds}{\verti{T(\omega)}^{N}}\right) \verti{\text{det} \ T}d\mathscr{H}^{N-1}(\omega)
            \\ & = \left(\int_{0}^{\infty} \text{e}^{-s^2} s^{N-1}\ ds\right) \int_{\mathbb{S}^{N-1}} g\left(\frac{T(\omega)}{\verti{T(\omega)}}\right) \frac{\verti{\text{det}\ T}}{\verti{T(\omega)}^{N}}\ d\mathscr{H}^{N-1}(\omega).
            \end{split}
            \end{flalign*}
(Here, we make the changes of variables $x=T(y)$ to obtain the first equality and $s=\verti{T(\omega)}r$ to obtain the third one.)

\smallskip
We obtain the desired conclusion by comparing the two above formulas.         
\end{proof}

\begin{proof}[Proof of Proposition \ref{affine invariance}]
Assume, e.g., that $s$ is non-integer (the integer case is similar). 
 Let $f \in \dot{W}^{s,p}$, and $T \in \text{SL}_N$. 
We have
\bes
\ba
    &[\mathscr{E}_{s,p}(f\circ T)]^{-N/s}=\sigma_{N}^{-N/sp-1}\int_{\mathbb{S}^{N-1}} \left(\int_{0}^{\infty}u^{-sp-1}\vertii{\Delta_{u\frac{T(\xi)}{\verti{T(\xi)}}}^{\lfloor s \rfloor +1}f}_{L^p}^p \ du\right)^{-N/sp} \frac{1}{\verti{T(\xi)}^N}\ d\mathscr{H}^{N-1}(\xi)  
\ea
\ees
(here, we make the change of variables $u=\verti{T(\xi)}t$).

We obtain the desired conclusion by applying Lemma \ref{change of var} to the map
\bes
g(\omega)\coloneq \left(\int_{0}^{\infty}t^{-sp-1}\vertii{\Delta_{t\omega}^{\lfloor s \rfloor +1}f}_{L^p}^p \ dt\right)^{-N/sp}. \hfill \qedhere 
\ees
\end{proof}
We next state some auxiliary results that we will use in the proof of Theorem \ref{good fspace}. 
\begin{lemma}\label{oscar res}

Let $s$ be non-integer, $1 \leq p<\infty$, and $m$ be an integer $>s$. If $f \in L^1_{\text{loc}}$ satisfies 
    \begin{flalign*}
    \int_{\R^N}\frac{\vertii{\Delta_h^m f}_{L^p}^p}{ \verti{h}^{sp+N}} \ dh < \infty,
\end{flalign*}
    then there exists a polynomial $P$ such that 
    $f - P \in \dot{W}^{s,p}$.
\end{lemma}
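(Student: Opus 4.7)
My plan is to induct on the ``excess order'' $m - (\lfloor s \rfloor + 1) \geq 0$. The base case $m = \lfloor s \rfloor + 1$ is immediate: take $P = 0$, and the conclusion is exactly the definition of $\dot{W}^{s,p}$ given in \eqref{def 1}. The inductive step reduces to the following ``one-step order reduction'': if $m - 1 > s$ and $\int_{\R^N} \verti{h}^{-sp-N} \vertii{\Delta_h^m f}_{L^p}^p \, dh < \infty$, then there exists a polynomial $Q$ of degree at most $m-1$ such that $\int_{\R^N} \verti{h}^{-sp-N} \vertii{\Delta_h^{m-1}(f-Q)}_{L^p}^p \, dh < \infty$. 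Since $\Delta_h^m$ annihilates every polynomial of degree $< m$, the hypothesis is preserved under the substitution $f \mapsto f - Q$, so one can iterate the reduction $m - \lfloor s \rfloor - 1$ times until the order drops to $\lfloor s \rfloor + 1$; concatenating the resulting polynomials yields the desired $P$.

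To prove the one-step reduction, I would first mollify, setting $f_\delta := f \ast \rho_\delta$. By Lemma \ref{decre convol}, the hypothesis holds for $f_\delta$ uniformly in $\delta$, and $f_\delta$ is smooth. For the smooth function $f_\delta$, I would take $Q_\delta$ to be the Taylor polynomial of $f_\delta$ of order $m-1$ centered at the origin, so that $f_\delta - Q_\delta$ vanishes to order $m$ at $0$. Using Lemma \ref{delta fo},
\bes
\Delta_h^m f_\delta(x) = \int_0^m \chi_m(t)\, D^m_{x+th} f_\delta(h,\dots,h)\, dt,
\ees
which expresses $m$-th differences as averages of $m$-th derivatives along segments, together with Taylor's remainder formula applied to $\Delta_h^{m-1}(f_\delta - Q_\delta)$, I would derive a uniform bound on the weighted integral of $\vertii{\Delta_h^{m-1}(f_\delta - Q_\delta)}_{L^p}^p$ in terms of the weighted integral of $\vertii{\Delta_h^m f_\delta}_{L^p}^p$. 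Finally, I would pass to the limit $\delta \to 0$, extracting a convergent subsequence of $(Q_\delta)$: the space of polynomials of degree $\leq m-1$ is finite-dimensional, and the coefficients can be controlled using the local $L^1$ convergence of $f_\delta$ to $f$. The limit polynomial is then the desired $Q$.

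The main technical obstacle is the uniform-in-$\delta$ estimate on $\vertii{\Delta_h^{m-1}(f_\delta - Q_\delta)}_{L^p}$ in terms of the hypothesis; conceptually, this is the step of inverting a difference operator, trading a polynomial ambiguity for a reduction of order. A cleaner but more machinery-intensive alternative would be to invoke the classical equivalence, modulo polynomials, of the Besov semi-norms $\bigl(\int \verti{h}^{-sp-N}\vertii{\Delta_h^k f}_{L^p}^p \, dh\bigr)^{1/p}$ for different integers $k > s$, which is established via Littlewood-Paley theory (see Triebel \cite{triebel2010theory}); the polynomial ambiguity then arises naturally from the low-frequency component of a dyadic decomposition of $f$.
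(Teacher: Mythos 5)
The overall induction on the excess order $m-(\lfloor s\rfloor+1)$ is sound (the base case is indeed definition \eqref{def 1}, and $\Delta_h^m$ kills polynomials of degree $\le m-1$), but your proof of the one-step reduction breaks down at the choice of the normalizing polynomial, and this is not a repairable technicality. The seminorms in play are global $L^p(\R^N)$ quantities, and subtracting the Taylor polynomial of $f_\delta$ at the origin makes $f_\delta-Q_\delta$ small near $0$ at the price of polynomial growth at infinity, which generically makes the left-hand side of your claimed uniform estimate infinite. Concretely, take $0<s<1$, $m=2$, and $f\in C_c^\infty(\R^N)$ with $\nabla f(0)\neq 0$: the hypothesis of the lemma holds (and the conclusion holds trivially with $P=0$), yet $Q_\delta(x)\approx f(0)+\nabla f(0)\cdot x$, so $f_\delta-Q_\delta$ contains the linear term $-\nabla f(0)\cdot x$, whose first difference is the nonzero constant function $x\mapsto -\nabla f(0)\cdot h$; hence $\vertii{\Delta_h(f_\delta-Q_\delta)}_{L^p(\R^N)}=\infty$ for a.e.\ $h$, while the right-hand side is finite. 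So no bound of the proposed form can hold with a Taylor normalization at a point: the polynomial that must be subtracted is dictated by the behavior of $f$ at infinity (e.g.\ limits of polynomial projections over large balls, or the low-frequency part of a dyadic decomposition), and constructing it and proving its convergence is precisely the substance of the lemma, not a side issue. A secondary problem with the same step: the coefficients of $Q_\delta$ are pointwise derivatives of $f_\delta$ at $0$, which for a general $f\in L^1_{loc}$ need not stay bounded as $\delta\to 0$, so the compactness argument for extracting a limit polynomial is also unjustified.

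For comparison, the paper does not attempt such a direct argument at all: it obtains the lemma in one line from Dorronsoro's mean-oscillation characterization of Besov spaces (Theorems 1 and 3 of \cite{dorronsoro1985mean}). The Littlewood--Paley route you mention at the end (equivalence, modulo polynomials, of the $k$-th difference seminorms for all integers $k>s$, with the polynomial arising from the low-frequency blocks) is indeed the standard way to prove this without citing Dorronsoro, and would be acceptable if carried out; but in your proposal it is only an aside, while the primary argument as written fails.
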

\begin{lemma}\label{Zero Poly}
Let $m$ be an integer.
    If $P$ is a polynomial satisfying
    \begin{flalign*}\label{assumption lemma poly}
        \mathscr{H}^{N-1}\left(\left\{\xi \in \mathbb{S}^{N-1}; \, \exists \ t>0,  \  \vertii{\Delta^m_{t\xi}P}_{L^p}< \infty \right\}\right) >0,
    \end{flalign*} then $P$ is of degree $\leq m-1$, and therefore
    \begin{flalign*}
        \Delta_h^m P (x) = 0, \ \fo x \in \R^N, \ \fo h \in \R^N.
    \end{flalign*}
\end{lemma}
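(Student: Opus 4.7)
The strategy is to first upgrade the integrability hypothesis into a pointwise identity, then extract from it an algebraic constraint that forces $\deg P\le m-1$. As a preliminary step, fix $\xi\in\mathbb{S}^{N-1}$ and $t>0$. The function $x\mapsto\Delta^m_{t\xi}P(x)=\sum_{l=0}^{m}\binom{m}{l}(-1)^{m-l}P(x+lt\xi)$ is a polynomial in $x$, and a nonzero polynomial on $\R^N$ cannot belong to $L^p(\R^N)$ for $1\le p<\infty$, since it grows polynomially at infinity. Hence the finiteness of $\vertii{\Delta^m_{t\xi}P}_{L^p}$ forces $\Delta^m_{t\xi}P\equiv 0$ on $\R^N$. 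The hypothesis therefore provides a set $A\subset\mathbb{S}^{N-1}$ with $\mathscr{H}^{N-1}(A)>0$ and, for each $\xi\in A$, a scale $t(\xi)>0$ with $\Delta^m_{t(\xi)\xi}P\equiv 0$.

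Set $d\coloneq\deg P$ and decompose $P=P_d+P_{d-1}+\dots+P_0$ with $P_j$ homogeneous of degree $j$. I would identify the top-degree-in-$x$ part of $\Delta^m_{t\xi}P$ by plugging the Taylor expansion $P_d(x+lt\xi)=\sum_{k\ge 0}(lt)^{k}\,D^{k}P_d(x)(\xi,\dots,\xi)/k!$ into the explicit expression of $\Delta^m_{t\xi}P_d(x)$. The combinatorial identity
\begin{equation*}
\sum_{l=0}^m\binom{m}{l}(-1)^{m-l}\,l^{k}=\begin{cases}0,&k<m,\\ m!,&k=m,\end{cases}
\end{equation*}
kills all terms of order $k<m$, so that the component of $\Delta^m_{t\xi}P(x)$ which is homogeneous of maximal degree $d-m$ in $x$ is exactly $t^{m}D^{m}P_d(x)(\xi,\dots,\xi)$. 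Since $\Delta^m_{t(\xi)\xi}P$ vanishes identically for $\xi\in A$, the top piece must vanish as well: $D^{m}P_d(x)(\xi,\dots,\xi)=0$ as a polynomial in $x$, for every $\xi\in A$.

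For fixed $x$, $\xi\mapsto D^{m}P_d(x)(\xi,\dots,\xi)$ is itself a polynomial in $\xi$, and since the cone $\R_+\cdot A\subset\R^N$ has positive Lebesgue measure, this polynomial must vanish identically on $\R^N$. Thus $D^{m}P_d(x)(y,\dots,y)=0$ for all $x,y\in\R^N$; polarizing in $y$ yields $D^{m}P_d\equiv 0$, which forces $P_d$ to have degree $<m$. Being also homogeneous of degree $d$, this forces $d<m$, hence $\deg P\le m-1$. The conclusion $\Delta_h^m P\equiv 0$ then follows from the standard fact that $\Delta_h$ lowers the degree of a polynomial by one, so $m$ iterations annihilate any polynomial of degree $\le m-1$. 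The only delicate point is the identification of the top-degree component in the second step; once the combinatorial identity above is in hand, the remaining arguments (polynomials vanishing on sets of positive Lebesgue measure, polarization of symmetric multilinear forms) are routine.
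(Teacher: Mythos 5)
Your argument is correct and its skeleton coincides with the paper's: first, a polynomial lying in $L^p(\R^N)$ must vanish, so $\Delta^m_{t(\xi)\xi}P\equiv 0$ for every $\xi$ in a set $A\subset\mathbb{S}^{N-1}$ with $\mathscr{H}^{N-1}(A)>0$; then this identity is converted into the vanishing of an $m$-th order directional quantity; finally, a polynomial in $\xi$ vanishing on $A$ (equivalently on the cone $\R_{+}A$, of positive Lebesgue measure) vanishes identically, which yields the degree bound. Where you diverge is only the middle step: the paper's Lemma~\ref{one t} rotates $\xi$ to $e_1$ and runs a degree count in the variable $x_1$ to obtain $\partial^m_{\xi}P=0$ for the whole polynomial $P$, whereas you expand only the leading homogeneous part $P_d$ by Taylor's formula together with the identity $\sum_{l=0}^m\binom{m}{l}(-1)^{m-l}l^k=0$ for $k<m$ and $=m!$ for $k=m$, obtaining $D^mP_d(\cdot)(\xi,\dots,\xi)=0$ for $\xi\in A$, and then polarize; the paper instead feeds the stronger pointwise statement $\partial^m_\xi P=0$ into its Lemma~\ref{p=0}, which rests on exactly the cone argument you use. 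Both implementations are elementary and of comparable length. Two cosmetic points: dispose of the trivial case $\deg P<m$ before speaking of the degree-$(d-m)$ component, and the fact that a nonzero polynomial cannot lie in $L^p(\R^N)$ merits a one-line justification (the paper states it without proof as well); neither affects correctness.
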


\begin{lemma}\label{int norm}
Let $s$ be an integer and $ 1 \leq p<\infty$.  Let $A \subset \mathbb{S}^{N-1}$ satisfy $\mathscr{H}^{N-1}(A)>0$. There exist $0<\mathcal{C}^1_{s,p,A}\leq \mathcal{C}^2_{s,p,A}<\infty$ such that, for each $f \in W^{s,1}_{\text{loc}}$, 
 \be \label{int} \mathcal{C}^1_{s,p,A}\verti{f}_{W^{s,p}}^p \leq \int_{A} \pari{\int_{\R^N}\verti{\partial^s_{\xi}f(x)}^p \ dx} \ d\mathscr{H}^{N-1}(\xi) \leq  \mathcal{C}^2_{s,p,A} \verti{f}_{W^{s,p}}^p.
    \ee 

\end{lemma}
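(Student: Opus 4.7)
My approach would reduce both inequalities to a finite-dimensional norm-equivalence argument on the space $\mathcal P_s$ of real homogeneous polynomials of degree $s$ on $\R^N$.

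The upper bound is immediate: for $\xi\in\mathbb S^{N-1}$ and a.e.\ $x\in\R^N$ we have $|\partial^s_\xi f(x)|=|D^s_xf(\xi,\dots,\xi)|\le \vertii{D^s_xf}$, so Fubini yields
\[
\int_{A}\pari{\int_{\R^N}|\partial^s_\xi f(x)|^p\,dx}\,d\mathscr{H}^{N-1}(\xi)\le \mathscr{H}^{N-1}(A)\,\verti{f}_{W^{s,p}}^p,
\]
which gives the right-hand inequality with $\mathcal{C}^2_{s,p,A}:=\mathscr{H}^{N-1}(A)$.

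For the lower bound I would fix a point $x$ at which $D^s_xf$ is defined and view $P_x(\xi):=\partial^s_\xi f(x)$ as an element of $\mathcal P_s$. The functional $N_A(P):=\bigl(\int_A|P(\xi)|^p\,d\mathscr{H}^{N-1}(\xi)\bigr)^{1/p}$ is manifestly a seminorm on $\mathcal P_s$; the key step is to upgrade it to a norm. For this I would argue that if $P\in\mathcal P_s$ vanishes $\mathscr{H}^{N-1}$-a.e.\ on $A$, then by homogeneity $P$ vanishes on the cone $\{t\xi:t>0,\,\xi\in A\}$, which has positive $N$-dimensional Lebesgue measure since $\mathscr{H}^{N-1}(A)>0$. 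Any polynomial on $\R^N$ that vanishes on a set of positive Lebesgue measure is identically zero (standard induction on dimension), whence $P\equiv 0$. Since $\dim \mathcal P_s<\infty$, $N_A$ is equivalent to any other norm on $\mathcal P_s$; in particular, identifying $\mathcal P_s$ with the space of symmetric $s$-linear forms via polarization, $N_A$ is equivalent to the operator norm $P\mapsto \vertii{D^s_xf}$ when $P=P_x$. This produces a constant $C=C_{s,p,N,A}$, independent of $x$ and $f$, such that
\[
\vertii{D^s_xf}^p\le C\int_{A}|\partial^s_\xi f(x)|^p\,d\mathscr{H}^{N-1}(\xi)\quad\text{for a.e.\ }x\in\R^N.
\]
Integrating in $x$ and applying Fubini delivers the left-hand inequality with $\mathcal{C}^1_{s,p,A}:=1/C$.

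The only non-routine step is the polynomial rigidity in the previous paragraph, which is precisely where the hypothesis $\mathscr{H}^{N-1}(A)>0$ is used. Once that point is settled, everything else is finite-dimensional linear algebra together with Fubini, and the argument goes through uniformly for all $1\le p<\infty$ (in particular it does not invoke Ornstein-type estimates, and so is insensitive to the $p=1$, $s\ge 2$ obstruction). The main obstacle is therefore conceptual rather than technical: one must recognise that fixing $x$ turns the estimate into a coercivity statement for an $L^p(A)$-seminorm on a finite-dimensional polynomial space.
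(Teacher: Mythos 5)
Your proposal is correct and follows essentially the same route as the paper: the upper bound by the pointwise estimate $\verti{\partial^s_\xi f(x)}\le \vertii{D^s_x f}$, and the lower bound by freezing $x$, recognizing $\xi\mapsto\partial^s_\xi f(x)$ as a homogeneous polynomial (equivalently, the diagonal restriction of the symmetric form $D^s_x f$), invoking the rigidity that a polynomial vanishing on the positive-measure cone over $A$ is zero, and then using finite-dimensional norm equivalence with constants uniform in $x$ before integrating. The only cosmetic difference is that the paper phrases the key step as a norm statement for $s$-linear forms (its Lemma on forms), while you phrase it on $\mathcal{P}_s$ via polarization; the substance is identical.
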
 

Granted the above results, we proceed to the proof of Theorem \ref{good fspace}.
\begin{proof}[Proof of Theorem \ref{good fspace}]
(1) Let $s$ be non-integer. If $f \in \dot{W}^{s,p}$, then $\mathscr{E}_{s,p}(f)<\infty$, by Lemma \ref{Jensen}. Conversely, if  $\mathscr{E}_{s,p}(f)< \infty$, then
\be\label{xi}
\mathscr{H}^{N-1}\left(\left\{ \xi \in \mathbb{S}^{N-1}; \, \int_{0}^{\infty} t^{-sp-1} \vertii{\Delta^{\lfloor s \rfloor +1}_{t\xi} f}^p_{L^p} \ dt < \infty \right\}\right)>0.
\ee

Consequently, there exists a basis $(u_1,\dots,u_N)$ of $\R^N$ such that, for each $1 \leq i \leq N$,
\bes
\int_{0}^{\infty} t^{-sp-1} \vertii{\Delta^{\lfloor s \rfloor +1}_{tu_i} f}^p_{L^p} \ dt <\infty.
\ees

Set $g\coloneq f\circ T^{-1}$, where $T$ is the linear transformation satisfying $T(u_i)=e_i$. 
For each $\xi \in \mathbb{S}^{N-1}$, we have
\bes
\ba
\int_{0}^{\infty} t^{-sp-1} \vertii{\Delta^{\lfloor s \rfloor +1}_{t\frac{T(\xi)}{\verti{T(\xi)}}} g}^p_{L^p} \ dt&= \frac{1}{\verti{T(\xi)}^{sp}}\int_{0}^{\infty} u^{-sp-1}\vertii{\Delta^{\lfloor s \rfloor +1}_{uT(\xi)} g}^p_{L^p} \ du \\ &= \frac{1}{\verti{T(\xi)}^{sp}} \int_{0}^{\infty} t^{-sp-1} \vertii{\pari{\Delta^{\lfloor s \rfloor +1}_{tT^{-1}(T(\xi))} f } \circ T^{-1}}^p_{L^p} \ dt  \\
& = \frac{\verti{\text{det} \ T}}{\verti{T(\xi)}^{sp}} \int_{0}^{\infty} t^{-sp-1} \vertii{\Delta^{\lfloor s \rfloor +1}_{t\xi} f}^p_{L^p} \ dt,
\ea
\ees
where we have used \eqref{delta comp} and performed the changes of variables $u=t\verti{T(\xi)}$, $y=T^{-1}(x)$.
Therefore, we have 
\bes
\begin{split}
 F_T\left(\left\{ \xi \in \mathbb{S}^{N-1};  \, \int_{0}^{\infty} t^{-sp-1} \vertii{\Delta^{\lfloor s \rfloor +1}_{t\xi} f}^p_{L^p} \ dt < \infty \right\} \right) \\ = \left\{ \xi \in \mathbb{S}^{N-1}; \, \int_{0}^{\infty} t^{-sp-1} \vertii{\Delta^{\lfloor s \rfloor +1}_{t\xi} g}^p_{L^p} \ dt < \infty \right\},
\end{split}
\ees
where $F_T$ is as in \eqref{ft}.
In particular, we have, for each $1 \leq i \leq N$,
\be \label{g slic}
\int_{0}^{\infty} t^{-sp-1} 
 \vertii{\Delta^{\lfloor s \rfloor +1}_{te_i} g}^p_{L^p} \ dt < \infty,
\ee
and
\be \label{g 2}
\mathscr{H}^{N-1}\left(\left\{ \xi \in \mathbb{S}^{N-1}; \, \int_{0}^{\infty} t^{-sp-1} \vertii{\Delta^{\lfloor s \rfloor +1}_{t\xi} g}^p_{L^p} \ dt < \infty \right\}\right) >0,
\ee
by \eqref{xi}.

\smallskip
If $s<1$, \eqref{g slic} and Theorem \ref{theo besov} imply that $g \in \dot{W}^{s,p}$ (recall that, in this case, \eqref{Besov} holds for each measurable function). Therefore $f \in \dot{W}^{s,p}$ (by Corollary \ref{T W}). Recall that, when $s<1$, 

\smallskip
If $s>1$, we argue as follows. 
 By \eqref{g slic} and Theorem \ref{higher slicing}, we have 
\be \label{hyos}
\int_{\R^N}\frac{\vertii{\Delta_h^{N(\lfloor s \rfloor +1)}g}^p_{L^p}} {\verti{h}^{sp+N}} \ dh < \infty.
\ee

By \eqref{hyos} and Lemma \ref{oscar res}, we find that there exists a polynomial $P$ such that 
$g-P \in \dot{W}^{s,p}$. By Theorem \ref{theo besov}, we have, for each $\xi \in \mathbb{S}^{N-1}$,
\be\label{P control}
(K^1_{s,p,N})^p\int_{0}^{\infty}t^{-sp-1}\vertii{\Delta^{\lfloor s \rfloor +1}_{t\xi}(g-P)}_{L^p}^p \ dt \leq \verti{g-P}_{W^{s,p}}^p.
\ee

Therefore, \eqref{g 2}, \eqref{P control}, and the triangular inequality imply that
\bes
 \mathscr{H}^{N-1}\left(\left\{ \xi \in \mathbb{S}^{N-1}; \, \int_{0}^{\infty}t^{-sp-1}\vertii{\Delta^{\lfloor s \rfloor +1}_{t\xi}P}_{L^p}^p \ dt < \infty\right\}\right)>0.
\ees

In particular, 
\bes
 \mathscr{H}^{N-1}\left(\left\{ \xi \in \mathbb{S}^{N-1}; \, \exists\, t>0\text{ such that } \vertii{\Delta^{\lfloor s \rfloor +1}_{t\xi}P}_{L^p}^p  < \infty\right\}\right)>0.
\ees

By Lemma \ref{Zero Poly}, we find that 
\bes
\Delta_h^{\lfloor s \rfloor +1} P (x) = 0, \ \fo x \in \R^N, \ \fo h \in \R^N.
\ees

Finally, 
\bes
\verti{g}_{W^{s,p}}=\verti{g-P}_{W^{s,p}}<\infty,
\ees
 which implies that $f \in \dot{W}^{s,p}$.
 
\smallskip 
\noindent (2) Let $s$ be an integer.

\smallskip
If $s=1$, we may argue  as in the proof of item (1) with $0<s<1$, using \eqref{slice s=1} instead of Theorem \ref{theo besov}. 

\smallskip
If $s\geq 2$, we argue as follows. Let $f \in W^{s,1}_{\text{loc}}$ be such that $\mathscr{E}_{s,p}(f)<\infty$. Then
\bes
\mathscr{H}^{N-1}\left(\left\{ \xi \in \mathbb{S}^{N-1}; \, \vertii{\partial^s_{\xi}f}_{L^p} < \infty \right\} \right)>0,
\ees
and therefore there exists $M<\infty$ such that 
\bes
\mathscr{H}^{N-1}\left(\left\{ \xi \in \mathbb{S}^{N-1}; \, \vertii{\partial^s_{\xi}f}_{L^p} < M \right\}\right)>0.
\ees

Set $A\coloneq \{ \xi \in \mathbb{S}^{N-1}; \, \vertii{\partial^s_{\xi}f}_{L^p} < M \}. $ By Lemma \ref{int norm} , there exists $C< \infty$ such that
\bes
\ba
\verti{f}_{W^{s,p}}^p \leq C \int_{A} \vertii{\partial^s_{\xi}f}_{L^p}^p  \ d\mathscr{H}^{N-1}(\xi)  \leq C M^p \mathscr{H}^{N-1}(A) < \infty,
\ea
\ees
which implies that $f \in \dot{W}^{s,p}$.
\end{proof}

We now turn to the proofs of Lemmas \ref{oscar res}, \ref{Zero Poly}, and \ref{int norm}. 
\begin{proof}[Proof of Lemma \ref{oscar res}]
This result is a direct consequence of the combination of Theorems $1$ and $3$ in \cite{dorronsoro1985mean}. 
\end{proof}

In the proof of Lemma \ref{Zero Poly}, we will rely on the following results.
\begin{lemma}\label{one t}
    Let $m$ be an integer and $P$ be a polynomial. If $\xi \in \R^N\setminus \{0\}$ is such that $\Delta^m_{\xi}P=0$, then $\partial^m_{\xi} P = 0$.    
\end{lemma}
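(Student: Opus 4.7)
The plan is to reduce the multivariable statement to a well-known one-variable fact about finite differences of polynomials. Fix an arbitrary $x \in \R^N$ and consider the one-variable polynomial
\[
 Q_x(t) \coloneq P(x+t\xi), \quad t \in \R.
\]
A direct computation from \eqref{useful identity}, or a trivial induction on $m$ using $Q_{x+s\xi}(t) = Q_x(t+s)$, yields the identity
\[
 (\Delta_1^m Q_x)(s) = (\Delta^m_{\xi} P)(x+s\xi), \quad \fo s \in \R,
\]
where $\Delta_1$ denotes the one-dimensional forward difference of step $1$. By assumption the right-hand side vanishes for every $s$, so the polynomial $\Delta_1^m Q_x$ has infinitely many zeros, hence is identically zero.

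I would then invoke the elementary fact that the operator $\Delta_1$ strictly lowers the degree of a non-constant one-variable polynomial by exactly one: if $Q(t)=a_d t^d+\ldots$ with $d\ge 1$ and $a_d\ne 0$, then $\Delta_1 Q(t) = d\,a_d\, t^{d-1}+\ldots$, so $\Delta_1^m Q \equiv 0$ forces $\deg Q \le m-1$. Applied to $Q_x$, this gives $\deg Q_x \le m-1$, and consequently
\[
 \partial_t^m Q_x(t) = 0, \quad \fo t \in \R.
\]

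Finally, by the chain rule applied to the restriction of $P$ to the affine line $\{x+t\xi;\, t\in\R\}$,
\[
 \partial_t^m Q_x(t) = \partial^m_{\xi} P(x+t\xi).
\]
Evaluating at $t=0$ gives $\partial^m_\xi P(x)=0$, and since $x\in\R^N$ was arbitrary, $\partial^m_\xi P \equiv 0$, as required. There is no real obstacle here; the only point that deserves a line of justification is the identity $(\Delta_1^m Q_x)(s)=(\Delta^m_{\xi}P)(x+s\xi)$, which is immediate from \eqref{useful identity}.
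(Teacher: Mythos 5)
Your proof is correct and follows essentially the same idea as the paper's: both arguments reduce to the one-dimensional fact that $\Delta^m$ annihilates exactly the polynomials of degree $\leq m-1$ in the $\xi$-direction, you by restricting $P$ to the lines $t\mapsto x+t\xi$, the paper by normalizing $\xi=e_1$ and expanding $P$ in powers of $X_1$ with polynomial coefficients. Your packaging has the minor advantage of avoiding the normalization step, but the substance is identical.
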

\begin{lemma}\label{form norm}
Let $k$ be an integer,  $1 \leq p <\infty$, and $A \subset \mathbb{S}^{N-1}$ satisfy $\mathscr{H}^{N-1}(A)>0$. Then the map
    \bes
      \eta \mapsto \pari{\int_{A}\verti{\eta(\xi,\dots,\xi)}^p \ d\mathscr{H}^{N-1}(\xi)}^{1/p}  
    \ees
is a norm on the space of $k$-linear forms of $\R^N$.    
\end{lemma}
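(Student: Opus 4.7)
The plan is to verify the three axioms of a norm. Non-negativity and absolute homogeneity follow directly from the definition, and the triangle inequality is exactly Minkowski's inequality applied in $L^p(A, d\mathscr{H}^{N-1})$. Everything therefore reduces to the definiteness axiom: if
\bes
\int_A \verti{\eta(\xi,\dots,\xi)}^p\, d\mathscr{H}^{N-1}(\xi) = 0,
\ees
then $\eta = 0$. I will treat $\eta$ as a symmetric $k$-linear form, which is the only case relevant to the applications (the forms $D^s_x f$ appearing in Lemma \ref{int norm} are symmetric).

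Granted the setup, the hypothesis gives $\eta(\xi,\dots,\xi)=0$ for $\mathscr{H}^{N-1}$-a.e.\ $\xi \in A$. Set $P(x):=\eta(x,\dots,x)$, which is a homogeneous polynomial of degree $k$ on $\R^N$. The strategy is to show that $P \equiv 0$ on all of $\R^N$, and then to recover $\eta=0$ via the polarization identity for symmetric multilinear forms, which expresses each value $\eta(v_1,\dots,v_k)$ as a linear combination of values of $P$ at sums $\sum_{i \in S} v_i$.

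The core step is the claim that a nonzero homogeneous polynomial cannot vanish on a subset of $\mathbb{S}^{N-1}$ of positive $\mathscr{H}^{N-1}$-measure. Suppose toward a contradiction that $P\not\equiv 0$ vanishes on some Borel set $B \subset A$ with $\mathscr{H}^{N-1}(B)>0$. By homogeneity, $P$ vanishes on the cone $C := \{r\omega : r>0,\,\omega \in B\}$. Writing Lebesgue measure in polar coordinates,
\bes
\verti{C \cap \{ \verti{x}<1\}} = \int_0^1 r^{N-1} \, dr \cdot \mathscr{H}^{N-1}(B) = \frac{\mathscr{H}^{N-1}(B)}{N} > 0,
\ees
contradicting the classical fact that the zero set of a nonzero polynomial on $\R^N$ is an algebraic variety, hence has Lebesgue measure zero. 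Thus $P\equiv 0$, and polarization finishes the proof.

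The main (mild) obstacle is simply bookkeeping around symmetry: since $\eta(\xi,\dots,\xi)$ only sees the symmetric part of $\eta$, the statement is substantive only for symmetric $k$-linear forms, which is exactly the setting of interest. Beyond that subtlety, every step is routine.
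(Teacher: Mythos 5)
Your proposal is correct and takes essentially the same route as the paper: the paper reduces definiteness to Lemma \ref{p=0} (a homogeneous polynomial vanishing on a subset of $\mathbb{S}^{N-1}$ of positive $\mathscr{H}^{N-1}$-measure is identically zero), proved by combining homogeneity with the fact that the zero set of a nonzero polynomial is Lebesgue-null, which is exactly your cone/polar-coordinates argument. Your explicit symmetry caveat and polarization step are apt, since for general (e.g.\ alternating) $k$-linear forms the quantity is only a semi-norm; the paper leaves this point implicit, and in its application the relevant forms $D^s_x f$ are indeed symmetric.
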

Lemma \ref{form norm} is a direct consequence of 
\begin{lemma}\label{p=0}
    Let $A \subset \mathbb{S}^{N-1}$ be such that $\mathscr{H}^{N-1}(A)>0$. If $P$ is a homogeneous polynomial satisfying $P(\xi)=0$, for each $\xi \in A$, then $P=0$.
\end{lemma}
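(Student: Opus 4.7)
My plan is to exploit homogeneity to turn the hypothesis about $A \subset \mathbb{S}^{N-1}$ into a statement about a set of positive Lebesgue measure in $\R^N$, and then invoke the classical fact that a nonzero polynomial cannot vanish on a set of positive Lebesgue measure.

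More precisely, suppose $P$ is homogeneous of degree $k$. The first step is to observe that if $P(\xi)=0$ for every $\xi \in A$, then $P(t\xi) = t^k P(\xi) = 0$ for every $t \in \R$ and $\xi \in A$. Hence $P$ vanishes on the truncated cone
\[
C_A \coloneq \{ t\xi;\, \tfrac12 < t < 1,\ \xi \in A\} \subset \R^N.
\]
The second step is to compute, via polar coordinates,
\[
|C_A| \,=\, \int_{1/2}^{1}\int_{A} t^{N-1}\, d\mathscr{H}^{N-1}(\xi)\, dt \,=\, \mathscr{H}^{N-1}(A)\int_{1/2}^{1} t^{N-1}\, dt \,>\, 0,
\]
since $\mathscr{H}^{N-1}(A)>0$ by assumption.

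The third and final step is to apply the classical fact that a polynomial on $\R^N$ vanishing on a set of positive Lebesgue measure is identically zero. If one prefers not to quote this as a black box, it can be established by a short induction on $N$: for $N=1$ a nonzero polynomial has only finitely many roots; for the induction step, write $P(x_1,\ldots,x_N)=\sum_{i=0}^d a_i(x_2,\ldots,x_N)\, x_1^i$ with $a_d \not\equiv 0$, apply Fubini to the zero set of $P$, and use the one-dimensional case to deduce that $a_d$ vanishes on a set of positive measure in $\R^{N-1}$, contradicting the induction hypothesis.

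No step here is a real obstacle; the argument is entirely standard. The only thing worth being careful about is that we need $C_A$ to be genuinely measurable with positive measure, which is immediate from the polar decomposition of Lebesgue measure since $A$ is $\mathscr{H}^{N-1}$-measurable. Thus $P$ vanishes on a set of positive Lebesgue measure, hence $P \equiv 0$, which is the desired conclusion.
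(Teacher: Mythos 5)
Your proof is correct and follows essentially the same route as the paper: use homogeneity to see that $P$ vanishes on a cone over $A$, which has positive Lebesgue measure, and then invoke the standard fact that a polynomial vanishing on a set of positive measure is identically zero. Your version merely makes explicit the polar-coordinates computation and a proof sketch of the standard fact, which the paper cites without proof.
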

\begin{proof}[Proof of Lemma \ref{p=0}]
Lemma \ref{p=0} follows from the homogeneity of $P$ and the following standard result: if $C \subset \R^{N}$ is such that $\verti{C}>0$, and if $P$ is a polynomial satisfying 
    $P(x)=0$, for each $x\in C$,
    then $P=0$.
\end{proof}
We now turn to the 
\begin{proof}[Proof of Lemma \ref{Zero Poly}]
Set $A\coloneq \left\{\xi \in \mathbb{S}^{N-1}; \,  \exists \  t>0,  \vertii{\Delta^m_{t\xi}P}_{L^p}< \infty \right\}$ . Let $\xi \in A$ and $t>0$ such that $\vertii{\Delta^m_{t\xi}P}_{L^p}<\infty$. Since the map
\bes
x \mapsto \Delta^{m} _{t\xi}P(x)
\ees
is a polynomial in $L^p$, we have $\Delta^{m}_{t\xi}P=0$, and therefore Lemma \ref{one t} yields
$\partial^m_{\xi}P=0$. 

\smallskip
Consequently, for each $x \in \R^N$, the map \ $\xi \mapsto \partial^m_{\xi} P(x)$
is a homogeneous polynomial vanishing on $A$. Combining this with Lemma \ref{p=0} we find that
\bes
\partial^m_{\xi}P(x)=0, \  \fo \xi \in \mathbb{S}^{N-1}, \ \fo x \in \R^N,
\ees
and thus $\deg (P)\le m-1$. This completes the proof of Lemma \ref{Zero Poly}.  
\end{proof}
\begin{proof}[Proof of Lemma \ref{int norm}]
 Let $f \in W^{s,1}_{\text{loc}}$. For a.e.\ $x \in \R^N$, the maps
 \bes
  (\R^N)^s \ni (h_1,\dots,h_s) \mapsto D_x^sf(h_1,\dots,h_s)
 \ees
 are $s$-linear forms. Therefore, by Lemma \ref{form norm}, there exist $0<\mathcal{C}^1_{s,p,A}\leq \mathcal{C}^2_{s,p,A} < \infty$ such that 
 \be \label{d ineq}
   \mathcal{C}^1_{s,p,A}\vertii{D^s_x f}^p \leq \int_{A}  \verti{\partial^s_{\xi} f (x)}^p \ d\mathscr{H}^{N-1}(\xi) \leq \mathcal{C}^2_{s,p,A} \vertii{D^s_x f}^p,
 \ee
 for a.e.\ $x \in \R^N$.
The conclusion of the lemma follows by integrating in $x$ \eqref{d ineq}. 
\end{proof}

\section{Applications of Theorems \ref{there is min intro} and \ref{constraint on min intro}}\label{appli}
In this short section, we present some straightforward consequences of Theorems \ref{there is min intro} and \ref{constraint on min intro}. The proof of Theorem \ref{constraint on min intro} will be given in Sections \ref{pf s=1} and \ref{sect general fractional}.
\begin{coro}\label{affine = classic}
Let $s>0$ and $1 \leq p<\infty$, with $p>1$ if $s$ is an integer $\geq 2$.
   For each $f \in \dot{W}^{s,p}$, there exists $T_f \in \text{SL}_N$ such that
   \begin{flalign}\label{equiv affine=cla}
    C^1_{s,p,N}\sigma_N^{1/p}\verti{ f\circ T_f}_{W^{s,p}}\leq \mathscr{E}_{s,p}(f \circ T_f) \leq C^2_{s,p,N}\sigma_{N}^{1/p}\verti{f \circ T_f}_{W^{s,p}},
   \end{flalign}
   where $C^1_{s,p,N}$ and $C^2_{s,p,N}$ are the constants given by Theorem \ref{constraint on min intro}. 
\end{coro}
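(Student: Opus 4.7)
The plan is to combine Theorem~\ref{there is min intro} (existence of a minimizer), Theorem~\ref{constraint on min intro} (directional equivalence of the semi-norm at the minimizer), and the defining formula for $\mathscr{E}_{s,p}$ with a short arithmetic manipulation on the exponents.

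First, I would apply Theorem~\ref{there is min intro} to obtain $T_f \in \text{SL}_N$ with $\verti{f\circ T_f}_{W^{s,p}} = \min\{\verti{f\circ T}_{W^{s,p}};\, T\in \text{SL}_N\}$, and set $g \coloneq f\circ T_f$. A key preliminary observation is that $g$ itself satisfies the minimization hypothesis of Theorem~\ref{constraint on min intro}: since the map $T\mapsto T_fT$ is a bijection of $\text{SL}_N$, we have
\begin{flalign*}
\min_{T\in \text{SL}_N} \verti{g\circ T}_{W^{s,p}} = \min_{T\in \text{SL}_N} \verti{f\circ(T_fT)}_{W^{s,p}} = \min_{T\in \text{SL}_N}\verti{f\circ T}_{W^{s,p}} = \verti{g}_{W^{s,p}}.
\end{flalign*}
Therefore, Theorem~\ref{constraint on min intro} (item (1) if $s$ is non-integer, item (2) otherwise, where $p=1$ is allowed only when $s=1$) applies to $g$, giving, with
\begin{flalign*}
\mathcal{I}(\xi) \coloneq \int_{0}^{\infty} t^{-sp-1} \vertii{\Delta^{\lfloor s\rfloor+1}_{t\xi} g}_{L^p}^p\, dt \quad \text{or} \quad \mathcal{I}(\xi)\coloneq \int_{\R^N} \verti{\partial^s_\xi g(x)}^p \, dx,
\end{flalign*}
the pointwise estimate
\begin{flalign*}
(C^1_{s,p,N})^p\, \verti{g}_{W^{s,p}}^p \leq \mathcal{I}(\xi) \leq (C^2_{s,p,N})^p\, \verti{g}_{W^{s,p}}^p, \quad \fo \xi\in \mathbb{S}^{N-1}.
\end{flalign*}

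Next, I would plug these bounds into the definition of $\mathscr{E}_{s,p}(g)$. Raising to the negative power $-N/(sp)$ reverses the inequalities; integrating over $\mathbb{S}^{N-1}$ picks up a factor of $\sigma_N$; raising to the (again negative) power $-s/N$ reverses the inequalities once more; and finally multiplying by the prefactor $\sigma_N^{(N+sp)/(Np)}$ appearing in $\mathscr{E}_{s,p}$ yields the $\sigma_N$-exponent $(N+sp)/(Np) - s/N = 1/p$. The result is precisely
\begin{flalign*}
C^1_{s,p,N}\, \sigma_N^{1/p}\, \verti{g}_{W^{s,p}} \leq \mathscr{E}_{s,p}(g) \leq C^2_{s,p,N}\, \sigma_N^{1/p}\, \verti{g}_{W^{s,p}},
\end{flalign*}
which is \eqref{equiv affine=cla} with the chosen $T_f$.

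There is no substantial obstacle here: the deep content has already been placed in Theorems~\ref{there is min intro} and~\ref{constraint on min intro}. The only mildly subtle point is the transfer of the minimization property from $f$ to $g = f\circ T_f$, handled above by the reindexing $T\mapsto T_fT$; the rest is bookkeeping with the exponents $-N/(sp)$ and $-s/N$ (both negative, which reverses inequalities at each stage) together with the identity $(N+sp)/(Np) - s/N = 1/p$.
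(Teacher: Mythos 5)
Your proposal is correct and follows essentially the same route as the paper: invoke Theorem \ref{there is min intro} to get a minimizer, apply Theorem \ref{constraint on min intro} to $f\circ T_f$, and integrate the resulting two-sided pointwise bounds over $\mathbb{S}^{N-1}$, with the exponent bookkeeping $(N+sp)/(Np)-s/N=1/p$ handled correctly. Your explicit verification that $g=f\circ T_f$ itself satisfies the minimization hypothesis (via the reindexing $T\mapsto T_fT$) is a small point the paper leaves implicit, but it is the same argument.
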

 Recall that $\sigma_N= \mathscr{H}^{N-1}(\mathbb{S}^{N-1})$.
\begin{proof}[Proof of Corollary \ref{affine = classic}]
Let $f \in \dot{W}^{s,p}$. By Theorem \ref{there is min intro}, there exists $T_f \in \text{SL}_N$ such that
\bes
\verti{f \circ T_f}_{W^{s,p}}= \min \{ \verti{f \circ T}_{W^{s,p}}; \, T \in \text{SL}_N \}.
\ees

By  Theorem \ref{constraint on min intro}, we have
\be \label{tint 1}
\ba
\left(C^2_{s,p,N} \verti{f\circ T_f}_{W^{s,p}} \right)^{-N/s} &\leq \left(\int_{0}^{\infty}t^{-sp-1} \vertii{\Delta^{\lfloor s \rfloor +1}_{\xi} (f\circ T_f)}_{L^p}^p \ dt \right)^{-N/sp} \\ & \hspace{30 pt} \leq  \left(C^1_{s,p,N} \verti{f \circ T_f}_{W^{s,p}} \right)^{-N/s},
\ea
\ee
when $s$ is non-integer, respectively 
\be\label{tint 2}
\ba
\left(C^2_{s,p,N} \verti{f\circ T_f}_{W^{s,p}} \right)^{-N/s} &\leq \left(\int_{\R^N} \verti{\partial^s_{\xi}(f\circ T_f) (x)}^p \ dx \right)^{-N/sp}  \leq  \left(C^1_{s,p,N} \verti{f \circ T_f}_{W^{s,p}} \right)^{-N/s},
\ea
\ee
when $s$ is an integer. Corollary \ref{affine = classic} follows by integrating in $\xi$ \eqref{tint 1}, respectively   \eqref{tint 2}.
\end{proof}

We next derive Theorems \ref{subcrit aff emb} and \ref{gen embedding} from
Corollary \ref{affine = classic}.
 \begin{proof}[Proof of Theorem \ref{subcrit aff emb}]
     Let $s,p$ be such that $sp<N$, with $p>1$ if $s$ is an integer $\geq 2$. Let $f$ be in $\mathring{W}^{s,p}$. By Corollary \ref{affine = classic}, there exists $T_f \in \text{SL}_N$ such that 
\begin{flalign*}
    C^1_{s,p,N}\sigma_N^{1/p}\verti{f\circ T_f}_{W^{s,p}} \leq \mathscr{E}_{s,p}(f\circ T_f).
\end{flalign*}

On the other hand, 
the Sobolev inequality yields 
\begin{flalign*}
    \vertii{f\circ T_f}_{L^{Np/(N-sp)}}\leq \widetilde{C}_{s,p,N}\verti{f\circ T_f}_{W^{s,p}}
\end{flalign*}
for some finite constant $\widetilde{C}_{s,p,N}$.
Therefore, we have
\begin{flalign*}
    \vertii{f\circ T_f}_{L^{Np/(N-sp)}} \leq \frac{\widetilde{C}_{s,p,N}\sigma_N^{-1/p}}{C^1_{s,p,N}} \mathscr{E}_{s,p}(f\circ T_f).
\end{flalign*}

Since $\vertii{\cdot}_{L^{Np/(N-sp)}}$ and $\mathscr{E}_{s,p}$ are invariant under unimodular transformations (by Proposition \ref{affine invariance}) , the last inequality amounts to
\begin{flalign*}
    \vertii{f}_{L^{Np/(N-sp)}} \leq \frac{\widetilde{C}_{s,p,N}\sigma_N^{-1/p}}{C^1_{s,p,N}} \mathscr{E}_{s,p}(f).
\end{flalign*}
This completes the proof of Theorem \ref{subcrit aff emb}.
\end{proof}
In the proof of Theorem \ref{gen embedding}, we rely on the following optimal Sobolev embeddings (see \cite[Theorem 11.39]{leoni2023first}, \cite[Theorem B]{BREZIS20192839}, and Appendix \ref{B}).
\begin{theo}\label{Opt Sob} 
    Let $0<s_1<s_2<\infty$ and $1 \leq p_1,p_2 <\infty$ satisfy \eqref{Sob n}. There exists $\tilde{C}\coloneq \tilde{C}_{s_1,s_2,p_1,p_2,N}<\infty$ such that
    \bes
    \verti{f}_{W^{s_1,p_1}} \leq \tilde{C} \verti{f}_{W^{s_2,p_2}}, \ \fo f \in \dot{W}^{s_1,p_1} \cap \dot{W}^{s_2,p_2}.
    \ees
\end{theo}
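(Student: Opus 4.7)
The strategy will be to reduce the embedding to the Littlewood-Paley (or Besov) characterization of the semi-norms $\verti{\cdot}_{W^{s,p}}$. The scaling hypothesis $s_2 - N/p_2 = s_1 - N/p_1$, combined with $s_1 < s_2$, forces $p_2 < p_1$, which is the arithmetic condition that makes the embedding possible in the first place.

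First, I would invoke the classical fact that, for $s$ non-integer, the Gagliardo-type semi-norm defined in \eqref{def 1} is equivalent to the homogeneous Besov semi-norm characterized via a Littlewood-Paley decomposition: $\verti{f}_{W^{s,p}}^p \approx \sum_{j \in \Z} 2^{jsp} \vertii{P_j f}_{L^p}^p$, where $\{P_j\}$ are the standard Littlewood-Paley projectors. For $s$ integer and $1<p<\infty$, Calder\'on-Zygmund theory yields $\vertii{D^s f}_{L^p} \approx \vertii{f}_{\dot{F}^{s}_{p,2}}$, which in turn controls the analogous dyadic expression above.

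Granted these characterizations, the key analytic input is Bernstein's inequality: since $P_j f$ has Fourier support in an annulus of size $\sim 2^j$, one has $\vertii{P_j f}_{L^{p_1}} \lesssim 2^{jN(1/p_2 - 1/p_1)} \vertii{P_j f}_{L^{p_2}}$. Using the scaling hypothesis $N/p_2 - N/p_1 = s_2 - s_1$, this becomes $2^{j s_1} \vertii{P_j f}_{L^{p_1}} \lesssim 2^{j s_2} \vertii{P_j f}_{L^{p_2}}$. Taking $\ell^{p_1}$ norms in $j$ and invoking the embedding $\ell^{p_2} \hookrightarrow \ell^{p_1}$ (valid since $p_2 \leq p_1$) yields
\bes
\biggl(\sum_{j \in \Z} 2^{js_1 p_1} \vertii{P_j f}_{L^{p_1}}^{p_1}\biggr)^{1/p_1} \lesssim \biggl(\sum_{j \in \Z} 2^{js_2 p_2} \vertii{P_j f}_{L^{p_2}}^{p_2}\biggr)^{1/p_2},
\ees
which, after re-applying the characterizations to both sides, is the desired inequality.

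The main obstacle lies in the endpoint case where $s_2 \geq 2$ is an integer and $p_2 = 1$: by Ornstein's theorem, $\vertii{D^{s_2} f}_{L^1}$ is \emph{not} equivalent to any Littlewood-Paley expression, so the argument above breaks down. In this regime, I would instead iteratively apply the classical endpoint Sobolev embedding $W^{k,1}(\R^N) \hookrightarrow W^{k-1, N/(N-1)}(\R^N)$ (for integer $k \geq 1$) to trade smoothness for integrability, eventually reducing to a case in which Littlewood-Paley methods apply, or in which one of the already established inequalities \eqref{classic SOb} or \eqref{lud had} can be quoted directly. This endpoint reduction is essentially the route followed in \cite{BREZIS20192839}.
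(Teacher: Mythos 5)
The central gap in your argument is the translation between the $B$- and $F$-scales when an integer smoothness is involved. Your chain (Bernstein plus $\ell^{p_2}\hookrightarrow\ell^{p_1}$) proves $\dot B^{s_2}_{p_2,p_2}\hookrightarrow \dot B^{s_1}_{p_1,p_1}$, which is exactly what is needed when both $s_1,s_2$ are non-integers. But for integer $s$ and $1<p<\infty$ one has $\verti{f}_{W^{s,p}}\approx \Vert f\Vert_{\dot F^{s}_{p,2}}$, and the inclusions between $\dot F^{s}_{p,2}$ and $\dot B^{s}_{p,p}$ go the right way only on one side of $p=2$: $\dot F^{s}_{p,2}\hookrightarrow \dot B^{s}_{p,p}$ requires $p\ge 2$, while $\dot B^{s}_{p,p}\hookrightarrow \dot F^{s}_{p,2}$ requires $p\le 2$. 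Hence your claim that the integer-order semi-norm ``controls the analogous dyadic expression'' fails when $s_2$ is an integer and $1<p_2<2$ (e.g.\ $N=3$, $s_2=1$, $p_2=3/2$, $s_1=1/2$, $p_1=2$), and, dually, a bound on $\dot B^{s_1}_{p_1,p_1}$ does not control $\verti{f}_{W^{s_1,p_1}}$ when $s_1$ is an integer and $p_1>2$ (e.g.\ $N=3$, $s_2=3/2$, $p_2=2$, $s_1=1$, $p_1=3$). These mixed cases are not exotic, satisfy \eqref{Sob n}, and are not reachable by Bernstein and monotonicity of $\ell^p$ norms alone; closing them requires the Jawerth--Franke embeddings, or the fact that the fine index is irrelevant in $F$-scale Sobolev embeddings once $p_2<p_1$ --- a genuinely additional ingredient your sketch does not supply. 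Your treatment of the endpoint $s_2\ge 2$ integer, $p_2=1$ (iterating $\dot W^{k,1}\hookrightarrow \dot W^{k-1,N/(N-1)}$) is sound; note, though, that the theorem as stated does not exclude this case because the inhomogeneous result the paper quotes already covers it.

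Beyond this, be aware that the paper's own proof is entirely different and Fourier-free: it quotes the \emph{inhomogeneous} embedding (Theorem~\ref{inhom sob}, from \cite{BREZIS20192839}), upgrades it to a homogeneous inequality for $C_c^{\infty}$ functions by scaling together with the Poincar\'e inequality of Lemma~\ref{1d Poi} (Theorem~\ref{hom sob}), and then extends it to all of $\dot W^{s_1,p_1}\cap\dot W^{s_2,p_2}$ by density (Lemmas~\ref{dens}, \ref{dens frac}), completeness of the quotient modulo polynomials (Lemma~\ref{cauchy}), and identification of the limit with $f$ up to a low-degree polynomial (Lemma~\ref{poincaré}). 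That last step is not cosmetic, and the same issue resurfaces in your route: the homogeneous Littlewood--Paley semi-norms only see $f$ modulo polynomials, so ``re-applying the characterizations'' to a general $f\in\dot W^{s_1,p_1}\cap\dot W^{s_2,p_2}$, as opposed to a Schwartz or $C_c^{\infty}$ function, requires a realization/approximation argument of the kind carried out in Appendix~\ref{B}. So even after repairing the $B$/$F$ mismatch you would still need this limiting step to prove the statement in the stated generality.
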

\begin{proof}[Proof of Theorem \ref{gen embedding}]
Let $0<s_1<s_2<\infty$ and $1 \leq p_1,p_2<\infty$ satisfy \eqref{Sob n}
with $p_2>1$, if $s_2\geq 2$ is an integer.
Let $f \in \dot{W}^{s_1,p_1}\cap \dot{W}^{s_2,p_2}$. By Corollary \ref{affine = classic}, there exists $T_f \in \text{SL}_N$ such that
\be \label{g rep}
C^1_{s_2,p_2,N}\sigma_N^{1/p_2}\verti{f\circ T_f}_{W^{s_2,p_2}} \leq \mathscr{E}_{s_2,p_2}(f\circ T_f).
\ee
Theorem \ref{Opt Sob} yields 
\be \label{opt ineq}
\verti{f \circ T_f}_{W^{s_1,p_1}}\leq \tilde{C} \verti{f \circ T_f}_{W^{s_2,p_2}}.
\ee
On the other hand, by Lemma \ref{Jensen} and Proposition \ref{affine invariance}, we have 
\be \label{aff small}
\mathscr{E}_{s_1,p_1}(f)=\mathscr{E}_{s_1,p_1}(f\circ T_f) \leq {\alpha}_{s_1,p_1,N} \verti{f \circ T_f}_{W^{s_1,p_1}},
\ee
for some finite constant $\alpha_{s_1,p_1,N}$.
Combining \eqref{g rep}, \eqref{opt ineq} and \eqref{aff small}, we find that
\begin{align*}
\mathscr{E}_{s_1,p_1}(f) \leq \alpha_{s_1,p_1,N} \verti{f \circ T_f}_{W^{s_1,p_1}} &\leq \tilde{C}\alpha_{s_1,p_1,N}\verti{f\circ T_f}_{W^{s_2,p_2}} 
\\ & \leq \frac{\tilde{C}\alpha_{s_1,p_1,N}\sigma_N^{-1/p_2}}{C^1_{s_2,p_2,N}} \mathscr{E}_{s_2,p_2}(f).  \qedhere
\end{align*}
\end{proof}
\section{Proof of Theorem \ref{constraint on min intro} when \texorpdfstring{$s=1$}{s=1}}\label{pf s=1}

In this section, we present two proofs of Theorem \ref{constraint on min intro} in the case where $s=1$.  Our first approach yields Theorem \ref{constraint on min intro} with the constant
\begin{flalign}\label{C1p}
    C^1_{1,p,N}\coloneq \sup\left\{\frac{1/N- \lambda^{-\frac{1}{N-1}}}{\lambda - \lambda^{-\frac{1}{N-1}}}; \, \lambda>N^{N-1} \right\}.
\end{flalign} 
The second approach leads to a different constant
\begin{flalign}\label{cons pvar}
\widetilde{C}^1_{1,p,N}\coloneq \begin{cases} \displaystyle N^{-1/2}, & \text{if} \ p\geq 2,  \\
         \displaystyle N^{-1/p}, &  \text{if} \ 1 \leq p<2. 
             \end{cases}
    \end{flalign} 

See Remarks \ref{wek} and \ref{p=2} for further comments on $C^1_{1,p,N}$ and $\widetilde{C}^1_{1,p,N}$.    

\smallskip
We now turn to the proofs.
\begin{proof}[First proof of Theorem \ref{constraint on min intro} in the case where $s=1$]
    It suffices to prove that if $f \in \dot{W}^{1,p}$ is such that there exists $\xi \in \mathbb{S}^{N-1}$ satisfying
    \begin{flalign*}
         \left(\int_{\R^N} \verti{\nabla f(x) \cdot \xi}^p \ dx \right)^{1/p} < C^1_{1,p,N} \vertii{\nabla f}_{L^p},
    \end{flalign*}
    then there exists a transformation $T \in \text{SL}_N$ such that
\begin{flalign}\label{decreased norm}
    \vertii{\nabla(f\circ T)}_{L^p} < \vertii{\nabla f}_{L^p}.
\end{flalign}

Without loss of generality, we may assume that $\xi=(1,0,\dots,0)$ and thus
\begin{flalign}\label{assumption rotated}
  \left(\int_{\R^N}\verti{\partial_1 f(x)}^p \ dx\right)^{1/p}  < C^1_{1,p,N}\vertii{\nabla f}_{L^p}.
\end{flalign}
 
By \eqref{C1p} and \eqref{assumption rotated}, we may find $\lambda>N^{N-1}$ such that
\begin{flalign*}
    \left(\int_{\R^N}\verti{\partial_1 f(x)}^p \ dx\right)^{1/p} < \frac{\frac{1}{N} - \lambda^{-\frac{1}{N-1}}}{\lambda-\lambda^{-\frac{1}{N-1}}} \vertii{\nabla f}_{L^p}.
\end{flalign*}

We have
\begin{flalign*}
    \left(\int_{\R^N}\verti{\partial_1 f(x)}^p \ dx\right)^{1/p} &< \frac{\frac{1}{N} - \lambda^{-\frac{1}{N-1}}}{\lambda-\lambda^{-\frac{1}{N-1}}} \vertii{\nabla f}_{L^p}
    \\ & < \frac{\frac{1}{N} - \lambda^{-\frac{1}{N-1}}}{\lambda-\lambda^{-\frac{1}{N-1}}} \sum_{i=1}^N \left( \int_{\R^N} \verti{\partial_i f(x)}^p \ dx  \right)^{1/p}.
\end{flalign*}

Setting $\mu \coloneq \lambda^{-\frac{1}{N-1}}$ and multiplying the last inequality by $ \lambda- \mu$, we find that
\begin{flalign*}
    (\lambda - \mu )\left(\int_{\R^N}\verti{\partial_1 f(x)}^p \ dx \right)^{1/p} < \left(\frac{1}{N}- \mu\right) \sum_{i=1}^N \left(\int_{\R^N}\verti{\partial_i f(x)}^p \ dx \right)^{1/p},
\end{flalign*}
and, therefore,
\begin{flalign}\begin{split}\label{claim form 1}
    \lambda \left(\int_{\R^N} \verti{\partial_1 f(x)}^p \ dx\right)^{1/p} &+ \mu \sum_{i=2}^N \left(\int_{\R^N}\verti{\partial_i f(x)}^p \ dx \right)^{1/p}
   \\ & < \frac{1}{N} \sum_{i=1}^N \left(\int_{\R^N}\verti{\partial_i f(x)}^p \ dx \right)^{1/p}. 
  \end{split} 
\end{flalign}

Consider now the linear transformation
\begin{flalign*}
    T_{\lambda}\colon (x_1,\dots,x_N) \mapsto(\lambda x_1,\mu x_2, \dots, \mu x_n),
\end{flalign*}
which satisfies $ \text{det} \ T_{\lambda}=1$,
\begin{flalign*}
    \int_{\R^N} \verti{\partial_1(f\circ T_{\lambda})(x)}^p \ dx = \lambda^p \int_{\R^N} \verti{\partial_1 f(x)}^p \ dx,
\end{flalign*} 
and
\begin{flalign*}
    \int_{\R^N} \verti{\partial_{i}(f\circ T_{\lambda})(x)}^p\ dx = \mu^p \int_{\R^N}\verti{\partial_i f(x)}^p \ dx, \   \fo  2 \leq i \leq N.
\end{flalign*}
Hence, \eqref{claim form 1} reads as
\begin{flalign*}
    \sum_{i=1}^N \left(\int_{\R^N}\verti{\partial_i (f \circ T_{\lambda})(x)}^p \right)^{1/p} < \frac{1}{N} \sum_{i=1}^N \left(\int_{\R^N}\verti{\partial_i f(x)}^p \ dx \right)^{1/p}.
\end{flalign*}

Therefore, using \eqref{slice s=1}, we find that
\begin{flalign*}
    \vertii{\nabla (f\circ T_{\lambda})}_{L^p}&\leq \sum_{i=1}^N \left(\int_{\R^N}\verti{\partial_i (f \circ T_{\lambda})(x)}^p \ dx \right)^{1/p}\\ & < \frac{1}{N} \sum_{i=1}^N \left(\int_{\R^N}\verti{\partial_i f(x)}^p \ dx \right)^{1/p} \leq \vertii{\nabla f}_{L^p}. 
\end{flalign*}
This implies \eqref{decreased norm} for $T=T_{\lambda}$ and completes the proof of Theorem \ref{constraint on min intro} when $s=1$.
\end{proof}
 The approach presented in the proof above also yields the following.
\begin{prop}\label{more gen con on min}
    Let $1 \leq p< \infty$ and $\gamma>1$. There exists $C(\gamma)>0$ such that if $f \in \dot{W}^{1,p}$ satisfies 
    \begin{flalign*}
        \vertii{\nabla f}_{L^p}\leq \gamma \min\{\vertii{\nabla (f\circ T)}_{L^p};\, \ T \in \text{SL}_N\},
    \end{flalign*}
    then
    \begin{flalign*}
        C(\gamma)\vertii{\nabla f}_{L^p}\leq \left(\int_{\R^N}\verti{\nabla f(x) \cdot \xi}^p \ dx\right)^{1/p}.
    \end{flalign*}
\end{prop}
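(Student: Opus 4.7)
The strategy is to test the $\gamma$-approximate minimality hypothesis against a single one-parameter family of anisotropic unimodular dilations. We first reduce to the case $\xi = e_1$: given $\xi \in \mathbb{S}^{N-1}$, pick $R \in \text{SO}_N$ with $Re_1 = \xi$ and replace $f$ by $f \circ R$. Since $\text{SO}_N \subset \text{SL}_N$ and $\vertii{\nabla \cdot}_{L^p}$ is $\text{O}_N$-invariant, the map $T \mapsto RT$ is a minimum-preserving bijection of $\text{SL}_N$, so $f \circ R$ remains a $\gamma$-approximate minimizer, while $\vertii{\partial_1(f \circ R)}_{L^p} = \vertii{\partial_\xi f}_{L^p}$ by a change of variable.

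For each $\lambda > 0$, introduce $T_\lambda \coloneq \text{diag}(\lambda, \lambda^{-1/(N-1)}, \dots, \lambda^{-1/(N-1)}) \in \text{SL}_N$. The chain rule and the change of variable $y = T_\lambda x$ (with $\verti{\det T_\lambda} = 1$) yield
\bes
\vertii{\partial_1(f \circ T_\lambda)}_{L^p} = \lambda \vertii{\partial_1 f}_{L^p}, \qquad \vertii{\partial_i(f \circ T_\lambda)}_{L^p} = \lambda^{-1/(N-1)} \vertii{\partial_i f}_{L^p}, \ i \geq 2.
\ees
Applying the right-hand slicing inequality \eqref{slice s=1} to $f \circ T_\lambda$, and the left-hand one to $f$ in the form $\sum_{i \geq 2} \vertii{\partial_i f}_{L^p} \leq N \vertii{\nabla f}_{L^p} - \vertii{\partial_1 f}_{L^p}$, produces
\bes
\vertii{\nabla(f \circ T_\lambda)}_{L^p} \leq (\lambda - \lambda^{-1/(N-1)}) \vertii{\partial_1 f}_{L^p} + N \lambda^{-1/(N-1)} \vertii{\nabla f}_{L^p}.
\ees

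Combining this with the hypothesis $\vertii{\nabla f}_{L^p} \leq \gamma \vertii{\nabla(f \circ T_\lambda)}_{L^p}$ and rearranging leads to
\bes
\bigl(1 - \gamma N \lambda^{-1/(N-1)}\bigr)\vertii{\nabla f}_{L^p} \leq \gamma(\lambda - \lambda^{-1/(N-1)}) \vertii{\partial_1 f}_{L^p}.
\ees
Any choice $\lambda > (\gamma N)^{N-1}$ makes the left-hand coefficient strictly positive, so one may divide through and take the supremum over admissible $\lambda$; this defines the desired constant $C(\gamma) > 0$. The resulting formula has the same shape as \eqref{C1p}, which corresponds to the borderline case $\gamma = 1$.

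The main obstacle is essentially bookkeeping: one must arrange the slicing estimates so that the coefficient $N\lambda^{-1/(N-1)}$ appearing in front of $\vertii{\nabla f}_{L^p}$ can be made smaller than $1/\gamma$ for $\lambda$ in a nonempty range. Once this is set up, the remainder of the proof is a routine one-variable optimization, and the strict positivity of $C(\gamma)$ follows from the fact that the supremand is a continuous function of $\lambda$ vanishing at the endpoint $\lambda = (\gamma N)^{N-1}$ but strictly positive just above it.
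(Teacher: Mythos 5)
Your proof is correct and is essentially the paper's intended argument: the paper derives Proposition \ref{more gen con on min} from its ``first approach'' to Theorem \ref{constraint on min intro} when $s=1$, i.e.\ testing against the unimodular dilations $T_\lambda$ and using both halves of \eqref{slice s=1}, exactly as you do. The only (immaterial) differences are that you argue directly rather than by contraposition and arrange the slicing bounds slightly differently, which changes the explicit value of $C(\gamma)$ but not its positivity.
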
 
 The second proof of Theorem \ref{constraint on min intro} when $s=1$ relies on the following fact. 
\begin{lemma}\label{diff}
    Let $1 \leq p < \infty$ and consider $g \in L^p(\R^N; \R^N)$. The map
    \begin{flalign}\label{def SL fun}
        \Psi\colon \text{GL}_N \ni L \mapsto \int_{\R^N} \verti{L g(x)}^p \ dx
    \end{flalign}
    is differentiable.  Its differential at $L_0 \in \text{GL}_N$ is the linear form given by 
    \begin{flalign}\label{diff formula}
        D_{L_0}\Psi (M) = p\int_{\R^N}\mathbbm{1}_{[g \neq 0]} \verti{ L_0  g(x)}^{p-2} \left( L_0 g(x) \cdot M g(x) \right) \ dx,
    \end{flalign}
    for each $M \in \text{M}_N$.
\end{lemma}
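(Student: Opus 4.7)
The approach is standard differentiation under the integral sign. First I would fix $L_0 \in \text{GL}_N$ and $M \in \text{M}_N$ and write the difference quotient as
\bes
t^{-1}\pari{\Psi(L_0+tM)-\Psi(L_0)} = \int_{\R^N}\psi_t(x)\,dx,\quad \psi_t(x):=\frac{\verti{(L_0+tM)g(x)}^p - \verti{L_0 g(x)}^p}{t}.
\ees
Pointwise analysis: on $\{g=0\}$, $\psi_t\equiv 0$; on $\{g\neq 0\}$, since $L_0\in\text{GL}_N$ one has $L_0 g(x)\neq 0$, hence $t\mapsto \verti{(L_0+tM)g(x)}^p$ is $C^1$ near $0$ with derivative at $0$ equal to $p\verti{L_0 g(x)}^{p-2}\,L_0 g(x)\cdot Mg(x)$. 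This identifies the pointwise limit of $\psi_t$ as the integrand appearing in \eqref{diff formula}.

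For domination, I would use the fundamental theorem of calculus on $\{g\neq 0\}$ to represent
\bes
\psi_t(x)=p\int_0^1 \verti{(L_0+stM)g(x)}^{p-2}(L_0+stM)g(x)\cdot Mg(x)\, ds,
\ees
and estimate the integrand by $(\vertiii{L_0}+\verti{t}\,\vertiii{M})^{p-1}\,\vertiii{M}\,\verti{g(x)}^p$. For $\verti{t}\le 1$ this yields an $L^1$ majorant of the form $C\verti{g}^p$, since $g\in L^p$. Dominated convergence then produces the Gâteaux derivative and the formula \eqref{diff formula}. Linearity in $M$ is evident, and the bound $\verti{D_{L_0}\Psi(M)}\le p\vertiii{L_0}^{p-1}\vertiii{M}\,\vertii{g}_{L^p}^p$ shows that $D_{L_0}\Psi$ is a continuous linear form on $\text{M}_N$.

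To upgrade Gâteaux to Fréchet differentiability, I would check that $L\mapsto D_L\Psi$ is continuous from $\text{GL}_N$ into $(\text{M}_N)^{\ast}$ by one more dominated convergence argument: for $L$ in a small neighborhood of $L_0$ in $\text{GL}_N$, the integrand $\mathbbm{1}_{[g\neq 0]}\verti{Lg}^{p-2}Lg$ depends continuously on $L$ pointwise on $\{g\neq 0\}$, and is dominated by a multiple of $\verti{g}^{p-1}$, whence continuity of $D_L\Psi$ in $L$. Since $\Psi$ is Gâteaux differentiable with a differential depending continuously on $L$, it is of class $C^1$, hence Fréchet differentiable on $\text{GL}_N$.

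The main subtlety lies in the range $1\le p<2$, where the factor $\verti{L_0 g}^{p-2}$ appearing in \eqref{diff formula} is singular at the zeros of $g$; this is precisely why the indicator $\mathbbm{1}_{[g\neq 0]}$ is essential. The most delicate case is $p=1$, for which $y\mapsto\verti{y}$ fails to be differentiable at $0$; but the invertibility of $L_0$ guarantees $L_0 g(x)\neq 0$ whenever $g(x)\neq 0$, so the pointwise derivative still exists a.e., while the set $\{g=0\}$ contributes nothing to either side of \eqref{diff formula}.
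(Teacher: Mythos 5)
Your proof is correct, but it takes a genuinely different route from the paper's. The paper argues in two cases: for $p>1$ it invokes the classical fact that $G\colon L^p(\R^N;\R^N)\ni h\mapsto\int_{\R^N}|h|^p$ is $C^1$ (Lemma \ref{Lp diff}) and composes with the bounded linear map $\mathrm{M}_N\ni L\mapsto Lg\in L^p(\R^N;\R^N)$ via the chain rule, which gives Fréchet differentiability at once; for $p=1$ it estimates the full Fréchet remainder directly, showing by dominated convergence that $\frac{1}{\vertii{H_n}}\int_{\R^N}\bigl|\,|(L_0+H_n)g|-|L_0g|-\mathbbm{1}_{[g\neq0]}|L_0g|^{-1}(L_0g\cdot H_ng)\,\bigr|\,dx\to0$ for every sequence $H_n\to0$. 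You instead treat all $1\le p<\infty$ uniformly: a Gâteaux derivative along a fixed direction $M$ obtained by dominated convergence with an FTC-based majorant, followed by the upgrade to Fréchet differentiability through continuity of $L\mapsto D_L\Psi$ and the standard criterion that a continuous Gâteaux derivative yields $C^1$. Your route is more self-contained (no infinite-dimensional differentiability lemma is needed) and delivers slightly more, namely $\Psi\in C^1(\mathrm{GL}_N)$ for every $p\ge1$, including $p=1$; the paper's $p=1$ argument is shorter in that it reaches Fréchet differentiability without the $C^1$ upgrade, and its $p>1$ case is a one-line reduction to a known lemma. One point you should make explicit in the domination step when $p=1$: for fixed $x$ with $g(x)\neq0$, the map $s\mapsto|(L_0+stM)g(x)|$ may fail to be differentiable at the (at most one) value of $s$ for which $(L_0+stM)g(x)=0$; since this map is convex and Lipschitz in $s$, the fundamental theorem of calculus representation still holds with the integrand defined for a.e.\ $s$, so your majorant and the dominated convergence argument go through, but as written the representation is asserted without this justification.
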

Lemma \ref{diff} applied to  $g\coloneq\nabla f$, $f \in \dot{W}^{1,p}$, and the chain rule, imply the following.
\begin{coro}\label{coro dif}
    Let $1 \leq p<\infty$ and consider $f \in \dot{W}^{1,p}$. The map 
   \bes
     \widetilde{\Psi} \colon \text{GL}_N \ni L \mapsto \int_{\R^N} \verti{ L^{\text{T}} \nabla f(x) }^p \ dx 
    \ees
    is differentiable at $L_0 \in \text{GL}_N$. Its differential is the linear form given by
    \bes
      D_{L_0}\widetilde{\Psi} (M) = p\int_{\R^N}\mathbbm{1}_{[\nabla f \neq 0]} \verti{  L_0^{\text{T}}  \nabla f (x)}^{p-2} \left( L_0^{\text{T}} \nabla f(x) \cdot  M^{\text{T}} \nabla f(x) \right) \ dx,
    \ees
    for each $M \in \text{M}_N$.
\end{coro}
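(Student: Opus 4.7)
The plan is to establish Fr\'echet differentiability of $\Psi$ at a fixed $L_0\in\text{GL}_N$ by writing the increment $\Psi(L_0+M)-\Psi(L_0)$ along the segment $[L_0,L_0+M]$ via the fundamental theorem of calculus, and then showing that the remainder after subtracting the candidate differential is $o(\verti{M})$ by dominated convergence.

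First, I would observe that for each $x$ with $g(x)\neq 0$ and each $L\in\text{GL}_N$, $Lg(x)\neq 0$, so the map $L\mapsto\verti{Lg(x)}^p$ is smooth in a neighborhood of $L_0$ with differential $M\mapsto p\verti{Lg(x)}^{p-2}(Lg(x)\cdot Mg(x))$. Since $\text{GL}_N$ is open, for $\verti{M}$ small the entire segment $L_0+tM$, $t\in[0,1]$, lies in $\text{GL}_N$, and the fundamental theorem of calculus yields
\bes
\verti{(L_0+M)g(x)}^p-\verti{L_0 g(x)}^p=p\int_0^1 \verti{(L_0+tM)g(x)}^{p-2}\bigl((L_0+tM)g(x)\cdot Mg(x)\bigr)\ dt.
\ees
Integrating in $x$ over $\{g\neq 0\}$ (the integrand vanishes elsewhere) and subtracting the candidate differential \eqref{diff formula}, I obtain
\bes
R(M)\coloneq \Psi(L_0+M)-\Psi(L_0)-D_{L_0}\Psi(M)=p\int_{\R^N}\mathbbm{1}_{[g\neq 0]}\int_0^1 \bigl(v_t(x)-v_0(x)\bigr)\cdot Mg(x)\ dt\ dx,
\ees
where $v_t(x)\coloneq \verti{(L_0+tM)g(x)}^{p-2}(L_0+tM)g(x)$.

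Next, I would apply dominated convergence to bound $\verti{R(M)}/\verti{M}$. Pointwise on $\{g\neq 0\}$, the continuity of $v\mapsto\verti{v}^{p-2}v$ at the nonzero point $L_0 g(x)$ gives $\verti{v_t(x)-v_0(x)}\to 0$ uniformly in $t\in[0,1]$ as $\verti{M}\to 0$; combined with $\verti{Mg(x)}\leq \verti{M}\,\verti{g(x)}$, the rescaled integrand tends to zero. For the dominating function, I fix a closed ball $\overline{B(L_0,r)}\subset\text{GL}_N$ in the matrix norm and, for $\verti{M}\leq r$, estimate
\bes
\verti{v_t(x)-v_0(x)}\leq 2\sup_{L\in\overline{B(L_0,r)}}\verti{Lg(x)}^{p-1}\leq C\verti{g(x)}^{p-1},
\ees
so the integrand in $\verti{R(M)}/\verti{M}$ is majorized by $C\verti{g(x)}^p\in L^1(\R^N)$. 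Dominated convergence then gives $R(M)=o(\verti{M})$; linearity and boundedness of the candidate form $M\mapsto D_{L_0}\Psi(M)$ follow from the bound $\mathbbm{1}_{[g\neq 0]}\verti{L_0 g}^{p-2}\verti{L_0 g\cdot Mg}\leq C\verti{g}^{p-1}\verti{Mg}$ together with H\"older's inequality.

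The main technical point arises when $1\leq p<2$, where $v\mapsto\verti{v}^{p-2}v$ fails to be Lipschitz near $v=0$. The indicator $\mathbbm{1}_{[g\neq 0]}$ in \eqref{diff formula} is essential here: restricting integration to $\{g\neq 0\}$ and using $L_0\in\text{GL}_N$ (so that $L_0g(x)$ stays away from $0$ precisely on that set) allows me to rely only on the pointwise continuity of $v\mapsto\verti{v}^{p-2}v$ at $L_0g(x)$, circumventing any need for a uniform Lipschitz estimate; this is the obstacle the argument must navigate.
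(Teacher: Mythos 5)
Your argument is correct, but it takes a genuinely different route from the paper's. The paper deduces the corollary from Lemma \ref{diff} (applied with $g=\nabla f$, plus the chain rule through the linear map $L\mapsto L^{\text{T}}$), and proves Lemma \ref{diff} by a case split: for $p>1$ it invokes the known $C^1$-differentiability of $g\mapsto\int|g|^p$ on $L^p(\R^N;\R^N)$ (Lemma \ref{Lp diff}) composed with the bounded linear map $\text{M}_N\ni L\mapsto Lg\in L^p(\R^N;\R^N)$, and only for $p=1$ does it run a direct dominated-convergence argument on the first-order remainder along sequences $H_n\to0$, using the pointwise differentiability of $L\mapsto\verti{Lg(x)}$ at $L_0$ on $\{g\neq0\}$ and the bound $\bigl|\verti{(L_0+H)g}-\verti{L_0g}\bigr|\le\vertii{H}\,\verti{g}$ for the domination. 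You instead give a single self-contained proof valid for all $1\le p<\infty$: the fundamental theorem of calculus along the segment $[L_0,L_0+M]$ (legitimate because the segment stays in the open set $\text{GL}_N$, so $(L_0+tM)g(x)\neq0$ on $\{g\neq0\}$ and $t\mapsto\verti{(L_0+tM)g(x)}^p$ is $C^1$ even when $p=1$), followed by dominated convergence with the majorant $C\verti{g}^p\in L^1$ and the pointwise, uniform-in-$t$ continuity of $v\mapsto\verti{v}^{p-2}v$ at $L_0g(x)\neq0$. Your route buys uniformity in $p$ (no case split, no appeal to Lemma \ref{Lp diff}) at the modest price of the segment/FTC step, whereas the paper's route is shorter for $p>1$ because it cites a standard lemma; for $p=1$ the two dominated-convergence arguments are close in spirit, yours using the integral remainder and the paper's the Taylor remainder directly. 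The only point you leave implicit is the final passage from $\Psi(L)=\int\verti{L\nabla f}^p$ to $\widetilde{\Psi}(L)=\int\verti{L^{\text{T}}\nabla f}^p$, i.e., composing with the linear map $L\mapsto L^{\text{T}}$ to obtain the stated formula with $M^{\text{T}}$; this is the same one-line chain-rule reduction the paper performs and is harmless.
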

Granted Lemma \ref{diff}, we turn to the
\begin{proof}[Second proof of Theorem \ref{constraint on min intro} in the case where $s=1$]
   Let $f \in \dot{W}^{1,p}$ be  such that 
   \begin{flalign*}
    \vertii{\nabla f}_{L^p}= \min \{ \vertii{\nabla (f\circ T)}_{L^p}; \, \ T \in \text{SL}_N \}.
    \end{flalign*}
It suffices to show that 
\begin{flalign}\label{redest}
    \widetilde{C}^1_{1,p,N}\vertii{\nabla f}_{L^p}\leq \vertii{\partial_1 f}_{L^p}.
\end{flalign}

Consider the map
\bes
\Psi \colon \text{GL}_N \ni L \mapsto \vertii{\nabla f}^p_{L^p}= \int_{\R^N}\verti{L^{\text{T}} \nabla f(x)}^p \ dx.
\ees

The restriction of $\Psi$ to $\text{SL}_N$, still denoted $\Psi$ for simplicity, reaches its minimum at $I_N$. Therefore we have $(D_{I_N}\Psi)_{|T_{I_N}\text{SL}_N}=0$, where $T_{I_N}\text{SL}_N= \{ M \in \text{M}_N; \ \text{tr}(M)=0 \}$ is the tangent space to $\text{SL}_N$ at $I_N$. Therefore, by Corollary \ref{coro dif}, we have  
\begin{flalign}\label{crit point}
    \int_{\R^N}\mathbbm{1}_{\nabla f\neq 0} \verti{\nabla f}^{p-2} \nabla f \cdot M^{T}\nabla f \ dx=0, \ \text{for each} \  M \ \text{such that} \  \text{tr}(M)=0.
\end{flalign}

Letting, in \eqref{crit point}, $M \coloneq I_N-\text{diag}(N,0,\dots,0)$, we find that
\begin{flalign}\label{equi formula}
    \frac{1}{N} \int_{\R^N} \verti{\nabla f}^p \ dx = \int_{\R^N} \verti{\nabla f}^{p-2}\verti{\partial_1 f}^2 \ dx.
\end{flalign} 

 If $p\geq 2$, an application of H\"older's inequality shows that
    \begin{flalign*}
        \frac{1}{N}\int_{\R^N}\verti{\nabla f}^p \ dx= \int_{\R^N}\verti{\nabla f}^{p-2}\verti{\partial_1 f}^2 \ dx \leq \vertii{\nabla f}_{L^p}^{p-2} \vertii{\partial_1 f}_{L^p}^{2}.
    \end{flalign*}   
    We conclude that
    \begin{flalign*}
        \frac{1}{\sqrt{N}}\vertii{\nabla f}_{L^p} \leq \vertii{\partial_1 f}_{L^p}.
    \end{flalign*}
    
If $1 \leq p<2$, we have 
    \begin{flalign*}
        \int_{\R^N}\verti{\nabla f}^{p-2}\verti{\partial_1 f}^{2} \ dx \leq \int_{\R^N} \verti{\partial_1 f}^{p} \ dx. 
    \end{flalign*} 
    This fact combined with \eqref{equi formula} yields
    \bes
        \frac{1}{N^{1/p}} \vertii{\nabla f}_{L^p} \leq \vertii{\partial_1 f}_{L^p}. \qedhere
    \ees
\end{proof}

\smallskip
We now turn to the proof of Lemma \ref{diff}. When 
 $p>1$, Lemma \ref{diff} is a consequence of the following well-known result, combined with the chain rule.
\begin{lemma}\label{Lp diff}
    Let $1<p<\infty$. The map 
    \bes
      G \colon L^p(\R^N ; \R^N) \ni g \mapsto \int_{\R^N}\verti{g(x)}^p \ dx
    \ees
    is $C^1$ and its differential at $g_0 \in L^p(\R^N ; \R^N)$ is given by
    \bes
     D_{g_0}G(h) = p \int_{\R^N} \verti{g_0(x)}^{p-2} \left(g_0(x) \cdot h(x) \right) \ dx,\ \forall\,  h \in L^p(\R^N ; \R^N).
    \ees
\end{lemma}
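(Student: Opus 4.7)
The plan is to reduce the statement to the continuity of a Nemytskii operator, with the main technical ingredient being the pointwise fundamental theorem of calculus applied to the convex function $v\mapsto |v|^p$.

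First I would verify that the proposed formula indeed defines a bounded linear functional on $L^p(\R^N;\R^N)$. Setting $p'\coloneq p/(p-1)$ and $\Phi(g)\coloneq |g|^{p-2}g$, the change of variable $|v|^{(p-1)p'}=|v|^p$ gives $\Phi(g_0)\in L^{p'}(\R^N;\R^N)$ with $\vertii{\Phi(g_0)}_{L^{p'}}^{p'}=\vertii{g_0}_{L^p}^p$; Hölder's inequality then shows that $h\mapsto p\int_{\R^N}\Phi(g_0)\cdot h\,dx$ is continuous on $L^p$.

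The core step is to write, for $g_0,h\in L^p(\R^N;\R^N)$ and a.e.\ $x\in\R^N$,
\begin{equation*}
|g_0(x)+h(x)|^p-|g_0(x)|^p=p\int_0^1 |g_0(x)+th(x)|^{p-2}\bigl(g_0(x)+th(x)\bigr)\cdot h(x)\,dt,
\end{equation*}
which follows from the fundamental theorem of calculus applied to $t\mapsto |g_0(x)+th(x)|^p$ (the integrand is continuous in $t$ even at zeros when $1<p<2$ since $|v|^{p-1}\to 0$ as $v\to 0$). Integrating over $\R^N$ and applying Fubini—justified by the pointwise bound $|\Phi(g_0+th)\cdot h|\le |g_0+th|^{p-1}|h|$ together with Hölder—one obtains
\begin{equation*}
G(g_0+h)-G(g_0)-D_{g_0}G(h)= p\int_0^1\int_{\R^N}\bigl(\Phi(g_0+th)-\Phi(g_0)\bigr)\cdot h\,dx\,dt.
\end{equation*}
Another application of Hölder's inequality in $x$ gives the bound
\begin{equation*}
|G(g_0+h)-G(g_0)-D_{g_0}G(h)|\le p\,\vertii{h}_{L^p}\int_0^1 \vertii{\Phi(g_0+th)-\Phi(g_0)}_{L^{p'}}\,dt.
\end{equation*}

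The proof is then reduced to the continuity of the Nemytskii operator $\Phi:L^p(\R^N;\R^N)\to L^{p'}(\R^N;\R^N)$. This is the main obstacle, and would be the place requiring the most care; one proves it by combining the pointwise continuity of $v\mapsto |v|^{p-2}v$ with the Vitali/Krasnoselskii-type argument: extracting a subsequence converging a.e., applying the elementary inequalities $\bigl||a|^{p-2}a-|b|^{p-2}b\bigr|\lesssim_p (|a|+|b|)^{p-2}|a-b|$ when $p\ge 2$ and $\bigl||a|^{p-2}a-|b|^{p-2}b\bigr|\lesssim_p |a-b|^{p-1}$ when $1<p<2$, and concluding by dominated convergence (or equi-integrability) combined with the identity $\vertii{\Phi(g)}_{L^{p'}}^{p'}=\vertii{g}_{L^p}^p$.

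Granted this continuity, the bound above yields differentiability at $g_0$ as $\vertii{h}_{L^p}\to 0$. The $C^1$ conclusion is then immediate: under the canonical identification $(L^p)^*\cong L^{p'}$, the differential $D_{g_0}G$ corresponds to the vector field $p\Phi(g_0)$, so the continuity of $\Phi$ is exactly the continuity of $g_0\mapsto D_{g_0}G$.
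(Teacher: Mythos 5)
Your argument is correct, and it is worth noting that the paper does not actually prove this lemma: it invokes it as a well-known fact (Lemma \ref{Lp diff} is only cited, and the paper's hands-on proof concerns the case $p=1$ of Lemma \ref{diff}, where an everywhere-pointwise differentiation plus dominated convergence argument is used instead). Your route — the pointwise identity
\begin{equation*}
|g_0(x)+h(x)|^p-|g_0(x)|^p=p\int_0^1 \Phi\bigl(g_0(x)+th(x)\bigr)\cdot h(x)\,dt,
\end{equation*}
valid since $v\mapsto\verti{v}^p$ is $C^1$ for $p>1$, followed by Fubini, H\"older, and the continuity of the Nemytskii map $\Phi(g)=\verti{g}^{p-2}g$ from $L^p$ to $L^{p'}$ — is the standard textbook proof and gives exactly the $C^1$ statement, with the duality identification making continuity of the differential literally the continuity of $\Phi$. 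One small simplification: the elementary inequalities you quote already yield norm-continuity of $\Phi$ directly, with no need for the subsequence/Vitali step. For $1<p<2$, $\bigl|\Phi(a)-\Phi(b)\bigr|\lesssim_p\verti{a-b}^{p-1}$ gives $\vertii{\Phi(g_n)-\Phi(g)}_{L^{p'}}^{p'}\lesssim\vertii{g_n-g}_{L^p}^{p}$; for $p\ge 2$, $\bigl|\Phi(a)-\Phi(b)\bigr|\lesssim_p(\verti{a}+\verti{b})^{p-2}\verti{a-b}$ combined with H\"older (exponents $\tfrac{p-1}{p-2}$ and $p-1$) gives $\vertii{\Phi(g_n)-\Phi(g)}_{L^{p'}}^{p'}\lesssim\bigl(\vertii{g_n}_{L^p}+\vertii{g}_{L^p}\bigr)^{\frac{p(p-2)}{p-1}}\vertii{g_n-g}_{L^p}^{\frac{p}{p-1}}$, so in both cases $\Phi(g_n)\to\Phi(g)$ in $L^{p'}$ whenever $g_n\to g$ in $L^p$.
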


It remains to consider the case where $p=1$.
\begin{proof}[Proof of Lemma \ref{diff} in the case where $p=1$]
Let $g \in L^1(\R^N ; \R^N)$ and $L_0 \in \text{GL}_N$. 
The differentiability at $L_0$ of
\bes
\Psi \colon \text{GL}_N \ni L \mapsto \int_{\R^N} \verti{Lg(x)} \ dx
\ees
will follow from
\be \label{domconv}
\ba
\int_{\R^N}\frac{1}{\vertii{H_n}}\bigg|\verti{(L_0+H_n) g(x)} &-\verti{L_0 g(x)}\\ &- \mathbbm{1}_{[g \neq 0]} \verti{L_0g(x)}^{-1}\left(L_0 g(x) \cdot  H_n g(x) \right) \bigg| \  dx \longrightarrow 0, 
\ea
\ee
for each $(H_n) \subset \text{M}_N$ that converges to $0$, property that we now show. For this purpose, we argue as follows. For each $x \in \R^N$, the map 
\bes
\varphi_x \colon \text{GL}_N \ni L \mapsto \verti{L g(x)}
\ees
is differentiable. If $g(x)=0$, then $\varphi_x=0$. If $g(x) \neq 0$, the chain rule yields
\bes
D_{L_0}\varphi_x(H)= \verti{L_0g(x)}^{-1}\left(L_0g(x) \cdot Hg(x) \right), \ \fo H \in \text{M}_N.
\ees

This implies that the integrand in \eqref{domconv} converges to $0$  as $n \to \infty$. It remains to find a suitable domination.

\smallskip
 We have 
\bes
\ba
&\bigg|\verti{(L_0+H_n) g(x)} -\verti{L_0 g(x)}- \mathbbm{1}_{[g \neq 0]} \verti{L_0g(x)}^{-1}\left(L_0 g(x) \cdot H_n g(x) \right) \bigg|
\\ & \leq \bigg| \verti{(L_0+H_n) g(x)} -\verti{L_0 g(x)} \bigg| + \bigg| \mathbbm{1}_{[g \neq 0]} \verti{L_0g(x)}^{-1}\left(L_0 g(x) \cdot H_n g(x) \right) \bigg|
\\ & \leq 2 \vertii{H_n} \verti{g(x)},
\ea
\ees
for each $x \in \R^N$.
Hence,  
\bes
\ba
&\frac{1}{\vertii{H_n}}\bigg|\verti{(L_0+H_n) g(x)} -\verti{L_0 g(x)}- \mathbbm{1}_{[g \neq 0]} \verti{L_0g(x)}^{-1}\left(L_0 g(x) \cdot H_n g(x) \right) \bigg| \\ & \leq 2 \verti{g(x)}, 
\ea
\ees
for each $x \in \R^N$ and $n$. 
\eqref{domconv} then follows by dominated convergence and this completes the proof of Lemma \ref{diff}.
\end{proof}
\begin{rema}
Identity \eqref{equi formula} appears in \cite[Theorem 1.2]{huang2018characterization}.
\end{rema}
\begin{rema}\label{wek}
We note that our second approach yields a sharper bound than our first one. More specifically, we have
\begin{flalign*}
    \frac{C^1_{1,p,N}}{\widetilde{C}^1_{1,p,N}} \rightarrow 0 \ \text{as} \ N \rightarrow \infty,
\end{flalign*}
for every $1 <p < \infty$, by \eqref{cons pvar} and
since $C^1_{1,p,N} \leq N^{-1}$.
\end{rema}
\begin{rema}\label{p=2}
 In the special case where $p=2$, we have, by \eqref{equi formula},
\begin{flalign}
    \left(\int_{\R^N}\verti{\nabla f(x) \cdot \xi}^2 \ dx\right)^{1/2} = \frac{1}{\sqrt{N}}\vertii{\nabla f}_{L^2}, \fo \xi \in \mathbb{S}^{N-1},
\end{flalign}
for each $f \in \dot{W}^{1,2}$ such that
\bes
\vertii{\nabla f}_{L^2}=\min \{ \vertii{\nabla(f \circ T)}_{L^2}; \, T \in \text{SL}_N \}.
\ees

Hence, we find that, for such a function $f$, 
\begin{flalign}\label{eq sob}
    \mathscr{E}_{1,2}(f)=\sqrt{\frac{\sigma_N}{N}}\vertii{\nabla f}_{L^2}.
\end{flalign}

It is straightforward that \eqref{eq sob} implies the optimal affine Sobolev inequality in \cite[Theorem 1]{lutwak2002sharp}, when $N \geq 3$ and $p=2$.
\end{rema}
\section{Proof of Theorem \ref{constraint on min intro} in the general case}\label{sect general fractional}
In this section, we prove that Theorem \ref{constraint on min intro} holds with 
\be \label{def C}
\ba
    C^1_{s,p,N}&\coloneq \sup   
    \left\{\frac{K^1_{s,p,N}-K^2_{s,p,N} \lambda^{-\frac{s}{N-1}}}{(K^2_{s,p,N})^2(\lambda^s - \lambda^{-\frac{s}{N-1}})}; \,  \lambda > \left(\frac{K^2_{s,p,N}}{K^1_{s,p,N}}\right)^{(N-1)/s}  \right\}, \\
    C^2_{s,p,N}&\coloneq \frac{1}{K^1_{s,p,N}},
\ea
\ee
where $K^1_{s,p,N}$ and $K^2_{s,p,N}$ are the constants given by Theorems \ref{theo besov} and \ref{integer slicing}. 

\smallskip
It is straightforward that $C^1_{s,p,N}>0$. 
We refer to Section \ref{cons} for further remarks on $C^1_{s,p,N}$.

\smallskip
\begin{proof}[Proof of Theorem \ref{constraint on min intro}] 

(1) If $s$ is non-integer, we argue as follows. It suffices to prove that if $f \in \dot{W}^{s,p}$ is such that there exists $\xi \in \mathbb{S}^{N-1}$ satisfying
    \begin{flalign}\label{contrapo hypo}
    \left(\int_{0}^{\infty}t^{-sp-1}\vertii{\Delta^{\lfloor s \rfloor +1}_{t\xi }f}_{L^p}^p dt \right )^{1/p}  < C^1_{s,p,N}\verti{f}_{W^{s,p}},
    \end{flalign}
then there exists a unimodular transformation $T \in \text{SL}_N$ such that  
\begin{flalign}\label{final claim frac}
\verti{f\circ T}_{W^{s,p}} < \verti{f}_{W^{s,p}}.
\end{flalign}
Without loss of generality, we may assume that $\xi=(1,0,\dots,0)$ and thus
\be\label{e1}
 \left(\int_{0}^{\infty}t^{-sp-1}\vertii{\Delta^{\lfloor s \rfloor +1}_{te_1 }f}_{L^p}^p dt \right )^{1/p}  < C^1_{s,p,N}\verti{f}_{W^{s,p}}.
\ee
Using \eqref{def C} and \eqref{e1}, we obtain the existence of some $\displaystyle \lambda > \left(\frac{K^2_{s,p,N}}{K_{s,p,N}^1}\right)^{(N-1)/s}$ such that, with $\mu\coloneq \lambda^{-1/(N-1)}$, 
\begin{flalign}\label{by def of C}
\begin{split}
    &\left(\int_{0}^{\infty}t^{-sp-1}\vertii{\Delta^{\lfloor s \rfloor +1}_{te_1 }f}_{L^p}^p \ dt  \right)^{1/p} < \frac{K^1_{s,p,N}-K^2_{s,p,N} \mu^s}{(K^2_{s,p,N})^2\left(\lambda^s - \mu^s\right)}  \verti{f}_{W^{s,p}} \\& \hspace{50 pt} \leq \frac{K^1_{s,p,N}-K^2_{s,p,N} \mu^s}{K^2_{s,p,N}\left(\lambda^s - \mu^s\right)}  \sum_{i=1}^N  \left(\int_{0}^{\infty}t^{-sp-1}\vertii{\Delta^{\lfloor s \rfloor +1}_{te_i }f}_{L^p}^p \ dt\right)^{1/p}.
    \end{split}
\end{flalign}
(For the last inequality, we use Theorem \ref{theo besov}.) 

\smallskip
Therefore, 
\begin{flalign*}
& K^{2}_{s,p,N}(\lambda^s-\mu^s)\left(\int_{0}^{\infty}t^{-sp-1}\vertii{\Delta^{\lfloor s \rfloor +1}_{te_1 }f}_{L^p}^p \ dt  \right)^{1/p} \\ & \hspace{50 pt} <  \left(K^1_{s,p,N}-K^2_{s,p,N}\mu^s \right)\sum_{i=1}^N  \left(\int_{0}^{\infty}t^{-sp-1}\vertii{\Delta^{\lfloor s \rfloor +1}_{te_i }f}_{L^p}^p dt \right)^{1/p},
\end{flalign*}
and, thus,
\begin{flalign}\label{des ineq}
\begin{split}
& K^2_{s,p,N}\bigg[ \lambda^s \left(\int_{0}^{\infty}t^{-sp-1}\vertii{\Delta^{\lfloor s \rfloor +1}_{te_1 }f}_{L^p}^p \ dt\right)^{1/p} \\ &   \hspace{40 pt} + \mu^s \sum_{i=2}^N \left(\int_{0}^{\infty}t^{-sp-1}\vertii{\Delta^{\lfloor s \rfloor +1}_{te_i }f}_{L^p}^p \ dt\right)^{1/p} \bigg] \\ & \hspace{60 pt}< K^1_{s,p,N}  \sum_{i=1}^N  \left(\int_{0}^{\infty}t^{-sp-1}\vertii{\Delta^{\lfloor s \rfloor +1}_{te_i }f}_{L^p}^p dt \right)^{1/p}.
\end{split}
\end{flalign} 
Consider now the linear transformation
\begin{flalign*}
    T_{\lambda}\colon \R^N \ni (x_1,\dots,x_N)\mapsto (\lambda x_1, \mu x_2, \dots,\mu x_N),
\end{flalign*}
which satisfies $\text{det} \  T_{\lambda} =1.$ By Lemma \ref{frac chan}, we have 
\be \label{der 1}
    \int_{0}^{\infty}t^{-sp-1}\vertii{\Delta^{\lfloor s \rfloor +1}_{te_1}(f\circ T_{\lambda})}_{L^p}^p \ dt= \lambda^{sp}\int_{0}^{\infty}t^{-sp-1}\vertii{\Delta^{\lfloor s \rfloor +1}_{t e_1}f}_{L^p}^p \ dt,
\ee
and, for each $2 \leq i \leq N$,
\be \label{der 2}
    \int_{0}^{\infty}t^{-sp-1}\vertii{\Delta^{\lfloor s \rfloor +1}_{te_i}(f\circ T_{\lambda})}_{L^p}^p \ dt=\mu^{sp}\int_{0}^{\infty}t^{-sp-1}\vertii{\Delta^{\lfloor s \rfloor +1}_{te_i}f}_{L^p}^p \ dt.
\ee
Hence, \eqref{des ineq} reads
\begin{flalign*}
\begin{split}
    K^2_{s,p,N} &\sum_{i=1}^N \left(\int_{0}^{\infty}t^{-sp-1}\vertii{\Delta^{\lfloor s \rfloor +1}_{te_i}(f\circ T_{\lambda})}_{L^p}^p \ dt \right)^{1/p}   
    \\ & \hspace{30 pt} < K^1_{s,p,N} \sum_{i=1}^N \left(\int_{0}^{\infty}t^{-sp-1}\vertii{\Delta^{\lfloor s \rfloor +1}_{te_i}f}_{L^p}^p \ dt \right)^{1/p}.
\end{split}    
\end{flalign*}
Therefore, by Theorem \ref{theo besov}, we find that
\begin{flalign*}
\begin{split}
    \verti{f \circ T_{\lambda}}_{W^{s,p}} & \leq K^2_{s,p,N} \sum_{i=1}^N \left(\int_{0}^{\infty}t^{-sp-1}\vertii{\Delta^{\lfloor s \rfloor +1}_{te_i}(f\circ T_{\lambda})}_{L^p}^p \ dt \right)^{1/p}   
    \\ &  < K^1_{s,p,N} \sum_{i=1}^N \left(\int_{0}^{\infty}t^{-sp-1}\vertii{\Delta^{\lfloor s \rfloor +1}_{te_i}f}_{L^p}^p \ dt \right)^{1/p} \leq \verti{f}_{W^{s,p}}.
\end{split}
\end{flalign*}

This completes the proof of Theorem \ref{constraint on min intro} in the case where $s$ is non-integer. 

\smallskip
\noindent
(2) The proof of item (2) is essentially the same as the above one. The following modifications are required. 

\begin{enumerate}[(a)]
    \item Instead of \eqref{der 1} and \eqref{der 2}, we use the identities 
    \bes
     \ba
    \left(\int_{\R^N}\verti{\partial^s_1(f\circ T_{\lambda})(x)}^p \ dx \right)^{1/p} &= \lambda^s \left(\int_{\R^N}\verti{\partial^s_1 f (x)}^p \ dx \right)^{1/p}, \\
\left(\int_{\R^N}\verti{\partial^s_i(f\circ T_{\lambda})(x)}^p \ dx \right)^{1/p} &= \mu^s \left(\int_{\R^N}\verti{\partial^s_i f (x)}^p \ dx \right)^{1/p}, \ \fo 2 \leq i \leq N.
     \ea
    \ees
\item In place of Theorem \ref{theo besov}, we rely on Theorem \ref{integer slicing}. \qedhere
\end{enumerate}
\end{proof}

The proof of Theorem \ref{constraint on min intro} also yields the following analogue of Proposition \ref{more gen con on min}.
\begin{prop}
 \begin{enumerate}[(1)]
     \item  Let $s$ be non integer, $1\leq p<\infty$ and $\gamma\geq 1$. There exists $C(\gamma)>0$ such that, if $f \in \dot{W}^{s,p}$ satisfies
 \begin{flalign*}
    \verti{f}_{W^{s,p}}\leq \gamma \min\{\verti{f\circ T}_{W^{s,p}};\, \ T \in \text{SL}_N \},
 \end{flalign*}
 then
 \begin{flalign*}
   C(\gamma)\verti{f}_{W^{s,p}} \leq \left(\int_{0}^{\infty}t^{-sp-1}\vertii{\Delta_{t\xi}^{\lfloor s \rfloor +1}f}_{L^p}^p \ dt \right)^{1/p}, \ \fo \xi \in \mathbb{S}^{N-1}.
 \end{flalign*}
 \item Let $s\geq 2$ be an integer, $1<p<\infty$, and $\gamma \geq 1$. There exists $C(\gamma)>0$ such that if $f \in \dot{W}^{s,p}$ satisfies
 \begin{flalign*}
    \verti{f}_{W^{s,p}}\leq \gamma \min\{\verti{f\circ T}_{W^{s,p}};\, \ T \in \text{SL}_N \},
 \end{flalign*}
 then
 \begin{flalign*}
  C(\gamma)\verti{f}_{W^{s,p}} \leq  \left(\int_{\R^N} \verti{\partial^s_{\xi} f(x)}^p dx\right)^{1/p}, \ \fo \xi \in \mathbb{S}^{N-1}.
    \end{flalign*}
\end{enumerate}
\end{prop}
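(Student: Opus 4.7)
The plan is to adapt the proof of Theorem \ref{constraint on min intro} by tracking the way the relaxed $\gamma$-near-minimality affects the constant. The contrapositive I will establish is: if some direction $\xi\in\mathbb{S}^{N-1}$ satisfies
\[
\left(\int_{0}^{\infty}t^{-sp-1}\vertii{\Delta^{\lfloor s\rfloor+1}_{t\xi}f}_{L^p}^{p}\,dt\right)^{1/p} < C(\gamma)\verti{f}_{W^{s,p}}
\]
(and analogously in the integer case), then there exists $T\in\text{SL}_N$ with $\verti{f\circ T}_{W^{s,p}}<\verti{f}_{W^{s,p}}/\gamma$, which contradicts the hypothesis.

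After reducing to $\xi=e_1$ using the orthogonal invariance of $\verti{\,\cdot\,}_{W^{s,p}}$, I would consider the same diagonal family $T_\lambda=\text{diag}(\lambda,\mu,\dots,\mu)$ with $\mu=\lambda^{-1/(N-1)}$, so that $\det T_\lambda=1$. Writing $E_i\coloneq (\int_{0}^{\infty}t^{-sp-1}\vertii{\Delta^{\lfloor s\rfloor+1}_{te_i}f}_{L^p}^{p}\,dt)^{1/p}$ (respectively $E_i\coloneq (\int_{\R^N}\verti{\partial^s_{e_i}f(x)}^p\,dx)^{1/p}$ in the integer case), Lemma \ref{frac chan} yields $E_1(f\circ T_\lambda)=\lambda^s E_1(f)$ and $E_i(f\circ T_\lambda)=\mu^s E_i(f)$ for $i\geq 2$. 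Combining this with the two-sided bounds of Theorem \ref{theo besov} (resp.\ Theorem \ref{integer slicing}), the same algebra as in Section \ref{sect general fractional} reduces the desired inequality $\verti{f\circ T_\lambda}_{W^{s,p}}<\verti{f}_{W^{s,p}}/\gamma$ to the single scalar condition
\[
E_1 < \frac{K^1_{s,p,N}/\gamma - K^2_{s,p,N}\mu^s}{(K^2_{s,p,N})^2(\lambda^s - \mu^s)}\verti{f}_{W^{s,p}}.
\]

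Accordingly, I would set
\[
C(\gamma)\coloneq \sup\left\{\frac{K^1_{s,p,N}/\gamma-K^2_{s,p,N}\mu^s}{(K^2_{s,p,N})^2(\lambda^s-\mu^s)};\ \lambda>\left(\frac{\gamma K^2_{s,p,N}}{K^1_{s,p,N}}\right)^{\!(N-1)/s}\right\},
\]
which recovers \eqref{def C} when $\gamma=1$. Positivity of $C(\gamma)$ is immediate: the numerator is strictly positive for $\lambda$ just above the indicated threshold while the denominator stays finite there, so the supremum is achieved on a positive value (and is finite since the ratio tends to $0$ as $\lambda\to\infty$). Given the hypothesis $E_1<C(\gamma)\verti{f}_{W^{s,p}}$, selecting $\lambda$ which nearly attains the supremum yields the advertised $T_\lambda\in\text{SL}_N$, contradicting the $\gamma$-near-minimality.

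I do not foresee a real obstacle: the argument is structurally identical to that of Theorem \ref{constraint on min intro}, the only modifications being the insertion of the factor $1/\gamma$ in the numerator of $C(\gamma)$ and the corresponding tightening of the admissible threshold on $\lambda$ by a factor $\gamma^{(N-1)/s}$. The integer case in item (2) follows the same template verbatim after substituting Theorem \ref{integer slicing} for Theorem \ref{theo besov} and the $E_i$'s by their $\partial^s_{e_i}$ counterparts; the assumption $p>1$ when $s\geq 2$ is an integer enters precisely to invoke the second inequality of Theorem \ref{integer slicing}, which is known to fail at $p=1$ by Ornstein.
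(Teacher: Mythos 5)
Your proposal is correct and takes essentially the same route the paper intends: the proposition is obtained by rerunning the contrapositive dilation argument of Theorem \ref{constraint on min intro} with the factor $1/\gamma$ inserted in the numerator of the constant and the threshold on $\lambda$ tightened accordingly (compare the analogous constant in Proposition \ref{lap}). Your algebra for $C(\gamma)$ and the use of Theorem \ref{theo besov}, respectively Theorem \ref{integer slicing}, match the paper's scheme, so nothing further is needed.
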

In the same vein, we note the following result.
\begin{prop}\label{lap}
    Let $\gamma \geq 1$. There exists $C(\gamma)>0$ such that, if $f \in \dot{W}^{2,1}$ satisfies
\be\label{min lap}
\vertii{\Delta f}_{L^1} \leq \gamma \inf \left\{ \vertii{\Delta(f \circ T)}_{L^1}; \, T \in \text{SL}_N  \right\},
\ee    
then 
\bes
 C(\gamma)\vertii{\Delta f}_{L^1} \leq \int_{\R^N} \verti{\partial^2_{\xi} f(x)} \  dx,  \ \fo \xi \in \mathbb{S}^{N-1}.
\ees
\end{prop}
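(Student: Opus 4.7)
The proof follows the blueprint of the first proof of Theorem \ref{constraint on min intro} in the case $s=1$, but it bypasses the obstruction that excludes $(s,p)=(2,1)$ from Theorem \ref{integer slicing}. The key observation is the pointwise identity $\Delta=\sum_{i=1}^{N}\partial_i^2$, which supplies all the ``slicing'' information we need, without invoking any Hessian-control inequality of the type forbidden by Ornstein's counterexamples.

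I argue by contradiction. Assume that for some $\xi\in\mathbb{S}^{N-1}$ we have $\vertii{\partial_\xi^2 f}_{L^1}<C(\gamma)\vertii{\Delta f}_{L^1}$, where $C(\gamma)>0$ is to be chosen. Since $\Delta$ commutes with orthogonal transformations, pre-composition with a rotation $R\in \text{SO}_N$ satisfying $Re_1=\xi$ preserves $\vertii{\Delta f}_{L^1}$, leaves the near-minimality condition \eqref{min lap} invariant (because $R\cdot\text{SL}_N=\text{SL}_N$), and converts $\vertii{\partial_\xi^2 f}_{L^1}$ into $\vertii{\partial_1^2(f\circ R)}_{L^1}$. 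Hence we may suppose $\xi=e_1$.

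For $\lambda>\gamma^{(N-1)/2}$, set $\mu:=\lambda^{-1/(N-1)}$ and $T_\lambda:=\text{diag}(\lambda,\mu,\dots,\mu)\in \text{SL}_N$. A direct computation yields
\[
\Delta(f\circ T_\lambda)(x)=(\lambda^2-\mu^2)\,\partial_1^2 f(T_\lambda x)+\mu^2\,\Delta f(T_\lambda x),
\]
and the triangle inequality together with $\verti{\text{det}\ T_\lambda}=1$ gives
\[
\vertii{\Delta(f\circ T_\lambda)}_{L^1}\le (\lambda^2-\mu^2)\,\vertii{\partial_1^2 f}_{L^1}+\mu^2\,\vertii{\Delta f}_{L^1}.
\]
Setting
\[
C(\gamma):=\sup\left\{\frac{\gamma^{-1}-\lambda^{-2/(N-1)}}{\lambda^2-\lambda^{-2/(N-1)}}\;:\;\lambda>\gamma^{(N-1)/2}\right\},
\]
the hypothesis, combined with a choice of $\lambda$ that brings the ratio above arbitrarily close to $C(\gamma)$, forces $\vertii{\Delta(f\circ T_\lambda)}_{L^1}<\gamma^{-1}\vertii{\Delta f}_{L^1}$, contradicting \eqref{min lap}.

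The remaining check is $C(\gamma)>0$: the function under the supremum is continuous on $(\gamma^{(N-1)/2},\infty)$, vanishes at the left endpoint (numerator) and as $\lambda\to\infty$ (denominator grows like $\lambda^2$), so its supremum on this open interval is strictly positive. I do not foresee a substantive obstacle; once the additive structure of the Laplacian is exploited, the argument is elementary and parallels the $s=1$ case.
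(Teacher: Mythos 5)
Your proof is correct and follows essentially the same route as the paper: reduce to $\xi=e_1$ using that $\Delta$ commutes with rotations, apply the unimodular scaling $T_\lambda=\text{diag}(\lambda,\mu,\dots,\mu)$ with $\mu=\lambda^{-1/(N-1)}$, and contradict the near-minimality \eqref{min lap} for a suitably chosen $\lambda>\gamma^{(N-1)/2}$. The only (minor) difference is that you use the exact identity $\Delta(f\circ T_\lambda)=(\lambda^2-\mu^2)\,\partial_1^2 f\circ T_\lambda+\mu^2\,\Delta f\circ T_\lambda$, whereas the paper estimates $\vertii{\sum_{i\geq 2}\partial_i^2 f}_{L^1}\leq \vertii{\Delta f}_{L^1}+\vertii{\partial_1^2 f}_{L^1}$ by the triangle inequality, so your constant (with $\lambda^2-\lambda^{-2/(N-1)}$ in the denominator instead of $\lambda^2+\lambda^{-2/(N-1)}$) is marginally sharper.
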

\noindent
(In the above setting, we do not claim the existence of a minimizer in the right-hand side of \eqref{min lap}.)

\begin{proof}[Proof of Proposition \ref{lap}]
    Let $\gamma \geq 1$. We prove that the conclusion holds with
    \be\label{Cgamma}
    C(\gamma) \coloneq \sup \left\{ \frac{1-\gamma\lambda^{-2/(N-1)}}{\gamma(\lambda^2+\lambda^{-2/(N-1)})}; \, \lambda> \gamma^{(N-1)/2} \right\}.
    \ee
    
    We argue as in the proof of Theorem \ref{constraint on min intro}. It suffices to prove that, if $f \in \dot{W}^{2,1}$ is such that there exists $\xi \in \mathbb{S}^{N-1}$ satisfying
    \bes
    \vertii{\partial^2_{\xi} f}_{L^1}< C(\gamma)\vertii{\Delta f}_{L^1},
    \ees
    then there exists $T \in \text{SL}_N$ such that
    \be\label{claim lap}
     \gamma \vertii{\Delta(f\circ T)}_{L^1}< \vertii{\Delta f}_{L^1},
    \ee
    which is the desired contradiction.

    \smallskip
    Since the Laplace operator commutes with isometries, we may assume that $\xi=(1,0,\dots,0)$, and thus
    \be\label{hyp}
     \vertii{\partial^2_{1} f}_{L^1}< C(\gamma)\vertii{\Delta f}_{L^1}.
    \ee
    
    By \eqref{Cgamma} and \eqref{hyp}, there exists some $\lambda> \gamma^{(N-1)/2}$ such that
    \be \label{hyp lam}
    \vertii{\partial_1^{2} f}_{L^1} < \frac{1-\gamma\lambda^{-2/(N-1)}}{\gamma(\lambda^2+\lambda^{-2/(N-1)})} \vertii{\Delta f}_{L^1}.
    \ee

    Set $\mu \coloneq \lambda^{-1/(N-1)}$ and consider $T_{\lambda}$ as in the proof of Theorem \ref{constraint on min intro}. 
    We have
    \be\label{T est}
    \ba
     \vertii{\Delta(f\circ T_{\lambda})}_{L^1} &\leq \vertii{\partial_1^2 (f \circ T_{\lambda})}_{L^1}+ \int_{\R^N}\verti{\sum_{i=2}^N \partial_i^2(f\circ T_{\lambda})(x)} \ dx  
     \\ &= \lambda^2 \vertii{\partial^2_1 f}_{L^1} + \mu^2 \int_{\R^N}\verti{\sum_{i=2}^N \partial_i^2 f(x)} \ dx.
    \ea
    \ee
    
    On the other hand, we have
    \be\label{rest}
     \int_{\R^N}\verti{\sum_{i=2}^N \partial_i^2 f(x)} \ dx \leq \vertii{\Delta f}_{L^1}+ \vertii{\partial_1^2 f}_{L^1}.
    \ee
    
    Combining \eqref{hyp lam}, \eqref{T est}, and \eqref{rest}, we find that
    \bes
        \vertii{\Delta(f\circ T_{\lambda})}_{L^1} < \left( \frac{1-\gamma\mu^2}{\gamma(\lambda^2+\mu^2)}(\lambda^2+\mu^2) + \mu^2 \right) \vertii{\Delta f}_{L^1}= \frac{1}{\gamma} \vertii{\Delta f}_{L^1}.
    \ees
    
    Hence, \eqref{claim lap} holds with $T_{\lambda}$ and this completes the proof of Proposition \ref{lap}.
\end{proof}

Proposition \ref{lap} implies the following \enquote{weak} affine Sobolev inequality, which complements Theorem \ref{subcrit aff emb} in the borderline case where  $s=2$ and $p=1$.
\begin{theo}\label{weak}
    Assume that $N\geq 3$. There exists $K_{N}<\infty$ such that
    \be\label{we}
    \vertii{f}_{L^{N/(N-2),\infty}}\leq K_N \mathscr{E}_{2,1}(f), \ \fo f \in C_c^{\infty},
    \ee
    where $L^{N/(N-2), \infty}$ is the weak Lebesgue space, equipped with
    \be
    \label{faible}
     \vertii{f}_{L^{N/(N-2),\infty}}\coloneq \sup_{t>0} \ t \ \verti{\left\{x \in \R^N; \, \verti{f(x)}>t \right\}}^{(N-2)/N}.
    \ee
\end{theo}
\begin{rema}
    Note that, by Markov's inequality, 
     $\vertii{f}_{L^{N/(N-2),\infty}} \leq \vertii{f}_{L^{N/(N-2)}}$, for each measurable $f$. This explains why we refer to $L^{N/(N-2),\infty}$ as a \enquote{weak} Lebesgue space and to \eqref{we} as a \enquote{weak} affine Sobolev inequality.
\end{rema}
We rely on the following (see Zygmund \cite[p.247]{Zygaund1989}, Ponce \cite[Proposition 5.7]{ponce2016elliptic}).
\begin{theo}\label{cal}

Assume that $N\geq 3$. There exists $ K_N<\infty$ such that
\bes
 \vertii{f}_{L^{N/(N-2),\infty}}\leq K_N \vertii{\Delta f}_{L^1}, \ \fo f \in C_c^{\infty}.
\ees
\end{theo}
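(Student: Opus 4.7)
The plan is the standard two-step proof of the endpoint weak-type Calder\'on--Zygmund estimate for the Newtonian (Laplace) potential: first represent $f$ via the fundamental solution of $\Delta$, then invoke the endpoint weak-type boundedness on $L^1$ of the associated Riesz potential of order $2$.

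For $N\geq 3$, the function $\displaystyle G(x) := -\frac{1}{(N-2)\sigma_N}\verti{x}^{-(N-2)}$ is a fundamental solution of the Laplacian on $\R^N$. Given $f\in C_c^\infty$, the convolution $G*\Delta f$ is well defined (since $G$ is locally integrable and $\Delta f \in C_c^\infty$), satisfies $\Delta(G*\Delta f)=\Delta f$ distributionally, and decays at infinity. Hence $f - G * \Delta f$ is a harmonic function vanishing at infinity, and Liouville's theorem forces $f = G * \Delta f$. Taking absolute values yields the pointwise domination
\[
\verti{f(x)} \leq \frac{1}{(N-2)\sigma_N}\, (I_2 * \verti{\Delta f})(x), \qquad I_2(x) := \verti{x}^{-(N-2)}.
\]
The theorem therefore reduces to the endpoint weak-type inequality
\[
\vertii{I_2 * g}_{L^{N/(N-2),\infty}} \leq C_N \vertii{g}_{L^1}, \qquad \forall\, g \in L^1(\R^N),\ g\geq 0.
\]

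For this inequality I would use Hedberg's maximal-function argument. Split the convolution at radius $r > 0$ around $x$: the far integral is controlled by $r^{-(N-2)} \vertii{g}_{L^1}$, while a standard dyadic decomposition of the near integral gives the bound $\lesssim r^2 \, Mg(x)$, where $M$ denotes the Hardy--Littlewood maximal operator. Optimizing $r = (\vertii{g}_{L^1}/Mg(x))^{1/N}$ yields the pointwise interpolation
\[
(I_2 * g)(x) \lesssim \vertii{g}_{L^1}^{2/N}\, (Mg(x))^{(N-2)/N}.
\]
Consequently the superlevel set $\{I_2 * g > \lambda\}$ is contained in $\{Mg > c_N \lambda^{N/(N-2)}\vertii{g}_{L^1}^{-2/(N-2)}\}$, and the Hardy--Littlewood weak-type $(1,1)$ inequality closes the argument, giving $\lambda\,\verti{\{I_2*g>\lambda\}}^{(N-2)/N}\lesssim \vertii{g}_{L^1}$.

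The main obstacle is intrinsic to the endpoint nature at $p = 1$: the Riesz potential $I_2$ fails to map $L^1$ into $L^{N/(N-2)}$ in the strong sense, so one must settle for the weak quasi-norm on the left. Hedberg's trick circumvents this failure of strong-type boundedness by trading the unavailable Young-type inequality for the classical weak-$(1,1)$ bound on the maximal function; once the latter is granted as a well-known ingredient, the assembly is routine.
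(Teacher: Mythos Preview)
Your argument is correct and complete: the representation $f=G*\Delta f$ followed by Hedberg's pointwise inequality and the weak-$(1,1)$ bound for the maximal function is a standard route to this endpoint estimate. Note, however, that the paper does not actually prove this theorem---it is stated as a known result with references to Zygmund and Ponce---so there is no proof in the paper to compare against; your write-up would serve as a self-contained justification where the paper simply invokes the literature.
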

\begin{proof}[Proof of Theorem \ref{weak}] In what follows, $C$ denotes a general constant that depends only on $N\geq 3$.

\smallskip
We argue as in the proof of Theorem \ref{h order aff sob}.  Let $f \in C^{\infty}_c$. Let $T_f \in \text{SL}_N$ such that
\bes
\vertii{\Delta (f\circ T_f)}_{L^1}\leq 2 \inf\left\{\vertii{\Delta(f \circ T)}_{L^1}; \, T \in \text{SL}_N \right\}.
\ees

By Proposition \ref{lap}, we have
    \bes
    \vertii{\Delta(f\circ T_f)}_{L^1}\leq C \int_{\R^N} \verti{\partial_{\xi}^2 (f\circ T_f)(x)} \ dx, \ \fo \xi \in \mathbb{S}^{N-1},
    \ees
    and this yields
    \be\label{D est}
     \vertii{\Delta(f\circ T_f)}_{L^1} \leq C \mathscr{E}_{2,1}(f \circ T_f)=\mathscr{E}_{2,1}(f),
    \ee
    using Proposition \ref{affine invariance}.

    \smallskip
    On the other hand, an inspection of \eqref{faible} shows that
    $\vertii{f \circ T_f}_{L^{N/(N-2),\infty}}=\vertii{f}_{L^{N/(N-2),\infty}}$. Combining \eqref{D est} and Theorem \ref{cal}, we find that
    \bes
     \vertii{f}_{L^{N/(N-2),\infty}}=\vertii{f \circ T_f}_{L^{N/(N-2),\infty}}\leq C \vertii{\Delta(f\circ T_f)}_{L^1}\leq C \mathscr{E}_{2,1}(f).\qedhere
    \ees
\end{proof}
\section{A closer look at the case where \texorpdfstring{$0<s<1$}{0<s<1}}\label{cons}
In this section, we make a quantitative comparison between our approach to affine Sobolev inequalities and the one developed in \cite{haddad2024affine}, when $0<s<1$. 

\smallskip
The proof of \eqref{h order aff sob} in \cite{haddad2024affine} goes as follows. Let $f \in W^{s,p}$ and $f^{\#}$ be the symmetric decreasing rearrangement of $f$.
Clearly, we have 
\be\label{pro}
\vertii{f}_{L^q}= \vertii{f^{\#}}_{L^q}, \ 
 \mathscr{E}_{s,p}(f^{\#})=  \verti{f^{\#}}_{W^{s,p}}, \ \text{and} \ \vertii{f^{\#}}_{L^q} \leq \widetilde{C}_{s,p,N}\verti{f^{\#}}_{W^{s,p}},
 \ee
where the second equality follows from \eqref{rad} and \eqref{sp}, and  $\widetilde{C}_{s,p,N}$ is the best Sobolev constant.
One of the main contributions of \cite{haddad2024affine} consists in establishing the affine P\'olya-Szeg\"o inequality
\be \label{pza}
\mathscr{E}_{s,p}(f^{\#}) \leq \mathscr{E}_{s,p}(f).
\ee

\eqref{pro} and \eqref{pza} obviously imply \eqref{lud had} with
\be\label{sobc}
C_{s,p,N}\coloneq  \widetilde{C}_{s,p,N}.
\ee

Moreover, the above considerations show that  we have equality in \eqref{lud had} if $f$ is an extremizer in the Sobolev inequality, and therefore $C_{s,p,N}$ is the best constant.

\smallskip
By contrast, our approach relies on the fact that
\bes
\vertii{f \circ T}_{L^q}=\vertii{f}_{L^q}, \ \vertii{ f \circ T}_{L^q}\leq \widetilde{C}_{s,p,N} \verti{f \circ T}_{W^{s,p}}, \ \fo T \in \text{SL}_N,
\ees
and on the existence of $T_f \in \text{SL}_N$ such that
\bes
\verti{f \circ T_f}_{W^{s,p}} \leq \frac{\sigma_N^{-1/p}}{C^1_{s,p,N}} \mathscr{E}_{s,p}(f \circ T_f)= \frac{\sigma_N^{-1/p}}{C^1_{s,p,N}} \mathscr{E}_{s,p}(f).
\ees

Here, $C^1_{s,p,N}$ is the constant in Theorem \ref{constraint on min intro}. This yields \eqref{lud had}, with the constant
\be\label{k}
K_{s,p,N}\coloneq \frac{\widetilde{C}_{s,p,N} \sigma_N^{-1/p}}{C_{s,p,N}^1}, 
\ee
instead of the optimal constant $C_{s,p,N}$ given by formula \eqref{sobc}. 

\smallskip
Although there is no hope to expect that $K_{s,p,N}=C_{s,p,N}$ in general, we observe that the estimate we obtain is \enquote{not much worse} than \eqref{lud had}. To be more precise,   there exists $C\coloneq C_{N}<\infty$ such that  
    \be\label{compar}
     K_{s,p,N}\leq C C_{s,p,N}, 
    \ee
    for each $0<s<1$ and $1 \leq p<\infty$.

Indeed, Remark \ref{co s 1} and the proof of Theorem \ref{constraint on min intro} imply that Theorem \ref{constraint on min intro} holds with a positive constant $C^1_{s,p,N}=C^1_N$ that only depends on $N$. By \eqref{sobc} and \eqref{k}, this implies \eqref{compar}.
\section{Gagliardo--Nirenberg type inequalities}\label{gag sec}
This section is devoted to the proof of Theorem \ref{aff gag}.
It is based on the following, see, e.g., \cite[Theorems 7.50, 11.42]{leoni2023first} and \cite{BREZIS20181355}. 
\begin{theo}\label{gag}
    Let $0<s_1<s_2<\infty$, $1 < p_1,p_2 < \infty $, and $\theta \in (0,1)$. Set $s\coloneq \theta s_2 +(1-\theta)s_1$ and $1/p \coloneq \theta/p_2 +(1-\theta)/p_1$.  There exists $\widetilde{C}\coloneq \widetilde{C}_{s_1,p_1,s_2,p_2,\theta,N}<\infty$ such that
    \bes
    \verti{f}_{W^{s,p}}\leq \widetilde{C} \verti{f}_{W^{s_1,p_1}}^{1-\theta}\verti{f}_{W^{s_2,p_2}}^{\theta}, \ \fo f \in \dot{W}^{s_1,p_1}\cap \dot{W}^{s_2,p_2}.
    \ees
     Same when $0<s_1<s_2\leq 1$ and $1 \leq p_1,p_2<\infty$, with $s_1p_1<1$ if $s_2=1$ and $p_2=1$.
\end{theo}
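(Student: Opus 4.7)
The plan is to reduce the statement to an interpolation estimate on dyadic frequency blocks, via the Littlewood--Paley characterization of the seminorms $\verti{\cdot}_{W^{s,p}}$. Fix a standard Littlewood--Paley decomposition $\{\varphi_k\}_{k \in \Z}$ and set $f_k := \varphi_k \ast f$. Classical Besov/Triebel--Lizorkin theory (see \cite{triebel2010theory}) yields, for non-integer $\sigma > 0$ and $1 \le q < \infty$, the equivalence $\verti{f}_{W^{\sigma,q}}^q \sim \sum_{k \in \Z} 2^{k\sigma q}\vertii{f_k}_{L^q}^q$, identifying $\dot{W}^{\sigma,q}$ with the Besov space $\dot{B}^\sigma_{q,q}$; for integer $\sigma \ge 1$ and $1 < q < \infty$, the equivalence becomes $\verti{f}_{W^{\sigma,q}} \sim \vertii{(\sum_k 2^{2k\sigma}|f_k|^2)^{1/2}}_{L^q}$, identifying $\dot{W}^{\sigma,q}$ with $\dot{F}^\sigma_{q,2}$.

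In the main case where $s,s_1,s_2$ are all non-integer, I would apply H\"older's inequality at two levels. First, log-convexity of $L^p$-norms gives block-wise $\vertii{f_k}_{L^p} \le \vertii{f_k}_{L^{p_1}}^{1-\theta}\vertii{f_k}_{L^{p_2}}^{\theta}$. Splitting $2^{ksp} = 2^{ks_1(1-\theta)p} \cdot 2^{ks_2\theta p}$ and grouping,
\bes
2^{ksp}\vertii{f_k}_{L^p}^p \le \pari{2^{k s_1 p_1}\vertii{f_k}_{L^{p_1}}^{p_1}}^{(1-\theta)p/p_1}\pari{2^{k s_2 p_2}\vertii{f_k}_{L^{p_2}}^{p_2}}^{\theta p/p_2}.
\ees
Summing in $k$ and applying H\"older with conjugate exponents $p_1/((1-\theta)p)$ and $p_2/(\theta p)$ (whose reciprocals sum to $1$ by the definition of $p$) concludes this case after taking $p$-th roots.

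When some exponents are integer, the characterization involves a square function. The same scheme still works: H\"older in $k$ yields the pointwise bound
\bes
\pari{\sum_k 2^{2ks}|f_k|^2}^{1/2} \le \pari{\sum_k 2^{2ks_1}|f_k|^2}^{(1-\theta)/2}\pari{\sum_k 2^{2ks_2}|f_k|^2}^{\theta/2},
\ees
after which H\"older in $x$ with exponents matching the interpolation of $1/p$ concludes. The hypothesis $p_i > 1$ for integer $s_i$ is precisely what is needed for the Fefferman--Stein vector-valued maximal inequality, which underlies the Littlewood--Paley characterization of integer-order Sobolev spaces. Mixed cases (some integer, some not) are handled by the standard embeddings $\dot{B}^s_{p,\min(p,2)} \hookrightarrow \dot{F}^s_{p,2} \hookrightarrow \dot{B}^s_{p,\max(p,2)}$, which bring the three norms into a common Besov framework before the block interpolation is performed.

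The hardest part will be the endpoint case $s_2 = 1$, $p_2 = 1$, $s_1 p_1 < 1$, where the Littlewood--Paley machinery fails because $\dot{W}^{1,1}$ does not coincide with $\dot{F}^1_{1,2}$. Following the strategy of \cite{BREZIS20181355}, I would proceed by a truncation/Calder\'on--Zygmund splitting of $\nabla f$ at a well-chosen level $\lambda>0$: the \enquote{good} part is bounded in $L^\infty$ and absorbed into the $\dot{W}^{s_1,p_1}$ contribution using the subcritical condition $s_1 p_1 < 1$ (which permits the needed $L^\infty$-type control via a Morrey-type or Sobolev embedding argument), while the singular part is controlled in $L^1$. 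Optimizing over $\lambda$ produces the desired geometric interpolation. It is in this endpoint, where direct arguments on differences replace frequency decomposition, that the argument is the most delicate.
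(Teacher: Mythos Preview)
The paper does not prove this theorem; it is quoted as a known result, with references to \cite[Theorems 7.50, 11.42]{leoni2023first} and \cite{BREZIS20181355}. Your Littlewood--Paley/H\"older scheme is indeed one of the standard routes to such inequalities, and your treatment of the two \enquote{pure} cases (all three orders non-integer, so all norms are $\dot B^{\sigma}_{q,q}$; or all three orders integer with $p_i>1$, so all norms are $\dot F^{\sigma}_{q,2}$) is correct.

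There is, however, a genuine gap in your handling of the mixed cases. Using the embeddings $\dot B^{\sigma}_{q,\min(q,2)}\hookrightarrow \dot F^{\sigma}_{q,2}\hookrightarrow \dot B^{\sigma}_{q,\max(q,2)}$ to pass to a common Besov scale forces the third Besov index on the right-hand side to become $\max(p_i,2)$ whenever $s_i$ is an integer. The block H\"older argument then produces a bound on $\|f\|_{\dot B^{s}_{p,r}}$ with $1/r=(1-\theta)/q_1+\theta/q_2$, and you need $r\le p$ (equivalently $1/r\ge 1/p$) to recover $\dot B^{s}_{p,p}$ or $\dot F^{s}_{p,2}$. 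Since $1/p=(1-\theta)/p_1+\theta/p_2$ and $\max(p_i,2)\ge p_i$, this fails precisely when some integer-order factor has $p_i<2$. In that regime your embeddings go the wrong way and the argument does not close. The inequality is nonetheless true there: the fix is the stronger endpoint bound $\|f\|_{\dot B^{s}_{p,1}}\lesssim \|f\|_{X_1}^{1-\theta}\|f\|_{X_2}^{\theta}$ valid whenever $s_1\ne s_2$ (this is the content of \cite{BREZIS20181355}), which then feeds into $\dot B^{s}_{p,1}\hookrightarrow \dot B^{s}_{p,p}\cap\dot F^{s}_{p,2}$; but this requires a genuinely different mechanism (an optimized splitting exploiting $s_1\ne s_2$) rather than the plain H\"older-in-$k$ you describe.

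Your endpoint sketch for $s_2=1$, $p_2=1$, $s_1p_1<1$ correctly identifies the difficulty and the relevant reference, but the description (\enquote{Calder\'on--Zygmund splitting of $\nabla f$}, \enquote{Morrey-type control via $s_1p_1<1$}) does not match the actual mechanism in \cite{BREZIS20181355}, which works with first differences of $f$ and uses the pointwise bound $|\Delta_h f|\le |h|\,M(|\nabla f|)$ together with a truncation in the size of $M(|\nabla f|)$; the condition $s_1p_1<1$ enters only to ensure $s<1$, not through a Morrey embedding.
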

\begin{proof}[Proof of Theorem \ref{aff gag}]
    In what follows,  $C$ denotes a general positive constant that only depends on $s_1,p_1,s_2,p_2,s,p,$ and $N$.  
    By Theorems \ref{there is min intro} and \ref{constraint on min intro}, it suffices to show that
    \be
      C \mathscr{E}_{s,p}(f) \leq  \verti{f \circ T_1}_{W^{s_1,p_1}}^{1-\theta} \verti{f \circ T_2}_{W^{s_2,p_2}}^{\theta},
      \ee
    for each $T_1, T_2  \in \text{SL}_N$, and $f \in \dot{W}^{s_1,p_1} \cap \dot{W}^{s_2,p_2}$. This amounts to 
    \be \label{red gag}
     C\mathscr{E}_{s,p}(f) \leq  \verti{f \circ T}_{W^{s_1,p_1}}^{1-\theta} \verti{f}_{W^{s_2,p_2}}^{\theta},
      \ee
    for each $T \in \text{SL}_N$ and $f \in \dot{W}^{s_1,p_1} \cap \dot{W}^{s_2,p_2}$

\smallskip 
 We claim that it actually suffices to prove that 
\be \label{diag red}
 C \mathscr{E}_{s,p}(f) \leq  \verti{f \circ (\text{DO})}_{W^{s_1,p_1}}^{1-\theta} \verti{f}_{W^{s_2,p_2}}^{\theta},
\ee
for each diagonal $\text{D} \in \text{SL}_N$, $\text{O} \in \text{O}_N$, and $f \in \dot{W}^{s_1,p_1} \cap \dot{W}^{s_2,p_2}$.
Indeed, each $T \in \text{SL}_N$ can be written as  $T=\text{O}\text{S}$, with $\text{O} \in \text{O}_N$ and a symmetric matrix $\text{S} \in \text{SL}_N$. The spectral theorem yields a matrix $\tilde{\text{O}} \in \text{O}_N$ and a diagonal matrix $\text{D} \in \text{SL}_N$ such that $T=\text{O}\tilde{\text{O}}^{\text{T}}\text{D} \tilde{\text{O}}$. This implies that \eqref{red gag} holds for $f$ and $T$ if and only if
\bes
C \mathscr{E}_{s,p}(g) \leq  \verti{g\circ (\text{D} \text{O})
}_{W^{s_1,p_1}}^{1-\theta} \verti{g}_{W^{s_2,p_2}}^{\theta}, 
\ees
where $g\coloneq f \circ(\text{O} \tilde{\text{O}}^{\text{T}}).$ This proves our claim.

\smallskip
We now prove \eqref{diag red}. Let $f \in \dot{W}^{s_1,p_1}\cap\dot{W}^{s_2,p_2}$, $\text{O} \in \text{O}_N$, and $\text{D}=\text{diag}(\lambda_1,\lambda_2,\dots,\lambda_N) \in \text{SL}_N$. Consider the orthonormal basis $u_i \coloneq \text{O}^{-1}e_i$. If $s_1$ is non-integer, by Theorem \ref{theo besov}, Lemma \ref{frac chan}, and Remark \ref{1d norm}, we have
\be\label{frac sli}
\begin{split}
\verti{f \circ (\text{DO})}_{W^{s_1,p_1}}\geq C \left(\int_{0}^{\infty}t^{-s_1p_1-1} \vertii{\Delta_{tu_i}^{\lfloor s_1 \rfloor +1}(f\circ(\text{DO}))}_{L^{p_1}}^{p_1} \ dt \right)^{1/p_1}
\\ \geq C \verti{\lambda_i}^{s_1}\left(\int_{\R^{N-1}}\verti{f(x_1,\dots,x_{i-1},\cdot,x_{i+1},\dots,x_N)}_{W^{s_1,p_1}(\R)}^{p_1}  d \widehat{x_i}\right)^{1/p_1}, 
\end{split}
\ee
for each $1 \leq i \leq N$. 

\smallskip
Similarly, if $s_1$ is an integer, by Theorem \ref{integer slicing} and Remark \ref{1d norm}, we have
\begin{flalign}\label{int sli}
\begin{split}
&\verti{f \circ (\text{DO})}_{W^{s_1,p_1}}\geq C \vertii{\partial^{s_1}_{u_i}(f\circ (\text{DO}))}_{L^{p_1}}
\\ & \hspace{20 pt} \geq C \verti{\lambda_i}^{s_1}\left(\int_{\R^{N-1}}\verti{f(x_1,\dots,x_{i-1},\cdot,x_{i+1},\dots,x_N)}_{W^{s_1,p_1}(\R)}^{p_1}  d \widehat{x_i}\right)^{1/p_1}, 
\end{split}
\end{flalign}
for each $1 \leq i \leq N$.

\smallskip
Using \eqref{frac sli}, respectively  \eqref{int sli},  we obtain
\be\label{1 sli}
\ba
& \verti{f \circ (\text{DO})}_{W^{s_1,p_1}}^{(1-\theta)p} \\ &\geq C \sum_{i=1}^N \verti{\lambda_i}^{s_1(1-\theta)p}\left(\int_{\R^{N-1}}\verti{f(x_1,\dots,x_{i-1},\cdot,x_{i+1},\dots,x_N)}^{p_1}_{W^{s_1,p_1}(\R)}  \ d \widehat{x_i}\right)^{(1-\theta)p/p_1}.
\ea
\ee

Similarly, we have
\be\label{2 sli}
\verti{f}_{W^{s_2,p_2}}^{\theta p} \geq C\sum_{i=1}^N \left(\int_{\R^{N-1}} \verti{f(x_1,\dots,x_{i-1},\cdot,x_{i+1},\dots,x_N)}_{W^{s_2,p_2}(\R)}^{p_2}  d \widehat{x_i}\right)^{\theta p/p_2}.  
\ee

Therefore, by \eqref{1 sli} and \eqref{2 sli}, we have
    \begin{flalign*}
       & \verti{f \circ (\text{DO})}_{W^{s_1,p_1}}^{(1-\theta)p} \verti{f}_{W^{s_2,p_2}}^{\theta p} &&
       \\ & \geq C \sum_{i=1}^N \verti{\lambda_i}^{s_1(1-\theta)p}\left(\int_{\R^{N-1}} \verti{f(x_1, \dots,x_{i-1}, \cdot, x_{i+1},\dots,x_N)}^{p_1}_{W^{s_1,p_1}(\R)} \ d \widehat{x_i}\right)^{(1-\theta)p/p_1}  \\ &  \hspace{40 pt} \times \sum_{\ell=1}^N \left(\int_{\R^{N-1}} \verti{f(x_1, \dots,x_{\ell-1}, \cdot, x_{\ell+1},\dots,x_N)}^{p_2}_{W^{s_2,p_2}(\R)} \ d \widehat{x_{\ell}}\right)^{\theta p/p_2} 
       \\ & \geq C  \sum_{i=1}^N \bigg[\verti{\lambda_i}^{s_1(1-\theta)p}\left(\int_{\R^{N-1}} \verti{f(x_1, \dots,x_{i-1}, \cdot, x_{i+1},\dots,x_N)}^{p_1}_{W^{s_1,p_1}(\R)}\ d \widehat{x_i} \right)^{(1-\theta)p/p_1} \\ &  \hspace{30 pt }\times \left(\int_{\R^{N-1}} \verti{f(x_1, \dots,x_{i-1}, \cdot, x_{i+1},\dots,x_N)}^{p_2}_{W^{s_2,p_2}(\R)} \ d \widehat{x_i}\right)^{\theta p/p_2} \bigg] 
     \\ &  \geq C \sum_{i=1}^N \verti{\lambda_i}^{s_1(1-\theta)p} \int_{\R^{N-1}} \bigg(\verti{f(x_1, \dots,x_{i-1}, \cdot, x_{i+1},\dots,x_N)}^{p(1-\theta)}_{W^{s_1,p_1}(\R)} \\ &  \hspace{130 pt}\times \verti{f(x_1, \dots,x_{i-1}, \cdot, x_{i+1},\dots)}^{p\theta}_{W^{s_2,p_2}(\R)}\bigg) \ d\widehat{x_i},
    \end{flalign*}
    where the last inequality follows from H\"older's inequality. Applying Theorem \ref{gag} to the functions $f(x_1,\dots,x_{i-1},\cdot,x_{i+1},\dots,x_N)$, we find that
    \be \label{for g}
    \ba
    \verti{f \circ (\text{DO})}_{W^{s_1,p_1}}^{(1-\theta)p} &\verti{f}_{W^{s_2,p_2}}^{\theta p} \\ &\geq C \sum_{i=1}^N \verti{\lambda_i}^{s_1(1-\theta)p} \int_{\R^{N-1}} \verti{f(x_1,\dots,x_{i-1},\cdot,x_{i+1},\dots,x_N)}_{W^{s,p}(\R)}^p \ d \widehat{x_i}.
    \ea
    \ee
    Consider now $\widetilde{\text{D}}\coloneq \text{diag}(\verti{\lambda_1}^{s_1(1-\theta)/s},\dots, \verti{\lambda_N}^{s_1(1-\theta)/s}) \in \text{SL}_N$ and the function $g \coloneq f \circ \widetilde{\text{D}}$. We have
    \bes
    \ba
     \int_{\R^{N-1}} &\verti{g(x_1,\dots,x_{i-1},\cdot ,x_{i+1},\dots,x_N)}_{W^{s,p}(\R)}^p \ 
     d\widehat{x_i} \\ & = \verti{\lambda_i}^{s_1(1-\theta)p} \int_{\R^{N-1}}  \verti{f(x_1,\dots,x_{i-1},\cdot,x_{i+1},\dots,x_N)}_{W^{s,p}(\R)}^p \ 
     d\widehat{x_i}, 
    \ea
    \ees
    for each $1 \leq i \leq N,$ by Lemma \ref{frac chan} and Remark \ref{1d norm}. Therefore, \eqref{for g} reads
    \bes
    \ba
        &\verti{f \circ (\text{DO})}_{W^{s_1,p_1}}^{(1-\theta)p} \verti{f}_{W^{s_2,p_2}}^{\theta p} \\ &\geq C\sum_{i=1}^N  \int_{\R^{N-1}} \verti{g(x_1,\dots,x_{i-1},\cdot,x_{i+1},\dots,x_N)}_{W^{s,p}(\R)}^p \ 
     d\widehat{x_i}
    \ea
    \ees
    and Theorem \ref{theo besov} (if $s$ is non-integer), respectively Theorem \ref{integer slicing} (if $s$ is an integer), yield
    \be\label{final}
     \verti{f \circ (\text{DO})}_{W^{s_1,p_1}}^{(1-\theta)p} \verti{f}_{W^{s_2,p_2}}^{\theta p} \geq C \verti{g}_{W^{s,p}}^p.
    \ee
    
    On the other hand, we have
 $\verti{g}_{W^{s,p}}^p \geq  \alpha_{s,p,N}\mathscr{E}_{s,p}(g)^p $
    and $\mathscr{E}_{s,p}(f)=\mathscr{E}_{s,p}(g)$ (by Lemma \ref{Jensen} and Proposition \ref{affine invariance}). Hence, \eqref{final} yields
    \bes
     \verti{f \circ (\text{DO})}_{W^{s_1,p_1}}^{(1-\theta)p} \verti{f}_{W^{s_2,p_2}}^{\theta p} \geq C \mathscr{E}_{s,p}(f)^p.
    \ees
    
  This completes the proof of Theorem \ref{aff gag}.
\end{proof}    
\section{Reverse affine inequalities}\label{rev ineq}
In this section, we prove Theorems \ref{start reverse}, \ref{gen rev affine ineq}, and \ref{cara rev}.

\smallskip
Clearly, Theorem \ref{gen rev affine ineq} follows from Theorems \ref{start reverse}, \ref{there is min intro}, and \ref{constraint on min intro}. The proof of Theorem \ref{start reverse} relies on the following.
\begin{lemma}\label{lem rev}
 Let $R>0$ and $1 \leq p<\infty$.
 \begin{enumerate}[(1)]
\item Let $s$ be non-integer. There exists $C_{s,p,R}<\infty$ such that, for each $f \in W^{s,p}$ supported in $B(0,R)$ (see Appendix \ref{A} for the definition of $W^{s,p}$),  \bes
\vertii{f}_{L^p}^p\leq C_{s,p,R} \left(\int_{0}^{\infty} t^{-sp-1}\vertii{\Delta^{\lfloor s \rfloor +1}_{t\xi} f}_{L^p}^p \ dt \right), \ \fo \xi \in \mathbb{S}^{N-1}. 
\ees
\item Let $s$ be an integer.  There exists $C_{s,p,R}<\infty$ such that, for each $f \in W^{s,p}$ supported in $B(0,R)$,
\bes
\vertii{f}_{L^p}^p\leq C_{s,p,R}\vertii{\partial^s_{\xi} f}_{L^p}^p, \ \fo \xi \in \mathbb{S}^{N-1}. 
\ees
\end{enumerate}
\end{lemma}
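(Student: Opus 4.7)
\medskip

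\noindent\textbf{Plan for the proof of Lemma \ref{lem rev}.}

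The plan is to reduce both statements to a one-dimensional Poincaré-type inequality. First, fix $\xi \in \mathbb{S}^{N-1}$ and let $R_\xi \in \mathrm{O}_N$ be such that $R_\xi e_1 = \xi$. Since $|\cdot|_{W^{s,p}}$ is invariant under orthogonal transformations (item (l) in Section~\ref{sect on SOb}), $\mathrm{O}_N$ preserves the class of functions supported in $B(0,R)$, and $\|\partial^s_\xi f\|_{L^p}$, respectively $\int_0^\infty t^{-sp-1}\|\Delta^{\lfloor s\rfloor+1}_{t\xi}f\|_{L^p}^p\,dt$, transforms to its $e_1$-analogue under the substitution $f \mapsto f\circ R_\xi$, it suffices to treat the case $\xi = e_1$. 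Combining Fubini with Remark~\ref{1d norm}, the bound in both cases reduces to the following one-dimensional estimate, applied slicewise in $\widehat{x_1}$ and then integrated: there exists $C_{s,p,R}<\infty$ such that for every $g\in W^{s,p}(\mathbb{R})$ supported in $[-R,R]$,
\[
\|g\|_{L^p(\mathbb{R})}^p \leq C_{s,p,R}\, |g|_{W^{s,p}(\mathbb{R})}^p.
\]
Indeed, for a.e.\ $\widehat{x_1}\in\mathbb{R}^{N-1}$ the slice $g(\cdot)\coloneq f(\cdot,\widehat{x_1})$ is supported in $[-R,R]$.

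For the non-integer case, set $m\coloneq\lfloor s\rfloor+1$. The key observation is that whenever $|h|>2R$ and $x\in[-R,R]$, the points $x+lh$ lie outside $[-R,R]$ for each $1\leq l\leq m$, so by \eqref{useful identity} and the support condition on $g$,
\[
\Delta_h^m g(x) = (-1)^m g(x).
\]
Discarding the remaining contributions, which are non-negative, yields
\[
|g|_{W^{s,p}(\mathbb{R})}^p \geq \int_{|h|>2R}\int_{-R}^R \frac{|g(x)|^p}{|h|^{sp+1}}\,dx\,dh = c_{s,p,R}\,\|g\|_{L^p(\mathbb{R})}^p,
\]
as needed. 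For the integer case $s\in\mathbb{N}^*$, since $g$ is supported in $[-R,R]$ one has $g(x)=\int_{-R}^x g'(t)\,dt$, and H\"older's inequality produces the standard Poincar\'e bound $\|g\|_{L^p([-R,R])}\leq (2R)\|g'\|_{L^p([-R,R])}$. Iterating $s$ times gives $\|g\|_{L^p}\leq(2R)^s\|g^{(s)}\|_{L^p}$, which is the 1D claim for $\partial^s_1 g = g^{(s)}$.

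The main technical point is the non-integer case: one needs the lower bound on $|g|_{W^{s,p}}$ to pick up $\|g\|_{L^p}^p$ despite the presence of higher-order differences. The observation that $\Delta_h^m g$ collapses to a single term $(-1)^m g(x)$ on $\{x\in[-R,R],\ |h|>2R\}$—by the compact support of $g$—is what makes the argument go through uniformly in $s$ and without any regularity assumption beyond $g\in L^p$; the resulting constant behaves like a multiple of $\int_{2R}^{\infty} h^{-sp-1}\,dh = (sp)^{-1}(2R)^{-sp}$, which explains the dependence on $R$, $s$, and $p$ in the final constant.
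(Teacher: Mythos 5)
Your argument is correct and follows the same overall route as the paper: reduce to $\xi=e_1$ by rotation invariance, slice via Remark \ref{1d norm}, and conclude from a one-dimensional Poincar\'e inequality for functions supported in $[-R,R]$. The difference is that the paper simply invokes this Poincar\'e inequality as Lemma \ref{1d Poi} (stated without proof, with the cases $0<s<1$ and $s=1$ attributed to the literature), whereas you prove it: in the integer case by the fundamental theorem of calculus, H\"older, and iteration, and in the non-integer case by the nice observation that for $|h|>2R$ and $x\in[-R,R]$ all shifted points $x+lh$, $1\leq l\leq m$, fall outside the support, so $\Delta^m_h g(x)=(-1)^m g(x)$ and the restricted integral already controls $\|g\|_{L^p}^p$ with constant comparable to $(sp)^{-1}(2R)^{-sp}$. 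This makes the lemma self-contained and gives an explicit constant, at no extra cost. One small caution: your collapse identity uses the $l=0$ term $(-1)^m g(x)$ of the binomial expansion of $\Delta^m_h$, which is missing from \eqref{useful identity} as printed in the paper (the sum there should start at $l=0$); your computation is the correct one, but you should not cite \eqref{useful identity} verbatim, since taken literally it would give $\Delta^m_h g(x)=0$ on that region.
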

Lemma \ref{lem rev} was established (with explicit constants) in \cite{haddad2021affine} when $s=1$ and in \cite{dominguez2024new} when $0<s<1$. In full generality, it is a consequence of the following Poincaré inequality.
\begin{lemma}\label{1d Poi}
    Assume that $N\geq 1$. Let $R>0$, $s>0$, and $1 \leq p<\infty$. There exists $C_{s,p,R,N}<\infty$ such that
    \bes
    \vertii{f}_{L^p}^p \leq C_{s,p,R,N} \verti{f}_{W^{s,p}}^p, 
    \ees
    for each $f \in W^{s,p}$ supported in $B(0,R)$. 
\end{lemma}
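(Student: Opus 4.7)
The plan is to treat the non-integer and integer cases separately, using direct arguments that yield explicit constants in each case.

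\emph{Case 1: $s$ non-integer.} Set $m\coloneq\lfloor s\rfloor+1$. For each $h\in\R^d$ with $\verti{h}>2R$, the translates $x\mapsto f(x+\ell h)$, $\ell=0,1,\dots,m$, have pairwise disjoint supports: indeed, $\supp f(\cdot+\ell h)\subset\overline{B(-\ell h,R)}$, and the closed balls $\overline{B(-\ell_1 h,R)}$, $\overline{B(-\ell_2 h,R)}$ are disjoint as soon as $\verti{h}>2R/\verti{\ell_1-\ell_2}$, the worst case being $\verti{\ell_1-\ell_2}=1$. Using the explicit formula \eqref{useful identity} combined with the disjointness of supports, we find that for such $h$,
\bes
\vertii{\Delta_h^m f}_{L^p}^p = \sum_{\ell=0}^m\binom{m}{\ell}^p\vertii{f}_{L^p}^p \geq \vertii{f}_{L^p}^p.
\ees
Plugging this lower bound into the definition \eqref{def 1} of $\verti{f}_{W^{s,p}}$ and restricting the integration to $\{\verti{h}>2R\}$ yields
\bes
\verti{f}_{W^{s,p}}^p \geq \int_{\verti{h}>2R}\frac{\vertii{f}_{L^p}^p}{\verti{h}^{sp+d}}\,dh = \frac{\sigma_d\,(2R)^{-sp}}{sp}\,\vertii{f}_{L^p}^p,
\ees
so the conclusion holds with $C_{s,p,R,d}=sp\,(2R)^{sp}/\sigma_d$.

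\emph{Case 2: $s$ integer.} Here I would rely on iterated first-order Poincar\'e. For $g\in C_c^\infty(B(0,R))$, the classical Poincar\'e inequality yields $\vertii{g}_{L^p}\leq\widetilde C_R\vertii{\nabla g}_{L^p}$. Applied inductively to $f$ and to each of its partial derivatives of order $<s$ (each of which remains compactly supported in $B(0,R)$), this gives $\vertii{f}_{L^p}\leq\widetilde C_R^{\,s}\vertii{D^s f}_{L^p}$ after $s$ iterations, using that the operator norm on $s$-linear forms on $\R^d$ is equivalent to any norm on mixed partials. Extension from $C_c^\infty$ to arbitrary $f\in W^{s,p}(\R^d)$ supported in $B(0,R)$ is routine via mollification: $f_\varepsilon\coloneq f*\rho_\varepsilon\in C_c^\infty(B(0,R+\varepsilon))$, both $\vertii{f_\varepsilon}_{L^p}\to\vertii{f}_{L^p}$ and $\vertii{D^s f_\varepsilon}_{L^p}\to\vertii{D^s f}_{L^p}$ as $\varepsilon\to 0$, and one passes to the limit (with constant depending on $R+1$, say).

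Neither case presents a genuine obstacle: the non-integer case is handled by a one-line computation exploiting disjoint translates at large scales, while the integer case is a standard iterated Poincar\'e estimate combined with a routine mollification argument. The only mild subtlety worth noting is that the direct difference-based argument of Case~1 has no natural analogue for integer $s$ under the definition \eqref{def 2}, which is why the two cases must be handled by different means.
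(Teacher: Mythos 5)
Your proof is correct. Note, though, that the paper itself offers no proof of Lemma \ref{1d Poi}: it is invoked as a classical Poincar\'e inequality (the only Poincar\'e-type statement actually proved in the paper is the variant modulo polynomials, Lemma \ref{poincaré} in Appendix \ref{B}, which is obtained by citing Maz'ya for integer $s$ and by real interpolation between $L^p$ and $\dot{W}^{\lfloor s\rfloor+1,p}$ for fractional $s$). So your argument is a genuinely different, self-contained route: in the fractional case you exploit that for $\verti{h}>2R$ the translates $f(\cdot+\ell h)$, $0\le \ell\le m$, have disjoint supports, so $\vertii{\Delta^m_h f}_{L^p}^p=\sum_{\ell=0}^m\binom{m}{\ell}^p\vertii{f}_{L^p}^p\ge\vertii{f}_{L^p}^p$, and integrating over $\{\verti{h}>2R\}$ gives the explicit constant $sp(2R)^{sp}/\sigma_d$; in the integer case you iterate the first-order Friedrichs--Poincar\'e inequality and remove the smoothness assumption by mollification. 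Both steps check out (the limit passage is justified since $D^s(f\ast\rho_\varepsilon)=(D^sf)\ast\rho_\varepsilon\to D^sf$ in $L^p$ for $1\le p<\infty$, and one uses the Poincar\'e constant of $B(0,R+1)$ for $\varepsilon\le 1$). What your approach buys over the interpolation route is elementariness and explicit constants; what it loses is uniformity of method across integer and non-integer $s$, as you yourself observe. One small point of hygiene: your computation uses the full binomial expansion $\sum_{\ell=0}^m\binom{m}{\ell}(-1)^{m-\ell}f(x+\ell h)$, which is the correct identity — the paper's \eqref{useful identity} starts the sum at $\ell=1$, which is a typo there, not an error in your argument.
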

\begin{proof}[Proof of Lemma \ref{lem rev}]
  Let $f \in W^{s,p}$ be supported in $B(0,R)$. 
  
  \smallskip
  
  If $s$ is non-integer, we argue as follows.  It suffices to prove that
    \bes
     \vertii{f}_{L^p}^p \leq C_{s,p,R} \int_{0}^{\infty}t^{-sp-1}\vertii{\Delta_{te_1}^{\lfloor s \rfloor +1}f}_{L^p}^p \ dt, 
    \ees
    for some finite $C_{s,p,R}$.
    By Remark \ref{1d norm}, we have
    \bes
    2 \int_{0}^{\infty}t^{-sp-1} \vertii{\Delta_{te_1}^{\lfloor s \rfloor +1}f}_{L^p}^p \ dt = \int_{\R^{N-1}} \verti{f(\cdot, x_2,\dots,x_N)}_{W^{s,p}(\R)}^p \ d \widehat{x_1}.
    \ees
    Therefore, by Lemma \ref{1d Poi}, we have
    \be
    \ba
      \vertii{f}_{L^p}^p&=\int_{\R^{N-1}} \vertii{f(\cdot,x_2,\dots,x_N)}_{L^p(\R)}^p \ d \widehat{x_1} \\ &\leq C_{s,p,R}\int_{\R^{N-1}} \verti{f(\cdot, x_2,\dots,x_N)}_{W^{s,p}(\R)}^p \ d \widehat{x_1}\\ &= 2 C_{s,p,R} \int_{0}^{\infty}t^{-sp-1} \vertii{\Delta_{te_1}^{\lfloor s \rfloor +1}f}_{L^p}^p \ dt,
    \ea
    \ee
    and this completes the proof of Lemma \ref{lem rev} in the case where $s$ is non-integer.

    \smallskip
    The integer case follows from similar arguments.
\end{proof}
We now turn to the 
\begin{proof}[Proof of Theorem \ref{start reverse}]
    Let $R>0$, $s>0$, and $1 \leq p<\infty$, with $p>1$ if $s\geq 2$ is an integer. Let $f \in W^{s,p}$ be supported in $B(0,R)$ and $T \in \text{SL}_N$. 

    \smallskip
    We may assume, arguing as in the proof of Theorem \ref{aff gag}, that $T=\text{DO}$,  where 
    \ $\text{D}=\text{diag}(\lambda_1,\dots,\lambda_N) \in \text{SL}_N$ and $\text{O} \in \text{O}_N$.
    Let $(u_1,\dots,u_N)$ be the orthonormal basis of $\R^N$ defined by $\text{O}u_i=e_i$.

    \smallskip
    If $s$ is non-integer, we argue as follows. By Theorem \ref{theo besov} and Lemma \ref{frac chan}, we have
    \be \label{1frac}
    \ba
     \verti{f \circ (\text{DO})}^p_{W^{s,p}}&\geq C\sum_{i=1}^N \int_{0}^{\infty}t^{-sp-1}\vertii{\Delta^{\lfloor s \rfloor +1}_{t u_i} (f\circ(\text{DO}))}_{L^p}^p \ dt  
     \\ & = C\sum_{i=1}^N \verti{\lambda_i}^{sp}\int_{0}^{\infty}t^{-sp-1}\vertii{\Delta^{\lfloor s \rfloor +1}_{t e_i} f}_{L^p}^p \ dt .
    \ea
    \ee
By the AM-GM inequality and since $\displaystyle \prod_{i=1}^N \lambda_i = 1$, we have
\begin{flalign}\label{ag}
\begin{split}
 \frac{1}{N}&\left(\sum_{i=1}^N \verti{\lambda_i}^{sp}\int_{0}^{\infty}t^{-sp-1}\vertii{\Delta^{\lfloor s \rfloor +1}_{t e_i} f}_{L^p}^p \ dt \right) \\ & \hspace{20 pt} \geq \left(\prod_{i=1}^N \verti{\lambda_i}^{sp} \int_{0}^{\infty}t^{-sp-1}\vertii{\Delta^{\lfloor s \rfloor +1}_{t e_i} f}_{L^p}^p \ dt \right)^{1/N}
 \\  & \hspace{35 pt }\geq \left(\prod_{i=1}^N \int_{0}^{\infty}t^{-sp-1}\vertii{\Delta^{\lfloor s \rfloor +1}_{t e_i} f}_{L^p}^p \ dt \right)^{1/N}.
\end{split}
\end{flalign}
On the other hand, by Lemma \ref{lem rev},
\be\label{a i}
C \vertii{f}_{L^p}^p \leq \int_{0}^{\infty}t^{-sp-1}\vertii{\Delta^{\lfloor s \rfloor +1}_{t e_i} f}_{L^p}^p \ dt, \ \fo 1 \leq i\leq N, 
\ee 
and the right-hand inequality in \eqref{Besov} implies that there exists $1 \leq j \leq N$ such that
\be \label{e j}
C \verti{f}_{W^{s,p}}^p \leq \int_{0}^{\infty}t^{-sp-1}\vertii{\Delta^{\lfloor s \rfloor +1}_{t e_j} f}_{L^p}^p \ dt.
\ee
Combining \eqref{ag}, \eqref{a i}, and \eqref{e j}, we find that
\be\label{sum e}
\sum_{i=1}^N \verti{\lambda_i}^{sp}\int_{0}^{\infty}t^{-sp-1}\vertii{\Delta^{\lfloor s \rfloor +1}_{t e_i} f}_{L^p}^p \ dt \geq C\vertii{f}_{L^p}^{p(1-1/N)} \verti{f}_{W^{s,p}}^{p/N}.
\ee
Using \eqref{1frac} and \eqref{sum e}, we obtain
\bes
\verti{f \circ (\text{DO})}^p_{W^{s,p}} \geq C \vertii{f}_{L^p}^{p(1-1/N)}\verti{f}_{W^{s,p}}^{p/N}
\ees
and this completes the proof of Lemma \ref{start reverse} when $s$ is non-integer.

\smallskip
In the case where $s$ is an integer, we may argue similarly, using the identities
\bes
\vertii{\partial_{u_i}^s (f\circ \text{DO})}^p_{L^p}= \verti{\lambda_i}^{sp} \vertii{\partial_{i}^s f}_{L^p}^p, \ \fo 1 \leq i \leq N,
\ees
and relying on Theorem \ref{integer slicing} instead of Theorem \ref{theo besov}.
\end{proof}

\begin{proof}[Proof of Theorem \ref{cara rev}]
Without loss of generality, we assume that $R=1$.

\smallskip
\noindent
(1) We obtain the \enquote{if} part of (1) relying on Theorem \ref{gen rev affine ineq}, Lemma \ref{1d Poi}, and the fact that $\vertii{f}_{L^q}  \leq C\vertii{f}_{L^p} $, for each $f \in W^{1,p}$ supported in $B(0,1)$.

     The \enquote{only if} part of (1) is implicit in \cite[Proof of Theorem 2]{haddad2021affine}.  When $p>1$ (the case $p=1$ is included in (2)), following \cite{haddad2021affine}, we may consider the functions 
    \bes 
    f_k\colon x \mapsto \phi_k(x_1) \eta(x_2,\dots,x_N),  
    \ees
    where $\eta$ is a smooth function supported in $B_{N-1}(0,1/2)$, the ball of radius $1/2$ centered at $0$ in $\R^{N-1}$, and 
    \bes
     \phi_k(x_1) \coloneq \begin{cases}
         1+k/6 -k\verti{x_1-1/2},& \ \text{if} \ x_1 \in [1/3-1/k,1/3] \cup [2/3,2/3+1/k]\\
         1, & \ \text{if} \ x_1 \in [1/3,2/3]\\
         0,& \ \text{else}
     \end{cases}.
    \ees
    The $f_k$'s are supported in $B(0,1)$ for each sufficiently large integer $k $.
    In \cite[Proof of Theorem 2]{haddad2021affine}, it is shown that $\mathscr{E}_{1,p}(f) \leq C k^{(p-1)/pn}$ and $\vertii{\nabla f}_{L^p} \geq C k^{(p-1)/p}$ for each $k$. Thus, for any $\theta>1/N$, we have
    \bes
    \frac{\mathscr{E}_{1,p}(f_k)}{\vertii{\nabla f}_{L^p}^{\theta}} \to 0, \ \text{as}\  k \to \infty,
    \ees
    while $\inf_k \vertii{f_k}_{L^1}>0$, and therefore \eqref{ineq 1} fails if $\theta>1/N$.

    \smallskip
    \noindent
   (2) In order to prove the \enquote{only if} part of (2), we argue as follows. Let $0\leq \theta \leq 1$ be such that \eqref{ineq 2} holds. For each $f \in W^{1,p}$ supported in $B(0,1)$ and each $\lambda>0$, we have 
    \be \label{all lam}
    \ba
    \vertii{f}_{L^q}^{1-\theta} \vertii{\nabla f}_{L^p}^{\theta}&\leq C \mathscr{E}_{1,p}(f) \\ & = C \mathscr{E}_{1,p}(f\circ T_{\lambda}) \leq  C\left( \lambda\vertii{\partial_1 f}_{L^p} + \lambda^{-1/(N-1)}  \sum_{i=2}^N  \vertii{\partial_i f}_{L^p} \right), 
    \ea
    \ee
    where 
     \bes
     T_{\lambda}: (x_1,\dots,x_N) \mapsto (\lambda x_1, \lambda^{-1/(N-1)}x_2, \dots,\lambda^{-1/(N-1)}x_N).
    \ees
      Here, we rely on Proposition \ref{affine invariance} to obtain the equality (since $\text{det} \ T_{\lambda}=1)$, and on Lemma \ref{Jensen} and \eqref{slice s=1} for the second inequality.

    If   $\vertii{\partial_1 f}_{L^p}\neq 0$, applying \eqref{all lam} to $\displaystyle \lambda\coloneq  \left(\frac{\sum_{i=2}^N \vertii{\partial_i f}_{L^p}}{\vertii{\partial_1 f}_{L^p}} \right)^{1-1/N}$ yields
      \be \label{scale}
      \vertii{f}_{L^q}^{1-\theta} \vertii{\partial_1 f}_{L^p}^{\theta} \leq \vertii{f}_{L^q}^{1-\theta} \vertii{\nabla f}_{L^p}^{\theta} \leq C \vertii{\partial_1 f}_{L^p}^{1/N} \left( \sum_{i=2}^N \vertii{\partial_i f}_{L^p} \right)^{1-1/N}. 
      \ee
      
      Considering non-zero functions $\displaystyle \varphi \in C^{\infty}_c\big( (-1/2,1/2) \big) $ and $\displaystyle \psi \in C_c^{\infty}\big( B_{N-1}(0,1/2) \big) $, and applying \eqref{scale} to the maps
       \bes
        f_{\varepsilon}(x) \coloneq \varphi(x_1/\varepsilon) \psi(x_2,\dots,x_N), \ 0<\varepsilon<1,  
    \ees
    we find that $\varepsilon^{1/q+\theta(1/p-1/q-1)}  \leq C \varepsilon^{1/p-1/N}$, for each $0<\varepsilon<1$, and this yields 
    \bes
    \theta \leq \frac{1/N+1/q-1/p}{1+1/q-1/p}.
    \ees

We now prove the \enquote{if} part of (2) as follows. 

\smallskip
Let $ p\leq q <\infty$ be such that $\displaystyle\theta_{\text{max}}\coloneq\frac{1/N+1/q-1/p}{1+1/q-1/p}\geq 0$. This condition is equivalent to $q\leq Np/(N-p)$  when $p<N$, and always holds when $p\geq N$. Equivalently, 
\be
\label{ggf}
\theta_{\text{max}}\ge 0
\ \text{if and only if the embedding $W^{1,p}(B(0,1))\hookrightarrow L^q(B(0,1))$ holds}.
\ee

In view of \eqref{ggf}, it suffices to show that \eqref{ineq 2} holds with $\theta=\theta_{\text{max}}$. In turn,
 the proof of \eqref{ineq 2} with $\theta=\theta_{\text{max}}$ goes as follows. 
 Let $s\coloneq N/p-N/q$ which satisfies $q=Np/(N-sp)$.   If $p \geq N$, we have $ 0 \leq s \leq 1$. If $p<N$, this is also the case, since $q\leq Np/(N-p)$.  By Theorems \ref{subcrit aff emb} and \ref{aff gag}, we have
\be \label{gns}
\vertii{f}_{L^q} \leq C \mathscr{E}_{s,p}(f) \leq \vertii{f}_{L^p}^{1-s}\mathscr{E}_{1,p}(f)^{s},
\ee
for each $f \in W^{1,p}$. Hence, for each $f \in W^{1,p}$ supported in $B(0,1)$, we have
\be \label{last}
\ba
\vertii{f}_{L^q}^{1-1/N} & \vertii{\nabla f}_{L^p}^{1/N+1/q-1/p} \\ &\leq C \vertii{f}_{L^p}^{(1-s)(1-1/N)} \mathscr{E}_{1,p}(f)^{s(1-1/N)}\vertii{\nabla f}_{L^p}^{1/N+1/q-1/p}
\\ & = C (\vertii{f}_{L^p}^{(1-1/N)} \vertii{\nabla f}_{L^p}^{1/N})^{(1-s)} \mathscr{E}_{1,p}(f)^{s(1-1/N)} 
\\ & \leq C\mathscr{E}_{1,p}(f)^{(1-s)} \mathscr{E}_{1,p}(f)^{s(1-1/N)}
\\ & = C\mathscr{E}_{1,p}(f)^{1+1/q-1/p}.
\ea
\ee
Here, we rely on \eqref{gns} for the first inequality, on the definition of $s$ for the first and the second equality, and on Theorem \ref{gen rev affine ineq} for the last inequality. We obtain the desired conclusion, raising \eqref{last} to the power $\displaystyle \frac{1}{1+1/q-1/p}$.  
\end{proof}
\appendix
\section{From inhomogeneous to homogeneous slicing}\label{A}
For the sake of completeness, we explain in Appendices \ref{A} and \ref{B} how to obtain homogeneous slicing (Theorem \ref{integer slicing}) and Sobolev embeddings (Theorem \ref{Opt Sob}) from their inhomogeneous counterparts.

\smallskip
In both cases, a first step consists in proving homogeneous inequalities for $C_c^{\infty}$ functions, using their inhomogeneous counterparts. This easily follows from a scaling argument, combined with the use of Poincaré inequalities. In a second step, we show that these homogeneous inequalities generalize to the corresponding homogeneous spaces.

\smallskip
 We will consider the following inhomogeneous Sobolev spaces $W^{s,p}\coloneq L^p \cap \dot{W}^{s,p}$, equipped with the norm
     \bes
          \vertii{f}_{W^{s,p}}\coloneq \vertii{f}_{L^p} + \verti{f}_{W^{s,p}}.
          \ees
          
    In the case where $s$ is an integer, $W^{s,p}$ is the classical Sobolev space of $L^p$ functions with all distributional derivatives of order $\leq s$ in $L^p$, and the norm $\vertii{\cdot}_{W^{s,p}}$ is equivalent to 
    \bes
    f \to \vertii{f}_{L^p}+ \sum_{\verti{\alpha}\leq s} \vertii{\partial^{\alpha} f }_{L^p}
    \ees
    (see, e.g.,  \cite[Corollary 12.86]{leoni2024first}).

 \smallskip

   We start by proving Theorem \ref{integer slicing}, using its inhomogeneous counterpart.
\begin{theo}\label{trie}(\cite[Theorem, Section 2.5.6]{triebel2010theory})
 Let $s$ be an integer and  $1<p<\infty$.  There exist $0<K^1_{s,p,N}\leq K^2_{s,p,N}<\infty$ such that, for each $f \in W^{s,p}$, we have
    \begin{flalign*} 
&K^1_{s,p,N}\sum_{i=1}^{N} \left(\int_{\R^{N-1}} \vertii{f(x_1,\dots,x_{i-1},\cdot,x_{i+1},\dots,x_N)}_{W^{s,p}(\R)}^{p} \ d \widehat{x_i} \right)^{1/p} \\ & \hspace{20 pt}\leq \vertii{f}_{W^{s,p}}
    \leq K^2_{s,p,N} \sum_{i=1}^{N} \left(\int_{\R^{N-1}} \vertii{f(x_1,\dots,x_{i-1},\cdot,x_{i+1},\dots,x_N)}^{p}_{W^{s,p}(\R)} \ d \widehat{x_i} \right)^{1/p}. 
    \end{flalign*}    
\end{theo}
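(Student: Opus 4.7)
The inequality chain in the theorem is equivalent, modulo constants, to
\begin{equation*}
\vertii{f}_{W^{s,p}(\R^N)} \sim \vertii{f}_{L^p(\R^N)} + \sum_{i=1}^N \vertii{\partial_i^s f}_{L^p(\R^N)},
\end{equation*}
because, by Fubini, the quantity $\int_{\R^{N-1}} \vertii{f(x_1,\dots,x_{i-1},\cdot,x_{i+1},\dots,x_N)}_{W^{s,p}(\R)}^p\, d\widehat{x_i}$ equals $\vertii{f}_{L^p}^p + \vertii{\partial_i^s f}_{L^p}^p$. The upper bound in this reformulation (slicing quantity $\lesssim$ full norm) is then immediate from Fubini, since $\vertii{\partial_i^s f}_{L^p}^p$ is just one term of the full $W^{s,p}$-norm.

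My plan for the nontrivial lower bound is to rely on Mikhlin's Fourier multiplier theorem (which is the reason why we must restrict to $1<p<\infty$). The argument proceeds in three steps. First, I would invoke the classical Bessel potential characterization $\vertii{f}_{W^{s,p}(\R^N)} \sim \vertii{(I-\Delta)^{s/2} f}_{L^p}$, itself a consequence of Mikhlin applied to the multipliers $\xi^\alpha (1+|\xi|^2)^{-s/2}$ for $|\alpha|\leq s$, and their inverse partners. Second, I would introduce the surrogate symbol $\sigma(\xi) := \bigl(1 + \sum_{i=1}^N \xi_i^{2s}\bigr)^{1/2}$ and show that $\sigma(\xi)/(1+|\xi|^2)^{s/2}$ and its reciprocal are Mikhlin multipliers; the decisive pointwise input is the norm equivalence $|\xi|^{2s}\sim_{s,N} \sum_i \xi_i^{2s}$ on $\R^N$ (a power-mean inequality), coupled with the fact that $\sum_i \xi_i^{2s}$ only vanishes at $\xi=0$ so that $\sigma$ is smooth on $\R^N\setminus\{0\}$. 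Third, I would relate $\sigma$ to the pure directional derivatives by observing that the symbols $\xi_i^s/\sigma(\xi)$ are bounded, smooth on $\R^N\setminus\{0\}$, and $0$-homogeneous at infinity, hence Mikhlin-admissible; chaining these with their inverses produces
\begin{equation*}
\vertii{\sigma(D) f}_{L^p} \sim \vertii{f}_{L^p} + \sum_{i=1}^N \vertii{\partial_i^s f}_{L^p}.
\end{equation*}
Composing the three equivalences would then yield the lower bound.

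The main obstacle is that this route genuinely needs $p>1$: Mikhlin's theorem fails at $p=1$ (and $p=\infty$), and this is not a mere technical defect of the approach, since Ornstein's non-inequality shows that for $s\geq 2$ mixed partials $\partial^\alpha f$ cannot be bounded by $\sum_i \vertii{\partial_i^s f}_{L^1}$, so the theorem itself breaks down at $p=1$. A secondary technical point is verifying the Mikhlin condition $|\partial^\beta m(\xi)|\lesssim_\beta |\xi|^{-|\beta|}$ for the comparator multipliers; this reduces to checking that $\sigma(\xi)$ behaves like a smooth, strictly positive $1$-homogeneous function at infinity, which follows from $\sigma(\xi)^2$ being a $2$-homogeneous polynomial at large $|\xi|$ with no zeros on $\mathbb{S}^{N-1}$.
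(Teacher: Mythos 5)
Your proposal is essentially correct, but note that the paper itself offers no proof of this statement: it is quoted verbatim from Triebel \cite{triebel2010theory} and used as a black box to deduce the homogeneous slicing (Theorem \ref{integer slicing}). What you supply is a self-contained proof along the standard Fourier-multiplier lines, which is in the same spirit as the Littlewood--Paley/multiplier machinery behind the cited result, and it correctly isolates why $1<p<\infty$ is needed (Mikhlin fails at $p=1$, and Ornstein shows the inequality itself fails there, exactly as the paper remarks after Theorem \ref{integer slicing}). Three small points should be tightened. First, in the Fubini reformulation the slice quantity is only \emph{comparable} (with constants depending on $p$) to $\vertii{f}_{L^p}^p+\vertii{\partial_i^s f}_{L^p}^p$, since $\vertii{\cdot}_{W^{s,p}(\R)}$ is a sum of two norms, not an $\ell^p$ sum; this is harmless but ``equals'' is not literally true. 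Second, the leading part of $\sigma(\xi)^2=1+\sum_i\xi_i^{2s}$ is $2s$-homogeneous, not $2$-homogeneous; the Mikhlin bounds still follow because $\sum_i\xi_i^{2s}$ is a smooth, strictly positive, $2s$-homogeneous function away from the origin comparable to $\verti{\xi}^{2s}$, so $\verti{\partial^\beta\sigma(\xi)}\lesssim (1+\verti{\xi})^{s-\verti{\beta}}$. Third, ``chaining with their inverses'' should be made precise: the symbols $\sigma/\xi_i^s$ are \emph{not} admissible multipliers (they blow up on the hyperplanes $\xi_i=0$); the correct route for the bound $\vertii{\sigma(D)f}_{L^p}\lesssim \vertii{f}_{L^p}+\sum_i\vertii{\partial_i^s f}_{L^p}$ is the algebraic identity $\sigma=\sigma^{-1}+\sum_i(\xi_i^s/\sigma)\,\xi_i^s$, which only uses the admissible multipliers $1/\sigma$ and $\xi_i^s/\sigma$, while the reverse bound applies these same multipliers to $\sigma(D)f$. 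With these adjustments your argument is a complete and standard proof of the cited theorem.
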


\begin{proof}[Proof of Theorem \ref{integer slicing} using Theorem \ref{trie}] 
The left-hand side inequality in \eqref{ttt1} is obvious. We consider the inequality on the right-hand side. We first prove that this inequality holds for $C^\infty_c$ maps. By a scaling argument, it suffices to establish it for  maps supported in $B(0,1)$. 

\smallskip
When $f \in C_c^{\infty}(B(0,1))$, we have  
\bes
\ba
\verti{f}_{W^{s,p}} \leq \vertii{f}_{W^{s,p}} &\leq C \sum_{i=1}^{N} \left(\int_{\R^{N-1}} \vertii{f(x_1,\dots,x_{i-1},\cdot,x_{i+1},\dots,x_N)}^{p}_{W^{s,p}(\R)} \ d \widehat{x_i} \right)^{1/p}
\\ & \leq C \sum_{i=1}^N \left(\int_{\R^{N-1}}\verti{f(x_1,\dots,x_{i-1},\cdot,x_{i+1},\dots,x_N)}^{p}_{W^{s,p}(\R)}   \ d \widehat{x_i}  \right)^{1/p}
\\ & =C \sum_{i=1}^N \vertii{\partial^s_i f}_{L^p}.
\ea
\ees

Here, we rely on Theorem \ref{trie} for the second inequality and on Lemma \ref{1d Poi} for the last one. 

\smallskip
This completes the proof of Theorem \ref{integer slicing} for $f \in C_c^{\infty}$. The fact that Theorem \ref{integer slicing} also holds for each $f \in \dot{W}^{s,p}$ is then a direct consequence of the next result (see \cite[Theorem 11.43]{leoni2024first}).
\end{proof}
\begin{lemma}\label{dens}
    Let $s$ be an integer and $1 \leq p<\infty$. For each $f \in \dot{W}^{s,p}$, there exists $(f_n) \subset C_{c}^{\infty}$ such that $\verti{f_n-f}_{W^{s,p}} \to 0$, as $n \to \infty$.
\end{lemma}
\section{ From inhomogeneous to homogeneous Sobolev embeddings}\label{B}
In this Appendix, we explain how to obtain homogeneous Sobolev embeddings (in homogeneous function spaces) from inhomogeneous Sobolev embeddings (in inhomogeneous function spaces).
Our starting point is the following well-known result (see \cite[Theorem B]{BREZIS20192839}).
   \begin{theo}\label{inhom sob}
    Let $0<s_1<s_2<\infty$ and $1 \leq p_1,p_2 <\infty$ satisfy \eqref{Sob n}. There exists $\tilde{C}\coloneq \tilde{C}_{s_1,s_2,p_1,p_2,N}<\infty$ such that
    \bes
    \vertii{f}_{W^{s_1,p_1}} \leq \tilde{C} \vertii{f}_{W^{s_2,p_2}}, \ \fo f \in W^{s_2,p_2}.
    \ees
\end{theo}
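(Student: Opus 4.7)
My plan is to establish the embedding by combining a density reduction with the classical Littlewood-Paley/Besov machinery. First, by truncating and mollifying (in the spirit of Lemma \ref{dens}), it suffices to prove the inequality for $f \in C_c^\infty(\R^N)$. The scaling identity $s_2 - N/p_2 = s_1 - N/p_1$ combined with $s_1 < s_2$ forces $p_2 < p_1$, which is precisely the gain-in-integrability direction in which Sobolev embeddings operate.

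The heart of the argument is the Besov interpretation of $W^{s,p}$. For non-integer $s > 0$ and $1 \leq p < \infty$, the Gagliardo semi-norm is equivalent to the Besov semi-norm (see, e.g., Triebel): if $(\varphi_j)_{j \geq 0}$ is a Littlewood-Paley partition of unity, then
\[
|f|_{W^{s,p}}^p \simeq \sum_{j \geq 1} 2^{jsp} \|\varphi_j * f\|_{L^p}^p,
\]
and a similar formula controls $\|f\|_{L^p}$ via the low-frequency piece. The embedding then reduces to the Bernstein inequality $\|\varphi_j * g\|_{L^{p_1}} \lesssim 2^{jN(1/p_2 - 1/p_1)} \|\varphi_j * g\|_{L^{p_2}}$ for $p_2 \leq p_1$: the scaling identity ensures $s_1 + N(1/p_2 - 1/p_1) = s_2$, so that Bernstein matches the two dyadic weights, and the $\ell^{p_2} \hookrightarrow \ell^{p_1}$ embedding on the frequency side (valid because $p_2 \leq p_1$) closes the estimate to give $\|f\|_{W^{s_1,p_1}} \lesssim \|f\|_{W^{s_2,p_2}}$.

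The main obstacle is that the Gagliardo-to-Besov equivalence genuinely breaks down at integer $s$ (where $W^{s,p}$ is a Triebel-Lizorkin rather than a Besov space for $1 < p < \infty$), and both equivalences are delicate at $p = 1$. I would handle the integer endpoints by chaining: apply iterated subcritical Sobolev embeddings $W^{k,q} \hookrightarrow W^{k-1,q^*}$ with $1/q^* = 1/q - 1/N$ to absorb the integer part of $s_2$, and then apply the Besov embedding above on the remaining fractional step. For $p = 1$, one can either substitute the Gagliardo-Nirenberg-Sobolev inequality $W^{s,1} \hookrightarrow L^{N/(N-s)}$ at the subcritical step (bypassing Mikhlin-type multipliers) or route through the Besov scale $B^s_{1,1}$, whose embedding theorem under the same scaling constraint is standard and $L^1$-friendly.
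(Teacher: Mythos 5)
Before assessing: the paper itself does not prove Theorem \ref{inhom sob}; it is quoted from the literature (Theorem B of \cite{BREZIS20192839}), so your attempt has to stand as a proof on its own. Its core — reduction to $f\in C_c^{\infty}$, the equivalence of $\verti{\cdot}_{W^{s,p}}$ with the inhomogeneous $B^{s}_{p,p}$ semi-norm for non-integer $s$ (valid also at $p=1$), Bernstein's inequality on dyadic blocks, and $\ell^{p_2}\hookrightarrow\ell^{p_1}$ — is the standard route and does prove the statement when both $s_1$ and $s_2$ are non-integers; likewise the purely integer case follows from your iterated subcritical chain $W^{k,q}\hookrightarrow W^{k-1,q^*}$, with Gagliardo--Nirenberg--Sobolev absorbing a possible first step at $q=1$.

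The genuine gap is in the mixed cases, exactly where you are vaguest. If one of $s_1,s_2$ is an integer and the other is not, then after the integer chain the leftover ``fractional step'' runs between an integer-order Sobolev space and a fractional one, and the ``Besov embedding above'' does not apply to it: one endpoint is a Sobolev (Triebel--Lizorkin) space, as you yourself note. Converting crudely via $W^{m,q}=F^{m}_{q,2}\subset B^{m}_{q,\max(q,2)}$ and then embedding lands you in $B^{s_1}_{p_1,\max(q,2)}$, which lies inside $B^{s_1}_{p_1,p_1}=W^{s_1,p_1}$ only when $\max(q,2)\leq p_1$; this fails whenever $p_1<2$ (e.g. $N=10$, $s_2=2$, $p_2=3/2$, $s_1=3/2$, $p_1=60/37$), and the symmetric case ($s_1$ integer, $p_2>2$) fails for the same reason. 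Closing these cases requires the Jawerth--Franke embeddings $F^{s_2}_{p_2,q}\hookrightarrow B^{s_1}_{p_1,p_2}$ and $B^{s_2}_{p_2,p_1}\hookrightarrow F^{s_1}_{p_1,q}$, or a direct difference/mollification argument at scale $\verti{h}$ — genuinely more than Bernstein plus monotonicity of $\ell^p$ (the naive interpolation of $\vertii{\Delta_h f}_{L^{p_1}}$ between $L^{q^*}$ and $\vertii{\nabla f}_{L^q}$ bounds only covers a fractional drop $<1/2$). Moreover, your two $p=1$ fixes do not cover the remaining endpoint: when $s_2$ is an integer, $p_2=1$ and $s_2-s_1<1$, there is no integer step available to leave $p=1$ first, and ``routing through $B^{s_2}_{1,1}$'' goes the wrong way, since $W^{s_2,1}\not\subset B^{s_2}_{1,1}$ (only $B^{s_2}_{1,1}\subset W^{s_2,1}\subset B^{s_2}_{1,\infty}$, and the $B^{s_2}_{1,\infty}$ route irreparably loses the third index). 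That endpoint, e.g. $W^{1,1}(\R^N)\hookrightarrow W^{s_1,p_1}$ with $s_1-N/p_1=1-N$, is true but nontrivial — precisely the kind of case for which the paper outsources the whole statement to \cite{BREZIS20192839}.
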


Theorem \ref{inhom sob} clearly implies
\begin{theo}\label{hom sob}
    Let $0<s_1<s_2<\infty$ and $1 \leq p_1,p_2 <\infty$ satisfy \eqref{Sob n}. There exists $\tilde{C}\coloneq \tilde{C}_{s_1,s_2,p_1,p_2,N}<\infty$ such that
    \be\label{compa}
    \verti{f}_{W^{s_1,p_1}} \leq \tilde{C} \verti{f}_{W^{s_2,p_2}}, \ \fo f \in C_c^{\infty}.
    \ee
\end{theo}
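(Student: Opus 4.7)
The strategy is to deduce the homogeneous inequality \eqref{compa} from its inhomogeneous counterpart (Theorem \ref{inhom sob}) by a standard scaling argument. Fix $f\in C_c^\infty$ and, for $\lambda>0$, set $f_\lambda(x)\coloneq f(\lambda x)$. Since $f_\lambda\in C_c^\infty\subset W^{s_2,p_2}$, Theorem \ref{inhom sob} applies to $f_\lambda$.

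The first step is to record the scaling of the relevant norms. A direct change of variables gives $\vertii{f_\lambda}_{L^p}=\lambda^{-N/p}\vertii{f}_{L^p}$. For the Sobolev semi-norm, one shows that
\[
\verti{f_\lambda}_{W^{s,p}}=\lambda^{s-N/p}\verti{f}_{W^{s,p}}
\]
in both the integer and the non-integer case. When $s$ is an integer, the chain rule gives $D_x^s f_\lambda=\lambda^s (D^s f)(\lambda x)$, from which the claimed scaling follows. When $s$ is non-integer, the identity $\Delta_h^m f_\lambda(x)=(\Delta_{\lambda h}^m f)(\lambda x)$ with $m=\lfloor s\rfloor+1$, combined with the substitutions $y=\lambda x$ and $h'=\lambda h$ in the Gagliardo integral \eqref{def 1}, produces the same exponent.

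Next, applying Theorem \ref{inhom sob} to $f_\lambda$ and keeping only the semi-norm on the left-hand side yields
\[
\lambda^{s_1-N/p_1}\verti{f}_{W^{s_1,p_1}}\le \tilde C\,\bigl(\lambda^{-N/p_2}\vertii{f}_{L^{p_2}}+\lambda^{s_2-N/p_2}\verti{f}_{W^{s_2,p_2}}\bigr).
\]
By the Sobolev scaling condition \eqref{Sob n}, we have $s_1-N/p_1=s_2-N/p_2$, so dividing both sides by $\lambda^{s_1-N/p_1}$ gives
\[
\verti{f}_{W^{s_1,p_1}}\le \tilde C\,\bigl(\lambda^{-s_2}\vertii{f}_{L^{p_2}}+\verti{f}_{W^{s_2,p_2}}\bigr).
\]
Letting $\lambda\to\infty$ and using $s_2>0$ kills the $L^{p_2}$ term, which yields \eqref{compa}.

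No serious obstacle is expected: this is a routine dilation argument, already announced as \emph{clearly} implied by Theorem \ref{inhom sob}. The only point requiring mild care is the derivation of the scaling law for the Gagliardo semi-norm in the non-integer case, which is a direct change of variables.
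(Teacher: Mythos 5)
Your proof is correct: the scaling laws $\vertii{f_\lambda}_{L^p}=\lambda^{-N/p}\vertii{f}_{L^p}$ and $\verti{f_\lambda}_{W^{s,p}}=\lambda^{s-N/p}\verti{f}_{W^{s,p}}$ (in both the integer and fractional cases) are right, the exponent $-s_2$ on the leftover $L^{p_2}$ term follows from \eqref{Sob n}, and sending $\lambda\to\infty$ legitimately removes it since the constant from Theorem \ref{inhom sob} does not depend on $\lambda$. The route differs from the paper's in how the $L^{p_2}$ term is eliminated. The paper also starts from a dilation, but only to normalize the support: it reduces to $f$ supported in $B(0,1)$ and then invokes the Poincar\'e inequality of Lemma \ref{1d Poi} to bound $\vertii{f}_{L^{p_2}}$ by $\verti{f}_{W^{s_2,p_2}}$, so that the full inhomogeneous norm $\vertii{f}_{W^{s_2,p_2}}$ is controlled by the semi-norm. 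You instead exploit that the $L^{p_2}$ norm has strictly smaller scaling exponent than the two semi-norms and let the dilation parameter tend to infinity, which makes the argument self-contained (no Poincar\'e input) and works uniformly for all $f\in C_c^\infty$ without normalizing the support; the paper's version, on the other hand, fits naturally into its overall scheme, since Lemma \ref{1d Poi} is already needed elsewhere (e.g., in Appendix \ref{A} and in Section \ref{rev ineq}), and the ``scaling plus Poincar\'e on $B(0,1)$'' template is reused verbatim for Theorem \ref{integer slicing}. Both arguments yield the same statement with a non-sharp constant, so the choice is a matter of economy of tools rather than of strength.
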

\begin{proof}[Proof of Theorem \ref{hom sob} using Theorem \ref{inhom sob}]
    By a scaling argument, it suffices to prove that Theorem \ref{hom sob} holds for smooth functions supported in $B(0,1)$. 

    Let $f \in C^{\infty}(B(0,1))$. We have
    \bes
    \verti{f}_{W^{s_1,p_1}}\leq \vertii{f}_{W^{s_1,p_1}}\leq C \vertii{f}_{W^{s_2,p_2}} \leq C \verti{f}_{W^{s_2,p_2}}.
    \ees
    Here, we rely on Theorem \ref{inhom sob} for the second inequality, and on Lemma \ref{1d Poi} for the last one.
\end{proof}
The proof that estimate \eqref{compa} still holds in $\dot{W}^{s_1,p_1} \cap \dot{W}^{s_2,p_2}$ is more involved. It relies on Lemmas \ref{cauchy}, \ref{dens frac}, and \ref{poincaré} below.

\begin{lemma}\label{cauchy}
    Let $s>0$ and $1 \leq p<\infty$. Let $(f_n) \subset \dot{W}^{s,p}$ be such that $\verti{f_n-f_m}_{W^{s,p}} \to 0$, as $n,m \to \infty$. Then there exists $g \in \dot{W}^{s,p}$ such that $\verti{f_n - g}_{W^{s,p}} \to 0$.
\end{lemma}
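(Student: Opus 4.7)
The plan is to construct a polynomial-corrected sequence $(\tilde f_n)$ that converges in $L^p_{\text{loc}}$, identify the limit $g$, and then apply Fatou's lemma twice to obtain convergence in the homogeneous seminorm. Since $\verti{\cdot}_{W^{s,p}}$ vanishes on polynomials of degree at most $d$, with $d = \lfloor s \rfloor$ in the non-integer case (as $\Delta_h^{\lfloor s \rfloor + 1}$ kills such polynomials) and $d = s-1$ in the integer case (as $D^s$ does), a $\verti{\cdot}_{W^{s,p}}$-Cauchy sequence is to be thought of as Cauchy modulo $\mathcal{P}_d$, and completeness is only recovered up to a polynomial correction.

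The first step is a Poincaré--Wirtinger-type estimate, producing a bounded linear projection $P : \dot{W}^{s,p} \to \mathcal{P}_d$ (say, the $L^2(B_1)$-orthogonal projection onto $\mathcal{P}_d$) such that
\bes
\vertii{f - P(f)}_{L^p(B_R)} \leq C_R \verti{f}_{W^{s,p}}, \ \fo R>0, \ \fo f \in \dot{W}^{s,p}.
\ees
This reduces, via the slicing identities of Theorems \ref{theo besov} and \ref{integer slicing} combined with Remark \ref{1d norm}, to the one-dimensional Poincaré inequality of Lemma \ref{1d Poi}, after subtracting an appropriate polynomial of degree $\leq d$ along each coordinate axis. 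Setting $\tilde f_n \coloneq f_n - P(f_n)$, we have $P(\tilde f_n) = 0$ and $\verti{\tilde f_n - \tilde f_m}_{W^{s,p}} = \verti{f_n - f_m}_{W^{s,p}}$, so the estimate above applied to $\tilde f_n - \tilde f_m$ forces $\vertii{\tilde f_n - \tilde f_m}_{L^p(B_R)} \to 0$ as $n,m \to \infty$. Hence $(\tilde f_n)$ converges in $L^p_{\text{loc}}$ to some $g$, and after extracting a subsequence $(\tilde f_{n_k})$, we may further assume $\tilde f_{n_k} \to g$ almost everywhere.

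The second step applies Fatou's lemma. In the non-integer case, pointwise convergence yields, for each fixed $h$, $\Delta_h^{\lfloor s \rfloor + 1} \tilde f_{n_k}(x) \to \Delta_h^{\lfloor s \rfloor + 1} g(x)$ for a.e. $x$, so Fatou in $x$ bounds $\vertii{\Delta_h^{\lfloor s \rfloor +1}(\tilde f_n - g)}_{L^p}^p$ by $\liminf_k \vertii{\Delta_h^{\lfloor s \rfloor +1}(\tilde f_n - \tilde f_{n_k})}_{L^p}^p$, and a second Fatou in $h$ yields
\bes
\verti{\tilde f_n - g}_{W^{s,p}}^p \leq \liminf_k \verti{f_n - f_{n_k}}_{W^{s,p}}^p \xrightarrow[n \to \infty]{} 0
\ees
by the Cauchy hypothesis. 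In the integer case, one first identifies the $L^p$ limit of $D^s \tilde f_{n_k}$ with $D^s g$ via a distributional argument (derivatives being continuous with respect to $L^p_{\text{loc}}$ convergence) before applying Fatou in $x$. Since $f_n - g = \tilde f_n - g + P(f_n)$ and $P(f_n) \in \ker \verti{\cdot}_{W^{s,p}}$, we conclude $\verti{f_n - g}_{W^{s,p}} = \verti{\tilde f_n - g}_{W^{s,p}} \to 0$, and in particular $g \in \dot{W}^{s,p}$.

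The main obstacle is the Poincaré--Wirtinger estimate of the first step when $s$ is non-integer with $\lfloor s \rfloor \geq 1$: one must produce a bounded linear projection onto $\mathcal{P}_{\lfloor s \rfloor}$ controlled by $\verti{\cdot}_{W^{s,p}}$ over arbitrary balls, and verify that the slicing-based one-dimensional corrections assemble into a single polynomial of total degree $\leq \lfloor s \rfloor$, rather than merely into a function polynomial separately in each variable.
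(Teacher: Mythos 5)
Your overall architecture---correct the Cauchy sequence by polynomials, pass to an $L^p_{loc}$ limit, and recover seminorm convergence by Fatou---is a genuinely different route from the paper's proof, which instead invokes completeness of the quotient space $\dot{w}^{s,p}$ (Maz'ya's theorem for integer $s$, and the interpolation identity \eqref{interp} together with the fact that an interpolation space of a Banach couple is Banach for non-integer $s$). However, your proposal has a genuine gap at its single essential input: the Poincaré--Wirtinger estimate $\vertii{f-P(f)}_{L^p(B_R)}\leq C_R\verti{f}_{W^{s,p}}$ for \emph{arbitrary} $f\in\dot{W}^{s,p}$. You do not prove it, and the reduction you sketch does not work as stated: Lemma \ref{1d Poi} applies only to functions supported in a ball, so it cannot be applied to the slices of a general $f\in\dot{W}^{s,p}$; what is needed on each line is a one-dimensional Poincaré--Wirtinger inequality modulo polynomials on an interval, and even granting that, the correction produced on each line is a function of the transverse variables which is polynomial only in the sliced variable, and assembling these slice-wise corrections into a single polynomial of total degree $\leq\lfloor s\rfloor$, with quantitative control, is exactly the non-trivial point --- you flag it yourself as the main obstacle and leave it open. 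The estimate you need is precisely Lemma \ref{poincaré} of the paper; note that its proof there for non-integer $s$ again relies on Maz'ya's integer-case result and on interpolation (Lemma \ref{interp eq}), i.e., on the same machinery the paper uses to prove the present lemma directly, so your route is only \enquote{more elementary} conditionally on an unproved input.

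If Lemma \ref{poincaré} is taken as given, the rest of your argument does close: the $L^2(B_1)$-projection trick, the $L^p_{loc}$ convergence of $\tilde f_n$, the a.e. convergence along a subsequence, the two applications of Fatou, and the final passage $\liminf_k\verti{f_n-f_{n_k}}_{W^{s,p}}\to 0$ are all correct, and subtracting $P(f_n)$ indeed does not change the seminorms. Two small points remain in the integer case: you should verify that the limit $g$ lies in $W^{s,1}_{loc}$ (the definition of $\dot{W}^{s,p}$ for integer $s$ requires all derivatives up to order $s$ to be locally integrable, not merely that the distributional derivatives of order exactly $s$ belong to $L^p$); this follows from $g\in L^p_{loc}$ and $D^sg\in L^p$ by a standard local argument, but it deserves a line. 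Also, once $D^s\tilde f_{n}\to D^sg$ in $L^p$ is identified distributionally, no Fatou is needed there, since $(D^s\tilde f_n)$ is Cauchy in $L^p$ and converges to $D^s g$ directly.
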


The following result is the fractional counterpart of Lemma \ref{dens}.
\begin{lemma}\label{dens frac}
     Let $s$ be non-integer and $1 \leq p<\infty$. For each $f \in \dot{W}^{s,p}$, there exists $(f_n) \subset C_c^{\infty}(\R^N)$ such that $\verti{f_n -f}_{W^{s,p}}\to 0$, as $n \to \infty$.
\end{lemma}

Given $m$ an integer, we denote $\mathscr{P}_m$ the space of polynomials of degree $\leq m$.
We have the following classical result, see, e.g., \cite[Lemma, Section 1.1.11]{maz2013sobolev} and \cite[Theorem 3.5]{devore1984maximal}.
\begin{lemma}\label{poincaré}
Let $R>0$, $s>0$, and $1 \leq p<\infty$. Set $m\coloneq \lfloor s \rfloor$ if $s$ is non-integer, $m \coloneq s-1$ if $s$ is an integer. There exists $C\coloneq C_{s,p,R,N}<\infty$ such that, for each $f \in \dot{W}^{s,p}$, 
\be\label{ineq poinc}
\int_{B(0,R)} \verti{f(x) - P_{f,R}(x)}^p \ dx \leq C\verti{f}_{W^{s,p}}^p,
\ee
for some polynomial $P_{f,R} \in \mathscr{P}_m$.
\end{lemma}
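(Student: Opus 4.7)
The plan is to prove \eqref{ineq poinc} by a compactness-and-contradiction argument, with $P_{f,R}$ selected via a fixed projection onto $\mathscr{P}_m$. Concretely, let $\Pi : L^p(B(0,R)) \to \mathscr{P}_m$ be any continuous linear projection (for instance the $L^2(B(0,R))$-orthogonal projection onto $\mathscr{P}_m$, which is bounded on $L^p(B(0,R))$ because its range is finite-dimensional), and set $P_{f,R} := \Pi(f|_{B(0,R)})$. A key invariance used throughout is $\verti{f - P}_{W^{s,p}} = \verti{f}_{W^{s,p}}$ for every $P \in \mathscr{P}_m$: when $s$ is an integer, $D^s P \equiv 0$ for $\deg P \le s-1 = m$; when $s$ is non-integer, $\Delta^{\lfloor s\rfloor+1}_h P \equiv 0$ for $\deg P \le \lfloor s\rfloor = m$.

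Suppose by contradiction that \eqref{ineq poinc} fails. After normalisation one obtains a sequence $(g_n) \subset \dot{W}^{s,p}$ with $g_n = f_n - \Pi(f_n|_{B(0,R)})$, satisfying $\vertii{g_n}_{L^p(B(0,R))} = 1$, $\verti{g_n}_{W^{s,p}} = \verti{f_n}_{W^{s,p}} \to 0$, and $\Pi(g_n|_{B(0,R)}) = 0$ by construction. A routine comparison shows that the local seminorm $\verti{g_n}_{W^{s,p}(B(0,R))}$ is controlled by the global one, so the restrictions $g_n|_{B(0,R)}$ stay bounded in $W^{s,p}(B(0,R))$. Rellich's theorem then yields, up to a subsequence, $g_n \to g$ in $L^p(B(0,R))$, with $\vertii{g}_{L^p(B(0,R))} = 1$ and (by lower semicontinuity) vanishing local $W^{s,p}$-seminorm on $B(0,R)$.

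It remains to identify $g$: the vanishing of the local seminorm forces $g$ to coincide on $B(0,R)$ with an element of $\mathscr{P}_m$. For integer $s$ this is immediate since $D^s g = 0$ in $\mathcal{D}'(B(0,R))$; for non-integer $s$, one has $\Delta^{\lfloor s\rfloor+1}_h g = 0$ in $L^p$ for a.e.\ sufficiently small $h$, and the classical characterisation of polynomials by vanishing iterated differences (compare Lemma \ref{Zero Poly}) yields $g|_{B(0,R)} \in \mathscr{P}_m$. Continuity of $\Pi$ on $L^p(B(0,R))$ gives $\Pi(g) = \lim_n \Pi(g_n) = 0$, while $g|_{B(0,R)} \in \mathscr{P}_m$ forces $\Pi(g) = g$; hence $g \equiv 0$ on $B(0,R)$, contradicting $\vertii{g}_{L^p(B(0,R))} = 1$.

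The main obstacle is to justify the Rellich compactness and the comparison between the iterated-difference seminorm restricted to $B(0,R)$ and the global one. For $0 < s < 1$ both are entirely standard (Gagliardo--Slobodeckij seminorm and Rellich--Kondrachov). For $s \ge 1$ non-integer, one reduces to the same situation using the Besov identification of $\dot{W}^{s,p}$, which makes the $\Delta^{\lfloor s\rfloor+1}$-seminorm equivalent to a first-difference Gagliardo seminorm applied to derivatives of order $\lfloor s\rfloor$, essentially the content of the Dorronsoro-type Lemma \ref{oscar res} already invoked elsewhere in the paper. Once the compactness is in place, the rest of the argument is formal.
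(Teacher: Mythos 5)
Your argument is correct in outline but follows a genuinely different route from the paper. The paper proves the integer case by citing Maz'ya's local polynomial Poincar\'e inequality, and then treats non-integer $s$ purely by real interpolation: using Lemma \ref{interp eq} it decomposes $f=f_1+f_2$ with $\|f_1\|_{L^p}+\verti{f_2}_{W^{\lfloor s\rfloor+1,p}}\leq C\verti{f}_{W^{s,p}}$, applies the integer-case lemma to $f_2$, and concludes by the triangle inequality; no compactness is used and the polynomial is not produced by a fixed linear projection. You instead run a Deny--Lions-type contradiction/compactness argument with $P_{f,R}=\Pi f$ for a fixed bounded projection $\Pi:L^p(B(0,R))\to\mathscr{P}_m$; this is conceptually more elementary, yields a canonical linear choice of $P_{f,R}$, and bypasses the interpolation machinery, but the substance is hidden in your ``routine comparison'' and Rellich step. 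For $0<s\leq 1$ that step is indeed standard, but for $s\geq 1$ it requires genuine input not stated in the paper: for non-integer $s>1$ you must know that the global $(\lfloor s\rfloor+1)$-difference seminorm together with the $L^p(B(0,R))$ bound controls an intrinsic $W^{s,p}$/Besov norm on the ball (Marchaud-type inequalities, or the theory of Besov spaces on Lipschitz domains via moduli of smoothness; Lemma \ref{oscar res}, being a global statement on $\R^N$, does not by itself give this), and for integer $s\geq 2$ you need intermediate-derivative (Gagliardo--Nirenberg type) estimates on the ball to bound the derivatives of order $1\leq k<s$ before invoking Rellich--Kondrachov up to the boundary. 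Granted those known facts, the rest is sound: the normalization is legitimate (a counterexample with $\verti{f}_{W^{s,p}}=0$ is impossible since then $f\in\mathscr{P}_m$ a.e.\ and the left-hand side vanishes), lower semicontinuity identifies the limit as an element of $\mathscr{P}_m$ on the convex set $B(0,R)$, and $\Pi g=0$ together with $\Pi g=g$ forces $g=0$, contradicting $\|g\|_{L^p(B(0,R))}=1$. In short, the two proofs trade one body of outsourced results (real interpolation plus Maz'ya's lemma) for another (domain norm equivalences and compact embeddings); the paper's choice has the advantage of reusing Lemma \ref{interp eq}, which it needs anyway for Lemma \ref{cauchy}.
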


Granted Lemmas \ref{cauchy} and \ref{dens frac}, we turn to
\begin{proof}[Proof of Theorem \ref{Opt Sob} using Theorem \ref{hom sob}]
    Let $f \in \dot{W}^{s_1,p_1} \cap \dot{W}^{s_2,p_2}$. By Lemmas \ref{dens} and \ref{dens frac}, there exists $(f_n) \subset C_c^{\infty}$ such that \be \label{approx}
    \verti{f_n - f}_{W^{s_2,p_2}}\to 0. 
    \ee
    We have, by Theorem \ref{hom sob},
    \bes
     \verti{f_n -f_m}_{W^{s_1,p_1}} \leq \widetilde{C} \verti{f_n -f_m}_{W^{s_2,p_2}}.
    \ees
    Hence, by Lemma \ref{cauchy} there exists $g \in \dot{W}^{s_1,p_1}$ such that $\verti{f_n - g}_{W^{s_1,p_1}}\to 0$. Passing to the limit
yields
\bes
\verti{g}_{W^{s_1,p_1}} \leq \widetilde{C} \verti{f}_{W^{s_2,p_2}}.
\ees

    \smallskip
    We now show that $\verti{g}_{W^{s_1,p_1}}=\verti{f}_{W^{s_1,p_1}}$.  By Lemma \ref{poincaré}, we have, for each $R>0$,
    \bes
     \int_{B(0,R)}\verti{(f_n-g)(x) -P_{f_n-g,R}(x)}^{p_1} \ dx \leq C \verti{f_n-g}_{W^{s_1,p_1}}^{p_1},
    \ees
    where $P_{f_n-g,R}$ is a polynomial of degree $\leq s_1$.
    Hence, \be\label{lo co} (f_n-g) - P_{f_n-g,R} \to 0 \ \text{in} \ L^{p_1}(B(0,R)), \ \fo R>0. \ee

    If $s_2$ is an integer, we argue as follows. By \eqref{lo co}, we have
    \bes
     \partial^{\alpha}\left((f_n-g) - P_{f_n-g,R}\right) = \partial^{\alpha} f_n - \partial^{\alpha} g \to 0, \ \text{in} \ \mathscr{D}'(B(0,R)),
    \ees
    for each $R>0$ and $\alpha$ such that $\verti{\alpha}=s_2$, and thus  $\partial^{\alpha} f_n \to \partial^{\alpha} g $ in $\mathscr{D}'(\R^N)$. On the other hand, for each $\verti{\alpha}=s_2$, $\partial^{\alpha} f_n \to \partial^{\alpha} f$ in $L^p(\R^N)$ (by \eqref{approx}) . Therefore, we have $\partial^{\alpha} g = \partial^{\alpha} f$, for each $\verti{\alpha}=s_2$, and there exists a polynomial $P$ of degree $\leq s_2 -1 $ such that $f-g=P$. But $P \in \dot{W}^{s_1,p}$, since $f$ and $g$ are in $\dot{W}^{s_1,p}$, which implies that $\text{deg}(P)\leq \lfloor s_1 \rfloor $ if $s_1$ is non-integer,  $\text{deg}(P) \leq s_1-1$  if $s_1$ is an integer. This yields $\verti{g}_{W^{s_1,p_1}}=\verti{f-P}_{W^{s_1,p_1}}=\verti{f}_{W^{s_1,p_1}}$ and the desired conclusion. 

    If $s_2$ is non-integer, we argue similarly. For each $h \in \R^N$ and $R>0$, we may find $R'>0$ sufficiently large such that
    \begin{flalign}\label{delta lo}
     &\vertii{\Delta^{\lfloor s_2 \rfloor +1}_h (f_n-g)}_{L^{p_1}(B(0,R))} \\& \hspace{30 pt} =\vertii{\Delta^{\lfloor s_2 \rfloor +1}_h\left( (f_n-g)-P_{f_n-g,R'} \right)}_{L^{p_1}(B(0,R))} \\ & \hspace{30 pt} \leq C \vertii{(f_n-g)-P_{f_n-g,R'}}_{L^{p_1}(B(0,R'))}.
    \end{flalign}
    Combining \eqref{lo co} and \eqref{delta lo}, we find that
    \bes
     \Delta^{\lfloor s_2 \rfloor +1}_hf_n \to \Delta^{\lfloor s_2 \rfloor +1}_h g \ \ \text{in} \ L^{p_1}_{\text{loc}}(\R^N),
    \ees
    for each $h \in \R^N$. By \eqref{approx}, we also have $\Delta^{\lfloor s_2 \rfloor +1}_h f_n \to \Delta^{\lfloor s_2 \rfloor +1}_h f$ in $L^p$, and therefore
    \bes
    \Delta^{\lfloor s_2 \rfloor +1}_h f = \Delta^{\lfloor s_2 \rfloor +1}_h g,
    \ees
    for a.e. $h \in \R^N$. This implies that there exists a polynomial of degree $\leq \lfloor s_2 \rfloor $ such that $f-g=P$. Arguing as in the previous case, we then find that $\verti{f}_{W^{s_1,p_1}}=\verti{g}_{W^{s_1,p_1}}$ and this completes the proof of Theorem \ref{Opt Sob}.
\end{proof}

For the sake of completeness, we now present a possible approach to the proofs of Lemmas \ref{cauchy} and \ref{dens frac}.
For each $s$, we consider the quotient spaces
\bes
\dot{w}^{s,p}\coloneq \begin{cases}
    \dot{W}^{s,p}/\mathscr{P}_{\lfloor s \rfloor }, & \ \text{if} \ s\ \text{is non-integer}, \\
    \dot{W}^{s,p}/\mathscr{P}_{s-1}, & \ \text{if} \ s \ \text{is an integer},
\end{cases}
\ees
equipped with the norms
\bes
\verti{\Bar{f}}_{w^{s,p}}\coloneq  \verti{f}_{W^{s,p}},
\ees
where $\Bar{f}$ is the class of $f$.
We will use results of interpolation theory, see, e.g., \cite[Chapters 16, 17]{leoni2024first}.

For the first result, see \cite[Remark 17.29, Theorem 17.30]{leoni2024first}.
\begin{lemma}\label{interp eq}
    Let $s$ be non-integer and $1 \leq p<\infty$. There exist $ 0<C_1\leq C_2<\infty$ such that
    \bes
     C_1\verti{f}_{W^{s,p}}\leq \vertii{f}_{s/(\lfloor s \rfloor +1),p} \leq C_2 \verti{f}_{W^{s,p}},
    \ees
    for each $f \in L^1_{\text{loc}}$, where 
     $\displaystyle \vertii{\ \cdot \ }_{s/(\lfloor s \rfloor +1),p}$ is the interpolation semi-norm associated to the interpolation space
     $\displaystyle (L^p,\dot{W}^{\lfloor s \rfloor +1,p})_{s/(\lfloor s\rfloor +1),p}$.
\end{lemma}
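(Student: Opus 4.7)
The plan is to route the equivalence through the $L^p$-modulus of smoothness of order $m := \lfloor s \rfloor + 1$,
\bes
\omega_m(f,\tau)_p := \sup_{\verti{h} \leq \tau} \vertii{\Delta_h^m f}_{L^p}, \quad \tau > 0,
\ees
and to invoke two classical facts from approximation theory: the Johnen--Scherer characterization of the $K$-functional, and a Marchaud-type inequality. Set $\theta := s/m \in (0,1)$, and recall that the real interpolation semi-norm admits the representation
\bes
\vertii{f}_{\theta,p}^p \sim \int_0^{\infty} t^{-\theta p} K(t,f;L^p,\dot{W}^{m,p})^p \, \frac{dt}{t}.
\ees

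The first step would be to establish the equivalence $K(\tau^m, f; L^p, \dot{W}^{m,p}) \sim \omega_m(f,\tau)_p$, for every $\tau > 0$. The bound $\omega_m \lesssim K$ is straightforward: for any admissible decomposition $f = f_0 + f_1$ with $f_1 \in \dot{W}^{m,p}$, Lemma \ref{delta fo} yields $\vertii{\Delta_h^m f_1}_{L^p} \leq C\verti{h}^m \verti{f_1}_{W^{m,p}}$, while $\vertii{\Delta_h^m f_0}_{L^p} \leq 2^m \vertii{f_0}_{L^p}$; taking the supremum over $\verti{h}\leq \tau$ and the infimum over decompositions gives the inequality. The reverse bound $K \lesssim \omega_m$ is the less trivial direction: one constructs an explicit near-minimizer of the $K$-functional by setting $f_1 := f \ast \rho_\tau$ and $f_0 := f - f_1$, controls $\vertii{f_0}_{L^p} \lesssim \omega_m(f,\tau)_p$ through a Steklov-type averaging identity expressing $f \ast \rho_\tau - f$ as an integral against $\Delta_h^m f$, and bounds $\verti{f_1}_{W^{m,p}} \lesssim \tau^{-m} \omega_m(f,\tau)_p$ by moving $m$ derivatives onto the mollifier. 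Substituting $t = \tau^m$ in the $K$-functional representation of $\vertii{f}_{\theta,p}^p$ then produces
\bes
\vertii{f}_{\theta,p}^p \sim \int_0^{\infty} \tau^{-sp-1} \omega_m(f,\tau)_p^p \, d\tau.
\ees

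The second step is to compare this integrated modulus with $\verti{f}_{W^{s,p}}^p$. Writing \eqref{def 1} in polar coordinates gives
\bes
\verti{f}_{W^{s,p}}^p = \int_0^{\infty} \tau^{-sp-1} \pari{\int_{\mathbb{S}^{N-1}} \vertii{\Delta_{\tau\xi}^m f}_{L^p}^p \, d\mathscr{H}^{N-1}(\xi)} d\tau,
\ees
so the inequality $\verti{f}_{W^{s,p}} \lesssim \vertii{f}_{\theta,p}$ is immediate from the pointwise bound $\vertii{\Delta_{\tau\xi}^m f}_{L^p} \leq \omega_m(f,\tau)_p$. The reverse direction is the main obstacle: one must dominate the supremum $\omega_m(f,\tau)_p$ by a spherical $L^p$-average of $\vertii{\Delta_h^m f}_{L^p}$ over a shell $\verti{h} \asymp \tau$. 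This is achieved via a Marchaud-type monotonicity estimate combined with a dyadic decomposition in $\tau$: after integration, one may replace $\tau^{-sp-1}\omega_m(f,\tau)_p^p$ by $\int_{\tau \leq \verti{h} \leq 2\tau} \tau^{-sp-1-N}\vertii{\Delta_h^m f}_{L^p}^p \, dh$ up to a universal constant, and a Fubini argument then produces the desired bound by $\verti{f}_{W^{s,p}}^p$. This last replacement is the heart of the argument; the remainder of the proof is a routine application of real-interpolation theory.
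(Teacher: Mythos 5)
Your overall route --- the Johnen--Scherer equivalence $K(\tau^m,f;L^p,\dot{W}^{m,p})\sim\omega_m(f,\tau)_p$ with $m=\lfloor s\rfloor+1$, followed by the equivalence of $\int_0^\infty \tau^{-sp-1}\omega_m(f,\tau)_p^p\,d\tau$ with the Gagliardo integral \eqref{def 1} --- is precisely the standard argument behind the references the paper cites for this lemma (the paper itself gives no proof), so the strategy is the right one. The genuine gap is in your proof of $K\lesssim\omega_m$: the near-minimizer $f_1:=f\ast\rho_\tau$ with a plain mollifier fails once $m\geq 3$, i.e.\ $s>2$, which the lemma must cover. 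There is no identity expressing $f-f\ast\rho_\tau$ as an average of $\Delta^m_h f$: expanding, $f\ast\rho_\tau-f$ only involves first differences (second differences if $\rho$ is even), so for a smooth compactly supported $f$ one has $\vertii{f-f\ast\rho_\tau}_{L^p}\sim\tau^2$ while $\omega_m(f,\tau)_p\sim\tau^m$, and the claimed bound $\vertii{f-f_1}_{L^p}\lesssim\omega_m(f,\tau)_p$ fails as $\tau\to 0$ when $m\geq 3$. Likewise, ``moving $m$ derivatives onto the mollifier'' only gives $\verti{f\ast\rho_\tau}_{W^{m,p}}\lesssim\tau^{-m}\vertii{f}_{L^p}$, not the required $\tau^{-m}\omega_m(f,\tau)_p$ (and $\vertii{f}_{L^p}$ need not even be finite for $f\in L^1_{loc}$). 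The classical fix is the $m$-th order Steklov mean: average the identity $f(x)=(-1)^m\Delta^m_h f(x)+\sum_{j=1}^m(-1)^{j+1}\binom{m}{j}f(x+jh)$ in $h$ against a kernel at scale $\tau$ (B-spline kernels $\chi_m$ as in Lemma \ref{delta fo} are the convenient choice) and take $f_1$ to be the resulting combination of dilated convolutions; then $f-f_1$ is literally an average of $\Delta^m_h f$ over $\verti{h}\lesssim\tau$, and the $m$-th derivatives of $f_1$ again produce $m$-th differences of $f$ because $m$ derivatives of the B-spline kernel reduce to finite difference operators.

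In your second step, the replacement of the supremum $\omega_m(f,\tau)_p$ by an $L^p$-average of $\vertii{\Delta^m_h f}_{L^p}$ over a shell $\verti{h}\sim\tau$ is correctly identified as the crux, but it is only asserted, and it is not a Marchaud inequality (Marchaud compares moduli of different orders); what is needed is the classical equivalence of $\omega_m$ with its averaged version, whose proof rests on a superposition identity writing $\Delta^m_u f$ as a finite combination of differences $\Delta^m_v f$ with $v$ ranging over a set of measure of order $\tau^N$. Both missing ingredients are standard, so the proposal is repairable, but as written the two key estimates of the argument are not established.
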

This result also holds for the quotient spaces $\dot{w}^{s,p}$ and $\dot{w}^{\lfloor s \rfloor +1,p}$:  we have
\be\label{interp}
\dot{w}^{s,p}=(L^p,\dot{w}^{\lfloor s \rfloor, p})_{s/(\lfloor s\rfloor +1),p},
\ee
with equivalence between the interpolation norm and $\verti{ \ \cdot \ }_{w^{s,p}}$.

\begin{proof}[Proof of Lemma \ref{cauchy}]
When $s$ is an integer, $\dot{w}^{s,p}$ is complete (see \cite[Theorem 1, Section 1.1.13]{maz2013sobolev}). Since an interpolation space between Banach spaces is a Banach space (see Theorem \cite[Theorem 16.5]{leoni2024first}), we have the same result if $s$ is non-integer (by \eqref{interp}). This implies Lemma \ref{cauchy}.
\end{proof}

A proof of Lemma \ref{dens frac} using Lemma \ref{interp eq} and interpolation theory may be found in \cite[Proof of Theorem 17.37]{leoni2024first}.

\smallskip
\bibliography{bibart}
\end{document}